\documentclass[10pt,reqno,fleqn]{amsart}
\usepackage[foot]{amsaddr}

\author{Federico Cornalba$^*$}
\thanks{$^*$University of Bath, Claverton Down, BA2 7AY, Bath, United Kingdom. \\\emph{E-mail address:} \href{mailto:fc402@bath.ac.uk}{fc402@bath.ac.uk}}
\author{Julian Fischer$^\dag$}
\author{Jonas Ingmanns$^\ddag$}
\thanks{$^{\dag,\ddag}$Institute of Science and Technology Austria (ISTA), Am~Campus~1, 
3400 Klosterneuburg, Austria. 
\\ \emph{E-mail addresses:} \href{mailto:julian.fischer@ista.ac.at}{julian.fischer@ista.ac.at},
 \href{mailto:jonas.ingmanns@ista.ac.at}{jonas.ingmanns@ista.ac.at}}
\author{Claudia Raithel$^\S$}
\thanks{$^\S$Technische Universit\"at Dresden, 01217 Dresden, Germany. \\ \emph{E-mail address:}~\href{mailto:claudia.raithel@tu-dresden.de}{claudia.raithel@tu-dresden.de}}

\usepackage[hmarginratio=1:1,top=32mm,columnsep=20pt]{geometry} 
\geometry{left=35mm,right=35mm,top=40mm,bottom=40mm}
\setlength{\marginparwidth}{3cm}

\usepackage[utf8]{inputenc}
\usepackage{amsmath}
\usepackage{graphicx}
\usepackage{amssymb}
\usepackage{esint}
\usepackage{color}
\usepackage{amsthm}
\usepackage{epsfig}
\usepackage[]{todonotes}
\usepackage{mathrsfs} 
\usepackage{xxcolor}%
\usepackage[pagebackref=false,linktocpage=true,colorlinks=true,linkcolor=Blue,citecolor=Green]{hyperref}
\usepackage{doi}
\usepackage{enumitem}
\usepackage{mathtools}
\usepackage{tikz}
\usetikzlibrary{arrows}
\usepackage[english]{babel}
\usepackage[square,numbers]{natbib}

\iffalse 
\usepackage[autostyle]{csquotes}
\usepackage[
    backend=biber,
    style=numeric,
    firstinits = true,
    isnb=false,
    issn=false,
    url=false, 
    doi=true,
    eprint=true
]{biblatex}

\renewbibmacro{in:}{
  \ifentrytype{article}{}{
  \printtext{\bibstring{in}\intitlepunct}}}

\DeclareFieldFormat[article,unpublished,book]{pages}{#1}  

\DeclareFieldFormat[article]{title}{#1}
\DeclareFieldFormat[unpublished]{title}{#1}
\DeclareFieldFormat[misc]{title}{#1}
\fi 

\newtheorem{theorem}{Theorem}
\newtheorem{proposition}[theorem]{Proposition}
\newtheorem{lemma}[theorem]{Lemma}
\newtheorem{corollary}[theorem]{Corollary}
\newtheorem{definition}[theorem]{Definition}

\newtheorem*{theorem*}{Theorem}
\newtheorem*{maintheorem*}{Main Result}
\newtheorem*{theorem**}{{``Theorem''}}
\theoremstyle{definition}


\theoremstyle{definition}

\newtheorem{rem}[theorem]{Remark}

\reversemarginpar

\providecommand{\customgenericname}{}
\newcommand{\newcustomtheorem}[2]{%
  \newenvironment{#1}[1]
  {%
   \renewcommand\customgenericname{#2}%
   \renewcommand\theinnercustomgeneric{##1}%
   \innercustomgeneric
  }
  {\endinnercustomgeneric}
}

\newcustomtheorem{customthm}{Assumption}
\newcustomtheorem{customthm2}{Definition}


\def\XXint#1#2#3{{\setbox0=\hbox{$#1{#2#3}{\int}$ }
\vcenter{\hbox{$#2#3$ }}\kern-.6\wd0}}

\definecolor{Yellow}{rgb}{0.95,0.9,0.0} 
\definecolor{Red}{rgb}{0.8,0.1,0.1}
\definecolor{Green}{rgb}{0.1,0.65,0.2}
\definecolor{Blue}{rgb}{0.1,0.1,0.8}
\definecolor{Purple}{rgb}{0.7,0.1,0.7}
\definecolor{Grey}{rgb}{0.6,0.6,0.6}





\newcommand{\ov}{\overline}

\newcommand{\eps}{\varepsilon}
\newcommand{\ep}{\epsilon}
\newcommand{\vphi}{\varphi}
\newcommand{\vtheta}{\vartheta}
\newcommand{\delflex}{\kappa}
\newcommand{\delZero}{{\delta_0}}
\newcommand{\et}{\theta}

\newcommand{\N}{\mathbb{N}}
\newcommand{\Nz}{\mathbb{N}_0}
\newcommand{\Z}{\mathbb{Z}}

\newcommand{\R}{\mathbb{R}}

\newcommand{\domain}{{\mathbb{T}^d}}
\newcommand{\compemb}{\subset\subset}

  
\newcommand{\divv}{\nabla\cdot}
 
\newcommand{\dx}{\,\mathrm{d}x}
\newcommand{\dt}{\,\mathrm{d}t}

\newcommand{\f}{\boldsymbol}
\newcommand{\m}{\,\mathrm{d}}				



\newcommand{\Ev}{\mathbb{E}}
\newcommand{\Pm}{{\mathbb{P}}}

\newcommand{\Ghd}{G_{h,d}} 				
\newcommand{\LzGhd}{{L^2(\Ghd)}}
\newcommand{\hbase}[2][ ]{{f^{#1}_{#2}}} 
\newcommand{\HGhd}[1][ ]{{H^{#1}(\Ghd)}}
\newcommand{\Ih}{\mathcal{I}_{h}}		

\newcommand{\dOne}[2][h]{{\partial_{#1,1}^{#2}}}
\newcommand{\gradhR}{{\nabla_h^R}}
\newcommand{\dhRx}[1][\ell]{{\partial_{h,x_{#1}}^R}}

\newcommand{\nS}{{n_S}}
\newcommand{\numtest}{{K}}
\newcommand{\rI}{{r_I}}
\newcommand{\sI}{{s_I}}
\newcommand{\V}[1][ ]{{V^\rI_{#1}}}				
\newcommand{\rhor}[1][ ]{{\rho^\rI_{#1}}}			
\newcommand{\rhob}[1][ ]{{\ov{\rho}_{#1}}}		
\newcommand{\rhobr}[1][ ]{{\ov{\rho}^\rI_{#1}}}	
\newcommand{\rhoz}[1][ ]{{\rho^0_{#1}}}			
\newcommand{\rhobz}[1][ ]{{\ov{\rho}^0_{#1}}}	
\newcommand{\U}[1][ ]{{U^\rI_{#1}}}
\newcommand{\etat}[1][ ]{{\eta^t_{#1}}}
\newcommand{\phit}[1][ ]{{\phi^t_{#1}}}
\newcommand{\phith}[1][_]{{\phi^t_{\if_#1 h \else h,#1 \fi}}}
\newcommand{\empmeas}[1][ ]{{\mu_{{#1}}^{\rI,N}}}
\newcommand{\bT}{{\f{T}}}
\newcommand{\bTlin}[1][ ]{{\breve{\f{T}}_{#1}}}
\newcommand{\bvphi}{{\f{\vphi}}}
\newcommand{\bphi}[1][ ]{{\f{\phi}_{#1}}}
\newcommand{\bphit}[1][ ]{{\f{\phi}^t_{#1}}}
\newcommand{\bphith}[1][_]{{\f{\phi}^t_{\if_#1 h \else h,#1 \fi}}}
\newcommand{\bphittil}[1][ ]{{\widetilde{\f{\phi}}{}^t_{#1}}}
\newcommand{\bphitlin}[1][ ]{{\breve{\f{\phi}}{}^t_{#1}}}
\newcommand{\bphilin}[1][ ]{{\breve{\f{\phi}}{}_{#1}}}
\newcommand{\psit}[1][ ]{{\psi^t_{#1}}}

\newcommand{\psitlin}[1][ ]{{\breve{\psi}{}^t_{#1}}}
\newcommand{\psilin}[1][ ]{{\breve{\psi}{}_{#1}}}
\newcommand{\bzeta}{{\f{\zeta}}}
\newcommand{\bzetat}[1][ ]{{\f{\zeta}^t_{#1}}}
\newcommand{\bzetath}[1][_]{{\f{\zeta}^t_{\if_#1 h \else h,#1 \fi}}}
\newcommand{\bzetatTsh}[1][_]{{\f{\zeta}^{t\wedge\Ts}_{\if_#1 h \else h,#1 \fi}}}
\newcommand{\zetat}[1][ ]{{\zeta^t_{#1}}}

\newcommand{\zetatTsh}[1][_]{{\zeta^{t\wedge\Ts}_{\if_#1  h \else h,#1 \fi}}}
\newcommand{\tilQ}[1][ ]{{Q^{\rI,N}_{#1}}}
\newcommand{\Tbh}{{\overline{T}_h}}
\newcommand{\Ts}{{T_\oslash}}
\newcommand{\tTs}[1][ ]{{\widetilde{T}{}_\oslash^{#1}}}

\newcommand{\Errstop}[1][_]{{{\mathrm{Err}}_{\if_#1  \oslash \else \oslash,#1 \fi}}}

\newcommand{\M}{{\mathcal{M}}}
\newcommand{\G}{{\mathcal{G}}}




\allowdisplaybreaks[2] 		

\numberwithin{equation}{section}	
\numberwithin{theorem}{section}



\allowdisplaybreaks[2]

\usepackage{delimdelim}

\begin{document}

\title[Density fluctuations in interacting particle systems]{Density fluctuations in weakly interacting particle systems via the Dean--Kawasaki equation}


\begin{abstract}
The Dean--Kawasaki equation -- one of the most fundamental SPDEs of fluctuating hydrodynamics -- has been proposed as a model for density fluctuations in weakly interacting particle systems. In its original form it is highly singular and fails to be renormalizable even by approaches such as regularity structures and paracontrolled distributions, hindering mathematical approaches to its rigorous justification. It has been understood recently that it is natural to introduce a suitable regularization, e.\,g.,\ by applying a formal spatial discretization or by truncating high-frequency noise.

In the present work, we prove that a regularization in form of a formal discretization of the Dean--Kawasaki equation indeed accurately describes density fluctuations in systems of weakly interacting diffusing particles: We show that in suitable weak metrics, the law of fluctuations as predicted by the discretized Dean--Kawasaki SPDE approximates the law of fluctuations of the original particle system, up to an error that is of arbitrarily high order in the inverse particle number and a discretization error.
In particular, the Dean--Kawasaki equation provides a means for efficient and accurate simulations of density fluctuations in weakly interacting particle systems.

\end{abstract}

\maketitle


{\small{\bfseries Key words}. Weakly interacting particle systems, Fluctuating Hydrodynamics, Dean--Kawasaki equation, stochastic PDEs, numerical approximation.}

\tableofcontents

\section{Introduction}
The theory of \emph{Fluctuating Hydrodynamics} \cite{SpohnBook} describes the dynamics of large, finite-size particle systems subject to fluctuations. In the framework of this theory, the particle system being investigated is described via a suitable stochastic PDE (SPDE), which captures the fluctuations of the system on top of its deterministic limiting dynamics.
The physics applications of this theory are numerous and diversified, and touch upon several different fields, see for instance \cite{bertini2015macroscopic,marconi1999dynamic,goddard2012general,velenich2008brownian,
delfau2016pattern,donev2014reversible,lutsko2012dynamical,
dejardin2018calculation,duran2019instability,
cates2015motility,thompson2011lattice}. 


This work is concerned with giving a fully quantitative justification to a pivotal SPDE from fluctuating hydrodynamics, the so-called \emph{Dean--Kawasaki equation} \cite{Dean,Kawasaki}
\begin{align}\label{dk_self_int}
	 &	\partial_t \rho = 
	 			\sigma\Delta\rho
	 			+\divv\left(\rho\nabla V\ast\rho\right)+ N^{-1/2}\divv (\sqrt{2\sigma \rho}\,\xi).
\end{align}
Here, $\rho$ denotes the density of particles, $N$ is the number of particles, $V$ is an interaction potential, $\sigma>0$ is the diffusion coefficient, and $\xi$ denotes space-time vector-valued white noise. 
The model \eqref{dk_self_int} is proposed as a mesoscopic description for the law of the empirical density
\begin{align*}
	\mu^N(\cdot,t) := \frac{1}{N} \sum_{i=1}^N \delta_{X_i^N(t)}
\end{align*}
of a system of $N\gg 1$ particles with positions $X_i^N(t)\in \domain$ driven by i.i.d.\ Brownian motions $(B_i)_{i=1}^N$ and interacting weakly via a smooth potential $V$:
\begin{equation}\label{eq_intro_PartSys}
	\m X^{N}_{i}(t) =
	- N^{-1} \sum_{j=1}^N\nabla V\left(X^{N}_{i}(t)-X^{N}_{j}(t)\right)\m t + \sqrt{2\sigma}\m B_{i}(t).
\end{equation}
The purpose of the Dean--Kawasaki equation \eqref{dk_self_int} is to correctly describe the law of particle density fluctuations, going beyond the (deterministic) mean-field description of the particle density
\begin{align*}
\partial_t \bar \rho = \sigma \Delta \bar \rho + \divv\left(\bar \rho \nabla V \ast \bar \rho\right).
\end{align*}
As we shall see, the Dean--Kawasaki equation \eqref{dk_self_int} turns out to be even substantially superior in accuracy as compared to the leading-order description of fluctuations by the process
\begin{align}\label{lin_dk}
\partial_t \hat \rho = \sigma \Delta \hat \rho + \divv\left(\hat \rho \nabla V \ast \hat \rho\right)+ N^{-1/2}\divv (\sqrt{2\sigma \bar\rho}\,\xi).
\end{align}

The Dean--Kawasaki equation \eqref{dk_self_int} itself is a highly singular SPDE; it is not even renormalizable by approaches like regularity structures or paracontrolled distributions.
As shown in the seminal work \cite{konarovskyi2020dean}, the singular SPDE \eqref{eq_intro_PartSys} turns out to be in fact a formal -- mathematically equivalent -- rewriting of the associated microscopic particle system \eqref{eq_CDPartSys}: All of its martingale solutions are precisely given as the empirical density of an interacting particle system of the form \eqref{eq_intro_PartSys}, i.e., the only solutions to \eqref{dk_self_int} are of the form
\begin{align}\label{vonRenesseEtAl}
\rho(\cdot,t) \equiv \mu^N(\cdot,t) := N^{-1}\sum_{i=1}^{N}\delta_{X^N_i(t)}
\end{align}
for some $X^N_i$ satisfying the system of SDEs \eqref{eq_intro_PartSys}.
While this might -- at first glance -- appear to imply that the representation \eqref{dk_self_int} brings no additional insight over the associated particle dynamics \eqref{eq_intro_PartSys}, it has been observed in recent years that this is in fact not the case: Natural regularizations of the formal Dean--Kawasaki SPDE \eqref{dk_self_int} are better-behaved objects and may be expected to provide meaningful approximations of density fluctuations. Note that in any practical use for physics simulations, the Dean--Kawasaki equation \eqref{dk_self_int} is necessarily subjected to a regularization, either by applying a spatial numerical discretization (which implicitly truncates the high-frequency noise modes) or by introducing an explicit frequency cutoff in the noise term. While regularised models may differ from one another in terms of specific features and applicability, they are all usually much more tractable and versatile versions of \eqref{dk_self_int}, and capable of describing -- at least some -- features of the underlying microscopic systems up to a quantifiable, small error. 

In the present work, we prove that upon regularizing the Dean--Kawasaki equation \eqref{dk} by applying a formal spatial discretization, it is capable of approximating the law of density fluctuations of the weakly interacting particle system \eqref{eq_CDPartSys} up to \emph{arbitrary} precision in $N^{-1}$ (plus numerical errors). Note that any meaningful comparison of the empirical density $\mu^N$ -- a sum of Dirac measures -- to the continuous solution $\rho_h$ of a regularized variant of the Dean--Kawasaki equation \eqref{dk} must be formulated in terms of weak spatial norms, i.e.,\ in terms of testing the densities against a test function with sufficient regularity. Furthermore, as $\mu^N$ and $\rho_h$ live on different probability spaces, any comparison of $\rho_h$ to $\mu^N$ can only be phrased in terms of their law (and not as a pathwise statement).  The informal statement of our result is as follows.
\begin{maintheorem*}[Informal statement for high-order approximation of density fluctuations in the single-species case of Theorem \ref{MainThm} below] 
Let $T>0$ be a fixed time.
Let $\rho_h$ denote the solution to a suitable finite-difference discretisation of \eqref{dk_self_int} on $\domain \times [0,T]$ with spatial discretisation parameter $h$ and of spatial order $p+1$. 
Let $\mu^N$ denote the empirical density of the particle system \eqref{eq_intro_PartSys} consisting of $N$ particles. 
Suppose that the initial density $\rho_h(\cdot,0)\geq \inf_x \rho_h(x,0)>0$ is strictly positive and approximates (in law) the empirical density $\empmeas(\cdot,0)$. 
Furthermore, let the overline symbol $(^-)$ denote suitable ``mean-field'' analogues of the densities. 
For the scaling regime, assume that 
\begin{equation}\label{InformalScaling_N_h}
		N^{1-\delZero}h^d \geq 1 \quad\text{ for some }\delZero>0. 
\end{equation}
Then, for every $0<\ep<\delZero/4$, there exists a stopping time $\Ts$ with
\begin{align}\label{ExpDecayStopTime}
	\mathbb{P}(\Ts < T) 
	\lesssim
	\exp\left( - C N^{\ep/2} \right),
\end{align} 
where for every $\delflex>0$ there exists $N_0\in\N$, such that for every $t\in[0,T]$ and $N\geq N_0$,
\begin{align}\label{InformalResult}
	& d_{weak,2j+1}\left[	N^{1/2}(\rho_{h} - \rhob[h])(t \wedge \Ts),\,		N^{1/2}(\mu^N - \rhob)(t )\right] \\
	& \quad \lesssim 		N^{2\ep+\delflex}\left(h^{p+1} + \exp\big(-CN^{\ep/2}\big) + N^{-j(1/2-2\ep)}\right) \nonumber\\
	& \quad \eqqcolon			Err_{num} + Err_{\oslash} + Err_{fluct,rel} \nonumber
\end{align}
holds for all $j\in\mathbb{N}$. Here, $d_{weak,2j+1}$ is a negative Sobolev-type distance of order $-2j-1$.
\end{maintheorem*}
This result is desirable for three reasons:
\begin{itemize}
\item	The approximate scaling regime $Nh^d\gg 1$ (cfr. \eqref{InformalScaling_N_h}) is extremely relevant, as it corresponds to an -- on average -- large amount of particles per grid cell.
		Hence, it is the regime in which the direct particle simulation is more expensive than the numerical SPDE model.
\item 	The estimate \eqref{InformalResult} shows that the (discrete) Dean--Kawasaki equation provides an accurate description of the underlying particle system, in the sense that the error due to the finiteness of the number of particles is -- in suitable weak norms -- of arbitrarily high order in $N^{-1}$ and thus typically dominated by the numerical error $N^{2\ep+\delflex}h^{p+1}$.
\item 	By \eqref{ExpDecayStopTime}, the stopping time 
			(whose role in the proof is to guarantee that the fluctuations $\rho_{h}-\rhob[h]$ roughly stay in the regime of classical mean field fluctuations, and that $\rho_h$ stays positive) 
		runs short of the final time $T>0$ only with exponentially small probability.
\end{itemize}

The key novelty of our result is the derivation of an error estimate for an \emph{interacting} particle system that is \emph{of arbitrary order} in the inverse particle number $N^{-1}$ (plus numerical error $Err_{num}$ and modelling error $Err_{\oslash}$): In particular, our result is the first to show the Dean--Kawasaki equation is a far more accurate descriptor of fluctuations than the leading-order equation \eqref{lin_dk} in the case of weakly interacting particle systems. 

In the case of independent Brownian particles, i.e., the case $V\equiv 0$ in \eqref{dk_self_int}, the corresponding result has been proven in \cite{cornalba2021dean}.
Again in the non-interacting particle case, a short proof of a quantitative error bound of the order $O(N^{-1/(d+2)})$ has recently been given in \cite{djurdjevac2022weak}: Notably, the result in \cite{djurdjevac2022weak} does not require a positivity lower bound on the initial density profile.
Previously, a low-order error estimate $O(N^{-\beta})$ for some $\beta<\tfrac{1}{2}$ had been established in \cite{FehrmanGess}, covering also the case of weakly interacting particles.

Let us mention that, having in mind a future application to cross-diffusion limits, throughout the present work we in fact consider a somewhat more general interacting particle system: We allow for multiple species of diffusing particles that interact with each other via possibly mildly (singularly) rescaled potentials $\V[\alpha\beta](\cdot)=\rI^{-d}V(\cdot/\rI)$ with interaction length scale $\rI=\rI(\log N)$. The associated particle dynamics is given by the system of SDEs \eqref{eq_CDPartSys} below, while the corresponding analogue of the Dean--Kawasaki equation is given by the system
\begin{align}\label{dk}
	 &	\partial_t \rhor[\alpha] = 
	 			\sigma_\alpha\Delta\rhor[\alpha]
	 			+\divv\left(\sum_{\beta=1}^\nS\rhor[\alpha]\nabla\V[\alpha\beta]\ast\rhor[\beta]\right)+ N^{-1/2}\divv (\sqrt{2\sigma_\alpha \rhor[\alpha]}\,\xi_\alpha)
\end{align}
for $\alpha,\beta\in \{1,\cdots,\nS\}$, $\nS$ being the number of species. However, we emphasize that our results are already new and relevant in the single-species case $n_S=1$ and for $\rI\equiv 1$.

The most remarkable feature of \eqref{dk} is the linear cross-variation structure of the noise (with respect to the density $\rho_\alpha$). More precisely, as we will discuss thoroughly, the It\^o differential of the process $\int_{\domain}{(\rho_\alpha - \overline{\rho}_{\alpha})\varphi_1}\int_{\domain}{(\rho_\alpha - \overline{\rho}_{\alpha})\varphi_2}$, for smooth test functions $\varphi_1,\varphi_2$, has martingale component with cross-variation given by
\begin{align}\label{CovStructure}
 N^{-1}\int_{\domain}{\rho_\alpha\nabla\varphi_1 \cdot \nabla\varphi_2}.
\end{align}
One of the key steps of this contribution is -- in a nutshell -- to suitably reduce the analysis of the (nonlinear) convolutional term $\divv\big(\sum_{\beta=1}^\nS\rho_\alpha\nabla\V[\alpha\beta]\ast\rho_\beta\big)$ in \eqref{dk} to the one of the (linear) cross-variation structure \eqref{CovStructure}.

%

\subsection{Related literature} 

\subsubsection{Dean--Kawasaki model}

As discussed around \eqref{vonRenesseEtAl}, the Dean--Kawasaki equation in its original form is a rather rigid mathematical object, as it only allows for the empirical particle system as solution: this result is given in \cite{konarovskyi2020dean} and, also, in the (earlier) analogue version for non-interacting particles \cite{konarovskyi2019dean}. The papers \cite{konarovskyi2019dean,konarovskyi2020dean} are related to a series of works \cite{andres2010particle,von2009entropic,konarovskyi2017reversible,konarovskyi2019modified,schiavo2022dirichlet,konarovskyi2020conditioning,marx2021infinite} which -- among others -- shed light on the rigid interplay which deterministic and stochastic components of the Dean--Kawasaki model have to abide to (in the context of a suitable stochastic Wasserstein gradient flow).

The aforementioned rigidity of the Dean--Kawasaki model can be broken once suitable regularisations (such as, for instance, noise smoothing or truncation or a discretization) are introduced: 
A substantial number of works belong to this ever-growing framework. 

In \cite{dirr2020conservative}, a rigorous justification of the SPDE of fluctuating hydrodynamics for the simple exclusion process is provided (with leading order dynamics, although the noise term in this case is nonlinear and therefore much more challenging), together with convergence results concerning the rate functions for large deviation principles.
Rigorous links between Dean--Kawasaki type models and large-deviation principles for zero-range processes (and associated thermodynamic setting) are given in \cite{dirr2016entropic}.

In \cite{fehrman2019well}, the well-posedness theory (in a suitable kinetic formulation) of stochastic porous media and fast diffusion equations driven by nonlinear, conservative noise is provided. The generation of a random dynamical system is also discussed. The latter topic is expanded and enriched with uniqueness of invariant measures and mixing for the associated Markov process in \cite{fehrman2022ergodicity}. Similar results are also derived in the case of correlated noise \cite{fehrman2021well}. For the same noise, derivation of underlying microscopic dynamics is given in \cite{ding2022new}.

Rates of convergence of the discretized Dean--Kawasaki dynamics towards the particle system in the case of independent Brownian particles are discussed in \cite{cornalba2021dean}. 
In the recent paper \cite{djurdjevac2022weak}, the authors prove weak error estimates in the non-interacting particle case, but they use a suitable SPDE approximation of the Dean--Kawasaki model rather than a discrete numerical approximation of the same. Their mathematical approach (which is centered around Laplace duality arguments and Kolmogorov backwards equation techniques) is somewhat complementary to ours, and it is safe to say that the current work and  \cite{djurdjevac2022weak} have different points of strength. 
Unlike the results in this work, those in \cite{djurdjevac2022weak} allow for general initial particle profiles, and provide non-negativity of the solution (in addition to other  well-posedness properties, including a comparison principle and entropy estimates): 
However, the weak error accuracy in \cite{djurdjevac2022weak} in terms of $N$ is capped (in terms of relative error) by $N^{-1/(d/2+1)}\log N$ (which gets worse with the spatial dimension $d$), while our fluctuation rate \eqref{InformalResult} can be arbitrarily high.

Recently, conservative stochastic PDEs sharing strong similarities with Dean--Kawasaki models have been proven to be limit of stochastic interacting particle systems in the mean-field limit (e.g., the case of stochastic gradient descent dynamics in overparametrised, shallow neural networks is covered in \cite{gess2022conservative} with optimal convergence rates provided for both convergence and associated Central Limit Theorem).

For regularised Dean--Kawasaki models of inertial type (i.e., models capturing in both density and momentum density), high-probability well-posedness for both independent and weakly interacting particle systems is discussed in \cite{Cornalba2019a, Cornalba2021al, cornalba2021well}. 

The Dean--Kawasaki model is becoming more and more widespread in physics applications (see, for instance, \cite{cates2015motility,thompson2011lattice,lutsko2012dynamical,marconi1999dynamic,velenich2008brownian,goddard2012general,delfau2016pattern,dejardin2018calculation,donev2014reversible,duran2019instability}). 
Consequently, works devoted to numerical approximations of the Dean--Kawasaki model are on the rise. 
Among such contributions on the numerical side, we mention structure-preserving finite difference and finite element schemes for high-order fluctuation bounds of non-interacting diffusing particles \cite{cornalba2021dean}, analysis of finite element discretisations in the context of reaction-diffusion (agent-based) systems models \cite{helfmann2021interacting, kim2017stochastic}, 
analysis of finite differences discretisations of agent-based models describing co-evolving opinion and social dynamics under the influence of multiplicative noise \cite{djurdjevac2022feedback},
finite-volume schemes for stochastic gradient flow equations \cite{russo2021finite, donev2010accuracy}, 
full reconstruction of dissipative operators in gradient flow equations from particle fluctuations \cite{li2018find}, convergence of finite element schemes for a weak formulation 
of suitable smoothed Dean--Kawasaki model \cite{bavnas2020numerical}, 
convergence analysis of discountinuous Galerkin scheme -- and modelling -- for the regularised inertial Dean--Kawasaki model \cite{cornalba2022regularised}.

\subsubsection{The mean-field limit of \eqref{eq_CDPartSys} and cross-diffusion systems} 
The analysis of the Dean--Kawasaki model \eqref{dk} is naturally built on top of the mean-field dynamics of the particle system \eqref{eq_CDPartSys}.
The study of the mean-field dynamics of systems of SDEs goes back to the 80s (see, e.g., the reviews \cite{G, JW}). In the late 80s Oelschl\"ager obtained a deterministic nonlinear diffusion process as the mean-field limit of a weakly interacting particle system \cite{O2}. In his subsequent work \cite{O1}, reaction-diffusion systems are derived from moderately interacting particle systems -- in fact, the case of cross-diffusion is included, however, with a positive-definiteness assumption on the diffusion coefficients. Quadratic porous-medium-type equations are derived from moderately interacting particle systems in \cite{O3}.  The methods of Oelschl\"ager were significantly extended by Stevens in \cite{Stevens} to derive a chemotaxis system. In \cite{JM_1998} the mean-field limit and fluctuations of a moderately interacting particle system with nonlinear diffusion coefficients is studied.
Further contributions include, e.\,g., the derivation of a two-phase Stefan problem as the mean-field limit of a master equation \cite{IRS, KS}, the 2-species Maxwell-Stefan model for the diffusion of gaseous mixtures as the hydrodynamic limit of two (singularly) interacting Brownian motions \cite{I}, nonlocal Lotka-Volterra systems with cross-diffusion as limits of a suitable Markov process \cite{FM}, as well as Shigesada-Kawasaki-Teramoto type cross-diffusion systems \cite{CDHJ}.

Daus, Chen, and J\"ungel have shown in \cite{chen2019rigorous} that the limiting behavior of our SDE system \eqref{eq_CD-IMFL} under the simultaneous limit $N\rightarrow\infty$ and $\rI \sim (\log N)^{-\beta} \rightarrow 0$ is captured by a cross-diffusion system \cite[(1)]{chen2019rigorous}; we refer to \cite{Galiano_Selgas} for an earlier more restrictive result and to \cite{jungel2022nonlocal} for a recent extension.
In the present paper we will only concern ourself with the first limit $N \rightarrow \infty$, yielding a mean-field limit of the form \eqref{eq_CD-IMFL}; however, many intermediate results are formulated to be of use also in a future work focusing on this cross-diffusion limit, by being established, e.g., uniformly in $\rI$ for a suitable range $(\log N)^c \lesssim \rI \lesssim 1$.

\section{Main Result}\label{SecMainRes}

In our main result, we rigorously quantify the distance between (a) the law of the density fluctuations arising in the interacting particle system \eqref{eq_CDPartSys}, and (b) the law of density fluctuations in a suitable numerical discretisation of the Dean--Kawasaki SPDE \eqref{dk}. In order to compare these laws, we make use of the family of distances between $\mathbb{R}^K$-valued  random variables $X,Y$ given by
\begin{align}\label{WeakDistance}
d_{-j}[X,Y] := \sup_{\psi\colon \max_{0\leq \tilde{j}\leq j}{\|D^{\tilde{j}}\psi\|_{L^{\infty}(\mathbb{R}^K)}} \leq 1}{\left|\mean{\psi(X)} - \mean{\psi(Y)}\right|}, \qquad j\in\mathbb{N}.
\end{align}
In other words, the distance $d_{-j}$ is a negative Sobolev distance acting on the probability distributions of the arguments. Note that the distance $d_{-1}$ essentially corresponds to the $1$-Wasserstein distance (up to the zero-th order bound $\|\psi\|_{L^{\infty}}\leq 1$), stated in dual formulation.

While the full notation is given below in Section \ref{SecSettNotAss}, we briefly define the remaining minimal ingredients needed to state our main theorem, namely: $\Ghd$ (the uniformly spaced grid on $\domain$ with grid-size parameter $h>0$); $(\cdot,\cdot)_h$ (the standard inner product in $L^2(\Ghd)$); $\langle\cdot,\cdot\rangle$ (the standard measure/function duality), and $\Ih$ (the operator interpolating continuous functions on the grid points of $\Ghd$). 

Our main theorem reads as follows.

\begin{theorem}[High-order approximation of density fluctuations in weakly interacting particle systems]\label{MainThm}
Let $T>0$ and $N\in \mathbb{N}$.
On $[0,T]$ let
\begin{itemize}[noitemsep, topsep=0pt, left= 10pt]
	\item $\empmeas=(\empmeas[\alpha])_{\alpha=1}^\nS$ as in \eqref{vonRenesseEtAl} be the empirical measures of the cross-diffusing particle system \eqref{eq_CDPartSys} satisfying Assumption \ref{ass_CDsystem+MFL} (particle system),
	\item $\rhobr$ be the intermediate mean-field limit as given in \eqref{eq_CD-IMFL}, and satisfying Assumption \ref{ass_RegularityMFL} (existence and regularity of continuous mean-field limit),
	\item $\rhor[h]$ be a solution to the discretised Dean--Kawasaki model \eqref{eq_discretizedDK} below in the context of a standard finite-difference discretisation of order $p+1$ and spatial spacing $h>0$  satisfying Assumption \ref{ass_DiscrDefOps} (discrete finite-difference operators),
	\item $\rhobr[h]$ be the corresponding finite difference mean-field limit, as defined in  \eqref{eq_discrIntMeanFiLim}.
\end{itemize}
Let the initial conditions satisfy Assumption \ref{ass_InitCond} and the parameters $\rI,N,h$ satisfy Assumption \ref{ass_Scaling} with $0<\delZero<1$ (scaling regime for $N$ and $h$). 

Let $0<\ep<\delZero/4$.
Then there exists a stopping time $\Ts\in[0,T]$ with
\begin{align}\label{NegBoundRig}
	& \mathbb{P} \big[  \Ts  < T \big] 
	\lesssim  
	\exp\big( - C N^{\ep/2} \big).
\end{align}
such that the discrete Dean-Kawasaki solutions $\{\rho_{h,\alpha}(t\wedge\Ts)\}_{\alpha=1}^{n_S}$ capture the fluctuations of the the empirical measures $\mu_{\alpha,t}^{r_I, N}$ in the following sense.

Let $j\in\N$. 
Assume that with
$
	s(d,p,j)\coloneqq p+\frac{3d}{2}+4+j
$
the continuous mean field limit $\rhobr$ is in $L^{\infty}\big(0,T; C^{s(d,p,j)}\big)$,
and that the interaction potentials satisfy $V\in \big[W^{\sI,1}(\domain)\big]^{\nS\times\nS}$ with 
$
	\sI > 2^{j-1}3j^2 s(d,p,j) +2d +1
$.
Then, abbreviating $data:=\{V,\rhobr,\rho_{min},\rho_{max},d, T, \nS, p, j\}$, for each $\delflex>0$, there exists $N_0=N_0(\delflex,\ep,K,data)$
such that if $N>N_0$,
\begin{align}
&d_{-(2j+1)}\left[
N^{1/2}
\begin{pmatrix}
\sum_{\alpha=1}^\nS\big(\Ih[\vphi_{1,\alpha}],
	(\rhor[h,\alpha]-\rhobr[h,\alpha])(T_1 \wedge \Ts)\big)_h
\\
\vdots
\\
\sum_{\alpha=1}^\nS\big(\Ih[\vphi_{K,\alpha}],
	(\rhor[h,\alpha]-\rhobr[h,\alpha])(T_K \wedge \Ts)\big)_h
\end{pmatrix},
\right.\nonumber\\
& 
\quad \quad \quad \quad \quad \quad \quad \quad\qquad \quad\left.
N^{1/2}
\begin{pmatrix}
\sum_{\alpha=1}^\nS\big\langle\vphi_{1,\alpha},
	(\empmeas[\alpha]-\rhobr[\alpha])(T_1)\big\rangle
\\
\vdots
\\
\sum_{\alpha=1}^\nS\big\langle\vphi_{K,\alpha},
	(\empmeas[\alpha]-\rhobr[\alpha])(T_K)\big\rangle\end{pmatrix}
\right]
\label{DistanceRandomVariables}\\
& \quad 
\leq C(K,\|\bvphi\|_{H^{s(d,p,j)}},data)N^{2\ep+\delflex}\big(h^{p+1} +\exp\big(-CN^{\ep/2}\big) +N^{-j(1/2-2\ep)} \big) \nonumber\\
& \quad =: Err_{num} + Err_{\oslash} + Err_{fluct,rel}\label{FluctPrecise}
\end{align}
holds for any $\f{\varphi} = (\varphi_{k,\alpha})_{\alpha=1,\dots,\nS}^{k=1,\dots,K} \in \big[H^{s(d,p,j)}(\domain,\R^\nS)\big]^K$ and any $(T_1,\dots, T_K) \in [0,T]^K$,
where the distance $d_{-j}[X,Y]$ has been introduced in \eqref{WeakDistance}.
\end{theorem}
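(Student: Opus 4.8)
The plan is to compare the two fluctuation fields through their It\^o semimartingale decompositions, localised by the stopping time $\Ts$, and then to match them order by order in powers of $N^{-1/2}$ by reducing everything to the linear cross-variation structure \eqref{CovStructure}. First I would write down, for smooth test functions, the It\^o expansions of $N^{1/2}(\empmeas[\alpha]-\rhobr[\alpha])$ and of $N^{1/2}(\rhor[h,\alpha]-\rhobr[h,\alpha])$: in both cases the drift splits into a part that is \emph{linear} in the fluctuation (the linearised mean-field operator applied to the fluctuation, plus the convolution of the fluctuation against $\rhobr$) and a part that is genuinely nonlinear in the fluctuation but carries a prefactor $N^{-1/2}$ (the self-convolution of the fluctuation), while the martingale part has cross-variation exactly given by \eqref{CovStructure} (respectively its finite-difference analogue) --- an algebraic identity, not a leading-order approximation, since for the particle system $\smash{\langle\empmeas[\alpha],\varphi\rangle}$ has martingale part $\tfrac{\sqrt{2\sigma_\alpha}}{N}\sum_i\nabla\varphi(X_i^N)\cdot\mathrm{d}B_i$ whose bracket is precisely $\tfrac{2\sigma_\alpha}{N}\langle\empmeas[\alpha],|\nabla\varphi|^2\rangle$. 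I would absorb the linear drift by propagating the test functions along the adjoint linearised mean-field flow (a discrete flow $\phi_h^{(t)}$ on the grid and a continuous flow $\phi^{(t)}$ for the particle side), which is where regularity of $\rhobr$ of order $s(d,p,j)$ is consumed, and I would fix $\Ts$ to be the first time either the fluctuations leave the classical regime $\lesssim N^{-1/2+\eps}$ in the relevant weak norms or $\rhor[h]$ drops below, say, $\rho_{\min}/2$, deferring the bound \eqref{NegBoundRig}.

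Next I would set up a hierarchy: expand $N^{1/2}(\rhor[h]-\rhobr[h])=\sum_{k=1}^{j}N^{-(k-1)/2}\rho_h^{(k)}+\text{(remainder)}$, where each $\rho_h^{(k)}$ solves a \emph{linear} SPDE with the linearised mean-field operator, noise of the cross-variation type \eqref{CovStructure} with coefficient $\rhobr[h]$, and an additive source assembled from products of the lower levels $\rho_h^{(l)}$, $l<k$ (this is exactly the step that "reduces the nonlinear convolutional term to the linear cross-variation structure"). I would build the mirror hierarchy for the particle fluctuations via a linearised comparison process $\breve\rho$, so that on each probability space the $k$-th level is, up to a discretisation mismatch of relative order $h^{p+1}$, a functional of a Gaussian input with the \emph{same joint law} across all $j$ levels as on the other side. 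Since iterating a quadratic source $j$ times squares the test functions $\sim j$ times, the regularity budget for the interaction potential blows up like $2^{j-1}\,3j^2 s(d,p,j)$, which is the origin of the stated condition on $\sI$; I would keep all estimates uniform in $\rI$ over the admissible range so they serve the intended cross-diffusion application.

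To pass from the order-by-order match of the hierarchies to the bound on $d_{-(2j+1)}$ in \eqref{WeakDistance}, I would run a Lindeberg/Taylor-swapping argument: for $\psi$ with $\max_{\tilde j\le 2j+1}\|D^{\tilde j}\psi\|_{L^\infty}\le1$, Taylor-expand $\psi$ about the leading Gaussian field to order $2j$, replace the particle hierarchy by the discrete-DK hierarchy level by level (each order of accuracy in $N^{-1/2}$ costs two moments, because the fluctuation scale is $N^{-1/2}$ and the covariance functional is quadratic, hence the factor-of-two relating $j$ to the Sobolev index $2j+1$), and estimate the Taylor remainder --- the only place the $(2j+1)$-st derivative of $\psi$ enters --- by the $(2j+1)$-st-order contribution, which on $\{\Ts=T\}$ has relative size $N^{-j(1/2-2\eps)}$, the $2\eps$ loss coming from the stopping-time-enforced fluctuation bound. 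Collecting terms, the Taylor remainder yields $Err_{fluct,rel}=N^{-j(1/2-2\eps)}$, the hierarchy discretisation mismatch yields $Err_{num}=h^{p+1}$, and bounding $|\mean{\psi(X)}-\mean{\psi(Y)}|$ crudely by $2$ on $\{\Ts<T\}$ yields $Err_{\oslash}=\exp(-CN^{\eps/2})$, all multiplied by $N^{2\eps+\delflex}$ and a constant depending on $K$, $\|\bvphi\|_{H^{s(d,p,j)}}$ and $data$, giving \eqref{FluctPrecise}.

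Finally I would prove \eqref{NegBoundRig} by exponential-moment and concentration estimates: for the particle system this is a uniform-in-time large-deviation-type bound for $\empmeas$ around $\rhobr$ via exponential supermartingales, and for the linear and hierarchy SPDEs it follows from exponential moment bounds for linear conservative-noise SPDEs together with a positivity/maximum-principle argument for the order-$(p+1)$ finite-difference scheme combined with the fluctuation bound; the self-consistency (the stopping time is defined through the very bounds one then establishes) is closed by a continuity/bootstrap argument. I expect the main obstacle to be precisely the uniform bookkeeping of the nonlinear convolution across the hierarchy: controlling the linearised adjoint flows and the iterated quadratic sources so that the total loss stays within $N^{2\eps}$, the stopping-time constraints are respected at every level, and the regularity expenditure matches exactly the stated $s(d,p,j)$ and the exponential-in-$j$ demand on $V$ --- with the added subtlety that all constants must be traced to be independent of $\rI$ in the admissible window.
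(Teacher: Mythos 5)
Your high-level reading of the mechanism is right (linear cross-variation structure, backward-evolved test functions, a stopping time enforcing the $N^{-1/2+\eps}$ regime and positivity, and the three error sources), but the core of your proposed argument diverges from the paper's and has a genuine gap. The paper never expands the solution into a hierarchy $\sum_k N^{-(k-1)/2}\rho_h^{(k)}$; it iterates at the level of \emph{observables}: one It\^o step (Theorem \ref{thm_iterative structure}, built from Lemma \ref{lem_pre34a equivalent}, Lemma \ref{lem_pre34b equivalent} and Proposition \ref{prop_ContAndDiscrMomStructure}) converts $\Ev[\psi(\bzeta^T)]-\Ev[\psi(\bzeta^T_h)]$ into quantities of exactly the same form with prefactor $N^{-1/2}$, at the cost of two derivatives of $\psi$ per regular term and one per compensation term, and this is iterated $j$ times along a tree. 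Your hierarchy route hinges on the claim that, level by level, both sides are ``functionals of a Gaussian input with the same joint law''. That claim is unsubstantiated and, as stated, false: the particle-side driving martingales have brackets $N^{-1}\langle\empmeas,\nabla\varphi_1\cdot\nabla\varphi_2\rangle$ and the discrete-DK noise has bracket $N^{-1}(\nabla_h\varphi_1\cdot\nabla_h\varphi_2,(\rhor[h])^+)_h$, i.e.\ solution-dependent, non-Gaussian brackets on two unrelated probability spaces with no canonical coupling; so your Lindeberg/Taylor-swapping step has nothing rigorous to swap against unless you first prove an order-by-order closeness of laws of the hierarchy levels — which is precisely the kind of iterated moment comparison that constitutes the actual proof. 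Relatedly, you do not explain how the quadratic compensation survives the iteration: the paper's key device (Proposition \ref{prop_Iterative Structure for Errlin}) is a $2d$-Fourier separation of $\nabla\V[\alpha\beta](x-y)$ that rewrites $\tilQ[t]$ and $\tilQ[h,t]$ as infinite sums of \emph{products of two linear functionals} of the fluctuations, so that the quadratic term re-enters the same observable class; the exponential-in-$j$ condition on $\sI$ arises exactly from summability of these Fourier coefficients against the accumulated Sobolev norms of the test functions along the tree. Your remark that the regularity budget ``blows up like $2^{j-1}3j^2 s(d,p,j)$'' names the answer but supplies no mechanism producing it.

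Two further points. First, your appeal to a ``positivity/maximum-principle argument for the order-$(p+1)$ finite-difference scheme'' is not available: high-order finite-difference operators do not satisfy a discrete maximum principle in general. The paper instead gets positivity for free before $\Ts$, because $\Ts$ incorporates an $L^\infty_h$ bound on $\rhor[h]-\rhobr[h]$ of size $N^{-\eps}$ while $\rhobr[h]\geq\rho_{min,h}>0$ (Remark \ref{rem_rhominh_rhomaxh}); this part of your stopping-time design is repairable. Second, \eqref{NegBoundRig} concerns only the discrete-DK fluctuations, and the paper proves it not by exponential supermartingales plus bootstrap but by high-moment bounds on tested fluctuations (Doob plus an optimized Chebyshev exponent), a time grid of step $h^{\eta}$, and a BDG estimate for the increments, with the self-consistency closed through the conditional (strictly stricter at $t=0$) definition of $\Ts$ and Assumption \ref{ass_InitCond}; your sketch leaves the bootstrap closure, which is the delicate step, unaddressed.
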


\begin{rem}
Throughout the paper, the generic expression \emph{data} -- which may change from use to use -- denotes a relevant subset of parameters $\{V,\rhobr,\rho_{min},\rho_{max},d, T, \nS, p, j\}$.
More specifically, dependencies on $V,\rhobr$ come in terms of Sobolev norms. 
Note that $\rI$ is not included. 
We assume that there is a uniform bound in $\rI$ for the respective Sobolev norms of $\rhobr$. 
This is a reasonable assumption due to the convergence to the solution of a suitable cross-diffusion system given regular enough initial data for $\rI\rightarrow 0$, see e.g. \cite{chen2019rigorous}.
\end{rem}

\begin{rem}
Theorem \ref{MainThm} captures the relative error in fluctuations, $Err_{fluct,err}$: this is due to the prefactor $N^{1/2}$ in the arguments of the metric $d_{-j}$, which balances out the natural order of fluctuations $N^{-1/2}$ of the inner products 
	$\sum_{\alpha=1}^\nS(\Ih[\vphi_{k,\alpha}],
	(\rhor[h,\alpha]-\rhobr[h,\alpha])(T_k \wedge \Ts))_h$
	and 
	$\sum_{\alpha=1}^\nS\langle\vphi_{k,\alpha},
	(\empmeas[\alpha, T_k]-\rhobr[\alpha])(T_k)\rangle
	$. 
The term $Err_{num}$ accounts for the intrinsic numerical error of the scheme. 
The term $Err_{\oslash}$ accounts for the cases where the stopping time runs short of $T$.
The definition of the stopping time (see Section \ref{SubsecExpSmallProb}) ensures positivity of $\rhor[h]$: This is due to fact that $\Ts$ incorporates an $L^\infty$-bound for $\rhor[h]-\rhobr[h]$, and the fact that the mean-field limit is $\rhobr[h]$ is strictly positive by Assumption \ref{ass_RegularityMFL} below.
Additionally, $\Ts$ ensures that the fluctuations $\rhor[h]-\rhobr[h]$ roughly stay within the natural regime $N^{-1/2}$. 
Hence, control of the stopping time is the discrete equivalent of quantifying the mean-field limit convergence.

The additional factors of $N^{\ep}$ appear since we are only able to control the stopping time if we relax the fluctuation bound from $N^{-1/2}$ to $N^{-1/2+\ep}$.
The factor $N^\delflex$ stems from the logarithmic scaling of the interaction radius $\rI$ with respect to $N$, see Assumption \ref{ass_Scaling}.
If $\rI$ is constant and does not scale, $N^\delflex$ can be replaced with a constant depending on \emph{data}.
\end{rem}

\begin{rem}\label{AbbreviationXY}
We will often abbreviate the $\mathbb{R}^K$-valued random variables in \eqref{DistanceRandomVariables} as
\begin{align}\label{RandomVariablesXY}
N^{1/2}\big\langle\bvphi,\empmeas[\bT]-\rhobr(\bT)\big\rangle, \qquad N^{1/2}\big(\Ih[\bvphi],(\rhor[h]-\rhobr[h])(\bT \wedge \Ts)\big)_h,
\end{align}
where $\bT := [T_1,\dots, T_K]\in [0,T]^K$.
\end{rem}

\subsection{Structure of the paper}
The details of the weakly interacting particle systems we consider, as well as the relevant discretised Dean--Kawasaki model, are given in Section \ref{SecSettNotAss}. 
Section \ref{SecInformalProof} gives an informal -- yet exhaustive -- summary of the most important results needed to prove Theorem \ref{MainThm}: 
In particular, Theorem \ref{MainThm} is of inductive type,
and the small fluctuation error $Err_{fluct,rel} \propto N^{2\ep+\delflex-j(1/2 -2\ep)}$ is obtained after $j$ induction steps.
Section \ref{TheKeyStepSection} spells out the structure of one of these induction steps and provides all necessary building blocks (above all, Proposition \ref{prop_Iterative Structure for Errlin} indicates how to quantitatively include the convolutional nonlinearity in the iteration, thus resolving the mismatch with the linear noise covariance). 
The proof of Theorem \ref{MainThm} (i.e., the quantitative performance of all $j$ steps) is finalised in Section \ref{sec_ProofMT}. 
The core technical lemmas are deferred to subsequent sections, namely: 
Quantitative convergence to the mean-field limit (Section \ref{SecQuantConvMeanF}); 
Exponentially decaying bound for the probability of the stopping time $\Ts$ coming short of the final time horizon $T>0$ (Section \ref{SubsecExpSmallProb}).

Finally, the appendix contains the following: Regularity estimates for the continuous test functions (Appendix \ref{AppContReg}); Regularity estimates for the discretised mean-field limit and the discretised test functions, as well as error bounds with respect to their continuous counterparts (Appendix \ref{AppDiscrete}); Explicit construction of a set of admissible initial conditions for the discrete mean-field limit (Appendix \ref{AppConstructionInitialData}).

\section{Setting, Notation, and Assumptions}\label{SecSettNotAss}

Throughout the paper, we use $C$ to denote a generic constant whose value may change from line to line. Relevant dependencies on specific parameters are highlighted whenever needed. Moreover, for generic functions $f,g\colon \domain \rightarrow \mathbb{R}^m$ ($m\in\mathbb{N}$) and $s\colon \domain \rightarrow \mathbb{R}$, we denote 
$
(f \ast s) (x) := \left[\int_{\domain}{f_\ell(y)s(x-y)\m y}\right]_{\ell=1}^{m}$
and
$
(f\ast_c g)(x) := \sum_{\ell=1}^{m}{(f_\ell \ast g_{\ell})(x)}.
$

We now give specific notation and relevant assumptions for the weakly interacting particle system we consider, and its Dean--Kawasaki approximation.

\subsection{The continuous setting - the particle system}
The weakly interacting particle system we are interested in is given by
\begin{equation}\label{eq_CDPartSys}
	\left\{\begin{aligned}
		\m X^{\rI,N}_{\alpha,i}(t) &=
				- \sum_{\beta=1}^\nS N^{-1} \sum_{j=1}^N\nabla\V[\alpha\beta]\left(X^{\rI,N}_{\alpha,i}(t)-X^{\rI,N}_{\beta,j}(t)\right)\m t 
				+ \sqrt{2\sigma_\alpha}\m B_{\alpha,i}(t)\\
		X^{\rI,N}_{\alpha,i}(0)	&= \eta_{\alpha,i},
		\qquad \alpha=1,\ldots,\nS,\quad i=1,\ldots,N.
	\end{aligned}\right.
\end{equation}
In \eqref{eq_CDPartSys}, $\{X^{\rI,N}_{\alpha,i}\}_{i=1}^{N}\subset \domain$ denote positions of particles of species $\alpha$, $\{\sigma_\alpha\}_{\alpha=1}^{\nS} > 0$ are diffusion constants, $B_{\alpha,i}$ are independent Brownian motions (also independent of $\{X^{\rI,N}_{\alpha,i}(0)\}_{\alpha,i}$), $\{\eta_{\alpha,i}\}_{i=1}^N$ are the particles' initial positions, and the potentials $\{\V[\alpha\beta]\}_{\alpha,\beta=1}^{\nS}$ are defined as standard mass-preserving rescaling of smooth potentials $V_{\alpha\beta}$, namely
\begin{align}\label{DefPotentialsV_alpha_beta}
\V[\alpha\beta](\cdot)=\rI^{-d}V_{\alpha\beta}\left(\cdot/\rI\right).
\end{align}
We refer to Assumption \ref{ass_CDsystem+MFL} below for the regularity of the potentials $\{\V[\alpha\beta]\}_{\alpha,\beta=1}^{\nS}$ and the law of the particles' initial positions $\{\eta_{\alpha,i}\}_{i=1}^N$.

For each species $\alpha$ we define the empirical measure 
\begin{equation}\label{eq_empirical measure}
	\empmeas[\alpha,t] := N^{-1}\sum_{i=1}^N\delta_{X^{\rI,N}_{\alpha,i}(t)}.
\end{equation}
In the limit $N\rightarrow \infty$, the empirical densities \eqref{eq_empirical measure} converge almost surely to the deterministic limit $\rhobr[\alpha]$ satisfying the PDE 
\begin{equation}\label{eq_CD-IMFL}
	\left\{\begin{aligned}
	 	\partial_t \rhobr[\alpha] &= 
	 			\sigma_\alpha\Delta\rhobr[\alpha]
	 			+\divv\left(\sum_{\beta=1}^\nS\rhobr[\alpha]\nabla\V[\alpha\beta]\ast\rhobr[\beta]\right)\\
		\rhobr[\alpha](0)&=\rhobz[\alpha],\\
	\end{aligned}\right.
\end{equation}
where $\rhobz[\alpha]$ is a suitable deterministic approximation of the initial particle distribution, see Assumption \ref{ass_CDsystem+MFL}. 

Finally, the fluctuating hydrodynamics Dean--Kawasaki equation capturing the fluctuations of the particle system \eqref{eq_CDPartSys} on top of the mean-field limit \eqref{eq_CD-IMFL} is precisely \eqref{dk}, the SPDE of interest for this work.

\begin{rem}
As already mentioned, we view the current work in the weakly interacting particle setting (which translates in the mollified potentials $\V[\alpha\beta]$ via the parameter $r_I>0$) as laying the ground for future applications to the purely local cross-diffusion case (i.e., considering $r_I \rightarrow 0$).
\end{rem} 

\subsection{Discretisation of the Dean--Kawasaki model}\label{FiniteDiffDiscr}

We work with the uniformly spaced grid on the $d$-dimensional torus $\mathbb{T}^d := [-\pi,\pi)^d$. Specifically, for $L\in 2\mathbb{N}$, we define the spatial discretisation parameter $h:=2\pi/L$, and set $\Ghd := h\mathbb{Z}^d \cap \mathbb{T}^d = \{-\pi, -\pi + h, \dots, \pi - h\}^d$. 
For $m\in\mathbb{N}$, we endow the space $[L^2(\Ghd)]^m$ with the standard inner product 
\begin{align*}
	(u_h,v_h)_h := \sum_{x\in \Ghd}{h^du_h(x) \cdot v_h(x)},
\end{align*}
and the orthonormal basis $\hbase[m]{x,\ell}(y) := h^{-d/2}\delta_{x,y}e_\ell$ for $(x,\ell)\in (\Ghd,\{1,\dots,m\})$, where $e_\ell$ is the $\ell$-th vector of the $\mathbb{R}^d$ canonical basis.
If there is no ambiguity, the notation is simplified as $\hbase[m]{x,\ell} \equiv \hbase{x}$. 
The natural discrete analogue of the continuous convolution operator $\ast$ (respectively, $\ast_c$, see beginning of Section \ref{SecSettNotAss}) with respect to the $L^2(\Ghd)$-inner product is denoted by $\ast_h$ (respectively, by $\ast_{c,h}$).
Furthermore, we denote by $\mathcal{I}_h$ the interpolation operator of continuous functions onto $[L^2(\Ghd)]^m$, meaning that $\mathcal{I}_hf(x) = f(x)$ for every $x\in \Ghd$.

As for the discrete differential operators, we use: i) a discrete gradient $\nabla_h$ and divergence $\nabla_h \cdot $ based on suitable first-order discrete partial derivatives $[\partial_{h,x_1},\dots,\partial_{h,x_d}]$, and ii) second-order discrete derivates $D^2_{h,x_\ell}$ satisfying a standard integration by parts rule
\begin{align}\label{IntByParts}
(D^2_{h,x_\ell} u_h, v_h)_h = -(D_{h,x_\ell}u_h, D_{h,x_\ell}v_h)_h
\end{align}
for some other first-order operators $D_{h,x_\ell}$. 
Furthermore, we denote $\Delta_h := \sum_{\ell=1}^{d}{D^2_{h,x_\ell}}$ and $\nabla_{h,D} := [D_{h,x_1},\dots,D_{h,x_d}]$. 
In general, $D_{h,x_\ell}$ may differ from $\partial_{h,x_\ell}$.

We can now define the discretized Dean--Kawasaki model.

\begin{definition}[Finite difference Dean--Kawasaki model of order $p+1$]\label{fddk}
We say that the $\LzGhd$-valued processes $(\rhor[h,\alpha])_{\alpha=1,\ldots\nS}$ solve the finite difference Dean--Kawasaki model if they solve the system of stochastic differential equations
\begin{equation}\label{eq_discretizedDK}
\tag{h-DK}
	\left\{\begin{aligned}
		\mbox{d}(\rhor[h,\alpha],\hbase{x})_h =&\,
				 \bigg[\sigma_\alpha(\Delta_h\rhor[h,\alpha],\hbase{x})_h 
				 -\sum_{\beta=1}^\nS\big(\rhor[h,\alpha](\Ih[\nabla\V[\alpha\beta]]\ast_h\rhor[h,\beta]),\nabla_h \hbase{x}\big)_h\bigg] \m t\\
				&\,\,-\sqrt{2\sigma_\alpha} N^{-1/2}\!\!\!\!\sum_{\substack{y\in\Ghd \\ l\in\lbrace 1,\ldots,d\rbrace}}\!\!\!
					\left(\sqrt{(\rhor[h,\alpha])^+}\hbase[d]{y,l},\nabla_h \hbase{x}\right)_h \m  W_{(y,l)}^\alpha,
				\quad\forall x,
				\\
		\rhor[h,\alpha](0) =&\, \rho_{h,\alpha}^0
	\end{aligned}\right.
\end{equation}
on a finite-time horizon $T>0$, where $(f_x)_{x\in\Ghd}$ is the basis from above, and where $$\lbrace  W_{(y,l)}^\alpha \rbrace_{(y,l)\in\Ghd\times\lbrace 1,\ldots,d\rbrace}^{\alpha=1,\ldots,\nS}$$ are independent Brownian motions.
The assumptions on the random initial datum $\{\rho_{h,\alpha}^0\}_{\alpha=1,\dots,\nS}$ will be given in Assumption \ref{ass_InitCond} below. 
Moreover, the families $\lbrace  W_{(y,l)}^\alpha \rbrace_{(y,l)\in\Ghd\times\lbrace 1,\ldots,d\rbrace}^{\alpha=1,\ldots,\nS}$ and $\{\rho_{h,\alpha}^0\}_{\alpha=1,\dots,\nS}$ are independent.
\end{definition}

Analogously to the continuous case, the model \eqref{eq_discretizedDK} captures the fluctuations around the following discretised mean-field limit.
\begin{definition}[Mean-field limit for \eqref{eq_discretizedDK}]\label{fddkmf}
We say that the $\LzGhd$-valued functions $(\rhobr[h,\alpha])_{\alpha=1,\ldots\nS}$ solve the discrete version of the mean-field limit of \eqref{eq_CDPartSys} if they solve the system of differential equations\begin{equation}
	\tag{h-MFL}
	\left\{\begin{aligned}\label{eq_discrIntMeanFiLim}
		\partial_t\rhobr[h,\alpha] &=
				\sigma_\alpha\Delta_h\rhobr[h,\alpha]
				+ \nabla_h\cdot\bigg(\rhobr[h,\alpha]\sum_{\beta=1}^\nS\Ih[\nabla\V[\alpha\beta]]\ast_h\rhobr[h,\beta]\bigg)
				\,\,\,\,\,\,\text{on }\Ghd\times(0,T),\\
		\rhobr[h,\alpha](0)	&= \rhobz[h,\alpha],
				\qquad \alpha=1,\ldots,\nS.
	\end{aligned}\right.
\end{equation}
for given deterministic initial datum $\rhobz[h,\alpha]$, see Assumption \ref{ass_InitCond} and Remark \ref{rem_AlernativeInitCond}.
\end{definition}

\subsection{Relevant functions spaces}\label{subsec_spaces}
\subsubsection{The spaces $\mathcal{L}_{pow,r}^{q}$ of functions with polynomially growing derivatives}
While the definition of the metric $d_{-j}$ takes the supremum over `generalised moment functions' $\psi\in W^{j,\infty}\cap C^j(\R^K,\R)$, the natural spaces for such functions will turn out to be
\begin{align*}
	\mathcal{L}_{pow,r}^{q}(\R^K) 
	\coloneqq 
	\left\lbrace \psi\in C^q(\R^K) ;~ 
	\|\psi\|_{\mathcal{L}_{pow,r}^{q}} \coloneqq \max_{0\leq\tilde{q}\leq q} \|(1+|\cdot|^2)^{-r/2} D^{\tilde{q}}\psi\|_{L^\infty}
	<\infty \right\rbrace
\end{align*}
for $K\in\N$, $q,r\in\Nz$. In particular, it holds $W^{j,\infty}\cap C^j= \mathcal{L}_{pow,0}^{j}$.

\subsubsection{The discrete Sobolev norms}
We denote the $L^2$-norm induced by the discrete inner product $(\cdot,\cdot)_h$ on $\LzGhd$ by either $\|\cdot\|_{\LzGhd}$ or, more succinctly, $\|\cdot\|_{L^2_h}$.
Analogously to the continuous setting, we also use the notation
$\|g\|_{L^\infty(\Ghd)}=\|g\|_{L^\infty_h} \coloneqq \max_{x\in\Ghd}|g(x)|$.

We will also need a discrete version of (also negative) Sobolev norms: This version uses the first order one-sided finite differences given by
\begin{equation*}
	\dOne{e_\ell}g(x) = \frac{g(x+he_\ell)-g(x)}{h} \qquad\text{for all } g\in\LzGhd, \qquad \ell=1,\ldots,d.
\end{equation*}

\begin{definition}[The discrete Sobolev norms]\label{def_Hsh}
For $s\in\Nz$ and $g\in\LzGhd$ set
\begin{equation}\label{eq_MFLhO_altDef_Hs}
	\|g\|_{\HGhd[s]}=\|g\|_{H^s_h} \coloneqq \sup_{|\nu|\leq s}\|\dOne{\nu} g\|_{\LzGhd}
\end{equation}
with the supremum over multi-indices $\nu\in\Nz^d$.
\end{definition}

\begin{rem}\label{rem_DefHsh}
The discrete $H^s$-norms can be equivalently characterized via the discrete Fourier basis:
Let  $(\vtheta_m)_{m\in\Z^d\cap\left[-\frac{\pi}{h},\frac{\pi}{h}\right)^d}\subset\LzGhd$ with $\vtheta_m(x)\coloneqq (2\pi)^{-d/2}e^{im\cdot x}$. 
Then
\begin{equation*}
	\|g\|_{\HGhd[s]}^2
	\approx 
	\sum_{m\in\Z^d\cap\left[-\frac{\pi}{h},\frac{\pi}{h}\right)^d}(1+|m|_2^2)^s\big(g,\vtheta_m\big)_h^2.
\end{equation*}
This stems from the observation that due to the discrete version of the Plancherel theorem
\begin{equation*}
	\|\dOne{e_\ell}g\|_{\LzGhd}
	=
	\sum_{m\in\Z^d\cap\left[-\frac{\pi}{h},\frac{\pi}{h}\right)^d}\frac{1}{h^2}|e^{im_\ell h}-1|^2\big(g,\vtheta_m\big)_h^2.
\end{equation*}
\end{rem}

This remark leads to a natural definition of the discrete negative Sobolev norms.
\begin{definition}[Discrete negative Sobolev norms]\label{def_HshNeg}  
For $\Z \ni s < 0$ and $g\in\Ghd$, we set
\begin{equation*}
	\|g\|_{\HGhd[s]}^2
	\coloneqq
	\sum_{m\in\Z^d\cap\left[-\frac{\pi}{h},\frac{\pi}{h}\right)^d}(1+|m|_2^2)^s\big(g,\vtheta_m\big)_h^2
\end{equation*}
where $(\vtheta_m)_{m\in\Z^d\cap\left[-\frac{\pi}{h},\frac{\pi}{h}\right)^d}\subset\LzGhd$ with $\vtheta_m(x)\coloneqq e^{im\cdot x}$ is the Fourier basis of $\LzGhd$.
\end{definition}

\subsection{Assumptions}\label{SecAssumptions}

\begin{customthm}{A1}[Weakly interacting particle system and associated mean-field limit]\label{ass_CDsystem+MFL}
We consider the weakly interacting particle system $\{X^{\rI,N}_{\alpha,i}\}_{\alpha=1,\dots,\nS}^{i=1,\dots,N}$ as given in \eqref{eq_CDPartSys}, and its associated mean-field limit \eqref{eq_CD-IMFL}. 
In terms of regularity, we assume that
\begin{align*}
	V_{\alpha\beta} & \in C^{p+3}(\domain), \qquad \forall \alpha,\beta=1,\ldots,\nS,
\end{align*}
and, furthermore, that $V_{\alpha\beta}$ is symmetric (i.e., $V_{\alpha\beta}(x) = V_{\alpha\beta}(-x)$). This regularity is passed on to the rescaled potentials $\V[\alpha\beta]$. 

For the initial values of the particles, we assume that either they are i.i.d.\ according to some probability distribution or that they satisfy a spectral gap inequality in the sense that for any $F\in C^1((\R^d)^N)$ and all $\alpha=1,\ldots,\nS$ there holds
\begin{equation}\label{eq_ass_sepctralGap}
	\Ev\left[\left|F\left(\big(X^{\rI,N}_{\alpha,i}(0)\big)_i\right)-\mathbb{E}\left[F\left(\big(X^{\rI,N}_{\alpha,i}(0)\big)_i\right)\right]\right|^2\right]
	\leq C \Ev\left[\left|\nabla F\left(\big(X^{\rI,N}_{\alpha,i}(0)\big)_i\right)\right|^2\right].
\end{equation} 
For the initial value $\rhobr(0)=\rhobz$ of the mean field limit we assume that 
\begin{equation}\label{eq_ass_rhobz vs Ev}
	\big\|\Ev[\empmeas[0]]-\rhobz\big\|_{H^{-d/2-2}}\leq CN^{-1/2}.
\end{equation}
\end{customthm}

\begin{customthm}{A2}[Continuous mean field limit]\label{ass_RegularityMFL}
The solution $\{\rhobr[\alpha]\}_{\alpha=1}^{\nS}$ to \eqref{eq_CD-IMFL} exists and belongs to $L^\infty(0,T;C^{p+3}(\domain,\R^\nS))$.
Furthermore, we assume that there exist $\rho_{min},\rho_{max}\in\R$ such that on $[0,T]$ it holds that $0<\rho_{min}\leq\rhobr[\alpha]\leq\rho_{max}$ for all $\alpha=1,\ldots,\nS$.
\end{customthm}

\begin{customthm}{A3}[Discrete differential operators]\label{ass_DiscrDefOps}
Let $p\in\N$ be fixed.
The discrete operators $\partial_{h,x_\ell}, D_{h,x_\ell},D^2_{h,x_\ell}$ introduced in Subsection \ref{FiniteDiffDiscr} are standard finite difference operators of order $p+1$. 
In particular, being of finite difference type, these operators commute.
Finally, the two first order differential operators $\partial_{h,x_\ell}, D_{h,x_\ell}$ satisfy the inequality
\begin{align}\label{1stDerBound}
\|\dhRx[\ell]g\|_{L^2(\Ghd)}^2 \leq \frac{1}{C_D} \|D_{h,x_\ell}g\|_{L^2(\Ghd)}^2,\qquad \forall g\in L^2(\Ghd),
\end{align}
for some $C_D>0$, where $\dhRx[\ell]$ is the reflected version of $\partial_{h,x_\ell}$ (i.e., its adjoint) which appears when integrating by parts in the discrete setting.
\end{customthm}


\begin{customthm}{A4}[Discrete initial conditions]\label{ass_InitCond}
The initial conditions $\{\rho_{h,\alpha}^0\}_\alpha$ for \eqref{eq_discretizedDK} and $\{\rhobz[h,\alpha]\}_{\alpha}$ for \eqref{eq_discrIntMeanFiLim} are chosen such that the following properties hold.
\begin{itemize} 
\item 	\emph{Initialization via interpolation:} 
		We set $\rhobz[h]\coloneqq \Ih[\rhobr(0)]=\Ih[\rhobz]$.
\item	\emph{Positivity and mass restriction:}
		We assume that the random discrete initial data satisfies $\rhoz[h,\alpha]\geq 0$ and $\|\rhoz[h,\alpha]\|_{L^1_h}=h^d\sum_{x\in\Ghd}|\rhoz[h,\alpha](x)|\leq 2$ for all $\alpha=1,\ldots,\nS$.
\item 	\emph{High-order fluctuation bound:} 
		For any $K,q\in\N$, $r\in\Nz$ and $\psi\in\mathcal{L}_{pow,r}^q(\R^K)$, the bound
		\begin{multline}\label{InitDataFluctInequality}
			\left| \Ev\bigg[\psi\bigg(N^{1/2}\big\langle\bvphi,\empmeas[0]-\rhobr(0)\big\rangle\bigg)\bigg] \right.\\
			\left. - \Ev\bigg[\psi\bigg(N^{1/2}(\Ih[\bvphi],(\rhor[h]-\rhobr[h]))_h(0)\bigg)\bigg]\right| 
			\leq 
			C\|\psi\|_{\mathcal{L}_{pow,r}^q}h^{p+1}\|\bvphi\|_{C^{p+1}}^{r+1}
		\end{multline}
		holds for any $\bvphi \in \big[C^{p+1}(\domain,\R^\nS)\big]^K$. 
		In \eqref{InitDataFluctInequality}, we have used the vectorial notation convention specified in Remark \ref{AbbreviationXY}.
\item	\emph{Exponentially decaying bound for probability of observing `large fluctuations':}
		We assume that for any $1\leq R \leq N^{1/2}h^{d/2}$ it holds that 
		\begin{align}
			\Pm\big[\|\rhoz[h]-\rhobz[h]\|_{L^\infty_h}\geq N^{-1/2}h^{-d/2}R\big]
			&\leq
			C\exp(-CR),\label{eq_ass_InitialLinftyh}\\
			\Pm\big[\|\rhoz[h]-\rhobz[h]\|_{H^{-\floor{d/2+1}}_h}\geq N^{-1/2}R\big]
			&\leq
			C\exp(-CR).\label{eq_ass_InitialHlh}
		\end{align}		
\end{itemize}
\end{customthm}
		
\begin{customthm}{A5}[Parameter scaling]\label{ass_Scaling}
The parameters $(N,h,\rI)$ scale with respect to the following relations:
For a chosen, arbitrarily small $0<\delZero<1$, we assume that
\begin{align}
	N^{1-\delZero}h^d 	&\geq 	1,	\label{ScalingRegime}\\
	N^{\delZero(T+1)}h	&\leq	1, \label{ScalingRegimeNhReverse}\\
	r_I^{-2(d+2)} 		&\leq 	\log N.\label{ScalingRegimeRadius}
\end{align}
The scaling regime \eqref{ScalingRegimeRadius} is analogous to that of \cite[Theorem 3]{CDHJ}.
\end{customthm}

\begin{rem}[Ad Assumptions \ref{ass_CDsystem+MFL}--\ref{ass_RegularityMFL}]\label{RemA1A2}
The regularity of $\{\rhobr[\alpha]\}_{\alpha=1}^{\nS}$ prescribed in Assumption \ref{ass_RegularityMFL} can be met on any time interval $[0,T]$, for instance, if the initial value for the mean field limit $\rhobz[\alpha]$ belongs to $H^{s}$, $s > \max\{d/2+1; p+3 + (d/2+1)\}$ and satisfies a smallness assumption $\|\rhobz[\alpha]\|_{H^s} \leq \{\min_{\alpha}{\sigma_{\alpha}}\}/[C(s,d)\sum_{\alpha,\beta}{\|V_{\alpha\beta}\|_{L^1}}]$ \cite{chen2019rigorous}. See also \cite{CDHJ} for an alternative setting.
\end{rem}


\begin{rem}[Ad Assumption \ref{ass_InitCond}]\label{rem_AlernativeInitCond}
Since $\rhobr[h](0)=\Ih[\rhobr(0)]$, for order $p+1$ finite difference operators $(\partial_h^{e_\ell})_{\ell=1,\ldots,d}$ we have
\begin{align}\label{InitDataInterpInequality}
	\sup_{|\nu|\leq s}\|\Ih[\partial^\nu\rhobr(0)] - \partial_h^\nu\rhobr[h](0)\|_{L^2_h} 
	\leq 
	C\|\rhobr(0)\|_{C^{s+p+2}}h^{p+1},
\end{align}
where the supremum is over multiindices $\nu\in\Nz^d$ provided $\rhobr[h](0)$ is regular enough.
\end{rem}

\begin{rem}[Positivity and Boundedness of the discrete mean field limit]\label{rem_rhominh_rhomaxh}
Analogously to the continuous Sobolev inequality, in the discrete setting there exists $C_S>0$ such that $\|g\|_{L^\infty_h}\leq C_S \|g\|_{H^{s}}$ for all $s>{d/2}$, $g\in\LzGhd$.
In particular, in light of Proposition \ref{prop_MFLhigherOrder}, as long as $\rhobr\in L^\infty(0,T;C^{p+3+\floor{d/2}+1})$, there exist $\rho_{min,h},\rho_{max,h}$ such that for all $N>N_0$ and $h<h_0$ it holds that on $[0,T]$
\begin{equation*}
	0<\rho_{min,h}\leq\rhobr[h,\alpha]\leq\rho_{max,h}
	\quad\text{for all } \alpha=1,\ldots,\nS,
\end{equation*}
for some $N_0=N_0(p, data)$, where $h_0=h_0(\|\rhobr\|_{C^{\floor{d/2}+2}},\rho_{min})$.
\end{rem}

\begin{rem}[Constructing the discrete initial data]\label{rem_DiscrInitData}
The assumptions for the initial random distribution $\rhoz[h]$ (Assumption \ref{ass_InitCond}) may seem extensive at first glance, but such discrete initial data can be rather naturally constructed from the continuous initial data. 
The most straightforward example is a pathwise approach based on an interpolation scheme -- each realisation of $\rhoz[h]$ is derived from the initial particle realisation $\empmeas[0]$.
Essentially, the mass of a particle starting at $X^{\rI,N}_{\alpha,i}(0)$ is split across surrounding grid points according to interpolation weights.
These weights stem from an interpolation scheme of sufficiently high order that is used to approximate functions at $X^{\rI,N}_{\alpha,i}(0)$ based on the function values at the grid points.
\end{rem}

\section{Strategy of the proof of Theorem \ref{MainThm}: an informal view}\label{SecInformalProof}

In this section, we spell out the main ideas behind the proof of Theorem \ref{MainThm} in an informal way. All arguments will be made precise and rigorous later on.

\subsection{The induction step}\label{SecInductiveStep}
The proof of Theorem \ref{MainThm} is of inductive type, as we now detail.

In order the compare the fluctuations of the discrete Dean--Kawasaki solution $\rho_h$ (see \eqref{eq_discretizedDK}) and of the particle empirical density $\empmeas$ (see \eqref{eq_empirical measure}), we choose a set of times $\f{T}= (T_k)_{k=1}^{\numtest}$, regular enough test functions $\f{\varphi} = (\vphi_{k,\alpha})_{k=1,\ldots,\numtest}^{\alpha=1,\ldots,\nS}$ and set
\begin{align}\label{RandomVariablesXY}
\bzeta^T 	& := N^{1/2}\big\langle\bvphi,\empmeas[\bT]-\rhobr(\bT)\big\rangle \in \mathbb{R}^K,\nonumber\\
\bzeta^T_h 	& := N^{1/2}\big(\Ih[\bvphi],(\rhor[h]-\rhobr[h])(\bT \wedge \Ts)\big)_h \in \mathbb{R}^K,
\end{align}
which is a shorthand vectorial notation (over the indexes $k=1,\dots,\numtest$ and $\alpha=1,\dots,\nS$) for the random variables in \eqref{DistanceRandomVariables}, as anticipated in Remark \ref{AbbreviationXY}. 
The role and definition of the stopping time $\Ts$ will be discussed in due course.

We measure the distance between $\bzeta^T$ and $\bzeta^T_h$ using the Wasserstein-type metric $d_{-j}$-metric \eqref{WeakDistance}, namely
\begin{align*}
d_{-j}[\bzeta^T,\bzeta^T_h] := \sup_{\psi\colon \max_{1\leq \tilde{j}\leq j}{\|D^{\tilde{j}}\psi\|_{L^{\infty}}} \leq 1}{\left|\mean{\psi(\bzeta^T)} - \mean{\psi(\bzeta^T_h)}\right|}.
\end{align*}
Furthermore, we define the shorthand notation\footnote{we use a slight abuse of notation, as $\psi(\bzeta^T)$ and $\psi(\bzeta^T_h)$ live in different probability spaces.}  
\begin{align}\label{InductiveQuantity}
\mathcal{M}(\psi,\bvphi) := \mean{\psi(\bzeta^T)}-\mean{\psi(\bzeta^T_h)}.
\end{align}
The proof of Theorem \ref{MainThm} crucially revolves around essentially obtaining the following relation via the It\^o calculus
\begin{align}\label{IterationBound}
\mean{d\mathcal{M}(\psi,\bvphi)} & \propto N^{-1/2+2\ep}\cdot\mean{\mathcal{M}(\tilde{\psi},\tilde{\bvphi})}dt + Err_{\oslash} + Err_{num}, 
\end{align}
where the parameter $\ep>0$ is small (so that $2\ep<1/2$).
The bound \eqref{IterationBound} has an iterative component, as the right-hand-side contains -- yet -- another object of kind $\mathcal{M}$, as well as a numerical error $Err_{num}$, which will show to be of the type
\begin{align}
 Err_{num} & \propto N^{2 \ep + \delflex}h^{p+1},  \label{ResidualNum}
\end{align}
and a modelling error 
\begin{align}
 Err_{\oslash} & \propto N^{2\ep+\delflex}\exp\big(-CN^{\ep/2}\big),  \label{Stop}
\end{align}
where, again, the parameter $\delflex>0$ is small enough.
The role of the small parameters $\ep,\delflex$ will be clarified throughout the proofs, and we need not discuss it in this summary.

By iterating \eqref{IterationBound} over $\mathcal{M}$, one cumulates as many (small) prefactors $N^{-1/2 +2\ep}$ as steps performed, together with numerical and modelling errors \eqref{ResidualNum}--\eqref{Stop}. The number of iterative steps one performs is only capped by the regularity of the initial test functions $\psi,\bvphi$ (such regularity usually deteriorates from step to step). Once the regularity of the test functions is exhausted and \eqref{IterationBound} is inapplicable, one closes off the argument by obtaining the -- final -- fluctuation contribution in \eqref{FluctPrecise}. 
The precise details concerning \eqref{IterationBound}--\eqref{Stop} are given in Theorem \ref{thm_iterative structure}, which, in turn, relies on several other ingredients which we now list.

\subsubsection{Choice of dynamics for test functions $\f{\varphi}$ and $\mathcal{I}_h\f{\varphi}$}\label{subsubsec_OVBackEvoTest} The particle system \eqref{eq_CDPartSys} satisfies a crucial property: as the particles are driven by independent Brownian motions, it is easy to see that the cross variation (denoted by square brackets) of the quantities $\langle \varphi_1, \mu^{r_I,N} -\rhobr \rangle$ and $\langle \varphi_2, \mu^{r_I,N} -\rhobr \rangle$ (these are the `building blocks' for $\bzeta^T$ in \eqref{RandomVariablesXY}) is, for sufficiently regular $\varphi_1,\varphi_2$, given by
\begin{align}\label{ParticleCovStructure}
& \left[ \langle \varphi_1, \mu^{r_I,N} -\rhobr \rangle, \langle \varphi_2, \mu^{r_I,N} -\rhobr \rangle\right] \nonumber \\
& \quad = N^{-1} \langle \nabla\varphi_1\cdot\nabla\varphi_2, \mu^{r_I,N} \rangle \nonumber \\
& \quad = N^{-1} \langle \nabla\varphi_1\cdot\nabla\varphi_2,  \mu^{r_I,N} -\rhobr \rangle + N^{-1} \langle \nabla\varphi_1\cdot\nabla\varphi_2, \rhobr \rangle =: P_1 + P_2.
\end{align}
With the exception of term $P_2$ (which we will deal with at a later stage), \eqref{ParticleCovStructure} amounts to saying that the cross-variation preserves linear functionals of $\mu^{r_I,N} -\rhobr$: this fact is crucial, as it plays directly into the iterative structure of $\mathcal{M}$ in \eqref{InductiveQuantity}.

In light of \eqref{ParticleCovStructure}, it is convenient to define time-dependent test functions $\bphi$ which: 
\begin{itemize}
\item coincide with the original test functions $\bvphi$ at the evaluation times $\bT$ (i.e., $\f{\phi}^{\bT} = \f{\varphi}$), and
\item reduce the deterministic drift of the It\^o differential for $\bzeta^T$ in \eqref{RandomVariablesXY} `as much as possible', as such deterministic drift cannot easily be treated using \eqref{ParticleCovStructure}. 
\end{itemize}
The latter requirement leads to the derivation of the backward evolution for $\f{\phi}$ in Lemma \ref{lem_backEvoTest}. Noticeably, the only deterministic drift term which survives this cancellation effort is 
\begin{equation}\label{quad_term}
	\tilQ[t](\phit)
	\coloneqq
	\sum_{\alpha,\beta=1}^\nS
	\big\langle\nabla\phit[\alpha] \cdot \big(\nabla\V[\alpha\beta]\ast\big(\empmeas[\beta,t]-\rhobr[\beta](t)\big)\big),
	\empmeas[\alpha,t]-\rhobr[\alpha](t)\big\rangle.
\end{equation}
The structure of $\tilQ$, which appears as a compensation term after linearising the convolution nonlinearity, is -- at least not yet -- compatible with the structure of $\mathcal{M}$, and will be dealt with in Subsection \ref{subsubsec_OVErrlin} below.

A totally analogous discussion also applies for suitable discretisations of \eqref{dk}: Being statistically equivalent to the particle system \eqref{eq_CDPartSys}, the Dean--Kawasaki model \eqref{dk} enjoys the `mesoscopic analogue' of \eqref{ParticleCovStructure}.
Namely, testing \eqref{dk} with smooth enough $\varphi_1,\varphi_2$ and integrating by parts, one finds the noise cross-variation to be \eqref{CovStructure}, which we recall here:
\begin{align*}
\left[ N^{-1/2}\int_{\domain}{\sqrt{\rho_\alpha}\xi\cdot \nabla\varphi_1}, N^{-1/2}\int_{\domain}{\sqrt{\rho_\alpha}\xi\cdot\nabla\varphi_2} \right] = N^{-1}\int_{\domain}{\rho_\alpha\nabla\varphi_1 \cdot \nabla\varphi_2}.
\end{align*}
Our chosen discretisation for the Dean--Kawasaki model \eqref{eq_discretizedDK} preserves \eqref{CovStructure} on the discrete level: 
Namely, 
for any $\alpha=1,\cdots,\nS$ and $\varphi_{1,h},\varphi_{2,h}\in L^2(\Ghd)$, we have
\begin{align}\label{DiscreteCovStructure}
& \left[ \big(\varphi_{1,h},\rhor[h,\alpha]-\rhobr[h,\alpha]\big)_h , \big(\varphi_{2,h},\rhor[h,\alpha]-\rhobr[h,\alpha]\big)_h\right] \nonumber\\
& = \left[ N^{-1/2}\!\!\!\!\sum_{y\in\Ghd, \,\,l\in\lbrace 1,\ldots,d\rbrace}
					\left(\sqrt{(\rhor[h,\alpha])^+}\hbase[d]{y,l},\nabla_h \varphi_{1,h}\right)_h \m  W_{(y,l)}^\alpha, \right. \nonumber\\
& \quad \quad \quad \left. N^{-1/2}\!\!\!\!\sum_{y\in\Ghd, \,\,l\in\lbrace 1,\ldots,d\rbrace}
					\left(\sqrt{(\rhor[h,\alpha])^+}\hbase[d]{y,l},\nabla_h \varphi_{2,h}\right)_h \m  W_{(y,l)}^\alpha \right] \nonumber\\
& \quad = N^{-1}(\nabla_h \varphi_{1,h}\cdot \nabla_h \varphi_{2,h}, \rhor[h,\alpha]-\rhobr[h,\alpha])_h + N^{-1}(\nabla_h \varphi_{1,h}\cdot \nabla_h \varphi_{2,h}, \rhobr[h,\alpha])_h \nonumber\\
& \quad \quad + N^{-1}(\nabla_h \varphi_{1,h}\cdot \nabla_h \varphi_{2,h}, (\rhor[h,\alpha])^{-})_h =: M_1 + M_2 + M_3. 
\end{align}
With the exception of the terms $M_2$ and $M_3$ (which will be treated in Subsections \ref{subsubsec_OVBackEvoMom} and \ref{subsubsec_OVMomStruc}), \eqref{DiscreteCovStructure} shows that the cross-variation preserves linear functionals of $\rhor[h,\alpha]-\rhobr[h,\alpha]$. In the same way as in the continuous setting, we define a suitable backwards evolution $\f{\phi}_h$ for $\f{\phi}_h^{\bT} = \mathcal{I}_h\f{\varphi}$, see Lemma \ref{lem_discrBackEvoTest}. Taking all species and test functions into account in the definition of $\bzeta^T_h$ in \eqref{RandomVariablesXY}, the only deterministic drift term which survives is given by
\begin{equation}\label{quad_term_discrete}
	\tilQ[h,t](\phith)
	\!\coloneqq\!
	\sum_{\alpha,\beta=1}^\nS
	\!\!\big(\nabla_h \phith[\alpha]\cdot (\Ih[\nabla\V[\alpha\beta]]\ast_h(\rhor[h,\beta]-\rhobr[h,\beta])(t)) ,(\rhor[h,\alpha]-\rhobr[h,\alpha])(t)\big)_h,
\end{equation}
This term, which is the discrete analogous of $\tilQ[t](\phit)$ above, and which also does not yet fit the structure of $\mathcal{M}$, is treated in Subsection \ref{subsubsec_OVErrlin} below.

\subsubsection{The generalised moment structure $\psi$}\label{subsubsec_OVBackEvoMom} The It\^o analysis conducted in Subsection \ref{subsubsec_OVBackEvoTest} shows that -- aside from the mean-field contributions $P_2,M_2$ and the mismatch $\rho_{h,\alpha}^+ \leftrightarrow \rho_{h,\alpha}$ in $M_3$ -- the iterative structure in $\mathcal{M}$ is preserved at the level of noise cross-variations. In order to include the mean-field contributions $P_2,M_2$, we also need to define a suitable backwards equation for the generalised moment function $\psi$: This analysis (see Lemma \ref{lem_pre34a equivalent}) is somewhat complementary to the derivation of the dynamics for the test functions $\bphi$ discussed earlier. The discussion is tailored to the continuous case associated with the particle system, within the adjustments needed for the discrete case treated in Lemma \ref{lem_pre34b equivalent}.

\subsubsection{Comparing the generalised moments, the iterative structure}\label{subsubsec_OVMomStruc} In this step we take the difference of the contributions of the It\^o differential on microscopic and SPDE level (discussed in the Subsections \ref{subsubsec_OVBackEvoTest} and \ref{subsubsec_OVBackEvoMom}), and we derive the bound
\begin{align}\label{IterationBoundPreQuadratic}
\mean{d\mathcal{M}(\psi,\bvphi)} & \propto N^{-1/2 + 2\ep}\cdot\mean{\mathcal{M}(\tilde{\psi},\tilde{\bvphi})}dt + Err_{num} +  Err_{\oslash} \nonumber\\
& \quad + Err_{lin,a} - Err_{lin,b}.
\end{align}
The terms $Err_{lin,a}, Err_{lin,b}$, given by
\begin{align}\label{ErrLin}
Err_{lin,a} & \coloneqq - N^{1/2}\sum_{k=1}^K\int_0^{T_k}\Ev\big[\partial_k\psit(N^{1/2}\big\langle\bphit,\empmeas[t\wedge\bT]-\rhobr(t\wedge\bT)\big\rangle)\tilQ[t](\phit[k])\big]\m t. \\
Err_{lin,b} & \coloneqq -N^{1/2}\sum_{k=1}^K \int_0^{T_k}\Ev\big[\partial_k\psi^t\big(N^{1/2}\big(\bphith,(\rhor[h]-\rhobr[h])(t\wedge\bT\wedge\Ts)\big)_h\big)\\
	& \qquad \qquad \qquad \qquad \qquad \qquad \qquad \qquad \qquad  \qquad 
			\times\tilQ[h,t\wedge\Ts](\phi_{h,k}(t))\big]\m t\nonumber
\end{align}
compensate for linearising the convolutional nonlinearity: in this form, they are not yet suitable for the purposes of iterating. 
Crucially, in \eqref{IterationBoundPreQuadratic}, the mismatch $\rho_h \leftrightarrow \rho^+_{h}$ (cfr. $M_3$ in \eqref{DiscreteCovStructure}) has been resolved using the stopping time $\Ts$: the definition of such a stopping time and Remark \ref{rem_rhominh_rhomaxh} entail the non-negativity of $\rho_h(t)$ for all $t\leq \Ts$. A suitable estimate concerning the smallness of $P[\Ts < T]$ (i.e., \eqref{NegBoundRig}) is proved separately in Proposition \ref{prop_StopTBound}, and justifies the bound \eqref{Stop}. 
Additionally, the error term $Err_{num}$ in \eqref{IterationBoundPreQuadratic} keeps track of several numerical approximations, including: 
\begin{itemize}
\item difference of continuous test functions $\f{\phi}$ and discrete counterparts $\f{\phi}_h$, 
\item difference of continuous mean-field limit $\rhobr[]$ and discrete counterpart $\rhobr[h]$, and 
\item difference of initial conditions between the particle system \eqref{eq_CDPartSys} and the discrete Dean--Kawasaki model \eqref{eq_discretizedDK}, see Assumption \ref{ass_InitCond} and Appendixes \ref{AppContReg}--\ref{AppConstructionInitialData}.
\end{itemize}

The argument for this subsection is spelled out in Proposition \ref{prop_ContAndDiscrMomStructure}.

\subsubsection{Linearising the convolution contributions}\label{subsubsec_OVErrlin} 
Here, we linearise the terms $Err_{lin,a}$, $Err_{lin,b}$ introduced above in order to make them suitable for the iteration. The linearisation is carried our using a $2d$-Fourier argument, which is proved in a separate result (Proposition \ref{prop_Iterative Structure for Errlin}), and discussed in more detail in Subsection \ref{subsubsec_TCErrlin} below.

\subsubsection{Closing the estimate}\label{subsubsec_OVClosingArgument} All arguments carried out so far are used to cumulate as many iteration steps as possible (depending on the initial regularity of the initial test functions $\f{\varphi},\psi$). When the iteration can no longer be performed, the remaining terms (other than $Err_{num}$) are bounded using quantitative convergence bounds to the mean-field limit (proven in Section \ref{SecQuantConvMeanF}), and the properties of the stopping time $\Ts$ (proven in Section \ref{SubsecExpSmallProb}).

\subsection{Technical challenges}\label{subsec_TechChall}

We highlight the four main technical challenges which we address in order to achieve the proof's building blocks sketched in Subsections \ref{subsubsec_OVBackEvoTest}--\ref{subsubsec_OVClosingArgument}.

\subsubsection{Quantitative convergence to the mean-field limit}\label{subsubsec_TCConvergenceMFL}
In several points of the argument, we need a quantitative bound on the convergence to the mean-field limit in both continuous and discrete setting. Specifically, in Proposition \ref{PropErrorEstimateAuxiliarySystem} we prove bounds of the type
\begin{align*}
& \big\| {\mu}_{\alpha,t}^{\rI,N} - \rhobr[\alpha](t) \big\|_{H^{-d/2-2}} \\
& \quad \leq\label{ErrorEstimateMeanFieldLimit_2}
	C(T+1) \exp\left(C T r_I^{-(d+2)}\right)\left(\big\| {\mu}_{0}^{\rI,N} - \rhobr(0) \big\|_{H^{-d/2-2}}	+ \mathcal{C} N^{-1/2}\right) 
\end{align*} 
for a random variable $\mathcal{C}$ with Gaussian moments $\Ev\big[\exp(C\mathcal{C}^2)\big]\leq 3$.
The corresponding estimate on the discrete level is a by-product of Section \ref{SubsecExpSmallProb} below.

\subsubsection{Linearisation argument for convolutional nonlinearity}\label{subsubsec_TCErrlin}

Linearising the quadratic terms $\tilQ[t](\phit), \tilQ[h,t](\phit[h])$ (given in \eqref{quad_term} and \eqref{quad_term_discrete}) essentially revolves around rewriting them into an infinite sum of suitable objects. 
If the interaction potential is regular enough, the Fourier expansion allows to rewrite the function $(x,y)\mapsto \partial_{\ell}\V[\alpha\beta](x-y)\colon \mathbb{T}^{2d}\rightarrow \mathbb{R}$ as 
	$\partial_{\ell}\V[\alpha\beta](x-y)= \sum_{m,n\in\mathbb{Z}^d}{\hat{F}_{k,\alpha\beta}^\ell[m,n]e^{im\cdot x}e^{in\cdot y}}$, 
effectively separating the variables $x,y$ in each addend of the sum.
We use this to rewrite $\tilQ[t](\phit)$ as 
\begin{multline*}
	\tilQ[t](\phit) \propto \sum_{\alpha,\beta=1}^\nS\sum_{\ell=1}^d\sum_{n,m\in\Z^d} \hat{F}_{k,\alpha\beta}^\ell[m,n] 
	\left(N^{1/2}\sum_{\gamma=1}^\nS\big\langle\delta^{\beta\gamma}\vtheta_m,\empmeas[\gamma,t]-\rhobr[\gamma](t)\big\rangle\right)\\
	\times \left( N^{1/2}\sum_{\gamma=1}^\nS\big\langle\delta^{\alpha\gamma}\partial_\ell\phit[\alpha]\vtheta_n,\empmeas[\gamma,t]-\rhobr[\gamma](t)\big\rangle\right),
\end{multline*}
where we have set $\vtheta_n(x) := \cos(n\cdot x)$, and where $\delta^{\cdot,\cdot}$ is the usual Kronecker delta. The above expression is now compatible with the structure of $\mathcal{M}$, as the nonlinearity has been `split' -- at the expense of having an infinite sum -- in two suitable stand-alone contributions (in the round brackets). Analogous discussions are applicable for $\tilQ[h,t\wedge\Ts](\phi^t_h)$.

\subsubsection{Estimate for probability of large fluctuations}\label{subsubsec_TCErrneg}
The exponentially decaying estimate \eqref{NegBoundRig} for $P[\Ts < T]$ 
is centered around proving Gaussian type moment bounds for the quantities $\rhor[h,\alpha]-\rhobr[h,\alpha]$. 
For this purpose, two different stopping times arguments are needed: i) an $L^{\infty}$-type stopping time, which controls the size of the stochastic noise, and ii) a $H^{-2\floor{d/2}-2}_h$-type stopping time (for which the estimates in Subsection \ref{subsubsec_TCConvergenceMFL} are needed), which allows to linearise the contribution of the quadratic convolutional nonlinearity.

\subsubsection{Compatible discrete setting}\label{subsubsec_DiscrDiff}
A number of auxiliary results is needed in order to substantiate our numerical approximations. The two main difficulties here are related to:
\begin{itemize}
\item
 quantifying the difference of relevant continuous functions (namely, the nonlinear mean-field limit and test functions) and bounding their discretised counterparts in higher-order Sobolev norms, see Appendixes \ref{AppContReg}--\ref{AppDiscrete}, and;
 \item constructing a compatible initial profile of fluctuations in the case of high order operators (i.e., $p>1$): we use arguments from polynomial interpolation theory, see Appendix \ref{AppConstructionInitialData}.
 \end{itemize}


\section{The key step - setting up the iterative structure}\label{TheKeyStepSection}
In this section we formalize the arguments outlined in Subsection \ref{SecInductiveStep}.
That is, we compute and compare the It\^o formulas (given by It\^o's rule) for
\begin{align*}
	\Ev[\psi(\bzeta^T)] &= \Ev\big[\psi\big(N^{1/2}\big\langle\bvphi,\empmeas[\bT]-\rhobr(\bT)\big\rangle\big)\big],\\
	\Ev[\psi(\bzeta^T_h)] &= \Ev\big[\psi\big(N^{1/2}\big(\Ih[\bvphi],(\rhor[h]-\rhobr[h])(\bT\wedge\Ts)\big)_h\big)\big].
\end{align*}
Theorem \ref{thm_iterative structure} below is the integrated, more precise version of \eqref{IterationBound}, that also provides bounds on $\widetilde{\psi}$, $\widetilde{\bphi}$, $\mathrm{Err}_{num}$, and $\mathrm{Err}_{neg}$.
On the first read of this section, the reader may wish to simply focus on the structure 
of the proofs and the comparison of the various blocks therein, and skip the quantitative estimates (such estimates rely on the results from the subsequent Section \ref{SecQuantConvMeanF}, Section \ref{SubsecExpSmallProb}, Appendix \ref{AppContReg} and Appendix \ref{AppDiscrete}). 

\begin{theorem}[Iterative Structure]\label{thm_iterative structure} 
Let $0<T$.
On $[0,T]$ let
\begin{itemize}[noitemsep, topsep=0pt, left= 10pt]
	\item $\empmeas=(\empmeas[\alpha])_{\alpha=1}^\nS$ as in \eqref{eq_empirical measure} be the empirical measures of the  particle system \eqref{eq_CDPartSys} satisfying Assumption \ref{ass_CDsystem+MFL},
	\item $\rhobr$ as in \eqref{eq_CD-IMFL} be the intermediate mean-field limit satisfying Assumption \ref{ass_RegularityMFL},
	\item $\rhor[h]$ solve the discretised Dean--Kawasaki model \eqref{eq_discretizedDK} satisfying Assumption \ref{ass_DiscrDefOps},
	\item $\rhobr[h]$ be the finite difference mean-field limit given in \eqref{eq_discrIntMeanFiLim}
\end{itemize}
Let Assumption \ref{ass_InitCond} (concerning the initial conditions) hold.
Let the parameters $\rI,N,h$ obey the scaling in Assumption \ref{ass_Scaling}.
Now let $K\in\N$, $q,r\in\Nz$, $q\geq 3$, and $s\in\N$, satisfying
\begin{equation}\label{eq_IS_sbounds}
	s> p+3d/2+5.
\end{equation}
Let $\psi\in\mathcal{L}_{pow,r}^{q}(\R^K)$,
$\bvphi=(\vphi_{k,\alpha})_{k=1,\ldots,K}^{\alpha=1,\ldots,\nS}\in[H^{s}(\domain,\R^{\nS})]^K$,
$\bT=(T_1,\ldots,T_K)\in[0,T]^K$.
Additionally, assume that $\rhobr\in L^\infty\left(0,T;[C^s(\domain)]^\nS\right)$ and $V\in W^{\sI,1}(\domain)$ for $\sI\in\N$ with
\begin{equation}\label{eq_IS_sIbounds}
	\sI > 2d+1+(r+2)\ceil{d/2+1}.
\end{equation}
Let $0<\ep<\delZero/4$, and let $\Ts=\Ts(\ep/2,\ep)$ be the associated stopping time as defined in \eqref{defn_stopping} at the start of Section \ref{SubsecExpSmallProb} for $\delta=\ep/2$.

Then the following facts hold. 
First, for all $t\in(0,T)$, $k,\tilde{k}\in\lbrace 1,\ldots,K\rbrace$, $m,n\in\Z^d$, and $\alpha,\beta\in\lbrace 1,\ldots,\nS\rbrace$ as well as $\ell\in\lbrace 1,\ldots,d\rbrace$ there exist generalised moment functions
\begin{equation*}
	\psi^0\in\mathcal{L}_{pow,r}^{q}(\R^K),  \quad
	\widetilde{\psi}_{k\tilde{k}}^t\in\mathcal{L}_{pow,r+1}^{q-2}(\R^{K+1}), \quad
	\psitlin[k]\in\mathcal{L}_{pow,r+2}^{q-1}(\R^{K+2}),
\end{equation*}
and coefficients 
	$\hat{F}_{k,\alpha\beta}^\ell[m,n]\in\R$
with sets of test functions
\begin{equation*}
	\bphi^0\in[H^s(\domain,\R^{\nS})]^K, \;\;,
	\bphittil[k\tilde{k}]\in [H^{s-1}(\domain,\R^{\nS})]^{K+1}, \;\;
	\bphitlin[k,mn,\alpha\beta,\ell]\in [H^{s}(\domain,\R^{\nS})]^{K+2},
\end{equation*}
and test times 
	$\tilde{\bT}_{k\tilde{k}}\in[0,T]^{K+1}$,
	$\bTlin[k]\in[0,T]^{K+2}$
such that
\begin{align}\label{eq_34a with adjusted Errlin}
	& \Ev\bigg[\psi\bigg(N^{1/2}\big\langle\bvphi,\empmeas[\bT]-\rhobr(\bT)\big\rangle\bigg)\bigg] 
	\\
	& \quad =
	\Ev\bigg[\psi^0\bigg(N^{1/2}\big\langle\bphi^0,\empmeas[0]-\rhobr(0)\big\rangle\bigg)\bigg]
	\nonumber\\
	& \quad \quad
	+ N^{-1/2}\sum_{k,\tilde{k}=1}^K \int_0^{T_k\wedge T_{\tilde{k}}} 
		\Ev\bigg[\widetilde{\psi}_{k\tilde{k}}^t\bigg(N^{1/2}\big\langle\bphittil[k\tilde{k}],\empmeas[t\wedge\bT_{k\tilde{k}}]-\rhobr(t\wedge\f{\tilde{T}}_{k\tilde{k}})\big\rangle\bigg)\bigg]\m t
	\nonumber\\
	& \quad \quad 
	+N^{-1/2}\sum_{k=1}^K\sum_{\alpha,\beta=1}^\nS\sum_{\ell=1}^d\int_0^{T_k}\sum_{n,m\in\Z^d} \hat{F}_{k,\alpha\beta}^\ell[m,n]
	\nonumber\\
	& \quad  \quad \quad \quad \times\Ev\bigg[\psitlin[k]\bigg(N^{1/2}\big\langle\bphitlin[k,mn,\alpha\beta,\ell], \empmeas[t\wedge{\bTlin[k]}]-\rhobr(t\wedge\bTlin[k]) \big\rangle\bigg)\bigg]\m t,
	\nonumber
\end{align}
as well as
\begin{align}\label{eq_34b with adjusted Errlin}
	& \Ev\bigg[\psi\bigg(N^{1/2}\big(\Ih[\bvphi],(\rhor[h]-\rhobr[h])(\bT\wedge\Ts)\big)_h\bigg)\bigg]\\
	& \quad =	
	\Ev\bigg[\psi^0\bigg(N^{1/2}\big(\Ih[\bphi^0],(\rhor[h]-\rhobr[h])(0)\big)_h\bigg)\bigg]
	\nonumber\\
	& \quad \quad 
	+ N^{-\frac{1}{2}}\!\!\sum_{k,\tilde{k}=1}^K \! \int_0^{T_k\wedge T_{\tilde{k}}} \!
	\mean{\widetilde{\psi}_{k\tilde{k}}^t\big(N^{1/2}(\Ih\big[\bphittil[k\tilde{k}]\big],(\rhor[h]-\rhobr[h])(t\wedge\f{\tilde{T}}_{k\tilde{k}}\wedge\Ts))_h\big)}\m t
	\nonumber\\
	& \quad \quad 
	+ N^{-\frac{1}{2}}\sum_{k=1}^K\sum_{\alpha,\beta=1}^\nS\sum_{\ell=1}^d\int_0^{T_k}\sum_{n,m\in\Z^d} \hat{F}_{k,\alpha\beta}^\ell[m,n]
	\nonumber\\
	& \quad \quad \quad 
		\times\Ev\bigg[\psitlin[k]\bigg(N^{1/2}\big(\Ih[\bphitlin[k,mn,\alpha\beta,\ell]], (\rhor[h]-\rhobr[h])(t\wedge\bTlin[k]\wedge\Ts) \big)_h\bigg)\bigg]\m t
	\nonumber\\
	& \quad \quad +\mathrm{Err}_{neg}+\mathrm{Err}_{num}+\Errstop.
	\nonumber
\end{align}
Second, for any $\delflex>0$ there exists $N_0=N_0(\delflex,\ep,r,s,K,data)$, such that for all $N>N_0$ the following estimates hold.
The stopping time satisfies
\begin{align}\label{eq_IS_TsEst}
	\Pm\left(\Ts<T\right) 
	\leq
	C \exp\big( - C N^{\ep/2} \big).
\end{align}
For $t\in(0,T)$, $k,\tilde{k}\in\lbrace 1,\ldots,K\rbrace$, $m,n\in\Z^d$, the bounds 
\begin{align}
	\|\psi^0\|_{{\mathcal{L}}_{pow,r}^{q}}
	&\leq
	C N^{\delflex}\left(1+\|\bvphi\|_{H^{s}}^r\right) \|\psi\|_{{\mathcal{L}}_{pow,r}^{q}},
	\label{eq_IS_psi0Est}\\
	\|\widetilde{\psi}_{k\tilde{k}}^t\|_{\mathcal{L}_{pow,r+1}^{q-2}}
	&\leq
	C N^{\delflex}\left(1+\|\bvphi\|_{H^{s}}^r\right) \|\psi\|_{{\mathcal{L}}_{pow,r}^{q}},
	\label{eq_IS_psiRegEst}\\
	\|\psitlin[k]\|_{\mathcal{L}_{pow,r+2}^{q-1}} 
	&\leq 
	C N^{\delflex}\left(1+\|\bvphi\|_{H^{s}}^r\right) \|\psi\|_{{\mathcal{L}}_{pow,r}^{q}}	
	\label{eq_IS_psiCompEst}
\end{align}
hold.
For all $t\in(0,T)$, $k\in\lbrace 1,\ldots,K\rbrace$, $m,n\in\Z^d$, $\alpha,\beta\in\lbrace 1,\ldots,\nS\rbrace$ and $\ell\in\lbrace 1,\ldots,d\rbrace$ the coefficients $\hat{F}_{k,\alpha\beta}^t[m,n]$ are subject to
\begin{align}\label{eq_IS_FhatEst}
	|\hat{F}_{k,\alpha\beta}^t[m,n]|
	\leq 
	C N^{\delflex}(|m|_2+|n|_2)^{-\sI+1}.
\end{align}
The test functions are subject to
\begin{align}	
	\|\bphi^0\|_{H^s}
	&\leq 
	C N^{\delflex} \|\bvphi\|_{H^s},
	\label{eq_IS_phi0Est}\\
	\|\bphittil[k\tilde{k}]\|_{H^{s-1}}
	&\leq 
	C N^{\delflex}\left(1+\|\bvphi\|_{H^s}\right)\|\bvphi\|_{H^s},
	\label{eq_IS_phiRegEst}\\
	\|\bphitlin[k,mn,\alpha\beta,\ell]\|_{H^{s-1}}
	&\leq 
	C N^{\delflex}\left(1+\|\bvphi\|_{H^{s}}\right)\left(1+|m|_\infty^{s-1}\vee|n|_\infty^{s-1}\right).
	\label{eq_IS_phiCompEst}
\end{align}
Furthermore, $\Errstop$, $\mathrm{Err}_{neg}$ and $\mathrm{Err}_{num}$ can be estimated via
\begin{align}
	|\Errstop| &\leq C\|\psi\|_{{\mathcal{L}}_{pow,r}^{2}}\big(1+ \|\bvphi\|_{H^s}^{2r+2}\big)N^{r\ep+\delflex}\exp\left(-CN^{\ep/2}\right),
	\label{eq_IS_ErrstopEst}\\
	|\mathrm{Err}_{neg}|&= 0,
	\label{eq_IS_ErrnegEst}\\
	|\mathrm{Err}_{num}|&\leq C\|\psi\|_{\mathcal{L}_{pow,r}^3} \big(1 + \|\bvphi\|_{H^s}^{2r+3}\big)N^{(r+1)\ep+\delflex}h^{p+1}.
	\label{eq_IS_ErrnumEst}
\end{align}
\end{theorem}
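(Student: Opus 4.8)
The plan is to run It\^o's formula in parallel on the microscopic side, for $\psi(\bzeta^T)=\psi(N^{1/2}\langle\bvphi,\empmeas[\bT]-\rhobr(\bT)\rangle)$, and on the discretised Dean--Kawasaki side, for $\psi(\bzeta^T_h)$, after first engineering backward-in-time evolutions of the test functions and of the generalised moment function $\psi$ itself, designed so that every deterministic drift term cancels except one quadratic compensator, which is then linearised by a $2d$-Fourier expansion. All of $\psi^0,\widetilde{\psi}_{k\tilde{k}}^t,\psitlin[k],\hat{F}_{k,\alpha\beta}^\ell[m,n],\bphi^0,\bphittil[k\tilde{k}],\bphitlin[k,mn,\alpha\beta,\ell]$ will be built from the continuous computation, so that \eqref{eq_34a with adjusted Errlin} holds as an identity; \eqref{eq_34b with adjusted Errlin} is then obtained by checking that the parallel discrete computation reproduces the interpolants $\Ih[\cdot]$ of the same objects up to a numerical remainder. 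First I would fix the test-function dynamics: via Lemma~\ref{lem_backEvoTest} choose $\bphit$ with terminal value $\bvphi$ at $\bT$ so that the It\^o differential of $\bzeta^t:=N^{1/2}\langle\bphit,\empmeas[t]-\rhobr(t)\rangle$ has deterministic drift reduced to the single term $\tilQ[t](\phit)$ of \eqref{quad_term}, the martingale part carrying cross-variation \eqref{ParticleCovStructure}; do the same on the discrete side via Lemma~\ref{lem_discrBackEvoTest}, producing $\bphith$ with compensator $\tilQ[h,t](\phith)$ of \eqref{quad_term_discrete} and cross-variation \eqref{DiscreteCovStructure}. Backward parabolic energy estimates for these linear equations yield \eqref{eq_IS_phi0Est}--\eqref{eq_IS_phiRegEst}, losing at most one spatial derivative per compensation and absorbing into $N^\delflex$ the logarithmic scaling \eqref{ScalingRegimeRadius} of $\rI$.

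Next, the mean-field remainders $P_2$ of \eqref{ParticleCovStructure} and $M_2$ of \eqref{DiscreteCovStructure} are compensated by a backward evolution for $\psi$ itself (Lemmas~\ref{lem_pre34a equivalent}--\ref{lem_pre34b equivalent}), producing $\psi^t$, while the noise-coefficient mismatch $(\rhor[h,\alpha])^+\leftrightarrow\rhor[h,\alpha]$ in $M_3$ is killed by working up to the stopping time $\Ts$, which by its definition and Remark~\ref{rem_rhominh_rhomaxh} forces $\rhor[h,\alpha]\geq 0$ on $\{t\leq\Ts\}$, hence $M_3=0$ and $\mathrm{Err}_{neg}=0$ as in \eqref{eq_IS_ErrnegEst}. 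Applying It\^o's rule to $\psi^t(\bzeta^t)$ and $\psi^t(\bzeta^t_h)$ and integrating over the relevant time windows gives the intermediate identity of Proposition~\ref{prop_ContAndDiscrMomStructure} (the integrated form of \eqref{IterationBoundPreQuadratic}): the surviving cross-variation produces the $\widetilde{\psi}$-integral, with $\widetilde{\psi}$ (morally $\partial_k\partial_{\tilde{k}}\psi$ times one new coordinate) losing two orders of differentiability and gaining one of polynomial growth — whence the space $\mathcal{L}_{pow,r+1}^{q-2}$, the bound \eqref{eq_IS_psiRegEst}, and, with the third-order terms entering $\mathrm{Err}_{num}$, the hypothesis $q\geq 3$ — while the leftover equals $Err_{lin,a}-Err_{lin,b}$ of \eqref{ErrLin}, plus $\Errstop$ (controlled by the stopping-time bound below, whence \eqref{eq_IS_ErrstopEst}) and $\mathrm{Err}_{num}$, which collects the discrepancies between $\bphit$ and $\bphith$, between $\rhobr$ and $\rhobr[h]$, and of the two sets of initial data, each $O(h^{p+1})$ by Assumption~\ref{ass_InitCond} and Appendices~\ref{AppContReg}--\ref{AppDiscrete}, whence \eqref{eq_IS_ErrnumEst}.

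It then remains to linearise the compensators so that they, too, fit the additive template on the right of \eqref{eq_34a with adjusted Errlin}--\eqref{eq_34b with adjusted Errlin}. Expanding $\partial_\ell\V[\alpha\beta](x-y)=\sum_{m,n\in\Z^d}\hat{F}_{k,\alpha\beta}^\ell[m,n]\,e^{im\cdot x}e^{in\cdot y}$ separates the two spatial variables, so that redistributing each mode across two new test-function slots — one carrying $e^{im\cdot x}$, the other $\partial_\ell\phit[\alpha]\,e^{in\cdot y}$ — turns $Err_{lin,a}$ into the double-Fourier sum of \eqref{eq_34a with adjusted Errlin}, with $\psitlin$ (morally $\partial_k\psi$ times two new coordinates) losing one further derivative and gaining one more order of growth ($\mathcal{L}_{pow,r+2}^{q-1}$), and symmetrically on the discrete side; this is precisely Proposition~\ref{prop_Iterative Structure for Errlin}. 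The decay \eqref{eq_IS_FhatEst} is the Fourier decay of $\V[\alpha\beta]\in W^{\sI,1}$ after one derivative, the rescaling factor $\rI^{-\sI+1}$ going into $N^\delflex$ via \eqref{ScalingRegimeRadius}, and the growth \eqref{eq_IS_phiCompEst} by $|m|_\infty^{s-1}\vee|n|_\infty^{s-1}$ comes from differentiating $e^{im\cdot x}$ up to $s-1$ times in the $H^{s-1}$-norm together with the Sobolev-algebra product with $\partial_\ell\phit[\alpha]$. I expect this linearisation to be the main obstacle: one must secure absolute convergence of the Fourier series — which is exactly what forces \eqref{eq_IS_sIbounds} on $V$ and \eqref{eq_IS_sbounds} on $s$ — and arrange the continuous and discrete expansions to use the \emph{same} coefficients $\hat{F}_{k,\alpha\beta}^\ell[m,n]$, so that only an $O(h^{p+1})$ remainder is shed into $\mathrm{Err}_{num}$, all the while tracking the coupled bookkeeping $q\to q-2\to q-1$, $r\to r+1\to r+2$, $s\to s-1$ and its interplay with the discrete-versus-continuous Sobolev estimates of Appendices~\ref{AppContReg}--\ref{AppDiscrete}. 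Finally, the stopping-time bound \eqref{eq_IS_TsEst} is imported from Proposition~\ref{prop_StopTBound} of Section~\ref{SubsecExpSmallProb} (which itself rests on the quantitative mean-field convergence of Section~\ref{SecQuantConvMeanF}), and collecting all constants uniformly for $N>N_0(\delflex,\ep,r,s,K,data)$ delivers the remaining estimates \eqref{eq_IS_psi0Est}--\eqref{eq_IS_ErrnumEst}.
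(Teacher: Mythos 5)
Your proposal is correct and follows essentially the same route as the paper: the theorem is exactly the combination of Proposition \ref{prop_ContAndDiscrMomStructure} (backward evolutions for $\bphi$, $\bphi_h$ and $\psi$, the stopping-time removal of the $(\rhor[h,\alpha])^+$ mismatch, and the collection of $\mathrm{Err}_{num}$, $\mathrm{Err}_{neg}$, $\Errstop$) with Proposition \ref{prop_Iterative Structure for Errlin} (the $2d$-Fourier linearisation of $\tilQ$ and $\tilQ[h]$ with the decay \eqref{eq_IS_FhatEst} and the bookkeeping $q\to q-2,\,q-1$, $r\to r+1,\,r+2$, $s\to s-1$), plus Proposition \ref{prop_StopTBound} for \eqref{eq_IS_TsEst}, which is precisely how the paper proceeds (up to the cosmetic point that the paper uses cosine modes rather than complex exponentials since $V$ is real-valued).
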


\begin{proof}
The result will follow from Proposition \ref{prop_ContAndDiscrMomStructure} and Proposition \ref{prop_Iterative Structure for Errlin}. 
\end{proof}


\subsection{The backwards evolution equations for the test functions}\label{subsec_BackEvos}
We first look at the dynamics of the empirical measure and then the solution of the discretized Dean--Kawasaki equation. 
This will lead to a choice for the backwards evolution for the test functions in both the continuous and discrete setting, summarized in Lemma \ref{lem_backEvoTest} and Lemma \ref{lem_discrBackEvoTest} respectively.

\subsubsection{Dynamics of the empirical measure}
Taking a set of generic time-dependent test functions $\eta_\alpha\colon[0,T]\times\domain\rightarrow\R$, writing $\etat[\alpha]=\eta_\alpha(t,\cdot)$, we apply the It\^o rule and calculate
\begin{align*}
	\mbox{d}\big\langle\etat[\alpha],\empmeas[\alpha,t]\big\rangle 
	& = \bigg\langle \partial_t\etat[\alpha] 
	+ \sigma_\alpha\Delta\etat[\alpha] 
	- \nabla\etat[\alpha] \cdot \bigg(\sum_{\beta=1}^\nS\nabla\V[\alpha\beta]\ast\empmeas[\beta,t]\bigg),
	\empmeas[\alpha,t]\bigg\rangle\m t\\
	& \quad + \frac{\sqrt{2\sigma_\alpha}}{N}\sum_{i=1}^N\nabla\etat[\alpha](X^{\rI,N}_{\alpha,i}(t))\m B_{\alpha,i}(t).
\end{align*}
We linearise the interaction induced term around the mean field limit, that is
\begin{align*}
	& \bigg\langle\nabla\etat[\alpha] \cdot \bigg(\sum_{\beta=1}^\nS\nabla\V[\alpha\beta]\ast\empmeas[\beta,t]\bigg),
	\empmeas[\alpha,t]\bigg\rangle \\
	& \quad
	= \bigg\langle\nabla\etat[\alpha] \cdot \bigg(\sum_{\beta=1}^\nS\nabla\V[\alpha\beta]\ast\rhobr[\beta](t)\bigg),
	\empmeas[\alpha,t]\bigg\rangle\\
	& \quad \quad 
	+ \sum_{\beta=1}^\nS \big\langle\nabla\etat[\alpha] \cdot \big(\nabla\V[\alpha\beta]\ast\big(\empmeas[\beta,t]-\rhobr[\beta](t)\big)\big),
	\rhobr[\alpha](t)\big\rangle
	+\tilQ[\alpha,t](\etat[\alpha]),
\end{align*}
where $\tilQ[\alpha,t](\etat[\alpha])$ is the quadratic linearisation compensation given by
\begin{equation*}
	\tilQ[\alpha,t](\etat[\alpha])
	\coloneqq \sum_{\beta=1}^\nS\big\langle\nabla\etat[\alpha] \cdot \big(\nabla\V[\alpha\beta]\ast\big(\empmeas[\beta,t]-\rhobr[\beta](t)\big)\big),
	\empmeas[\alpha,t]-\rhobr[\alpha](t)\big\rangle.
\end{equation*}
Introducing the notation 
$
	\U[\alpha](t)\coloneqq\sum_{\beta=1}^\nS\nabla\V[\alpha\beta]\ast\rhobr[\beta](t),
$
and by subtracting the mean field limit equation \eqref{eq_CD-IMFL}, we obtain
\begin{align}
	 \mbox{d}\big\langle\etat[\alpha],\empmeas[\alpha,t]-\rhobr[\alpha](t)\big\rangle &= \big\langle \partial_t\etat[\alpha] 
	+ \sigma_\alpha\Delta\etat[\alpha] 
	- \U[\alpha](t) \cdot \nabla\etat[\alpha],
	\empmeas[\alpha,t]-\rhobr[\alpha](t)\big\rangle\m t \label{eq_FlucEvoSingleSpec}\\
	& \quad  
	- \sum_{\beta=1}^\nS\big\langle\nabla\etat[\alpha] \cdot \big(\nabla\V[\alpha\beta]\ast\big(\empmeas[\beta,t]-\rhobr[\beta](t)\big)\big),	\rhobr[\alpha](t)\big\rangle\m t
	 \nonumber\\
	& \quad  -\tilQ[\alpha,t](\etat[\alpha])\m t + \frac{\sqrt{2\sigma_\alpha}}{N}\sum_{i=1}^N\nabla\etat[\alpha](X^{\rI,N}_{\alpha,i}(t))\m B_{\alpha,i}(t).\nonumber
\end{align}
Recalling the definition of $\ast_c$ (cfr. Section \ref{SecSettNotAss}) and summing over all species, we obtain
\begin{align*}
	& \sum_{\alpha=1}^\nS\sum_{\beta=1}^\nS\big\langle\nabla\etat[\alpha] \cdot \big(\nabla\V[\alpha\beta]\ast\big(\empmeas[\beta,t]-\rhobr[\beta](t)\big)\big),	\rhobr[\alpha](t)\big\rangle\\
	& \quad = -\sum_{\beta=1}^\nS \bigg\langle \sum_{\alpha=1}^\nS \nabla\V[\alpha\beta] \ast_c \big(\rhobr[\alpha](t)\nabla\etat[\alpha]\big)
	,\empmeas[\beta,t]-\rhobr[\beta](t)\bigg\rangle,
\end{align*}
where we used $\V[\alpha\beta](-x) =\V[\alpha\beta](x)$ for $x\in\domain$ (cfr. Assumption \ref{ass_CDsystem+MFL}).
Switching the species indices for this term and setting $\tilQ[t](\etat)=\sum_{\alpha=1}^\nS \tilQ[\alpha,t](\etat[\alpha])$ we have 
\begin{multline}\label{eq_dynamics of fluctuations}
	\mbox{d}\bigg(\sum_{\alpha=1}^\nS\big\langle\etat[\alpha],\empmeas[\alpha,t]-\rhobr[\alpha](t)\big\rangle\bigg)\\
	= \sum_{\alpha=1}^\nS\bigg\langle \partial_t\etat[\alpha] 
	+ \sigma_\alpha\Delta\etat[\alpha] 
	- \U[\alpha](t) \cdot \nabla\etat[\alpha]
	+ \sum_{\beta=1}^\nS \nabla\V[\beta\alpha] \ast_c \big(\rhobr[\beta](t)\nabla\etat[\beta]\big),
	\empmeas[\alpha,t]-\rhobr[\alpha](t)\bigg\rangle\m t\\
	-\tilQ[t](\etat) \m t
	+\sum_{\alpha=1}^\nS \frac{\sqrt{2\sigma_\alpha}}{N}\sum_{i=1}^N\nabla\etat[\alpha](X^{\rI,N}_{\alpha,i}(t))\m B_{\alpha,i}(t).
\end{multline}
Thus, we can read off the choice for the backwards evolution, which kills as much of the deterministic drift as possible, and obtain Lemma \ref{lem_backEvoTest} below.

\begin{lemma}[The backwards evolution equation for the test functions]\label{lem_backEvoTest}
Under the assumptions of Theorem \ref{thm_iterative structure},
let $\vphi=(\vphi_\alpha)_\alpha\in H^s(\domain,\R^\nS)$, $s> 2+d/2$.
Then there exists a unique $\phi\in C^1(0,T;H^s(\domain,\R^\nS))$ solving
\begin{equation}\label{eq_backEvoTest}
	\left\{\begin{aligned}
		-\partial_t\phit[\alpha] &=
				 \sigma_\alpha\Delta\phit[\alpha] 
				- \U[\alpha](t)\cdot\nabla\phit[\alpha]  
				+ \sum_{\beta=1}^\nS \nabla\V[\beta\alpha] \ast_c \big(\rhobr[\beta](t)\nabla\phit[\beta]\big)
				\quad\text{on }\domain\times(0,T),\\
		\phi^T_\alpha	&= \vphi_\alpha,
				\qquad \alpha=1,\ldots,\nS,
	\end{aligned}\right.
\end{equation}
where $\phit=\phi(t,\cdot)$ and for $\alpha=1,\ldots,\nS$, $t\in[0,T]$,
\begin{equation}\label{eq_def U}
	\U[\alpha](t)\coloneqq\sum_{\beta=1}^\nS\nabla\V[\alpha\beta]\ast\rhobr[\beta](t).
\end{equation}
Furthermore, it holds that 
\begin{multline}\label{eq_dynamics of fluctuations for backEvoTest}
	\mbox{d}\bigg(\sum_{\alpha=1}^\nS\big\langle\phit[\alpha],\empmeas[\alpha,t]-\rhobr[\alpha](t)\big\rangle\bigg)\\
	=-\tilQ[t](\phit) \m t
	+\sum_{\alpha=1}^\nS \frac{\sqrt{2\sigma_\alpha}}{N}\sum_{i=1}^N\nabla\phit[\alpha](X^{\rI,N}_{\alpha,i}(t))\m B_{\alpha,i}(t),
\end{multline}
where
\begin{equation}\label{eq_def tilQ}
	\tilQ[t](\phit)
	\coloneqq
	\sum_{\alpha,\beta=1}^\nS
	\big\langle\nabla\phit[\alpha] \cdot \big(\nabla\V[\alpha\beta]\ast\big(\empmeas[\beta,t]-\rhobr[\beta](t)\big)\big),
	\empmeas[\alpha,t]-\rhobr[\alpha](t)\big\rangle.
\end{equation}
\end{lemma}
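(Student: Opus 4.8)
The plan is to separate the two assertions — solvability of \eqref{eq_backEvoTest} in $C^1(0,T;H^s(\domain,\R^\nS))$, and the stochastic identity \eqref{eq_dynamics of fluctuations for backEvoTest} — and to handle the first by standard linear parabolic theory and the second by simply substituting the resulting solution into the It\^o formula \eqref{eq_dynamics of fluctuations}, which has already been derived for \emph{generic} time-dependent test functions. First I would reverse time via $t\mapsto T-t$, turning \eqref{eq_backEvoTest} into a forward linear parabolic system on $\domain\times(0,T)$ with initial datum $\bvphi$, whose principal part $\mathrm{diag}_\alpha(\sigma_\alpha\Delta)$ is decoupled and uniformly elliptic (recall $\sigma_\alpha>0$). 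Its lower-order terms are the transport term $-\U[\alpha](t)\cdot\nabla(\cdot)$, where by \eqref{eq_def U} and Assumption \ref{ass_RegularityMFL} the drift $\U[\alpha]=\sum_{\beta}\nabla\V[\alpha\beta]\ast\rhobr[\beta]$ is space-smooth (a convolution of a smooth kernel with a regular density), and the nonlocal coupling $\phi\mapsto\big(\sum_{\beta}\nabla\V[\beta\alpha]\ast_c(\rhobr[\beta]\nabla\phi_\beta)\big)_\alpha$, which by the convolution estimate $\|f\ast g\|_{H^a}\lesssim\|f\|_{W^{a,1}}\|g\|_{L^2}$ together with the regularity of $V$ (cf.\ \eqref{eq_IS_sIbounds} and Assumption \ref{ass_CDsystem+MFL}) and of $\rhobr$ is bounded on the relevant Sobolev scales and in fact smoothing; in particular the coupling between species enters only through lower-order terms.

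For well-posedness and regularity I would build the solution by a Duhamel/fixed-point argument around the analytic heat semigroups $e^{\sigma_\alpha t\Delta}$ on $\domain$ (equivalently, a Fourier-series construction), treating $-\U[\alpha]\cdot\nabla$ and the coupling term as perturbations: both map $H^s\to H^{s-1}$, and since $\|e^{\sigma_\alpha t\Delta}\|_{H^{s-1}\to H^s}\lesssim t^{-1/2}$ is integrable near $t=0$, the Duhamel iteration converges to a unique mild solution in $C([0,T];H^s)$ (a Galerkin scheme with $H^s$-energy estimates would give the same). Parabolic smoothing then upgrades this: for $t\in(0,T)$ the solution gains arbitrarily many spatial derivatives, so $\partial_t\phit[\alpha]=-\sigma_\alpha\Delta\phit[\alpha]+\dots\in H^s$ and hence $\phi\in C^1((0,T);H^s)$, with continuity in $H^s$ up to $t=T$ by construction; the quantitative dependence of the $H^s$-norm of $\phi$ on $\bvphi$, $\rhobr$ and $V$ is the content of Appendix \ref{AppContReg}. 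Note that the hypothesis $s>2+d/2$ is exactly what yields $H^s\hookrightarrow C^2(\domain)$, making $\phit[\alpha]$ an admissible test function for the It\^o computation below.

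With $\phi$ in hand, I would put $\etat[\alpha]:=\phit[\alpha]$ in identity \eqref{eq_dynamics of fluctuations}, which is legitimate since $\phi$ is $C^1$ in time and $C^2$ in space. By \eqref{eq_backEvoTest} the bracketed integrand appearing there,
\[
\partial_t\phit[\alpha]+\sigma_\alpha\Delta\phit[\alpha]-\U[\alpha](t)\cdot\nabla\phit[\alpha]+\sum_{\beta=1}^\nS\nabla\V[\beta\alpha]\ast_c\big(\rhobr[\beta](t)\nabla\phit[\beta]\big),
\]
vanishes identically, so the only surviving deterministic contribution is $-\tilQ[t](\phit)\,\mathrm{d}t$ with $\tilQ[t](\phit)=\sum_{\alpha}\tilQ[\alpha,t](\phit[\alpha])$ as in \eqref{eq_def tilQ}, while the martingale term is unchanged; this is precisely \eqref{eq_dynamics of fluctuations for backEvoTest}.

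The routine-looking last step is indeed routine once the regularity of $\phi$ is available, so the genuine work is the well-posedness step: establishing existence and uniqueness in the correct scale $C^1(0,T;H^s)$ in the presence of the first-order transport term and the nonlocal coupling (parabolic smoothing being what reconciles the merely finite terminal regularity $\bvphi\in H^s$ with interior smoothness), and — more importantly for the induction driving Theorem \ref{MainThm} — tracking quantitatively how the regularity of $\phi$ degrades relative to that of $\bvphi$ and $\rhobr$, since the iterated test-function bounds such as \eqref{eq_IS_phiRegEst} in Theorem \ref{thm_iterative structure} ultimately rest on this.
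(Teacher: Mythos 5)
Your proposal is correct and follows essentially the same route as the paper: the paper likewise reduces the identity \eqref{eq_dynamics of fluctuations for backEvoTest} to substituting the backward solution into the generic It\^o identity \eqref{eq_dynamics of fluctuations} (where the drift cancels by construction), and settles well-posedness of the linear backward system by standard parabolic arguments, namely the time-reversed $H^s$-energy/Gronwall estimate of Lemma \ref{lem_RegularityContinuousTestFunctions}, which is the energy-estimate variant you yourself mention as an alternative to the Duhamel construction.
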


\begin{proof}
For existence, uniqueness and regularity, see Lemma \ref{lem_RegularityContinuousTestFunctions}.
Then \eqref{eq_dynamics of fluctuations for backEvoTest} immediately follows from \eqref{eq_dynamics of fluctuations}, since the regularity is sufficient for the calculations to hold.
\end{proof}

\begin{rem}
Note that the reason, for which we are not able to kill all of the deterministic drift by evolving the test functions backwards, is the underlying non-linearity of the system.
Specifically, the linearization compensation $\tilQ[t](\cdot)$ associated with the nonlinear interaction terms of the cross-diffusion system is the only part of the drift that survives.
\end{rem}

\subsubsection{Dynamics of the solutions to the discretized Dean--Kawasaki model}
The following is the discrete version of Lemma \ref{lem_backEvoTest} (recall the definitions of $\ast_h,\ast_{h,c}$ from Section \ref{SecSettNotAss}).

\begin{lemma}[The backwards evolution equation for the discrete test functions]\label{lem_discrBackEvoTest}
Under the assumptions of Theorem \ref{thm_iterative structure}, 
let $\vphi_h=(\vphi_{h,\alpha})_\alpha\in L^2(\Ghd,\R^\nS)$.
Then there exists a unique $\phi_h\in C^1(0,T;L^2(\Ghd,\R^\nS))$ solving
\begin{equation}\label{eq_discrBackEvoTest}
	\left\{\begin{aligned}
		-\partial_t\phith[\alpha] 
			&= \sigma_\alpha \Delta_h\phith[\alpha]
			- \U[h,\alpha]\cdot\nabla_h\phith[\alpha]
			+ \sum_{\beta=1}^\nS \Ih[\nabla\V[\beta\alpha]]\ast_{h,c}(\rhobr[h,\beta]\nabla_h\phith[\beta]),\\
		\phi^T_{h,\alpha}&= \vphi_{h,\alpha},
				\qquad \alpha=1,\ldots,\nS,
	\end{aligned}\right.
\end{equation}
where for $\alpha=1,\ldots,\nS$, $t\in[0,T]$, we have set
\begin{equation}\label{eq_defUh}
	\U[h,\alpha](t)\coloneqq \sum_{\beta=1}^\nS \Ih[\nabla\V[\alpha\beta]]\ast_h\rhobr[h,\beta].
\end{equation}
Furthermore, it holds for $t\in[0,T]$ that 
\begin{align}\label{eq_discrDynamics of fluctuations for discrBackEvoTest}
	& \m\bigg(\sum_{\alpha=1}^\nS(\phith[\alpha],(\rhor[h,\alpha]-\rhobr[h,\alpha])(t))_h\bigg)
	\nonumber\\
	& \quad =
	-\tilQ[h](\phith)\m t \nonumber\\
	& \quad \quad \quad -\sum_{\alpha=1}^\nS \sqrt{2\sigma_\alpha} N^{-1/2} \sum_{\substack{y\in\Ghd \\ l\in\lbrace 1,\ldots,d\rbrace}} \bigg(\sqrt{(\rhor[h,\alpha](t))^+}\hbase[d]{y,l},\nabla_h \phith[\alpha]\bigg)_h \m  W_{(y,l)}^\alpha,
\end{align}
where $\lbrace f_{y,l}^d\rbrace$ is the basis of $L^2(\Ghd,\R^d)$ defined in Subsection \ref{FiniteDiffDiscr} and we have set
\begin{equation}\label{eq_deftilQh}
	\tilQ[h,t](\phith)
	\coloneqq
	\sum_{\alpha,\beta=1}^\nS
	\big(\nabla_h \phith[\alpha]\cdot (\Ih[\nabla\V[\alpha\beta]]\ast_h(\rhor[h,\beta]-\rhobr[h,\beta])(t)) ,(\rhor[h,\alpha]-\rhobr[h,\alpha])(t)\big)_h.
\end{equation}
\end{lemma}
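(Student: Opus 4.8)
The plan is to mimic the derivation of Lemma \ref{lem_backEvoTest} on the grid, replacing continuous operators by their discrete counterparts and the stochastic integral against Brownian motions driving the particles by the stochastic integral against the noises $W^\alpha_{(y,l)}$ driving \eqref{eq_discretizedDK}. First I would establish existence, uniqueness and $C^1$-in-time regularity of $\phi_h$: the system \eqref{eq_discrBackEvoTest} is, after the time reversal $t\mapsto T-t$, a \emph{linear} system of ODEs in the finite-dimensional space $L^2(\Ghd,\R^{\nS})$ with time-dependent (and, by Assumption \ref{ass_RegularityMFL} together with Remark \ref{rem_rhominh_rhomaxh}, continuous and bounded) coefficients $\U[h,\alpha]$ and $\rhobr[h,\beta]$; hence Picard--Lindelöf gives a unique global solution, and the right-hand side being continuous in $t$ yields $\phi_h\in C^1(0,T;L^2(\Ghd,\R^{\nS}))$.

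Next I would compute $\m\big(\phith[\alpha],(\rhor[h,\alpha]-\rhobr[h,\alpha])(t)\big)_h$ via the product rule, using the It\^o dynamics of $(\rhor[h,\alpha],\hbase{x})_h$ from \eqref{eq_discretizedDK} against the (smooth-in-time, deterministic) test function $\phith[\alpha]$, and the ODE \eqref{eq_discrIntMeanFiLim} for $\rhobr[h,\alpha]$. Summing over $x\in\Ghd$ with weights $h^d\phith[\alpha](x)$, the drift of $\m(\phith[\alpha],\rhor[h,\alpha])_h$ picks up $(\partial_t\phith[\alpha],\rhor[h,\alpha])_h + \sigma_\alpha(\Delta_h\phith[\alpha],\rhor[h,\alpha])_h - \sum_\beta(\nabla_h\phith[\alpha]\cdot(\Ih[\nabla\V[\alpha\beta]]\ast_h\rhor[h,\beta]),\rhor[h,\alpha])_h$, where I used the discrete integration-by-parts identity \eqref{IntByParts} (and self-adjointness of $\Delta_h$) to move $\Delta_h$ and $\nabla_h\cdot$ onto the test function. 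I then linearise the convolutional drift around $\rhobr[h]$ exactly as in the continuous case, writing $\rhor[h,\beta]=\rhobr[h,\beta]+(\rhor[h,\beta]-\rhobr[h,\beta])$ in one factor and $\rhor[h,\alpha]=\rhobr[h,\alpha]+(\rhor[h,\alpha]-\rhobr[h,\alpha])$ in the other; the pure mean-field term $\rhobr\times\rhobr$ cancels against the drift of $\rhobr[h,\alpha]$, the mixed terms produce $\U[h,\alpha]\cdot\nabla_h\phith[\alpha]$ and (after moving the discrete convolution to the other side using its symmetry, which holds because $V_{\alpha\beta}$ is even and $\Ih$ commutes with reflection on the symmetric grid) a term of the form $\sum_\beta\Ih[\nabla\V[\beta\alpha]]\ast_{h,c}(\rhobr[h,\beta]\nabla_h\phith[\beta])$, and the remaining bilinear term in $(\rhor[h]-\rhobr[h])$ is precisely $\tilQ[h,t](\phith)$ of \eqref{eq_deftilQh}. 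Choosing $\phi_h$ to solve \eqref{eq_discrBackEvoTest} kills the entire bracketed drift, leaving only $-\tilQ[h,t](\phith)\m t$ plus the martingale part, which is the stochastic-integral term obtained by testing the noise in \eqref{eq_discretizedDK} against $\phith[\alpha]$; this is \eqref{eq_discrDynamics of fluctuations for discrBackEvoTest}.

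The main obstacle, as in the continuous case, is purely bookkeeping: correctly tracking the discrete summation-by-parts manipulations so that the convolution term ends up acting on $\rhobr[h,\beta]\nabla_h\phith[\beta]$ rather than on $\phith$ alone — this requires the symmetry of the discrete convolution $\ast_h$ (equivalently, that $\sum_x h^d f(x)(g\ast_h s)(x) = \sum_x h^d (f\ast_h \tilde s)(x) g(x)$ with $\tilde s(\cdot)=s(-\cdot)$), which follows from evenness of $V_{\alpha\beta}$ and the grid being symmetric under $x\mapsto -x$ — together with switching the dummy species indices $\alpha\leftrightarrow\beta$ in that single term. Everything else is a direct transcription of the continuous computation, using that $\phi_h$ is deterministic and smooth in $t$ so that It\^o's product rule applies with no extra cross-variation between $\phith$ and the noise.
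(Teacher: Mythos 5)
Your proposal is correct and follows essentially the same route as the paper: apply It\^o's rule to $(\eta_{h,\alpha},\rhor[h,\alpha])_h$ using \eqref{eq_discretizedDK}, transcribe the continuous linearisation around $\rhobr[h]$ (using evenness of $V_{\alpha\beta}$ to flip the discrete convolution and switching species indices), so that the choice \eqref{eq_discrBackEvoTest} cancels the drift except for $\tilQ[h,t]$, with existence/uniqueness/$C^1$ regularity coming from the linear finite-dimensional ODE structure. The only difference is cosmetic: the paper derives the general tested dynamics first and then inserts the backwards-evolving test functions, while you work directly with $\phi_h$; the content is identical.
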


\begin{proof}
We first take a set of generic discrete test functions $\eta_{h,\alpha}\colon [0,T]\times\Ghd\rightarrow\R$ which are differentiable in time.
Since $\lbrace \hbase{x}\rbrace_{x\in\Ghd}$ as defined in Section \ref{FiniteDiffDiscr} is an orthonormal basis of $\LzGhd$, we expand 
$
	\eta_{h,\alpha} = \sum_{x\in\Ghd}(\eta_{h,\alpha},\hbase{x})_h \hbase{x}
$. Applying the It\^o rule to 
\begin{equation*}
	(\eta_{h,\alpha},\rhor[h,\alpha])_h(t) = \sum_{x\in\Ghd} (\eta_{h,\alpha}(t),\hbase{x})_h (\rhor[h,\alpha](t),\hbase{x})_h
\end{equation*}
using \eqref{eq_discretizedDK}, and the expansion $\nabla_h\eta_{h,\alpha} = \sum_{x\in\Ghd}(\eta_{h,\alpha},\hbase{x})_h \nabla_h \hbase{x}$, we obtain 
\begin{align*}
	 \mbox{d}(\eta_{h,\alpha},\rhor[h,\alpha])_h
	& = \bigg[ (\partial_t\eta_{h,\alpha},\rhor[h,\alpha])_h + \sigma_\alpha (\eta_{h,\alpha},\Delta_h\rhor[h,\alpha])_h\\
	& \quad \quad - \sum_{\beta=1}^\nS\big(\nabla_h \eta_{h,\alpha},\rhor[h,\alpha](\Ih[\nabla\V[\alpha\beta]]\ast_h\rhor[h,\beta])\big)_h\bigg] \m t\\
	& \quad - \sqrt{2\sigma_\alpha} N^{-1/2} \sum_{\substack{y\in\Ghd \\ l\in\lbrace 1,\ldots,d\rbrace}}
					\bigg(\sqrt{(\rhor[h,\alpha])^+}\hbase[d]{y,l},\nabla_h \eta_{h,\alpha}\bigg)_h \m  W_{(y,l)}^\alpha.
\end{align*}
Following the same steps as in Lemma \ref{lem_backEvoTest}, we get the discrete analogue to \eqref{eq_dynamics of fluctuations}.
Inserting the test functions evolving according to the discrete backwards evolution \eqref{eq_discrBackEvoTest}, we obtain \eqref{eq_discrDynamics of fluctuations for discrBackEvoTest}.
The existence and uniqueness of the solution to \eqref{eq_discrBackEvoTest} follows from linearity.
\end{proof}

\subsection{The generalised moment structure}\label{subsec_MomStruc}
In \ref{subsubsec_contMomStruc} we analyze the generalised moment structure in the continuous case, highlighting how we choose the backwards evolution for the generalised moment functions.
In \ref{subsubsec_discrMomStruc} we calculate the resulting generalised moment structure for the discrete setting.
Finally, in \ref{subsubsec_34ab equivalents} we will compare the continuous and discrete setting, collecting and estimating the error terms from adjusting the discrete terms to fit the iterative structure.
By defining as well as estimating $\widetilde{\psi}_{k\tilde{k}}^t$, $\f{\widetilde{\phi}}_{k\tilde{k}}^t$, and $\tilde{\bT}_{k\tilde{k}}\in[0,T]^{K+1}$ we complete the first big step towards proving Theorem \ref{thm_iterative structure}.

\subsubsection{Deriving the generalised moment structure in the continuous setting}\label{subsubsec_contMomStruc}
\begin{lemma}\label{lem_pre34a equivalent}
Let the assumptions of Theorem \ref{thm_iterative structure} hold.
Then there exists a unique set of functions $\bphi=(\phi_1,\ldots,\phi_K)\in \left[C^1(0,T;H^s(\domain,\R^\nS))\right]^K$ such that
\begin{equation}\label{eq_BackEvoTestMulti}
	\left\{\begin{aligned}
		-\partial_t\phit[k,\alpha] &=
				 \sigma_\alpha\Delta\phit[k,\alpha] 
				- \U[\alpha](t)\cdot\nabla\phit[k,\alpha]  
				+ \sum_{\beta=1}^\nS \nabla\V[\beta\alpha] \ast_c \big(\rhobr[\beta](t)\nabla\phit[k,\beta]\big),
				\quad t<T_k,\\
		\phit[k,\alpha]	&= \vphi_{k,\alpha},
				\quad t\in[T_k,T]
	\end{aligned}\right.
\end{equation}
for all $\alpha=1,\ldots,\nS$ and each $k=1,\ldots,K$.
Further, there exists a unique backwards evolution of $\psi$ in $C^1(0,T;\mathcal{L}_{pow,r}^q(\R^K))$ satisfying
\begin{equation}\label{eq_backEvoMoment}
	\left\{\begin{aligned}
		-\partial_t\psit &=
				\sum_{k,\tilde{k}=1}^K 
				\chi_{t\leq T_k\wedge T_{\tilde{k}}}\sum_{\alpha=1}^\nS \sigma_\alpha 
				\big\langle \nabla\phit[k,\alpha]\cdot\nabla\phit[\tilde{k},\alpha],\rhobr[\alpha](t) \big\rangle
				\partial_k\partial_{\tilde{k}}\psit
				\quad\text{on }\R^K\times(0,T),\\
		\psi^T	&= \psi.
	\end{aligned}\right.
\end{equation}
Then, it holds that
\begin{multline}\label{eq_pre34a equivalent}
	\Ev\bigg[\psi\bigg(N^{1/2}\big\langle\bvphi,\empmeas[\bT]-\rhobr(\bT)\big\rangle\bigg)\bigg]\\
	\shoveleft{	
	= 
	\Ev\bigg[\psi^0\bigg(N^{1/2}\big\langle\bphi^0,\empmeas[0]-\rhobr(0)\big\rangle\bigg)\bigg]
	}\\
	\shoveleft{
	+N^{-1/2}\sum_{k,\tilde{k}=1}^K \int_0^{T_k\wedge T_{\tilde{k}}} 
	\Ev\bigg[\partial_k\partial_{\tilde{k}}\psit\bigg(N^{1/2}\big\langle\bphit,\empmeas[t\wedge\bT]-\rhobr(t\wedge\bT)\big\rangle\bigg)
	}\\
	\shoveright{
	\times N^{1/2}\sum_{\alpha=1}^\nS \big\langle \sigma_\alpha\nabla\phit[k,\alpha]\cdot\nabla\phit[\tilde{k},\alpha],\empmeas[\alpha,t]-\rhobr[\alpha](t) \big\rangle\bigg] \m t
	}\\
	\shoveleft{
	- N^{1/2}\sum_{k=1}^K\int_0^{T_k}\Ev\big[\partial_k\psit(N^{1/2}\big\langle\bphit,\empmeas[t\wedge\bT]-\rhobr(t\wedge\bT)\big\rangle)\tilQ[t](\phit[k])\big]\m t.}
\end{multline}
Additionally, 
there exists $N_0=N_0(\delflex,r,K,data)$ such that for all $0\leq \tilde{q}\leq q$ and $t\in[0,T]$
\begin{equation}\label{eq_psitLqpowr}
	\|\psi^t\|_{{\mathcal{L}}_{pow,r}^{\tilde{q}}}
	\leq
	C  N^{\delflex}\left(1+\|\vphi\|_{H^{s}}^r\right) \|\psi\|_{{\mathcal{L}}_{pow,r}^{\tilde{q}}}.
\end{equation}
\end{lemma}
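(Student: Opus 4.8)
The plan is to build the two objects in the statement explicitly --- the family $\bphi$ of backward-evolved test functions and the backward-evolved moment function $\psi^t$ --- and then to read off \eqref{eq_pre34a equivalent} from a single application of It\^o's formula, in which the drift of $\psi^t$ is cancelled, by design, against the mean-field part of the quadratic variation of the fluctuation process. For the test functions, one applies Lemma~\ref{lem_backEvoTest} on $[0,T_k]$ with terminal datum $\vphi_k$ at $T_k$ and extends by $\phit[k,\alpha]:=\vphi_{k,\alpha}$ for $t\in[T_k,T]$; this produces a unique $\bphi=(\phi_1,\dots,\phi_K)$ solving \eqref{eq_BackEvoTestMulti}, with existence, uniqueness and regularity inherited from the theory for \eqref{eq_backEvoTest} used in the proof of Lemma~\ref{lem_backEvoTest}. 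In particular, the embedding $H^s\hookrightarrow C^2$ (valid for $s>2+d/2$) together with the $H^s$-estimate for \eqref{eq_backEvoTest} gives $\sup_t\|\phit[k,\alpha]\|_{C^1}\le CN^{\delflex}\|\vphi_{k,\alpha}\|_{H^s}$, the factor $N^{\delflex}$ absorbing the $\rI$-dependence of the coefficients of \eqref{eq_backEvoTest} via the scaling \eqref{ScalingRegimeRadius}.

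\textbf{Step: the moment function.} Set, for $t\in[0,T]$,
\[
  a_{k\tilde k}(t):=\chi_{t\le T_k\wedge T_{\tilde k}}\sum_{\alpha=1}^{\nS}\sigma_\alpha\big\langle\nabla\phit[k,\alpha]\cdot\nabla\phit[\tilde k,\alpha],\rhobr[\alpha](t)\big\rangle .
\]
Using $\chi_{t\le T_k\wedge T_{\tilde k}}=\chi_{t\le T_k}\chi_{t\le T_{\tilde k}}$, for any $\xi\in\R^K$ one has $\sum_{k,\tilde k}\xi_k\xi_{\tilde k}a_{k\tilde k}(t)=\sum_\alpha\sigma_\alpha\int_{\domain}\rhobr[\alpha](t)\big|\sum_k\xi_k\chi_{t\le T_k}\nabla\phit[k,\alpha]\big|^2\ge0$ since $\sigma_\alpha>0$ and $\rhobr[\alpha]\ge0$ (Assumption~\ref{ass_RegularityMFL}), so $(a_{k\tilde k}(t))$ is symmetric positive semi-definite. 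Hence \eqref{eq_backEvoMoment} is a spatially homogeneous, possibly degenerate, backward heat equation, whose solution I take to be the Gaussian average $\psi^t(x):=\Ev[\psi(x+G_t)]$ with $G_t\sim\mathcal N(0,\,2\int_t^T a(s)\,\m s)$. One checks directly that $\psi^T=\psi$, that $\psi^t$ solves \eqref{eq_backEvoMoment}, that $\psi^t\in C^q$ (convolution with a Gaussian does not destroy smoothness), and that this is the unique solution in $C^1(0,T;\mathcal{L}_{pow,r}^q(\R^K))$ (the difference of two solutions solves the equation with zero terminal data and polynomial growth, hence vanishes). The bound \eqref{eq_psitLqpowr} follows from
\[
  |D^{\tilde q}\psi^t(x)|\le\|\psi\|_{\mathcal{L}_{pow,r}^{\tilde q}}\,\Ev\big[(1+|x+G_t|^2)^{r/2}\big]\le C_r\|\psi\|_{\mathcal{L}_{pow,r}^{\tilde q}}\,(1+|x|^2)^{r/2}\big(1+\Ev|G_t|^r\big),
\]
where $\Ev|G_t|^r\le C_r\big(2\int_t^T\operatorname{tr}\,a(s)\,\m s\big)^{r/2}\le C(r,K,data)\big(T\max_\alpha\sigma_\alpha\,\rho_{max}\,\sup_{k,s}\|\nabla\phi^s_k\|_{L^\infty}^2\big)^{r/2}$, which by the previous paragraph is at most $C(r,K,data)N^{\delflex}\|\bvphi\|_{H^s}^r$ (after renaming $\delflex$).

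\textbf{Step: the identity.} Write $Z^t:=N^{1/2}\big\langle\bphit,\empmeas[t\wedge\bT]-\rhobr(t\wedge\bT)\big\rangle\in\R^K$, so $Z^T$ is the argument on the left of \eqref{eq_pre34a equivalent} (using $\phi^T_k=\vphi_k$) and $Z^0$ the argument of $\psi^0$. For $t<T_k$ the $k$-th component of $Z^t$ evolves by \eqref{eq_dynamics of fluctuations for backEvoTest} applied with $\eta=\phi_k$ (drift $-N^{1/2}\tilQ[t](\phit[k])\,\m t$ plus a Brownian martingale with bounded integrand), while for $t\ge T_k$ it is frozen; by independence of the driving Brownian motions, $\m[Z_k,Z_{\tilde k}]^t=2\,a_{k\tilde k}(t)\,\m t+2\chi_{t\le T_k\wedge T_{\tilde k}}\sum_\alpha\sigma_\alpha\big\langle\nabla\phit[k,\alpha]\cdot\nabla\phit[\tilde k,\alpha],\empmeas[\alpha,t]-\rhobr[\alpha](t)\big\rangle\,\m t$, i.e.\ exactly the particle cross-variation \eqref{ParticleCovStructure} with $\varphi_i=\phi_{k,\alpha},\phi_{\tilde k,\alpha}$, split into a mean-field and a fluctuation piece. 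Applying It\^o's formula to $\psi^t(Z^t)$ on each subinterval between consecutive values of the $T_k$'s (where $a(\cdot)$ and $\psi^{\cdot}$ are $C^1$ in time) and summing, the term $\partial_t\psi^t(Z^t)\,\m t$ is cancelled, by \eqref{eq_backEvoMoment}, against $\tfrac12\sum_{k,\tilde k}\partial_k\partial_{\tilde k}\psi^t(Z^t)\cdot2a_{k\tilde k}(t)\,\m t$; what survives is the fluctuation part of the second-order term together with $-N^{1/2}\sum_k\chi_{t\le T_k}\partial_k\psi^t(Z^t)\tilQ[t](\phit[k])\,\m t$, the stochastic integrals having zero expectation since their integrands are bounded on $[0,T]$ while $\psi^t$ has polynomial growth and $Z^t$ has finite moments of every order. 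Taking expectations, rewriting $\int_0^T\chi_{t\le T_k\wedge T_{\tilde k}}(\cdots)\,\m t=\int_0^{T_k\wedge T_{\tilde k}}(\cdots)\,\m t$ and $\int_0^T\chi_{t\le T_k}(\cdots)\,\m t=\int_0^{T_k}(\cdots)\,\m t$, and balancing the prefactor $N^{-1/2}\cdot N^{1/2}=1$, gives \eqref{eq_pre34a equivalent}.

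\textbf{Main obstacle.} The structural heart --- the exact cancellation of $\partial_t\psi^t$ against the mean-field part of the quadratic variation --- is built into the definitions \eqref{eq_BackEvoTestMulti} and \eqref{eq_backEvoMoment}, so the real work is bookkeeping: justifying It\^o's formula across the break-points $T_k$, where $a(\cdot)$ is only piecewise continuous and $\psi^{\cdot}$ only piecewise $C^1$ in time, and checking the integrability and true-martingale claims uniformly in the parameters. None of this is conceptually difficult, but it is where the hypotheses $q\ge3$, $s>p+3d/2+5$ and the Sobolev embeddings of Theorem~\ref{thm_iterative structure} enter.
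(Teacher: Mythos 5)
Your proposal is correct and follows essentially the same route as the paper: the dynamics of the tested fluctuations from Lemma \ref{lem_backEvoTest}, the splitting of the martingale cross-variation into a mean-field and a fluctuation part, and the cancellation of $\partial_t\psi^t$ against the mean-field part via \eqref{eq_backEvoMoment} under It\^o's formula are exactly the paper's argument. The only difference is that where the paper settles existence, uniqueness and the bound \eqref{eq_psitLqpowr} for $\psi^t$ by citing \cite[Proof of Proposition 6, Step 1]{cornalba2021dean}, you construct $\psi^t$ explicitly as a Gaussian average with covariance $2\int_t^T a(s)\,\mathrm{d}s$ (after checking positive semi-definiteness of $a$) and read the bound off Gaussian moments --- a self-contained substitute for the citation rather than a different strategy.
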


\begin{proof}
The existence, uniqueness and regularity of $\bphi$ is settled as in Lemma \ref{lem_backEvoTest}.
For the existence, uniqueness and regularity of $\psit$ see \cite[Proof of Proposition 6, Step 1]{cornalba2021dean}.
These arguments applied for every $0\leq \tilde{q}\leq q$ yield
\begin{align*}
	\|\psi^t\|_{{\mathcal{L}}_{pow,r}^{\tilde{q}}} 
	&\leq 
	C \left(1+ \|\bphi\|_{L^\infty(0,T;W^{1,\infty}(\domain))}^r\right)\|\psi\|_{{\mathcal{L}}_{pow,r}^{\tilde{q}}}\\
	&\leq
	C \left(1+ \|\bphi\|_{L^\infty(0,T;H^{\ceil{(d+3)/2}}(\domain))}^r\right)\|\psi\|_{{\mathcal{L}}_{pow,r}^{\tilde{q}}}.
\end{align*}
Now \eqref{eq_psitLqpowr} follows from $s\geq \ceil{(d+3)/2}$ and Lemma \ref{lem_RegularityContinuousTestFunctions}.
We now turn to poving \eqref{eq_pre34a equivalent}.
In what follows, the regularity of $\bphi$ and $\psit$ is sufficient for the calculations to hold. 

For the convenience of notation we set
\begin{align*}
	\bzetat 
	&\coloneqq 
	N^{1/2}\big\langle\bphit,\empmeas[t\wedge\bT]-\rhobr(t\wedge\bT)\big\rangle.
\end{align*}
Using this notation, we rewrite \eqref{eq_dynamics of fluctuations for backEvoTest} from Lemma \ref{lem_backEvoTest} for $k=1,\ldots,K$ as
\begin{equation*}
	\mbox{d} \zetat[k] 
	= -\chi_{t\leq T_k}N^{1/2}\tilQ[t](\phit[k])\m t 
	+ \mbox{d} M^t_k
\end{equation*}
with martingale part  $M^t_k$ given by 
\begin{equation*}
	\mbox{d} M^t_k \coloneqq 
	\chi_{t\leq T_k} \sum_{\alpha=1}^\nS \frac{\sqrt{2\sigma_\alpha}}{N^{1/2}}\sum_{i=1}^N\nabla\phit[k,\alpha](X^{\rI,N}_{\alpha,i}(t))\m B_{\alpha,i}(t).
\end{equation*}
Thus, as outlined in \ref{ParticleCovStructure}, we observe the linearity preserving property of the cross-variation (see Subsection \ref{subsubsec_OVBackEvoTest}, specifically \eqref{ParticleCovStructure}), which yields for $k,\tilde{k}\in\lbrace 1,\ldots,K\rbrace$ that
\begin{align}\label{eq_empmeasCrossvariation}
	\mbox{d}\left[ M^t_k, M^t_{\tilde{k}} \right]
	&= \chi_{t\leq T_k}\chi_{t\leq T_{\tilde{k}}} \sum_{\alpha=1}^\nS \frac{2\sigma_\alpha}{N}\sum_{i=1}^N
		\nabla\phit[k,\alpha](X^{\rI,N}_{\alpha,i}(t))\cdot\nabla\phit[\tilde{k},\alpha](X^{\rI,N}_{\alpha,i}(t))\m t\notag\\
	&= \chi_{t\leq T_k\wedge T_{\tilde{k}}}\sum_{\alpha=1}^\nS 2\sigma_\alpha \big\langle \nabla\phit[k,\alpha]\cdot\nabla\phit[\tilde{k},\alpha],\empmeas[\alpha,t] \big\rangle \m t.
\end{align}
Combining these observations with the It\^o rule 
and taking the expected value yields
\begin{multline*}\label{eq_InfinitIncr}
	\mbox{d}\Ev\big[\psit(\bzetat)\big] 
	=
	\Ev\big[\partial_t\psit(\bzetat)\big]\m t
	-\Ev\bigg[\sum_{k=1}^K \partial_k\psit(\bzetat)\chi_{t\leq T_k}N^{1/2}\tilQ[t](\phit[k])\bigg]\m t\\
	+\Ev\bigg[\sum_{k,\tilde{k}=1}^K \partial_k\partial_{\tilde{k}}\psit(\bzetat)
	\chi_{t\leq T_k\wedge T_{\tilde{k}}}\sum_{\alpha=1}^\nS \sigma_\alpha \big\langle \nabla\phit[k,\alpha]\cdot\nabla\phit[\tilde{k},\alpha],\empmeas[\alpha,t] \big\rangle\bigg] \m t.
\end{multline*}
Thinking of the term with $\tilQ[t](\phit[k])$ as a linearisation error, we see that \eqref{eq_backEvoMoment} is the clear choice for the backwards evolution of $\psit$ if we want the first two terms to fit the iterative structure, that is combining linear functionals of $\empmeas-\rhobr$. 
Plugging \eqref{eq_backEvoMoment} in and integrating the equation in time yields \eqref{eq_pre34a equivalent}.
\end{proof}

\subsubsection{Deriving the generalised moment structure in the discrete setting}\label{subsubsec_discrMomStruc}
Analogously to Lemma \ref{lem_pre34a equivalent} we obtain the following result in the discrete case. 
However, since the backwards evolution for $\psit$ is tailored for the continuous case, the generalised moment structure does not immediately show the iterative structure.
In Subsection \ref{subsubsec_34ab equivalents} we will estimate the cost of the adjustments necessary in order to obtain this structure.

\begin{lemma}\label{lem_pre34b equivalent}
Let the assumptions of Theorem \ref{thm_iterative structure} hold.
Then there exists a unique set of functions 
	$\bphi_h=(\phi_{h,1},\ldots,\phi_{h,K})\in \left[C^1(0,T;L^2(\Ghd,\R^\nS))\right]^K$
solving the respective discrete backwards evolution equation for test functions, that is
\begin{equation}\label{eq_discrBackEvoTestMulti}
	\left\{\begin{aligned}
		-\partial_t\phith[k,\alpha] 
			&= \sigma_\alpha \Delta_h\phith[k,\alpha]
			- \U[h,\alpha]\cdot\nabla_h\phith[k,\alpha]
			+ \sum_{\beta=1}^\nS \Ih[\nabla\V[\beta\alpha]]\ast_{h,c}(\rhobr[h,\beta]\nabla_h\phi_{h,k,\beta})\\
			&\qquad\qquad\qquad\qquad\qquad\qquad\qquad\qquad\qquad\qquad\qquad\qquad\text{for } t<T_k,\\
		\phith[k,\alpha]&= \Ih[\vphi_{k,\alpha}]
			\quad \text{for } t\in[T_k,T]
	\end{aligned}\right.
\end{equation}
for all $\alpha=1,\ldots,\nS$ and each $k=1,\ldots,K$.
Further, let $t\rightarrow \psit$ with $\psi^T=\psi$ be the solution of \eqref{eq_backEvoMoment} from Lemma \ref{lem_pre34a equivalent}.
As assumed in Theorem \ref{thm_iterative structure}, let $\Ts=\Ts(\ep/2,\ep)$ be the stopping time as defined in \eqref{defn_stopping}.
Then 
\begin{multline}\label{eq_pre34b equivalent}
	\Ev\big[\psi\big(N^{1/2}\big(\Ih[\bvphi],(\rhor[h]-\rhobr[h])(\bT\wedge\Ts)\big)_h\big)\big]
	\\
	\shoveleft{
	=
	\Ev\bigg[\psi^0\bigg(N^{1/2}\big(\bphi^0_h,(\rhor[h]-\rhobr[h])(0)\big)_h \bigg)\bigg]
	}\\
	\shoveleft{
	+\sum_{k,\tilde{k}=1}^K\sum_{\alpha=1}^\nS \sigma_\alpha \int_0^{T_k\wedge T_{\tilde{k}}}
	\Ev\bigg[\partial_k\partial_{\tilde{k}}\psi^t\bigg(N^{1/2}\big(\bphith,(\rhor[h]-\rhobr[h])(t\wedge\bT\wedge\Ts)\big)_h\bigg)
	}\\
	\shoveright{
	\times \left(\big(\nabla_h \phith[k,\alpha]\cdot\nabla_h \phith[\tilde{k},\alpha],(\rhor[h,\alpha])^+(t\wedge\Ts)\big)_h
	-\big\langle \nabla\phit[k,\alpha]\cdot\nabla\phit[{\tilde{k},\alpha}],\rhobr[\alpha](t\wedge\Ts) \big\rangle\right)\bigg]\m t
	}\\
	\shoveleft{
	-N^{1/2}\sum_{k=1}^K \int_0^{T_k}\Ev\bigg[\partial_k\psi^t\bigg(N^{1/2}\big(\bphith,(\rhor[h]-\rhobr[h])(t\wedge\bT\wedge\Ts)\big)_h\bigg)\tilQ[h,t\wedge\Ts](\phith[k])\bigg]\m t.
	}\\
	\shoveright{
	+\Errstop.}
\end{multline}
For any $\delflex>0$, 
there exists $N_0=N_0(\delflex,\ep,r,s,K,\text{data})$, such that $\Errstop$ is subject to \eqref{eq_IS_ErrstopEst} for all $N>N_0$.
\end{lemma}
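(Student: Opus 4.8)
The plan is to transcribe the argument of Lemma~\ref{lem_pre34a equivalent} to the \emph{stopped} discrete fluctuation process and to identify $\Errstop$ with the terms produced by freezing the dynamics at $\Ts$. First I would settle the existence part: for each $k$, \eqref{eq_discrBackEvoTestMulti} is a linear ODE on the finite-dimensional space $L^2(\Ghd,\R^\nS)$, run backwards from time $T_k$ with terminal datum $\Ih[\vphi_{k}]$ and held constant on $[T_k,T]$; since $\rhobr[h]$ is $C^1$ in time the coefficients are continuous, so the backward Cauchy--Lipschitz theorem yields a unique $\phi_{h,k}\in C^1(0,T;L^2(\Ghd,\R^\nS))$, exactly as in Lemma~\ref{lem_discrBackEvoTest}; the $K$ equations decouple.

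Next, set $Z^t\coloneqq\big(N^{1/2}\big(\phith[k],(\rhor[h]-\rhobr[h])(t\wedge T_k\wedge\Ts)\big)_h\big)_{k=1}^K$ and let $\psit$ be the solution of \eqref{eq_backEvoMoment} from Lemma~\ref{lem_pre34a equivalent}. On $[0,T_k]$, Lemma~\ref{lem_discrBackEvoTest} applied to $\phith[k]$ gives $\m\big(\phith[k],(\rhor[h]-\rhobr[h])(t)\big)_h=-\tilQ[h,t](\phith[k])\m t+\m M^t_{h,k}$, and since the $W^\alpha_{(y,l)}$ are independent and $\{\hbase[d]{y,l}\}$ is an $L^2(\Ghd,\R^d)$-orthonormal basis, the discrete cross-variation identity \eqref{DiscreteCovStructure} gives $\m[M^t_{h,k},M^t_{h,\tilde{k}}]=N^{-1}\sum_\alpha 2\sigma_\alpha\big(\nabla_h\phith[k,\alpha]\cdot\nabla_h\phith[\tilde{k},\alpha],(\rhor[h,\alpha])^+(t)\big)_h\m t$. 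Applying It\^o's rule to $\psit(Z^t)$, replacing $\partial_t\psit$ using \eqref{eq_backEvoMoment} (which is built from the \emph{continuous} objects $\nabla\phit[k,\alpha]$, $\rhobr[\alpha]$), and integrating from $0$ to $T$ — using $\phi^T_{h,k}=\Ih[\vphi_k]$ and $\psi^T=\psi$ at the upper endpoint, $0\wedge T_k\wedge\Ts=0$ at the lower one — the second-order term partially cancels the $\partial_t\psit$-contribution, leaving exactly the mixed coefficient $\big(\nabla_h\phith[k,\alpha]\cdot\nabla_h\phith[\tilde{k},\alpha],(\rhor[h,\alpha])^+(t\wedge\Ts)\big)_h-\big\langle\nabla\phit[k,\alpha]\cdot\nabla\phit[\tilde{k},\alpha],\rhobr[\alpha](t\wedge\Ts)\big\rangle$ and the first-order $\tilQ[h,t\wedge\Ts]$-term of \eqref{eq_pre34b equivalent}, plus the $\psi^0$-term at $t=0$. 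Every remaining discrepancy with \eqref{eq_pre34b equivalent} — the indicator mismatch $\chi_{t\le T_k\wedge T_{\tilde{k}}}$ versus $\chi_{t\le T_k\wedge T_{\tilde{k}}\wedge\Ts}$, replacing $t$ by $t\wedge\Ts$ inside $(\rhor[h,\alpha])^+$, $\rhobr[\alpha]$ and $\tilQ[h,\cdot]$, and extending $\int_0^{T_k\wedge\Ts}$ to $\int_0^{T_k}$ in the $\tilQ$-term — carries a factor $\chi_{\Ts<t}$; collecting them defines $\Errstop$ and establishes \eqref{eq_pre34b equivalent}.

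It then remains to bound $\Errstop$. Each of its summands is supported in time on $\{\Ts<T\}$, and its integrand is a spatial pairing of $\nabla_h\phith[\cdot]$, $\rhobr[\cdot]$, $(\rhor[h])^+$ (or of $\tilQ[h,\cdot]$) against a derivative of $\psit$ of order at most two, evaluated at the stopped process $Z^t$. On $\{\Ts<t\}$ the stopped process stays in the regime enforced by the definition \eqref{defn_stopping} of $\Ts$: the discrete duality pairing between the smooth test functions and the fluctuations (whose negative discrete Sobolev norm is controlled by $\Ts$) gives $|Z^t|\lesssim N^{\ep}\|\bvphi\|_{H^s}$, while $\|(\rhor[h,\alpha])^+(t\wedge\Ts)\|_{L^\infty_h}\lesssim 1$ (by the $L^\infty$-control of $\Ts$ together with $N^{1-\delZero}h^d\ge1$ and $\ep<\delZero/4$) and $\|\nabla_h\phith[k]\|_{L^2_h}\lesssim N^{\delflex}\|\bvphi\|_{H^s}$ by the discrete regularity estimates of Appendix~\ref{AppDiscrete}. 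Combining this with the moment bound \eqref{eq_psitLqpowr} for $\psit$ — needed only with $\tilde{q}=2$, so only $\|\psi\|_{\mathcal{L}_{pow,r}^{2}}$ enters — each integrand is bounded pathwise by the deterministic quantity $C N^{r\ep+\delflex}\big(1+\|\bvphi\|_{H^s}^{2r+2}\big)\|\psi\|_{\mathcal{L}_{pow,r}^{2}}$; since it is supported on $\{\Ts<T\}$, taking expectations and invoking the stopping-time estimate \eqref{eq_IS_TsEst} yields $|\Errstop|\le C N^{r\ep+\delflex}\big(1+\|\bvphi\|_{H^s}^{2r+2}\big)\|\psi\|_{\mathcal{L}_{pow,r}^{2}}\,\Pm(\Ts<T)$, which is \eqref{eq_IS_ErrstopEst}.

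The main obstacle is the bookkeeping in the second paragraph: after substituting $-\partial_t\psit$, one must verify that \emph{every} term not already matching \eqref{eq_pre34b equivalent} genuinely carries the factor $\chi_{\Ts<t}$ (so that it is supported on $\{\Ts<T\}$), and one must track the exact powers of $N$ in the pathwise bound of the last paragraph — both of which rely entirely on the structure of the stopping time $\Ts$ established in Section~\ref{SubsecExpSmallProb}.
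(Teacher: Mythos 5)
Your overall strategy coincides with the paper's: run the discrete dynamics of Lemma \ref{lem_discrBackEvoTest} against the backward moment equation \eqref{eq_backEvoMoment}, read off the iterative terms, and dump everything created by the stopping into $\Errstop$, bounded pathwise by a polynomial in $N$ times $\Pm(\Ts<T)$ and closed with Proposition \ref{prop_StopTBound}. Your bookkeeping differs in one respect: the paper stops the whole functional at $\Ts$, integrates only up to $\Ts$, and then pays a terminal correction $\Errstop[1]$ for $(\psi^{\Ts},\bphi^\Ts_h)\to(\psi,\Ih[\bvphi])$ plus an extension error $\Errstop[2]$, whereas you integrate to $T$ with only the density frozen. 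That removes the terminal correction, but it creates an extra residual you do not list among your discrepancies: for $\Ts<t<T_k$ the test functions keep evolving against the frozen fluctuation, so It\^o produces the drift $\sum_k\partial_k\psi^t(Z^t)\,N^{1/2}\big(\partial_t\phith[k],(\rhor[h]-\rhobr[h])(T_k\wedge\Ts)\big)_h$, which has no counterpart in \eqref{eq_pre34b equivalent}. It is harmless --- via \eqref{eq_discrBackEvoTestMulti} it is controlled by $H^{2l}_h$-type norms of $\Delta_h\phith[k]$, $\nabla_h\phith[k]$ (Lemma \ref{HsBoundDiscrete}, using $s>p+3d/2+5$) against the $H^{-2l}_h$-threshold encoded in $\Ts$ --- but it must be stated and estimated as part of $\Errstop$.

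The genuine gap is the event $\{\Ts=0\}$. Your pathwise bounds ``$|Z^t|\lesssim N^{\ep}\|\bvphi\|_{H^s}$'' and ``$\|(\rhor[h,\alpha])^+(t\wedge\Ts)\|_{L^\infty_h}\lesssim 1$ on $\{\Ts<t\}$'' are read off from the stopping thresholds, but the definition \eqref{defn_stopping} is conditional: if the initial fluctuation violates the $L^\infty_h$- or $H^{-l}_h$-conditions, then $\Ts=0$ and the stopping time gives no control whatsoever on $(\rhor[h]-\rhobr[h])(0)$, which is exactly where the frozen process sits for all $t$. On that event the only available estimates are the crude ones from Assumption \ref{ass_InitCond} (positivity and the mass restriction), giving $\|\rhor[h](0)\|_{L^\infty_h}\le Ch^{-d}$; the resulting polynomially large factors in $h^{-1}$ and $N$ then have to be absorbed using the separate stretched-exponential smallness of $\Pm[\Ts=0]$ together with the scaling Assumption \ref{ass_Scaling} --- this is precisely the extra step the paper carries out. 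Without splitting off $\{\Ts=0\}$ in this way, your concluding inequality $|\Errstop|\le C N^{r\ep+\delflex}\big(1+\|\bvphi\|_{H^s}^{2r+2}\big)\|\psi\|_{\mathcal{L}_{pow,r}^{2}}\Pm(\Ts<T)$ does not follow, because the pathwise bound it rests on is false on that event.
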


\begin{proof}
The existence, uniqueness and regularity for $\bphi[h]$ is as in Lemma \ref{lem_discrBackEvoTest}.
The regularity is sufficient for the following calculations to hold. 

{\bf Step 1: \eqref{eq_pre34b equivalent} and defining the error term.} 
We denote
\begin{align*}
	\bzetath 
	&\coloneqq 
	N^{1/2}\big(\bphith,(\rhor[h]-\rhobr[h])(t\wedge\bT\wedge\Ts)\big)_h.
\end{align*}
Rewriting \eqref{eq_discrDynamics of fluctuations for discrBackEvoTest} in Lemma \ref{lem_discrBackEvoTest} with this notation yields
\begin{align*}
	\mbox{d}\zetatTsh[k]
	=
	-\chi_{t\leq T_k\wedge\Ts}N^{1/2}\tilQ[h,t](\phith[k])\m t
	+\mbox{d}M^{t\wedge\Ts}_{h,k}
\end{align*}
with martingale part $M_{h,k}$ given by
\begin{align*}
	\mbox{d}M^{t\wedge\Ts}_{h,k} \coloneqq 
	-\chi_{t\leq T_k\wedge\Ts} \sum_{\alpha=1}^\nS \sqrt{2\sigma_\alpha}
	\sum_{\substack{y\in\Ghd \\ l\in\lbrace 1,\ldots,d\rbrace}} \bigg(\sqrt{(\rhor[h,\alpha](t))^+}\hbase[d]{y,l},\nabla_h \phith[k,\alpha]\bigg)_h \m W_{(y,l)}^\alpha(t).
\end{align*}
The cross variation terms with $k,\tilde{k}\in\lbrace1,\ldots,K\rbrace$, as detailed in Subsection \ref{subsubsec_OVBackEvoTest} (specifically \eqref{DiscreteCovStructure}), are given by 
\begin{align*}
	\mbox{d}\left[ M^{t\wedge\Ts}_{h,k}, M^{t\wedge\Ts}_{h,\tilde{k}} \right]
	&= \chi_{t\leq T_k\wedge T_{\tilde{k}}\wedge\Ts} \sum_{\alpha=1}^\nS 2\sigma_\alpha \big(\nabla_h \phith[k,\alpha]\cdot\nabla_h \phith[\tilde{k},\alpha],(\rhor[h,\alpha](t))^+\big)_h.
\end{align*}
Now as in the continuous case we have
\begin{multline}\label{eq_pre34bwoErrstop}
	\mbox{d}\psi^t(\bzetatTsh)
	=\partial_t\psi^t(\bzetatTsh)\m t 
	-\sum_{k=1}^K \partial_k\psi^t(\bzetatTsh)\chi_{t\leq T_k\wedge\Ts}N^{1/2}\tilQ[h](\phith[k])\m t\\
	\shoveright{
	+\sum_{k,\tilde{k}=1}^K \partial_k\partial_{\tilde{k}}\psi^t(\bzetatTsh) 
			\chi_{t\leq T_k\wedge T_{\tilde{k}}\wedge\Ts} \sum_{\alpha=1}^\nS \sigma_\alpha \big(\nabla_h \phith[k,\alpha]\cdot\nabla_h \phith[\tilde{k},\alpha],(\rhor[h,\alpha](t))^+\big)_h\m t
	}\\
	+\sum_{k=1}^K\partial_k\psit(\bzetatTsh) \m M^{t\wedge\Ts}_{h,k}.
\end{multline}
Integrating in time from $0$ to $\Ts$, plugging in \eqref{eq_backEvoMoment} and then taking the expected value we obtain \eqref{eq_pre34b equivalent} where the error term is given by $\Errstop=\Errstop[1]+\Errstop[2]$.
The first error term $\Errstop[1]$ stems from the correction $(\psi^\Ts,\bphi^\Ts_h)\rightarrow (\psi^T,\bphi^T_h)=(\psi,\bvphi)$ given by
\begin{multline*}
	\Errstop[1]
	\coloneqq
	\Ev\bigg[\chi_{\Ts<T}\bigg(\psi\big(N^{1/2}\big(\Ih[\bvphi],(\rhor[h]-\rhobr[h])(\bT\wedge\Ts)\big)_h\big)\\
		-\psi^\Ts\big(N^{1/2}\big(\bphi^\Ts_h,(\rhor[h]-\rhobr[h])(\bT\wedge\Ts)\big)_h \big)\bigg)\bigg],
\end{multline*}
while $\Errstop[2]$ comes from extending the integration to $T$, i.e. $\chi_{t\leq\Ts}\rightarrow 1$, that is 
\begin{align*}
	\Errstop[2]
	&\coloneqq
	-\sum_{k,\tilde{k}=1}^K\sum_{\alpha=1}^\nS \sigma_\alpha \int_0^{T_k\wedge T_{\tilde{k}}}
	\Ev\bigg[\chi_{\Ts<t\leq T}\partial_k\partial_{\tilde{k}}\psi^t\big(\bzetath\big)
	\\&
	\times \left(\big(\nabla_h \phith[k,\alpha]\cdot\nabla_h \phith[\tilde{k},\alpha],(\rhor[h,\alpha])^+(t\wedge\Ts)\big)_h
	-\big\langle \nabla\phit[k,\alpha]\cdot\nabla\phit[{\tilde{k},\alpha}],\rhobr[\alpha](t\wedge\Ts) \big\rangle\right)\bigg]\m t
	\\&\quad
	+N^{1/2}\sum_{k=1}^K \int_0^{T_k}\Ev\bigg[\chi_{\Ts<t\leq T}\partial_k\psi^t\big(\bzetath \big)\tilQ[h,t\wedge\Ts](\phith[k])\bigg]\m t.
\end{align*}

{\bf Step 2: Estimating the error terms.} 
For the following estimates we assume $N$ to be large enough to use the suitable auxiliary results.
Additionally, we will rely on the definition of the stopping time $\Ts$, see \eqref{defn_stopping} for $0<\ep<\delZero/4$ and $\delta=\ep/2$ as fixed in the assumptions of Theorem \ref{thm_iterative structure}.
Due to the definition of the ${{\mathcal{L}}_{pow,r}^{q}}$-spaces we have
\begin{align}
	&\big|\psi\big(N^{1/2}\big(\Ih[\bvphi],(\rhor[h]-\rhobr[h])(\bT\wedge\Ts)\big)_h\big)\big|\notag\\
	&\quad\leq
	\|\psi\|_{{\mathcal{L}}_{pow,r}^{0}}\big(1+ \big|N^{1/2}\big(\Ih[\bvphi],(\rhor[h]-\rhobr[h])(\bT\wedge\Ts)\big)_h\big|^2\big)^{r/2}\notag\\
	&\quad\leq
	\|\psi\|_{{\mathcal{L}}_{pow,r}^{0}}\big(1+ \|\Ih[\bvphi]\|_{H^{d+2}_h}^2 N\|(\rhor[h]-\rhobr[h])(\bT\wedge\Ts)\|_{H^{-2(\floor{d/2}+1)}_h}^2\big)^{r/2}\notag\\
	&\quad\leq 
	\|\psi\|_{{\mathcal{L}}_{pow,r}^{0}}\big(1+ C\|\bvphi\|_{C^{d+2}}^2 
	\big(N^{2\ep} + N\chi_{\Ts=0}\|(\rhor[h]-\rhobr[h])(0)\|_{H^{-2(\floor{d/2}+1)}_h}^2\big)  \big)^{r/2}\notag
\end{align}
where the last step follows from the definition of the stopping time $\Ts$.
Additionally plugging in \eqref{eq_DiscreteTestFuncBound} from Lemma \ref{HsBoundDiscrete}
to estimate $\|\bphi^\Ts_h\|_{\HGhd[d+2]}$ and \eqref{eq_psitLqpowr} from Lemma \ref{lem_pre34a equivalent} to bound $\|\psi^\Ts\|_{{\mathcal{L}}_{pow,r}^{q}}$, we obtain that -- as long as $N$ is large enough -- for any small $\delflex>0$ 
\begin{align*}
	&\psi^\Ts\big(N^{1/2}\big(\bphi^\Ts_h,(\rhor[h]-\rhobr[h])(\bT\wedge\Ts)\big)_h \big)\\
	&\quad\leq
	C\|\psi\|_{{\mathcal{L}}_{pow,r}^{0}}N^{\delflex}\big(1+\|\bvphi\|_{H^s(\domain)}^r\big)\\
	&\qquad\quad
	\times\Big(1+ N^{\delflex}\|\bvphi\|_{C^{d+2}(\domain)}^r 
	\big(N^{r\ep}+\chi_{\Ts=0}N^{r/2}\|(\rhor[h]-\rhobr[h])(0)\|_{H^{-2(\floor{d/2}+1)}_h}^r\big) \Big).
\end{align*}
Since Assumption \eqref{eq_IS_sbounds} implies $s>3d/2+2$, we have $\|\bvphi\|_{C^{d+3}}\leq C\|\bvphi\|_{H^s}$ via Sobolev embedding and thus get
\begin{multline}\label{eq_ErrstopProof2}
	\Errstop[1]
	\leq 
	\Pm(\Ts < T) C\|\psi\|_{{\mathcal{L}}_{pow,r}^{0}}\big(1+ \|\bvphi\|_{H^s}^{2r}\big)N^{r\ep+\delflex}\\
	+\Ev\Big[\chi_{\Ts = 0}\|(\rhor[h]-\rhobr[h])(0)\|_{H^{-2(\floor{d/2}+1)}_h}^r\Big]C\|\psi\|_{{\mathcal{L}}_{pow,r}^{0}}\big(1+ \|\bvphi\|_{H^s}^{2r}\big)N^{r/2}.
\end{multline}
For now ignoring the case $\Ts=0$, plugging in the bound for $\Pm(\Ts < T)$ from Proposition \ref{prop_StopTBound}, we obtain \eqref{eq_IS_ErrstopEst}.

With respect to the case $\Ts=0$, due to Assumption \ref{ass_InitCond}, the mass restriction and positivity of $\rhor[h](0)$, we obtain the very rough estimate
\begin{equation*}
	\|\rhor[h](0)\|_{H^{-2(\floor{d/2}+1)}_h} \leq  \|\rhor[h](0)\|_{L^2_h} \leq \|\rhor[h](0)\|_{L^\infty_h} \leq h^{-d}\|\rhor[h](0)\|_{L^1_h} \leq Ch^{-d}.
\end{equation*}
Furthermore, since $\rhobr[h](0)=\Ih[\rhobr(0)]$, we also get
\begin{equation*}
	\|\rhobr[h](0)\|_{H^{-2(\floor{d/2}+1)}_h} \leq \|\rhobr[h](0)\|_{L^2_h} \leq  C\|\rhobr(0)\|_{C^0}.
\end{equation*}
Both are controlled by the bound for $\Pm[\Ts = 0]\leq \Pm[\Ts<0]$: In particular
\begin{align*}
	\Pm\big[\Ts = 0\big] 
	&\leq  
	C \exp\big( -  C N^{\ep/2}\big) \leq
	CN^{-r/2}\left(h^{-d} + \|\rhobr(0)\|_{C^1}^{-r}\right)^{-r}\exp\big( -  C/2 N^{\ep/2}\big),
\end{align*}
where the second inequality is due to the scaling regime, see Assumption \ref{ass_Scaling}, for $N$ large enough (depending on $\|\rhobr(0)\|_{C^1}$).
Plugging these estimates into \eqref{eq_ErrstopProof2} we obtain the bound given in \eqref{eq_IS_ErrstopEst}. 

Concerning $\Errstop[2]$, with the integration we can ignore the case $\Ts=0$.
For $0<t<\Ts$ via Remark \ref{rem_rhominh_rhomaxh} we have
\begin{equation}
	\|(\rhor[h,\alpha])^+(t\wedge\Ts)\|_{L^\infty_h}
	\leq 
	\|(\rhobr[h])\|_{L^\infty_h} + N^{-\epsilon}
	\leq
	C\rho_{max,h},
\end{equation}
and using Corollary \ref{cor_DiscrHoelder}, Lemma \ref{HsBoundDiscrete}, and that $\|\nabla_h f\|_{L^2_h}\leq C\|f\|_{H^1_h}$ for $f\in\LzGhd$,
\begin{equation}
	\|\nabla_h \phith[k,\alpha]\cdot\nabla_h \phith[\tilde{k},\alpha]\|_{L^1_h}
	\leq
	C\|\bphith\|_{H^1_h}^2
	\leq 
	CN^\delflex \|\Ih[\bvphi]\|_{H^1_h}^2
	\leq
	CN^\delflex \|\bvphi\|_{H^s}^2.
\end{equation}
For the other term, note that 
\begin{align}
	\big|N^{1/2}\tilQ[h,t\wedge\Ts](\phith[k])\big|
	&\leq
	CN^{\ep+\delflex} \|\nabla_h\bphith\|_{L^1_h} \|(\rhor[h]-\rhobr[h])(t)\|_{L^\infty_h}\notag\\
	&\leq
	C\|\bvphi\|_{H^s}N^{\delflex}
	\label{eq_ErrstopProofQEst},
\end{align}
where for the first inequality we estimated the convolution pointwise via the definition of $\Ts$.
Together with Assumption \ref{ass_RegularityMFL} we obtain 
\begin{equation*}
	\Errstop[2] 
	\leq 
	C \Pm(\Ts<T) \|\psi\|_{\mathcal{L}_{pow,r}^2}(1+\|\bvphi\|_{H^s}^r)\|\bvphi\|_{H^s}^r N^{r\epsilon+\delflex}(\|\bvphi\|_{H^s}^2+\|\bvphi\|_{H^s}).
\end{equation*}
Combining this with the bound for $\Pm(\Ts<T)$, the scaling from Assumption \ref{ass_Scaling} and the estimate for $\Errstop[1]$ we obtain \eqref{eq_IS_ErrstopEst}.
\end{proof}

\subsubsection{Comparing the generalised moments, the iterative structure}\label{subsubsec_34ab equivalents}
Now, comparing \eqref{eq_pre34a equivalent} and \eqref{eq_pre34b equivalent}, up to some small error terms we obtain the iterative structure, i.e. the same type of terms we started with but with an additional small prefactor of $N^{-1/2}$.

\begin{proposition}\label{prop_ContAndDiscrMomStructure}
Under the assumptions of Theorem \ref{thm_iterative structure},
for all $t\in(0,T)$, as well as $k,\tilde{k}\in\lbrace 1,\ldots,K\rbrace$, $\alpha,\beta\in\lbrace 1,\ldots,\nS\rbrace$ there exist functions
\begin{equation*}
	\psi^0\in\mathcal{L}_{pow,r}^{q}(\R^K),  \quad
	\widetilde{\psi}_{k\tilde{k}}^t\in\mathcal{L}_{pow,r+1}^{q-2}(\R^{K+1}),
\end{equation*}
sets of test functions
\begin{equation*}
	\bphi^0\in[H^s(\domain,\R^{\nS})]^K, \quad
	\bphittil[k\tilde{k}]\in [H^{s-1}(\domain,\R^{\nS})]^{K+1},
\end{equation*}
and test times 
	$\tilde{\bT}_{k\tilde{k}}\in[0,T]^{K+1}$
such that 
\begin{align}\label{eq_34a equivalent}
	&\Ev\bigg[\psi\bigg(N^{1/2}\big\langle\bvphi,\empmeas[\bT]-\rhobr(\bT)\big\rangle\bigg)\bigg]\\
	&\quad=
	\Ev\bigg[\psi^0\bigg(N^{1/2}\big\langle\bphi^0,\empmeas[0]-\rhobr(0)\big\rangle\bigg)\bigg]
	\notag\\
	&\qquad
	+\frac{1}{N^{1/2}}\sum_{k,\tilde{k}=1}^K \int_0^{T_k\wedge T_{\tilde{k}}} 
	\Ev\bigg[\widetilde{\psi}_{k\tilde{k}}^t\bigg(N^{1/2}\big\langle\bphittil[k\tilde{k}],\empmeas[t\wedge\bT_{k\tilde{k}}]-\rhobr(t\wedge\f{\tilde{T}}_{k\tilde{k}})\big\rangle\bigg)\bigg]\m t
	\notag\\
	&\qquad
	+\mathrm{Err}_{lin,a}
	\notag
\end{align}
and
\begin{multline}\label{eq_34b equivalent}
	\Ev\bigg[\psi\bigg(N^{1/2}\big(\Ih[\bvphi],(\rhor[h]-\rhobr[h])(\bT\wedge\Ts)\big)_h\bigg)\bigg]\\
	\shoveleft{
	=\Ev\bigg[\psi^0\bigg(N^{1/2}\big(\Ih[\bphi^0],(\rhor[h]-\rhobr[h])(0)\big)_h\bigg)\bigg]
	}\\
	\shoveleft{
	+ N^{-1/2}\sum_{k,\tilde{k}=1}^K \int_0^{T_k\wedge T_{\tilde{k}}} 
	\Ev\bigg[\widetilde{\psi}_{k\tilde{k}}^t\bigg(N^{1/2}\big(\Ih\big[\bphittil[k\tilde{k}]\big],(\rhor[h]-\rhobr[h])(t\wedge\f{\tilde{T}}_{k\tilde{k}}\wedge\Ts)\big)_h\bigg)\bigg]\m t
	}\\
	\shoveleft{
	+\Errstop+\mathrm{Err}_{neg}+\mathrm{Err}_{num}+\mathrm{Err}_{lin,b}.
	\hfill}
\end{multline}
The linearization errors are constructed using $\bphi$ as in \eqref{eq_BackEvoTestMulti}, $\psi^t$ as in \eqref{eq_backEvoMoment}, $\bphi[h]$ as in \eqref{eq_discrBackEvoTestMulti}, $\tilQ$ as in \eqref{eq_def tilQ}, and $\tilQ[h]$ as in \eqref{eq_deftilQh}: explicitly, they read
\begin{align}\label{eq_DefErrlina}
	Err_{lin,a} & := - N^{1/2}\sum_{k=1}^K\int_0^{T_k}\Ev\big[\partial_k\psit(N^{1/2}\big\langle\bphit,\empmeas[t\wedge\bT]-\rhobr(t\wedge\bT)\big\rangle)\tilQ[t](\phit[k])\big]\m t, \\
Err_{lin,b} & := -N^{1/2}\sum_{k=1}^K \int_0^{T_k}\Ev\big[\partial_k\psi^t\big(N^{1/2}\big(\bphith,(\rhor[h]-\rhobr[h])(t\wedge\bT\wedge\Ts)\big)_h\big)\label{eq_DefErrlinb}\\
	& \qquad \qquad \qquad \qquad \qquad \qquad \qquad \qquad \qquad  \qquad 
			\times\tilQ[h,t\wedge\Ts](\phi_{h,k}(t))\big]\m t,\nonumber
\end{align}
For any $\delflex>0$ there exists $N_0=N_0(\delflex,\ep,r,s,K,\text{data})$, such that for all $N>N_0$ the following estimates hold.
The stopping time satisfies \eqref{eq_IS_ErrstopEst}, 
the new generalised moment functions $\psi^0$ and $\{\widetilde{\psi}_{k\tilde{k}}^t\}_{{t,k,\tilde{k}},m,n}$ are subject to \eqref{eq_IS_psi0Est} and \eqref{eq_IS_psiRegEst} respectively. 
The test function $\bphi^0$ satisfy \eqref{eq_IS_phi0Est}, and the test functions $\{\bphittil[k\tilde{k}]\}_{t,k,\tilde{k}}$ satisfy \eqref{eq_IS_phiRegEst}.
Finally, for the error terms the estimates \eqref{eq_IS_ErrstopEst}, \eqref{eq_IS_ErrnegEst}, and \eqref{eq_IS_ErrnumEst} hold respectively.

\end{proposition}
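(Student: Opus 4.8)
\emph{Strategy.} The proof simply feeds Lemma~\ref{lem_pre34a equivalent} and Lemma~\ref{lem_pre34b equivalent} into the claimed iterative template: the ``initial'' terms of \eqref{eq_pre34a equivalent}--\eqref{eq_pre34b equivalent} are copied verbatim, the cross-variation terms are rewritten so as to exhibit \emph{one more} linear functional of $\empmeas-\rhobr$ (resp.\ $\rhor[h]-\rhobr[h]$), and the linearisation-compensation terms become $\mathrm{Err}_{lin,a}$, $\mathrm{Err}_{lin,b}$ as in \eqref{eq_DefErrlina}--\eqref{eq_DefErrlinb}. Concretely I take $\psi^0$ to be the solution of \eqref{eq_backEvoMoment} at $t=0$, $\bphi^0:=(\phi^0_k)_{k=1}^K$ with $\phi_k$ as in \eqref{eq_BackEvoTestMulti}, and for $k,\tilde{k}\in\{1,\dots,K\}$
\[
	\widetilde{\psi}_{k\tilde{k}}^t(x_1,\dots,x_K,y):= y\,\partial_k\partial_{\tilde{k}}\psit(x_1,\dots,x_K),\qquad
	\bphittil[k\tilde{k}]:=\big(\phit[1],\dots,\phit[K],\,(\sigma_\alpha\nabla\phit[k,\alpha]\cdot\nabla\phit[\tilde{k},\alpha])_{\alpha=1}^{\nS}\big),
\]
together with $\tilde{\bT}_{k\tilde{k}}:=(T_1,\dots,T_K,T_k\wedge T_{\tilde{k}})$, $\mathrm{Err}_{neg}:=0$, and $\mathrm{Err}_{lin,a}$, $\mathrm{Err}_{lin,b}$ as defined. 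That $\bphittil[k\tilde{k}]\in[H^{s-1}(\domain,\R^\nS)]^{K+1}$ follows from the Banach-algebra property of $H^{s-1}$ (valid since $s-1>d/2$ by \eqref{eq_IS_sbounds}) and the regularity of $\bphi$ from Lemma~\ref{lem_backEvoTest}; the bounds \eqref{eq_IS_psi0Est}, \eqref{eq_IS_psiRegEst}, \eqref{eq_IS_phi0Est}, \eqref{eq_IS_phiRegEst} then come from \eqref{eq_psitLqpowr}, Lemma~\ref{lem_RegularityContinuousTestFunctions}, the product estimate, and the elementary inequality $(1+|x|^2)^{r/2}|y|\le(1+|x|^2+|y|^2)^{(r+1)/2}$ with its derivative analogues (these also give $\widetilde{\psi}_{k\tilde{k}}^t\in\mathcal{L}^{q-2}_{pow,r+1}$, since $\partial_k\partial_{\tilde{k}}$ consumes two derivatives and $y$ one more, and the map is linear in $y$). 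For the continuous identity \eqref{eq_34a equivalent} there is nothing left to estimate: on $t\le T_k\wedge T_{\tilde{k}}$ one has $t\wedge(T_k\wedge T_{\tilde{k}})=t$, so linearity of $\widetilde{\psi}_{k\tilde{k}}^t$ in its last slot turns the cross-variation term of \eqref{eq_pre34a equivalent} literally into $N^{-1/2}\sum_{k,\tilde{k}}\int_0^{T_k\wedge T_{\tilde{k}}}\Ev\big[\widetilde{\psi}_{k\tilde{k}}^t\big(N^{1/2}\big\langle\bphittil[k\tilde{k}],\empmeas[t\wedge\tilde{\bT}_{k\tilde{k}}]-\rhobr(t\wedge\tilde{\bT}_{k\tilde{k}})\big\rangle\big)\big]\m t$.

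\emph{The discrete identity.} For \eqref{eq_34b equivalent} the same bookkeeping produces the desired term plus two families of discrepancies, which I collect into $\mathrm{Err}_{num}$. First, the argument of $\partial_k\partial_{\tilde{k}}\psit$ in \eqref{eq_pre34b equivalent} involves the \emph{discrete} backward test functions $\phith[k]$, while the target involves $\Ih[\phit[k]]$; on $\{t\le\Ts\}$ the two $\R^K$-vectors differ by at most $CN^{1/2}h^{p+1}\|\bvphi\|_{H^s}\,\|(\rhor[h]-\rhobr[h])(t)\|_{H^{-2(\floor{d/2}+1)}_h}\lesssim N^{\ep+\delflex}h^{p+1}\|\bvphi\|_{H^s}$, using the discrete consistency estimates of Appendix~\ref{AppDiscrete} (e.g.\ Lemma~\ref{HsBoundDiscrete}) for $\phith[k]$ versus $\Ih[\phit[k]]$ together with the stopping-time bound $\|(\rhor[h]-\rhobr[h])(t)\|_{H^{-2(\floor{d/2}+1)}_h}\lesssim N^{-1/2+\ep}$. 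A mean-value estimate for $\partial_k\partial_{\tilde{k}}\psit\in\mathcal{L}^{q-2}_{pow,r+1}$ ($q\ge3$), the a priori bounds on the arguments and on the bracket (via Remark~\ref{rem_rhominh_rhomaxh}), and \eqref{eq_psitLqpowr}, convert this into a contribution $\lesssim\|\psi\|_{\mathcal{L}^3_{pow,r}}(1+\|\bvphi\|_{H^s}^{2r+3})N^{(r+1)\ep+\delflex}h^{p+1}$. Second, on $\{t<\Ts\}$ one has $(\rhor[h,\alpha])^+=\rhor[h,\alpha]$ by the very definition of $\Ts$ and Remark~\ref{rem_rhominh_rhomaxh} --- so the would-be $(\rhor[h,\alpha])^-$ contribution vanishes identically, which gives $\mathrm{Err}_{neg}=0$ and hence \eqref{eq_IS_ErrnegEst} --- and then
\[
	\big(\nabla_h\phith[k,\alpha]\cdot\nabla_h\phith[\tilde{k},\alpha],\rhor[h,\alpha]\big)_h-\big\langle\nabla\phit[k,\alpha]\cdot\nabla\phit[\tilde{k},\alpha],\rhobr[\alpha]\big\rangle
	=\big(\Ih[\nabla\phit[k,\alpha]\cdot\nabla\phit[\tilde{k},\alpha]],\rhor[h,\alpha]-\rhobr[h,\alpha]\big)_h+R_1+R_2,
\]
where $R_1$ pairs $\nabla_h\phith[k,\alpha]\cdot\nabla_h\phith[\tilde{k},\alpha]-\Ih[\nabla\phit[k,\alpha]\cdot\nabla\phit[\tilde{k},\alpha]]$ (of size $O(h^{p+1})$ in $H^{2(\floor{d/2}+1)}_h$ by Appendix~\ref{AppDiscrete}) against $\rhor[h,\alpha]-\rhobr[h,\alpha]$ ($O(N^{-1/2+\ep})$ by $\Ts$), and $R_2=(\nabla_h\phith[k,\alpha]\cdot\nabla_h\phith[\tilde{k},\alpha],\rhobr[h,\alpha])_h-\langle\nabla\phit[k,\alpha]\cdot\nabla\phit[\tilde{k},\alpha],\rhobr[\alpha]\rangle$ is a purely deterministic $O(h^{p+1})$ consistency error (discrete versus continuous inner product, $\rhobr[h]$ versus $\Ih[\rhobr]$, $\nabla_h\phith$ versus $\Ih[\nabla\phit]$, all for smooth periodic functions). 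The first summand on the right is exactly the $(K+1)$-st slot of the target; $R_1,R_2$ --- multiplied by $\partial_k\partial_{\tilde{k}}\psit$ and summed over $k,\tilde{k},\alpha$ with weights $\sigma_\alpha$ --- contribute another term of the same order. Together with $\Errstop$ inherited from Lemma~\ref{lem_pre34b equivalent} (whence \eqref{eq_IS_ErrstopEst}), this yields \eqref{eq_34b equivalent} and \eqref{eq_IS_ErrnumEst}.

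\emph{Main obstacle.} Everything above is either definitional or routine except the last step: one must carry the polynomially growing constants of $\psit$ (controlled by \eqref{eq_psitLqpowr}) through the mean-value and Hölder estimates while invoking the appendix's discrete consistency bounds \emph{at full order $p+1$} --- in particular for $\phith$ versus $\Ih[\phit]$, for $\rhobr[h]$ versus $\Ih[\rhobr]$, and for the periodic trapezoidal quadrature $(\Ih[f],\Ih[g])_h$ versus $\int_\domain fg$ --- and then match the powers of $N$ (stopping-time gains $N^{-1/2+\ep}$ against the $N^{1/2}$ rescaling, the polynomial factors $\|\bvphi\|_{H^s}$, and the $N^{\delflex}$ from the $\rI$-scaling) so that the two families of discrepancies, and $R_1$, $R_2$, all fit inside the single error budget $N^{(r+1)\ep+\delflex}h^{p+1}$ of \eqref{eq_IS_ErrnumEst}.
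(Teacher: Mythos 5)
Your construction coincides with the paper's own proof: the same choices of $\psi^0$, $\bphi^0$, $\widetilde{\psi}_{k\tilde{k}}^t(\f{z})=\partial_k\partial_{\tilde{k}}\psi^t(z_1,\dots,z_K)\,z_{K+1}$, $\bphittil[k\tilde{k}]$ and $\tilde{\bT}_{k\tilde{k}}$, the same vanishing of $\mathrm{Err}_{neg}$ via positivity of $\rhor[h]$ before $\Ts$ (Remark \ref{rem_rhominh_rhomaxh}), and the same splitting of the remaining discrepancies: your $R_1$ is the paper's $\mathrm{Err}_{num,1}$, your $R_2$ its $\mathrm{Err}_{num,2}$, and your replacement of $\bphith$ by $\Ih[\bphit]$ in the argument of $\partial_k\partial_{\tilde{k}}\psi^t$ is its $\mathrm{Err}_{num,3}$, all estimated with the same stopping-time and discrete consistency bounds and the same error budget \eqref{eq_IS_ErrnumEst}.

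There is, however, one concrete omission. You assert that the initial-time terms of \eqref{eq_pre34a equivalent}--\eqref{eq_pre34b equivalent} can be ``copied verbatim''; on the discrete side this is false, because \eqref{eq_pre34b equivalent} carries $\Ev\big[\psi^0\big(N^{1/2}\big(\bphi^0_h,(\rhor[h]-\rhobr[h])(0)\big)_h\big)\big]$ with the \emph{discrete} backward evolution $\bphi^0_h$, whereas the target \eqref{eq_34b equivalent} carries $\Ih[\bphi^0]$. A further error term (the paper's $\mathrm{Err}_{num,4}$) is needed for this replacement, obtained from the mean value theorem together with the test-function consistency bound \eqref{OpErrorTestFunctions} evaluated at $t=0$. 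Moreover, this is precisely the one place where your stopping-time argument does not apply: on the event $\{\Ts=0\}$ there is no control of $(\rhor[h]-\rhobr[h])(0)$ through $\Ts$, and one must instead combine the crude mass bound $\|\rhor[h](0)\|_{L^\infty_h}\le Ch^{-d}$ from Assumption \ref{ass_InitCond} with $\Pm[\Ts=0]\lesssim\exp(-CN^{\ep/2})$, exactly as in the $\Errstop$ estimate of Lemma \ref{lem_pre34b equivalent}. The fix stays within \eqref{eq_IS_ErrnumEst}, so the gap is local, but as written your decomposition of $\mathrm{Err}_{num}$ does not make \eqref{eq_34b equivalent} an identity. (A minor point: the consistency of $\phith$ versus $\Ih[\phit]$ is Lemma \ref{lemma_H1ErrorTestFunctions}, not Lemma \ref{HsBoundDiscrete}, which only bounds $\phith$ itself.)
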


\begin{rem}
The terms $\mathrm{Err}_{lin,a}$ and $\mathrm{Err}_{lin,b}$ do not show the itarative structure yet due to the convolutional structure.
In Subsection \ref{subsec_Errlin} below we will follow \ref{subsubsec_TCErrlin} to separate the two contributions of the quadratic non-linearity.
\end{rem}

\begin{proof}
In Step 1 and 2 we will introduce the definitions which yield \eqref{eq_34a equivalent} and \eqref{eq_34b equivalent} from Lemma \ref{lem_pre34a equivalent} and Lemma \ref{lem_pre34b equivalent} respectively. 
In Step 3, 4, and 5 we will prove the estimates. 
 
{\bf Step 1:  Defining $\widetilde{\psi}_{k\tilde{k}}^t$, $\bphittil[k\tilde{k}]$,$\tilde{\bT}_{k\tilde{k}}$ to obtain \eqref{eq_34a equivalent}.}
We introduce $\bphittil[k\tilde{k}](x)\colon \domain\rightarrow\R^{K+1}$ and $\tilde{\bT}_{k\tilde{k}}\in[0,T]^{K+1}$ with 
\begin{equation}\label{eq_DefTildePhi}
	\bphittil[k\tilde{k}](x) 
	= 
	\begin{pmatrix}
		\big(\phit[1,\alpha](x)\big)_{\alpha=1,\ldots,\nS}\\
		\vdots\\
		\big(\phit[K,\alpha](x)\big)_{\alpha=1,\ldots,\nS}\\
		\big(\sigma_\alpha\nabla\phit[k,\alpha](x)\cdot\nabla\phit[\tilde{k},\alpha](x)\big)_{\alpha=1,\ldots,\nS}
	\end{pmatrix},
	\qquad
	\tilde{\bT}_{k\tilde{k}}
	=
	\begin{pmatrix}
		T_1\\
		\vdots\\
		T_K\\
		T_k\wedge T_{\tilde{k}}
	\end{pmatrix}
\end{equation}
where $\bphit$ and thus particularly $\bphi^0$ come from \eqref{eq_BackEvoTestMulti}.
Further defining $\widetilde{\psi}_{k\tilde{k}}^t\colon \R^{K+1}\rightarrow\R$,
\begin{align*}
	\widetilde{\psi}_{k\tilde{k}}^t(\f{z})
	=
	\chi_{t\leq T_k\wedge T_{\tilde{k}}}\partial_k\partial_{\tilde{k}}\psi^t(z_1,\ldots,z_K)z_{K+1}
\end{align*}
where $\psit$, and thus also $\psi^0$ are given by \eqref{eq_backEvoMoment},
we immediately obtain \eqref{eq_34a equivalent} from \eqref{eq_pre34a equivalent}.

{\bf Step 2: \eqref{eq_34b equivalent} and defining the remaining error terms.}
From Lemma \ref{lem_pre34b equivalent} we have \eqref{eq_pre34b equivalent}.
Comparing this with \eqref{eq_34b equivalent}, there is a series of slight adjustments to obtain the appropriate form, for which we have to pay with the respective error term.
Recall that
\begin{align*}
	\bzetath 
	&\coloneqq 
	N^{1/2}\big(\bphith,(\rhor[h]-\rhobr[h])(t\wedge\bT\wedge\Ts)\big)_h.
\end{align*}
First, in order to adjust $(\rhor[h,\alpha])^+ \longrightarrow \rhor[h,\alpha]$, we have to pay
\begin{multline*}
	\mathrm{Err}_{neg}
	\coloneqq
	\sum_{k,\tilde{k}=1}^K\sum_{\alpha=1}^\nS \sigma_\alpha \int_0^{T_k\wedge T_{\tilde{k}}}
		\Ev\big[\partial_k\partial_{\tilde{k}}\psi^t(\bzetath)\\
	\times \big(\nabla_h \phith[k,\alpha]\cdot\nabla_h \phith[\tilde{k},\alpha],(\rhor[h,\alpha])^-(t\wedge\Ts)\big)_h\big]\m t.
\end{multline*}
Second, to adjust $\nabla_h \phith[k,\alpha]\cdot\nabla_h \phith[\tilde{k},\alpha] \longrightarrow \Ih\big[\nabla\phit[k,\alpha]\cdot\nabla\phit[\tilde{k},\alpha]\big]$, we pay
\begin{multline*}
	\mathrm{Err}_{num,1}
	\coloneqq
	\sum_{k,\tilde{k}=1}^K\sum_{\alpha=1}^\nS \sigma_\alpha \int_0^{T_k\wedge T_{\tilde{k}}}
		\Ev\bigg[\partial_k\partial_{\tilde{k}}\psi^t(\bzetath)\\
	\times\bigg(\big(\nabla_h \phith[k,\alpha]\cdot\nabla_h \phith[\tilde{k},\alpha]
		-\Ih[\nabla\phit[k,\alpha]\cdot\nabla\phit[\tilde{k},\alpha]],\rhor[h,\alpha](t\wedge\Ts)\big)_h\bigg)\bigg]\m t.
\end{multline*}
Third, for the adjustment $\rhobr[\alpha]\longrightarrow \rhobr[h,\alpha]$ we have to pay
\begin{multline*}
	\mathrm{Err}_{num,2}
	\coloneqq
	\sum_{k,\tilde{k}=1}^K\sum_{\alpha=1}^\nS \sigma_\alpha \int_0^{T_k\wedge T_{\tilde{k}}}
		\Ev\big[\partial_k\partial_{\tilde{k}}\psi^t(\bzetath)\big]\\
	\times\bigg(\big(\Ih[\nabla\phit[k,\alpha]\cdot\nabla\phit[\tilde{k},\alpha]],\rhobr[h,\alpha](t\wedge\Ts)\big)_h
		-\big\langle\nabla\phit[k,\alpha]\cdot\nabla\phit[\tilde{k},\alpha],\rhobr[\alpha](t\wedge\Ts)\big\rangle\bigg)\m t.
\end{multline*}
For adjusting $\bzetath \longrightarrow N^{1/2}\big(\Ih[\bphit],(\rhor[h]-\rhobr[h])(t\wedge\bT)\big)_h$ as the argument of $\psit$, we pay
\begin{multline*}
	\mathrm{Err}_{num,3}
	\coloneqq
	\sum_{k,\tilde{k}=1}^K\sum_{\alpha=1}^\nS \sigma_\alpha \int_0^{T_k\wedge T_{\tilde{k}}}
		\Ev\bigg[\bigg(\partial_k\partial_{\tilde{k}}\psi^t\big(N^{1/2}\big(\bphith,(\rhor[h]-\rhobr[h])(t\wedge\bT\wedge\Ts)\big)_h\big)\\
		\shoveright{
			-\partial_k\partial_{\tilde{k}}\psi^t\big(N^{1/2}\big(\Ih[\bphit],(\rhor[h]-\rhobr[h])(t\wedge\bT\wedge\Ts)\big)_h\big)\bigg)
		}\\
	\times\big(\Ih[\nabla\phit[k,\alpha]\cdot\nabla\phit[\tilde{k},\alpha]],(\rhor[h,\alpha]-\rhobr[h,\alpha])(t\wedge\Ts)\big)_h\bigg]\m t.
\end{multline*}
Finally, in order to adjust $\bphi^0_{h}\rightarrow \Ih[\bphi^0]$, we pay
\begin{multline*}
	\mathrm{Err}_{num,4}
	\coloneqq
	\Ev\bigg[\psi^0\bigg(N^{1/2}\big(\Ih[\bphi^0],(\rhor[h]-\rhobr[h])(0)\big)_h\bigg)\\
	-\psi^0\bigg(N^{1/2}\big(\bphi^0_h,(\rhor[h]-\rhobr[h])(0)\big)_h \bigg)\bigg].
\end{multline*}
Setting 
\begin{equation*}
	\mathrm{Err}_{num}
	\coloneqq
	\mathrm{Err}_{num,1}+\mathrm{Err}_{num,2}+\mathrm{Err}_{num,3}+\mathrm{Err}_{num,4},
\end{equation*}
we obtain \eqref{eq_34b equivalent}.

{\bf Step 3: Estimates for $\psi^0$, $\widetilde{\psi}_{k\tilde{k}}^t$, $\bphi^0$, $\bphittil[k\tilde{k}]$.}
Estimates for $\bphi^0$ and $\psi^0$ immediately follow from Lemma \ref{lem_backEvoTest} and Lemma \ref{lem_pre34a equivalent} respectively.
With respect to $\widetilde{\psi}_{k\tilde{k}}^t$ we observe that
\begin{equation*}
	\|\nabla\phit[k,\alpha]\cdot\nabla\phit[\tilde{k},\alpha]\|_{H^{s-1}}
	\leq
	2C(s) \sum_{l=0}^{\floor{(s-1)/2}} \|D^{l+1}\bphit\|_{L^\infty} \|D^{s-l}\bphit\|_{L^2}
	\leq
	C(s) \|\bphit\|_{H^s}^2
\end{equation*}
where we use $s>d+2$ for the Sobolev embedding $H^s\compemb C^{\floor{(s+1)/2}}$.
With the definition of $\widetilde{\psi}_{k\tilde{k}}^t$ and Lemma \ref{lem_RegularityContinuousTestFunctions} we thus obtain \eqref{eq_IS_phiRegEst}.
We obtain \eqref{eq_IS_psiRegEst} for $\widetilde{\psi}_{k\tilde{k}}^t$ by using its definition and \eqref{eq_psitLqpowr}, thus getting
\begin{align*}
	\|\widetilde{\psi}_{k\tilde{k}}^t\|_{\mathcal{L}_{pow,r+1}^{q-2}}
	\leq
	\|\psit\|_{\mathcal{L}_{pow,r}^{q}}
	\leq
	C  N^{\delflex}\left(1+\|\bvphi\|_{H^{s}}^r\right) \|\psi\|_{{\mathcal{L}}_{pow,r}^{q}}.
\end{align*}

{\bf Step 4: Estimates for the negativity and numerical errors.}
Estimate \eqref{eq_IS_ErrstopEst} for $\Errstop$ corresponds to the estimates in the proof of Lemma \ref{lem_pre34b equivalent}.
With respect to \eqref{eq_IS_ErrnegEst} we have $\mathrm{Err}_{neg}=0$ since for $\Ts>0$, all $\alpha=1,\ldots,\nS$
\begin{equation}
	\rhor[h,\alpha](t\wedge\Ts)\geq \rhobr[h,\alpha](t\wedge\Ts) - N^{-\ep} \geq \rho_{\min,h} - N^{-\ep} \geq 0
\end{equation}
due to the definition of $\Ts$ and Remark \ref{rem_rhominh_rhomaxh} for $N$ large enough.

For the rest of this step, to prove the bound on $\mathrm{Err}_{num}$, take an arbitrary $\delflex>0$.  
The following estimates hold for $N$ large enough with the $\delflex$-depending bound corresponding to the respectively used results. 
To simplify the notation, we allow for the specific value of $\delflex$ to change from line to line here.
Note that for $\Ts>0$
\begin{align}
	|\bzetath|
	&\leq
	N^{1/2}\|\bphith\|_{H^{d+2}_h}\|(\rhor[h]-\rhobr[h])(t\wedge\bT\wedge\Ts)\|_{H^{-2\floor{d/2}+2}_h}
	\leq	
	N^{\ep+\delflex}\|\bvphi\|_{H^s}\label{eq_bzetathEst}
\end{align}
due to \eqref{eq_DiscreteTestFuncBound} from Lemma \ref{HsBoundDiscrete}
paired with 
$\|\Ih[\cdot]\|_{H^{d+2}_h}\lesssim \|\cdot\|_{C^{d+2}}\lesssim \|\cdot\|_{H^s}$, 
as well as the definition of $\Ts$.

With respect to $\mathrm{Err}_{num,1}$, if $\Ts>0$ we have
\begin{align*}
	\|\rhor[h](t\wedge\Ts)\|_{L^2_h}
	&\leq 
	\|\rhobr[h]\|_{L^\infty(L^2_h)} + C\|(\rhor[h]-\rhobr[h])(t\wedge\Ts)\|_{L^\infty_h}\\
	&\leq
	C\|\rhobr\|_{L^\infty(C^0)} + \|\rhobr[h]-\Ih[\rhobr]\|_{L^\infty(L^2_h)} + N^{-\ep}
	\leq
	C
\end{align*}
by the definition of $\Ts$, Proposition \ref{prop_MFLconsistency} and the scaling from Assumption \ref{ass_Scaling}.
Thus, with \eqref{eq_psitLqpowr} to bound $\|\psit\|_{\mathcal{L}_{pow,r}^2}$ and \eqref{H1ErrorTestFunctionsProduct} from Lemma \ref{lemma_H1ErrorTestFunctions} we obtain
\begin{align}\label{eq_Errnum1Est}
	|\mathrm{Err}_{num,1}| 
	\leq 
	C(\|\rhobr\|_{L^\infty}) \|\psi\|_{\mathcal{L}_{pow,r}^2} \big(1 + \|\bvphi\|_{H^s}^{2r}\big)\|\bvphi\|_{H^s}^2 N^{r\eps+\delflex}h^{p+1}.
\end{align}
With respect to $\mathrm{Err}_{num,2}$, applying \eqref{eq_MFLconsistency:main estimate} from Proposition \ref{prop_MFLconsistency} as well as the Euler-Maclaurin formula -- see Lemma \ref{lem_EulerMaclaurin} -- we have
\begin{align*}
	&\big|\big(\Ih[\nabla\phit[k,\alpha]\cdot\nabla\phit[\tilde{k},\alpha]],\rhobr[h,\alpha](t)\big)_h
		-\big\langle\nabla\phit[k,\alpha]\cdot\nabla\phit[\tilde{k},\alpha],\rhobr[\alpha](t)\big\rangle\big|\\
	&\quad\leq
	 \big|\big(\Ih[\nabla\phit[k,\alpha]\cdot\nabla\phit[\tilde{k},\alpha]],(\rhobr[h,\alpha]-\Ih[\rhobr[\alpha]])(t)\big)_h \big|\\
	&\qquad\qquad\qquad
	 +  \big|\big(\Ih[\nabla\phit[k,\alpha]\cdot\nabla\phit[\tilde{k},\alpha]],\Ih[\rhobr[\alpha]])(t)\big)_h
	 -\big\langle\nabla\phit[k,\alpha]\cdot\nabla\phit[\tilde{k},\alpha],\rhobr[\alpha](t)\big\rangle \big|\\
	&\quad\leq C\|\bphit\|_{C^2}^2 N^\delflex h^{p+1} + C\|\bphit\|_{C^{p+3}}^2 \|\rhobr(t)\|_{C^{p+2}} h^{p+1}.
\end{align*}
Since \eqref{eq_IS_sbounds} implies $s>p+3+d/2$ and therefore $\|\cdot\|_{C^{p+3}}\lesssim\|\cdot\|_{H^s}$, with \eqref{eq_DiscreteTestFuncBound} it follows that
\begin{equation}\label{eq_Errnum2Est}
	|\mathrm{Err}_{num,2}|
	\leq 
	C(\|\rhobr\|_{L^\infty(C^{p+2})}) \|\psi\|_{\mathcal{L}_{pow,r}^2} \big(1 + \|\bvphi\|_{H^s}^{2r}\big)\|\bvphi\|_{H^s}^2N^{r\ep+\delflex}h^{p+1}.
\end{equation}
Concerning $\mathrm{Err}_{num,3}$, for $\Ts>0$ by the definition of $\Ts$ we have 
\begin{equation}
	|\big(\Ih[\nabla\phit[k,\alpha]\cdot\nabla\phit[\tilde{k},\alpha]],(\rhor[h,\alpha]-\rhobr[h,\alpha])(t\wedge\Ts)\big)_h|
	\leq 
	\|\bphit\|_{C^{d+3}}^2 N^{-1/2+\ep},
\end{equation}
and, using \eqref{OpErrorTestFunctions}, we get 
\begin{align}
	\label{eq_bzetathDiffEst}
	\big|\bzetath-N^{1/2}\big(\Ih[\bphit],(\rhor[h]-\rhobr[h])(t\wedge\bT)\big)_h\big|
	&\leq
	C \|\Ih[\bphit]-\bphith\|_{H^{d+2}_h} N^{\ep}\\
	&\leq
	\|\bvphi\|_{H^s}N^{\delflex}h^{p+1}N^{\ep},
	\notag
\end{align}
where we also used $\|\cdot\|_{C^{d+2+p+3}}\lesssim \|\cdot\|_{H^s}$ since $s>p+3d/2+5$.
In particular, for the connecting segment 
	$I\coloneqq\big[\bzetath, N^{1/2}\big(\Ih[\bphit],(\rhor[h]-\rhobr[h])(t\wedge\bT)\big)_h\big]\subset\R^K$ 
it holds that
	$\sup_{u\in I}|u|\leq C N^{\ep+\delflex}\|\bvphi\|_{H^s}$.
Thus, by mean value theorem and the definition of ${\|\cdot\|_{\mathcal{L}_{pow,r}^q}}$,
\begin{equation}\label{eq_Errnum3Est}
	|\mathrm{Err}_{num,3}|
	\leq
	C \|\psi\|_{\mathcal{L}_{pow,r}^3} \big(1 + \|\bvphi\|_{H^s}^{2r}\big)\|\bvphi\|_{H^s}^3 N^{-1/2+(r+2)\ep+\delflex}h^{p+1}.
\end{equation}
As for $\mathrm{Err}_{num,4}$, including the case $\Ts=0$ in \eqref{eq_bzetathDiffEst} and dealing with it as in the estimate of $\Errstop$ in the proof of Lemma \ref{lem_pre34b equivalent}, via the mean value theorem we obtain
\begin{equation}\label{eq_Errnum4Est}
	|\mathrm{Err}_{num,4}|
	\leq
	C \|\psi\|_{\mathcal{L}_{pow,r}^1} \big(1 + \|\bvphi\|_{H^s}^{2r}\big)\|\bvphi\|_{H^s} N^{(r+1)\ep+\delflex}h^{p+1}.
\end{equation}
Combining \eqref{eq_Errnum1Est}, \eqref{eq_Errnum2Est}, \eqref{eq_Errnum3Est}, and \eqref{eq_Errnum4Est} yields \eqref{eq_IS_ErrnumEst} for $\mathrm{Err}_{num}$.
\end{proof}

\subsection{Including the linearisation errors in the iterative structure}\label{subsec_Errlin}
Proposition \ref{prop_ContAndDiscrMomStructure} leaves us with the task to fit $\mathrm{Err}_{lin,a}$ and $\mathrm{Err}_{lin,b}$ to the iterative structure. 
These are the terms induced by the linearisation compensations $\tilQ[t](\cdot)$ and $\tilQ[h,t](\cdot)$, which stem from linearising the respective interaction terms in the continuous and discrete setting;
\begin{align*}
	\tilQ[t](\phit[k])
	&=\sum_{\alpha,\beta=1}^\nS
	\big\langle\nabla\phit[k,\alpha] \cdot \big(\nabla\V[\alpha\beta]\ast\big(\empmeas[\beta,t]-\rhobr[\beta](t)\big)\big),
	\empmeas[\alpha,t]-\rhobr[\alpha](t)\big\rangle,\\
	\tilQ[h,t](\phith[k])
	&=\sum_{\alpha,\beta=1}^\nS
	\big(\nabla_h \phith[k,\alpha]\cdot (\Ih[\nabla\V[\alpha\beta]]\ast_h(\rhor[h,\beta]-\rhobr[h,\beta])(t)) ,(\rhor[h,\alpha]-\rhobr[h,\alpha])(t)\big)_h.
\end{align*}
We now rewrite $\tilQ[t](\phit[k])$ and $\tilQ[h,t](\phith[k])$ (and thus $\mathrm{Err}_{lin,a}$ and $\mathrm{Err}_{lin,b}$) to conform them to the iterative structure as outlined in Subsection \ref{subsubsec_TCErrlin}.

\begin{proposition}[Iterative structure for the linearisation compensations]\label{prop_Iterative Structure for Errlin}
Under the assumptions of Theorem \ref{thm_iterative structure}, let $\mathrm{Err}_{lin,a}$ and $\mathrm{Err}_{lin,b}$ be defined as in Proposition \ref{prop_ContAndDiscrMomStructure}, \eqref{eq_DefErrlina} and \eqref{eq_DefErrlinb}.
Then for all $t\in(0,T)$, $k\in\lbrace 1,\ldots,K\rbrace$, $m,n\in\Z^d$, $\alpha,\beta\in\lbrace 1,\ldots,\nS\rbrace$ and $\ell\in\lbrace 1,\ldots,d\rbrace$ there exist generalised moment functions
	$\psitlin[k]\in\mathcal{L}_{pow,r+2}^{q-1}(\R^{K+2})$,
coefficients 
	$\hat{F}_{k,\alpha\beta}^\ell[m,n]$
with sets of test functions
	$\bphitlin[k,mn,\alpha\beta,\ell]\in [H^{s}(\domain,\R^{\nS})]^{K+2}$
and test times 
	$\bTlin[k]\in[0,T]^{K+2}$
such that
\begin{multline*}
	\mathrm{Err}_{lin,a} 
	= 
	N^{-1/2}\sum_{k=1}^K\sum_{\alpha,\beta=1}^\nS\sum_{\ell=1}^d\int_0^{T_k}\sum_{n,m\in\Z^d} \hat{F}_{k,\alpha\beta}^\ell[m,n]\\
		\times\Ev\big[\psitlin[k]\big(N^{1/2}\big\langle\bphitlin[k,mn,\alpha\beta,\ell], \empmeas[t\wedge{\bTlin[k]}]-\rhobr(t\wedge\bTlin[k]) \big\rangle\big)\big]\m t
\end{multline*}
and
\begin{multline*}
	\mathrm{Err}_{lin,b}
	= N^{-1/2}\sum_{k=1}^K\sum_{\alpha,\beta=1}^\nS\sum_{\ell=1}^d\int_0^{T_k}\sum_{n,m\in\Z^d} \hat{F}_{k,\alpha\beta}^\ell[m,n]\\
	\shoveright{
		\times\Ev\big[\psitlin[k]\big(N^{1/2}\big(\Ih[\bphitlin[k,mn,\alpha\beta,\ell]], (\rhor[h]-\rhobr[h])(t\wedge\bTlin[k]\wedge\Ts) \big)_h\big)\big]\m t}\\
	+ \mathrm{Err}_{num,lin}.
\end{multline*}
For any $\delflex>0$ there exists $N_0=N_0(\delflex,\ep,r,s,K,\text{data})$, such that for $N>N_0$ the following estimates hold.
The generalised moment functions $\{\psitlin[k]\}_{t,k}$ are subject to \eqref{eq_IS_psiCompEst}, 
the coefficients $\{\hat{F}_{\alpha\beta}[m,n]\}_{\alpha,\beta,m,n}$ to \eqref{eq_IS_FhatEst}.
The test functions $\{\bphitlin[k,mn,\alpha\beta,\ell]\}$ satisfy \eqref{eq_IS_phiCompEst}.
For the error term the estimate \eqref{eq_IS_ErrnumEst} holds.

\end{proposition}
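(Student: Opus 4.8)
The plan is to carry out the \emph{Fourier separation of the interaction kernel} anticipated in Subsection~\ref{subsubsec_TCErrlin}, and then to do the regularity bookkeeping. I would proceed in four steps. \textbf{Step~1 (Fourier expansion of the kernel).} First I would Fourier-expand each partial derivative $\partial_\ell\V[\alpha\beta]$. Viewing $(x,y)\mapsto\partial_\ell\V[\alpha\beta](x-y)$ as a function on $\mathbb{T}^{2d}$ and writing
\begin{equation*}
	\partial_\ell\V[\alpha\beta](x-y)=\sum_{m,n\in\Z^d}\hat F^\ell_{k,\alpha\beta}[m,n]\,\vtheta_m(x)\,\vtheta_n(y)
\end{equation*}
\emph{separates} the two variables in every summand. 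The coefficients inherit decay from the regularity $V_{\alpha\beta}\in W^{\sI,1}(\domain)$; undoing the rescaling $\V[\alpha\beta]=\rI^{-d}V_{\alpha\beta}(\cdot/\rI)$ from~\eqref{DefPotentialsV_alpha_beta} produces an $\rI^{-1}$-type prefactor which is absorbed into a harmless $N^{\delflex}$ via the scaling relation~\eqref{ScalingRegimeRadius}, yielding $|\hat F^\ell_{k,\alpha\beta}[m,n]|\le CN^{\delflex}(|m|_2+|n|_2)^{-\sI+1}$, i.e.\ \eqref{eq_IS_FhatEst}. The hypothesis~\eqref{eq_IS_sIbounds} on $\sI$ is exactly what makes all the ensuing $m,n$-sums converge absolutely once paired with the (polynomially-in-$(m,n)$-growing) test-function norms of Step~4.

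\textbf{Step~2 (Continuous compensation).} Next I would substitute the expansion into $\nabla\V[\alpha\beta]\ast(\empmeas[\beta,t]-\rhobr[\beta](t))$ and hence into $\tilQ[t](\phit[k])$ from~\eqref{eq_def tilQ}; this turns $\tilQ[t](\phit[k])$ into a sum over $\alpha,\beta,\ell,m,n$ of coefficients $\hat F^\ell_{k,\alpha\beta}[m,n]$ multiplied by a \emph{product of two linear functionals} of $\empmeas-\rhobr$, one involving $\vtheta_m$ in the $\beta$-th species slot and one involving $(\partial_\ell\phit[k,\alpha])\vtheta_n$ in the $\alpha$-th slot, exactly as displayed in Subsection~\ref{subsubsec_TCErrlin}. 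Inserting this into~\eqref{eq_DefErrlina} and redistributing the prefactor via $N^{1/2}(\text{lin})(\text{lin})=N^{-1/2}(N^{1/2}\,\text{lin})(N^{1/2}\,\text{lin})$, I read off the claimed identity for $\mathrm{Err}_{lin,a}$ with the following choices: the generalised moment function $\psitlin[k](\f{z}):=-\chi_{t\le T_k}\,\partial_k\psit(z_1,\dots,z_K)\,z_{K+1}z_{K+2}$; the augmented test block $\bphitlin[k,mn,\alpha\beta,\ell]$, whose first $K$ entries are $\phit[1],\dots,\phit[K]$ from~\eqref{eq_BackEvoTestMulti}, whose $(K+1)$-st entry is $\vtheta_m$ placed in the $\beta$-th species slot and whose $(K+2)$-nd entry is $(\partial_\ell\phit[k,\alpha])\vtheta_n$ placed in the $\alpha$-th species slot; and the test times $\bTlin[k]:=(T_1,\dots,T_K,T_k,T_k)$ (on $[0,T_k]$ the cut-off $t\wedge T_k$ is inactive, so the two new entries are evaluated at the running time $t$, as required).

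\textbf{Step~3 (Discrete compensation and $\mathrm{Err}_{num,lin}$).} I would then run the identical manipulation on $\tilQ[h,t](\phith[k])$ from~\eqref{eq_deftilQh}, \emph{using the same coefficients} $\hat F^\ell_{k,\alpha\beta}[m,n]$ so that $\mathrm{Err}_{lin,b}$ acquires, term by term, the iterative skeleton of $\mathrm{Err}_{lin,a}$. Concretely, I replace $\Ih[\nabla\V[\alpha\beta]]\ast_h(\cdot)$ and the operators $\nabla_h$ by the truncation of the series from Step~1 to frequencies $|m|_\infty,|n|_\infty\le\pi/h$, evaluated through the discrete pairings, and collect into $\mathrm{Err}_{num,lin}$ the resulting mismatch: the truncation/aliasing of the Fourier series on the finite grid (together with $\Ih[\partial_\ell\V[\alpha\beta]]$ versus $\partial_\ell\V[\alpha\beta]$) and the discrepancies $\nabla_h\leftrightarrow\nabla$ and $\Ih[\,\cdot\,]$ of products of test functions. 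The Fourier tail beyond $|m|_\infty\wedge|n|_\infty\sim 1/h$ is $O(h^{p+1})$ by the decay rate $-\sI+1$ with $\sI$ large, while the remaining terms are handled precisely as the $\mathrm{Err}_{num}$-estimates in Proposition~\ref{prop_ContAndDiscrMomStructure} — using the order-$(p+1)$ consistency of the finite-difference operators (Assumption~\ref{ass_DiscrDefOps}), the interpolation/consistency bounds of Appendix~\ref{AppDiscrete} and Proposition~\ref{prop_MFLconsistency}, and the stopping-time bound~\eqref{eq_bzetathEst} on the argument of $\psit$ that produces the $N^{r\ep}$-factors. This yields~\eqref{eq_IS_ErrnumEst} for $\mathrm{Err}_{num,lin}$.

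\textbf{Step~4 (Regularity of the new data) and the main obstacle.} To finish I check~\eqref{eq_IS_psiCompEst}: by~\eqref{eq_psitLqpowr} we have $\partial_k\psit\in\mathcal{L}_{pow,r}^{q-1}$ with the stated bound, and multiplying by $z_{K+1}z_{K+2}$ raises the polynomial-growth index by exactly $2$ (since $|z_{K+1}z_{K+2}|\le\tfrac12(1+|\f{z}|^2)$ and the first two derivatives of $z_{K+1}z_{K+2}$ grow at most linearly), so $\psitlin[k]\in\mathcal{L}_{pow,r+2}^{q-1}$ with norm $\le CN^{\delflex}(1+\|\bvphi\|_{H^s}^r)\|\psi\|_{\mathcal{L}_{pow,r}^{q}}$. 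For~\eqref{eq_IS_phiCompEst}: the first $K$ entries of $\bphitlin[k,mn,\alpha\beta,\ell]$ are bounded in $H^{s-1}$ by $CN^{\delflex}\|\bvphi\|_{H^s}$ via Lemma~\ref{lem_RegularityContinuousTestFunctions}; one has $\|\vtheta_m\|_{H^{s-1}}\lesssim 1+|m|_\infty^{s-1}$; and the Leibniz rule with $s>d/2+2$ gives $\|(\partial_\ell\phit[k,\alpha])\vtheta_n\|_{H^{s-1}}\lesssim(1+|n|_\infty^{s-1})\|\phit[k,\alpha]\|_{H^s}\lesssim(1+|n|_\infty^{s-1})\|\bvphi\|_{H^s}$, and combining these yields the claim. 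The main obstacle is Step~3: one must arrange the discrete re-expansion against \emph{exactly} the continuous coefficients $\hat F^\ell_{k,\alpha\beta}[m,n]$ — so that $\mathrm{Err}_{lin,a}$ and $\mathrm{Err}_{lin,b}$ match up term by term, leaving only $\mathrm{Err}_{num,lin}$ — while simultaneously controlling the Fourier aliasing on the grid (via smallness of $h$, Assumption~\ref{ass_Scaling}) and the Fourier tail (via the high regularity $\sI$ of $V$), all at order $h^{p+1}$ and with only the controlled power $N^{(r+1)\ep+\delflex}$. A secondary technical point is the legitimacy of interchanging the infinite $m,n$-summation with expectation and time-integration, which again rests on the decay rate $-\sI+1$ dominating the growth $|m|_\infty^{s-1}\vee|n|_\infty^{s-1}$ over the $2d$-dimensional lattice, i.e.\ on~\eqref{eq_IS_sIbounds}.
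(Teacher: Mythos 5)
Your Steps 1, 2 and 4 follow the paper's proof almost verbatim: the $2d$-Fourier expansion of $\partial_\ell\V[\alpha\beta](x-y)$ with the coefficient bound \eqref{eq_IS_FhatEst} (absorbing the $\rI$-factors into $N^{\delflex}$ via the scaling), the choices $\psitlin[k](\f z)=-\chi_{t\leq T_k}\partial_k\psi^t(z_1,\dots,z_K)z_{K+1}z_{K+2}$, the augmented test block $\bphitlin[k,mn,\alpha\beta,\ell]$ and $\bTlin[k]=(T_1,\dots,T_K,T_k,T_k)$, and the regularity bookkeeping for \eqref{eq_IS_psiCompEst} and \eqref{eq_IS_phiCompEst} are exactly the paper's. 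The genuine problem is your Step 3. The paper never truncates the Fourier series at grid frequencies and incurs no aliasing error: in the discrete pairing the kernel is only ever evaluated at differences $x-y$ with $x,y\in\Ghd$, where $\Ih[\partial_\ell\V[\alpha\beta]](x-y)=\partial_\ell\V[\alpha\beta](x-y)$ holds exactly; since $\sI>2d+1$ gives uniform convergence of the series, one substitutes the \emph{full} series into the finite double grid sum and exchanges it with that finite sum, producing an exact identity with the same coefficients $\hat F^\ell_{k,\alpha\beta}[m,n]$ and the sum over all $m,n\in\Z^d$, in which the two factors become the discrete pairings of $\Ih[\vtheta_m]$ and $\Ih[\partial_\ell\phit[k,\alpha]\vtheta_n]$ against the fluctuations. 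Consequently the only contributions to $\mathrm{Err}_{num,lin}$ are the replacement $\nabla_h\phith[k,\alpha]\to\Ih[\nabla\phit[k,\alpha]]$ (controlled via \eqref{Diff_Gradients} together with the stopping time) and the change of the argument of $\partial_k\psi^t$ from $\bphith$ to $\Ih[\bphit]$ (controlled via \eqref{OpErrorTestFunctions} and the mean value theorem); these give \eqref{eq_IS_ErrnumEst}.

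As you set it up, the truncation-plus-aliasing route does not close under the stated hypotheses. Your claim that the Fourier tail beyond frequencies of order $1/h$ is $O(h^{p+1})$ would need a lower bound on $\sI$ that grows with $p$ (roughly $\sI\gtrsim p+2d$), whereas \eqref{eq_IS_sIbounds} only requires $\sI>2d+1+(r+2)\lceil d/2+1\rceil$ and is independent of $p$; moreover, if you keep only the truncated discrete sum, the identity you obtain is not the one claimed in the Proposition (full sum, same coefficients), so the high-frequency discrete terms would have to be reinstated anyway -- at which point the truncation was never needed. A secondary imprecision of the same flavour: to interchange the $m,n$-sum with expectation and time integration you invoke domination of the growth $|m|_\infty^{s-1}\vee|n|_\infty^{s-1}$ by the decay $(|m|_2+|n|_2)^{-\sI+1}$ ``by \eqref{eq_IS_sIbounds}'', but \eqref{eq_IS_sIbounds} does not dominate $H^{s-1}$-growth (only the stronger condition on $\sI$ in Theorem \ref{MainThm} does, and that is used later in the iteration). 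Within this Proposition you should instead pair the fluctuations in $H^{-\lceil d/2+1\rceil}$ and use the moment bound of Proposition \ref{PropErrorEstimateAuxiliarySystem} on the continuous side, respectively the $L^\infty_h$-control provided by the stopping time on the discrete side, so that only a growth of order $(r+2)\lceil d/2+1\rceil$ appears, which \eqref{eq_IS_sIbounds} does dominate.
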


\begin{proof}
{\bf Step 1: Rewriting $\tilQ[t](\phit[k])$.}
We have 
	$\tilQ[t](\phit[k]) = \sum_{\alpha,\beta=1}^\nS I_{\alpha,\beta}$ 
with
\begin{equation*}
	I_{\alpha,\beta} 
	= \int_\domain \int_\domain \nabla\phit[k,\alpha](x) \cdot \nabla\V[\alpha\beta](x-y)
		\m\big(\empmeas[\beta,t]-\rhobr[\beta](t)\big)(y)		
		\m\big(\empmeas[\alpha,t]-\rhobr[\alpha](t)\big)(x).						
\end{equation*}
For $\ell=1,\ldots,d$ we use the $2d$-dimensional Fourier series representation
\begin{equation}\label{eq_Fourier series}
	\partial_\ell\V[\alpha\beta](x-y) = \sum_{n,m\in\Z^d}\hat{F}_{k,\alpha\beta}^\ell[m,n] \cos(m\cdot y)\cos(n\cdot x),
\end{equation} 
with 
\begin{equation*}
	\hat{F}_{k,\alpha\beta}^\ell[m,n] \coloneqq  \frac{1}{(2\pi)^{2d}}\int_{\domain\times\domain}\partial_\ell\V[\alpha\beta](x-y) \cos(m\cdot y)\cos(n\cdot x) \m x\m y.
\end{equation*}
Here we replaced $e^{in\cdot x}$ with $\cos(n\cdot x)$ since $V$ is real-valued.
Since $(x,y)\rightarrow \partial_\ell\V[\alpha\beta](x-y)$ is in $W^{1,\sI-1}(\domain\times\domain)$, we have pointwise convergence of the Fourier series (e.g., see \cite[Theorem 3.3.9 and Corollaries 3.4.9/3.4.10]{grafakos2008classical}) with coefficients estimated by
\begin{equation*}
	|\hat{F}_{k,\alpha\beta}^\ell[m,n]|
	\leq 
	C(\sI)\frac{\|\nabla\V[\alpha\beta]\|_{W^{\sI-1,1}(\domain)}}{(|m|_2+|n|_2)^{\sI-1}},
\end{equation*}
which yields \eqref{eq_IS_FhatEst} with the scaling, i.e. Assumption \ref{ass_Scaling}, since $\|\nabla\V[\alpha\beta]\|_{W^{\sI-1,1}(\domain)}\leq C\rI^{-\sI}$.
Thus, since $\sI>2d+1$, we also have uniform convergence of the Fourier series and therefore, abbreviating $\vtheta_n(x)\coloneqq \cos(n\cdot x)$ for $n\in\Z^d$, we obtain
\begin{equation*}
	I_{\alpha,\beta}
	= \sum_{\ell=1}^d\sum_{n,m\in\Z^d} \hat{F}_{k,\alpha\beta}^\ell[m,n] 
	\big\langle \vtheta_m,\empmeas[\beta,t]-\rhobr[\beta](t)\big\rangle
	\big\langle \partial_\ell\phit[k,\alpha]\vtheta_n,\empmeas[\alpha,t]-\rhobr[\alpha](t)\big\rangle.
\end{equation*}
Adjusting for $N$-prefactors, this leads to 
\begin{multline*}
	N\tilQ[t](\phit[k])
	=\sum_{\alpha,\beta=1}^\nS\sum_{\ell=1}^d\sum_{n,m\in\Z^d} \hat{F}_{k,\alpha\beta}^\ell[m,n] 
	\bigg(N^{1/2}\sum_{\gamma=1}^\nS		\big\langle\delta^{\beta\gamma}\vtheta_m,\empmeas[\gamma,t]-\rhobr[\gamma](t)\big\rangle\bigg)\\
	\times \bigg(N^{1/2}\sum_{\gamma=1}^\nS	\big\langle\delta^{\alpha\gamma}\partial_\ell\phit[k,\alpha]\vtheta_n,\empmeas[\gamma,t]-\rhobr[\gamma](t)\big\rangle\bigg),
\end{multline*}
where $\delta^{\beta\gamma}$ is the Kronecker-Delta.

{\bf Step 2: Rewriting $\tilQ[h,t\wedge\Ts](\phith[k])$.} First, we get an error for switching from $\nabla_h \phith[k,\alpha]$ to $\Ih[\nabla \phit[k,\alpha]]$. 
That is, we have
\begin{equation*}
	\tilQ[h,t\wedge\Ts](\phith[k])
	= \sum_{\alpha,\beta=1}^\nS I_{\alpha,\beta}^h + \overline{\mathrm{Err}}_{num,1}^{k,t}
\end{equation*}
with 
\begin{multline}\label{eq_DefErrbar}
	\overline{\mathrm{Err}}_{num,1}^{k,t}
	\coloneqq 
	\sum_{\alpha,\beta=1}^\nS\Big(\big(\nabla_h \phith[k,\alpha]-\Ih[\nabla \phit[k,\alpha]]\big)
	\cdot (\Ih[\nabla\V[\alpha\beta]]\ast_h(\rhor[h,\beta]-\rhobr[h,\beta])(t\wedge\Ts)),\\
	(\rhor[h,\alpha]-\rhobr[h,\alpha])(t\wedge\Ts)\Big)_h,
\end{multline}
\begin{multline*}
	I_{\alpha,\beta}^h
	\coloneqq 
	h^{2d}\sum_{x,y\in\Ghd}\nabla \phit[k,\alpha](x)\cdot \nabla\V[\alpha\beta](x-y)(\rhor[h,\beta]-\rhobr[h,\beta])(t\wedge\Ts,y)\\
	\times(\rhor[h,\alpha]-\rhobr[h,\alpha])(t\wedge\Ts,x).
\end{multline*}
Plugging in \eqref{eq_Fourier series} we obtain 
\begin{equation*}
	I_{\alpha,\beta}^h
	=
	\sum_{\ell=1}^d\sum_{n,m\in\Z^d} \hat{F}_{k,\alpha\beta}^\ell[m,n]
	\big(\Ih[\vtheta_m],\rhor[h,\beta]-\rhobr[h,\beta]\big)_h
	\big(\Ih[\partial_\ell\phit[k,\alpha]\vtheta_n],\rhor[h,\alpha]-\rhobr[h,\alpha]\big)_h(t\wedge\Ts)
\end{equation*}
and therefore
\begin{multline*}
	N\tilQ[h,t](\phith[k])
	=
	\sum_{\alpha,\beta=1}^\nS\sum_{\ell=1}^d\sum_{n,m\in\Z^d} \hat{F}_{k,\alpha\beta}^\ell[m,n] 
	\bigg(N^{1/2}\sum_{\gamma=1}^\nS \big(\Ih[\delta^{\beta\gamma}\vtheta_m],\rhor[h,\gamma]-\rhobr[h,\gamma]\big)_h\bigg)\\
	\times \bigg(N^{1/2}\sum_{\gamma=1}^\nS \big(\Ih[\delta^{\alpha\gamma}\partial_\ell\phit[k,\alpha]\vtheta_n],\rhor[h,\gamma]-\rhobr[h,\gamma]\big)_h\bigg)(t\wedge\Ts) 
	+ N\overline{\mathrm{Err}}_{num,1}^{k,t}.
\end{multline*}

{\bf Step 3: Defining the iterated functions.}
Defining $\psitlin[k]\colon\R^{K+2}\rightarrow\R$ as 
\begin{equation*}
	\psitlin[k](\f{z})
	\coloneqq
	 -\chi_{t\leq T_k}\partial_k\psi^t(z_1,\ldots,z_K)z_{K+1}z_{K+2},
\end{equation*}
we have via the dominated convergence theorem (see Step 5 for a full justification)
\begin{multline}\label{eq_ErrlinaRephrase}
	\mathrm{Err}_{lin,a} 
	= 
	N^{-1/2}\sum_{k=1}^K\sum_{\alpha,\beta=1}^\nS\sum_{\ell=1}^d\int_0^{T_k}\sum_{n,m\in\Z^d} \hat{F}_{k,\alpha\beta}^\ell[m,n]\\
		\times\Ev\big[\psitlin[k]\big(N^{1/2}\big\langle\bphitlin[k,mn,\alpha\beta,\ell], \empmeas[t\wedge{\bTlin[k]}]-\rhobr(t\wedge\bTlin[k]) \big\rangle\big)\big]\m t
\end{multline}
with 
\begin{equation}\label{eq_defphilinTlin}
	\bphitlin[k,mn,\alpha\beta,\ell](x)
	=
	\begin{pmatrix}
		\big(\phit[1,\gamma](x)\big)_{\gamma=1,\ldots,\nS}\\
		\vdots\\
		\big(\phit[K,\gamma](x)\big)_{\gamma=1,\ldots,\nS}\\
		\big(\delta^{\beta\gamma}\vtheta_m(x)\big)_{\gamma=1,\ldots,\nS}\\
		\big(\delta^{\alpha\gamma}\partial_\ell\phit[k,\alpha](x)\vtheta_n(x)\big)_{\gamma=1,\ldots,\nS}
	\end{pmatrix},
	\qquad
	\bTlin[k]
	=
	\begin{pmatrix}
		T_1\\
		\vdots\\
		T_K\\
		T_k\\
		T_k
	\end{pmatrix}.
\end{equation}
On the discrete side, we analogously have again by dominated convergence (see Step 5)
\begin{multline}\label{eq_ErrlinbRephrase}
	\mathrm{Err}_{lin,b}
	= N^{-1/2}\sum_{k=1}^K\sum_{\alpha,\beta=1}^\nS\sum_{\ell=1}^d\int_0^{T_k}\sum_{n,m\in\Z^d} \hat{F}_{k,\alpha\beta}^\ell[m,n]\\
	\shoveright{
		\times\Ev\big[\psitlin[k]\big(N^{1/2}\big(\Ih[\bphitlin[k,mn,\alpha\beta,\ell]], (\rhor[h]-\rhobr[h])(t\wedge\bTlin[k]\wedge\Ts) \big)_h\big)\big]\m t}\\
	+ \mathrm{Err}_{num,5}
	+ \mathrm{Err}_{num,6}
\end{multline}
where $\mathrm{Err}_{num,5}$ stems from the errors $\overline{\mathrm{Err}}_{num,1}^{k,t}$ defined in \eqref{eq_DefErrbar} of Step 2 and is given by
\begin{multline}\label{eq_DefErrnum5}
	\mathrm{Err}_{num,5}
	\coloneqq
	-\sum_{k=1}^K \int_0^{T_k}\Ev\big[\partial_k\psi^t\big(N^{1/2}\big(\bphith,(\rhor[h]-\rhobr[h])(t\wedge\bT\wedge\Ts)\big)_h\big)\\
	\times N^{1/2}\overline{\mathrm{Err}}_{num,1}^{k,t}\big]\m t,
\end{multline}
while $\mathrm{Err}_{num,6}$ is obtained from replacing $\bzetath \longrightarrow N^{1/2}\big(\Ih[\bphit],(\rhor[h]-\rhobr[h])(t\wedge\bTlin[k]\wedge\Ts)\big)_h$, leading to
\begin{multline}\label{eq_DefErrnum6}
	\mathrm{Err}_{num,6}
	\coloneqq
	-\sum_{k=1}^K \int_0^{T_k}\Ev\bigg[\bigg(\partial_k\psi^t\big(N^{1/2}\big(\bphith,(\rhor[h]-\rhobr[h])(t\wedge\bT\wedge\Ts)\big)_h\big)\\
		-\partial_k\psi^t\big(N^{1/2}\big(\Ih[\bphit],(\rhor[h]-\rhobr[h])(t\wedge\bT\wedge\Ts)\big)_h\big)\bigg)
		N^{1/2}\tilQ[h,t\wedge\Ts](\phith[k])\bigg]\m t.
\end{multline}

{\bf Step 4: Collecting the estimates.}
With respect for the test functions we recall that $\vtheta_n(x)\coloneqq \cos(n\cdot x)$. 
Thus, for $n\in\Z^d$, $\tilde{s}\in\N_0$ we have 
\begin{align}\label{eq_vthetaHs}
	\|\vtheta_n\|_{H^{\tilde{s}}(\domain)} \leq C |n|_\infty^{\tilde{s}}.
\end{align}
This, based on \eqref{eq_defphilinTlin} together with Lemma \ref{lem_RegularityContinuousTestFunctions} and the same argument from Step 3 in the proof of Proposition \ref{prop_ContAndDiscrMomStructure}, yields \eqref{eq_IS_phiCompEst}.
Using the definition of the new generalised moment function and \eqref{eq_psitLqpowr} we obtain
\begin{equation*}
	\|\psitlin[k]\|_{\mathcal{L}_{pow,r+2}^{q-1}} 
	\leq
	\|\psit\|_{\tilde{\mathcal{L}}_{pow,r}^{q}}
	\leq
	C N^{\delflex}\left(1+\|\vphi\|_{H^{s}}^r\right) \|\psi\|_{{\mathcal{L}}_{pow,r}^{q}}.
\end{equation*}
For estimating the error terms, let $\delflex>0$ be an arbitrary, small exponent.
The following estimates hold for $N$ large enough with the $\delflex$-depending bound corresponding to the respectively used results. 
To simplify the notation, we allow for the value of $\delflex$ to change from line to line.
For any $k=1,\ldots,K$, if $\Ts>t>0$ the discrete H\"older inequality (Corollary \ref{cor_DiscrHoelder}) entails
\begin{align*}
	\big|N^{1/2}\overline{\mathrm{Err}}_{num,1}^{k,t}\big|
	&\leq
	CN^{\ep+\delflex} \|\nabla_h\bphith-\Ih[\nabla\bphit]\|_{L^1_h} \|(\rhor[h]-\rhobr[h])(t)\|_{L^\infty_h} \\
	& \leq 
	C\|\bvphi\|_{H^s}N^{\delflex}h^{p+1},
\end{align*}
where for the second inequality we used the definition of $\Ts$, that $\|\cdot\|_{L^1_h}\lesssim\|\cdot\|_{L^2_h}$ and \eqref{Diff_Gradients};
for the first inequality we used that, for $\Ts>t>0$, it holds that
\begin{align*}
	&N^{1/2}\|\Ih[\nabla\V[\alpha\beta]]\ast_h(\rhor[h,\beta]-\rhobr[h,\beta])(t)\|_{L^\infty_h}\\
	&\qquad\leq
	N^{1/2}\|\Ih[\nabla\V]\|_{H^{2\floor{d/2}+2}} \|\rhor[h]-\rhobr[h]\|_{H^{ -2\floor{d/2}-2}}(t)\\
	&\qquad\leq	N^{\ep}\|\nabla\V\|_{C^{d+3}}
	\leq 		N^{\ep}\rI^{-(2d+4)}
	\leq		N^{\ep+\delflex}.
\end{align*}
Here, the last two inequalities are due to the definition of $\V$, given in \eqref{DefPotentialsV_alpha_beta}, and Assumption \ref{ass_Scaling}.
Combining this estimate with $|\bzetath|$-estimate \eqref{eq_bzetathEst} from the proof of Proposition \ref{prop_ContAndDiscrMomStructure} and \eqref{eq_psitLqpowr} from Lemma \ref{lem_pre34b equivalent} yields
\begin{equation}\label{eq_Errnum5Est}
	|\mathrm{Err}_{num,5}| 
	\leq 
	C\|\psi\|_{\mathcal{L}_{pow,r}^1} \big(1 + \|\bvphi\|_{H^s}^{2r}\big)\|\bvphi\|_{H^s}N^{r\ep+\delflex}h^{p+1}.
\end{equation}
With respect to $\mathrm{Err}_{num,6}$ using \eqref{eq_ErrstopProofQEst} for $0<t<\Ts$ from the proof of Lemma \ref{lem_pre34b equivalent}, analogously to the $|\mathrm{Err}_{num,3}|$-estimate \eqref{eq_Errnum3Est} from the proof of Proposition \ref{prop_ContAndDiscrMomStructure} via the mean value theorem we obtain
\begin{equation}\label{eq_Errnum6Est}
	C \|\psi\|_{\mathcal{L}_{pow,r}^2} \big(1 + \|\bvphi\|_{H^s}^{2r}\big)\|\bvphi\|_{H^s}^2 N^{(r+1)\ep+\delflex}h^{p+1}.
\end{equation}
Combining \eqref{eq_Errnum5Est} and \eqref{eq_Errnum6Est} yields \eqref{eq_IS_ErrnumEst} for $\mathrm{Err}_{num,5}+\mathrm{Err}_{num,6}$.

{\bf Step 5: Commuting infinite sum and expected value for \eqref{eq_ErrlinaRephrase} and \eqref{eq_ErrlinbRephrase}.}
We have with \eqref{eq_IS_FhatEst} and \eqref{eq_IS_phiCompEst} (or more precisely \eqref{eq_vthetaHs}) that
\begin{align*}
	&\left|\hat{F}_{k,\alpha\beta}^t[m,n]
	\psitlin[k]\big(N^{1/2}\big\langle\bphitlin[k,mn,\alpha\beta,\ell], \empmeas[t\wedge{\bTlin[k]}]-\rhobr(t\wedge\bTlin[k]) \big\rangle\big)\right|\\
	&\qquad\leq
	CN^{\delflex}(|m|_2+|n|_2)^{-\sI+1}
	\|\psitlin[k]\|_{\mathcal{L}_{pow,r+2}^0}\\
	&\qquad\qquad\qquad\qquad\qquad\times
	N^{(r+2)/2}\|\bphitlin[k,mn,\ell]\|_{H^{\ceil{d/2+1}}}^{r+2}
	\|\empmeas-\rhobr\|_{H^{-\ceil{d/2+1}}}^{r+2}\\
	&\qquad\leq
	CN^{\delflex}(1+\|\bvphi\|_{H^{s}}^{r+2})(|m|_2+|n|_2)^{-\sI+1}(|m|_\infty+|n|_\infty)^{(r+2)\ceil{d/2+1}}\\
	&\qquad\qquad\qquad\qquad\qquad\times
	\|\psitlin[k]\|_{\mathcal{L}_{pow,r+2}^0}
	N^{(r+2)/2}\|\empmeas-\rhobr\|_{H^{-\ceil{d/2+1}}}^{r+2},
\end{align*}
which is a converging sum in $n,m$ since $\sI>2d+1+(r+2)\ceil{d/2+1}$ with \eqref{eq_IS_sIbounds}, while the expected value is controlled due to Proposition \ref{PropErrorEstimateAuxiliarySystem}.
Thus, the application of the dominated convergence theorem in \eqref{eq_ErrlinaRephrase} is justified.
In the discrete case, with $\|\Ih[\cdot]\|_{L^1_h}\lesssim\|\cdot\|_{C^0}$,
\begin{align*}
	&\left|\hat{F}_{k,\alpha\beta}^\ell[m,n]
	\psitlin[k]\big(N^{1/2}\big(\Ih[\bphitlin[k,mn,\alpha\beta,\ell]], (\rhor[h]-\rhobr[h])(t\wedge\bTlin[k]\wedge\Ts) \big)_h\right|\\
	&\qquad\leq
	CN^{\delflex}\|\bvphi\|_{H^{s}}(|m|_2+|n|_2)^{-\sI+1}
	\|\psitlin[k]\|_{\mathcal{L}_{pow,r+2}^0}\\
	&\qquad\qquad\qquad\qquad\qquad\times
	N^{(r+2)/2}\|\bphitlin[k,mn,\ell]\|_{C^0}^{r+2}
	\|(\rhor[h]-\rhobr[h])(t\wedge\bTlin[k]\wedge\Ts)\|_{L^\infty_h}^{r+2}\\
	&\qquad\leq
	CN^{\delflex}(1+\|\bvphi\|_{H^{s}}^{r+3})(|m|_2+|n|_2)^{-s+1}
	N^{(r+2)/2-\epsilon(r+2)}
\end{align*}
due to the definition of the stopping time $\Ts$ in \eqref{defn_stopping} and that $\|\bphitlin[k,mn,\ell]\|_{C^0}\leq 1+\|\bphit\|_{H^s}$.
Since $\sI>2d+1$, the infinite sum other these terms converges and \eqref{eq_ErrlinbRephrase} is valid.
\end{proof}

\section{Proof of Theorem \ref{MainThm}}\label{sec_ProofMT}
Our main result follows from the iterative application of Theorem \ref{thm_iterative structure}, where Proposition \ref{PropErrorEstimateAuxiliarySystem}, Proposition \ref{prop_StopTBound} and the closeness of the initial fluctuations provided by Assumption \ref{ass_InitCond} are used to close the estimate after a sufficient number of iterations.

\begin{proof}
We choose the stopping time $\Ts=\Ts(\ep/2,\ep)$ as defined in \eqref{defn_stopping}.
With this choice, Proposition \ref{prop_StopTBound} directly yields \eqref{NegBoundRig}.
We further assume that in the following $N$ is always large enough for the used auxiliary results to hold. 
The precise dependencies are summarized in the result.
For this proof we allow for the specific value of $\delflex>0$ to change from line to line, since all estimates hold for arbitrary small $\delflex$, given $N$ is large enough (this can be easily adjusted down the line).

{\bf Step 1: The case $j=1$.}
Since some of the following considerations will be needed for the case $j>1$, we proceed under slightly more general assumptions than necessary for just the case $j=1$.
That is, let $\psi\in \mathcal{L}_{pow,r}^q$, $\bvphi\in \big[H^s(\domain,\R^\nS)\big]^K$  and $V\in \big[W^{\sI,1}(\domain)\big]^{\nS\times\nS}$ with $q,r,s,\sI\in\Nz$.
As in \eqref{InductiveQuantity} we set
\begin{multline*}
	\M(\psi,\bvphi,\bT)
	\coloneqq 
	\Ev\Big[\psi\Big(N^{1/2}\big\langle\bvphi,\empmeas[\bT]-\rhobr(\bT)\big\rangle\Big)\Big]\\
	- \Ev\Big[\psi\Big(N^{1/2}\big(\Ih[\bvphi],(\rhor[h]-\rhobr[h])(\bT\wedge\Ts)\big)_h\Big)\Big].
\end{multline*}
Now, if $q\geq 3$, $s>p+3d/2+5$ and $\sI>(d+2)(r+2)+2d+1\rbrace$, Theorem \ref{thm_iterative structure} yields
\begin{align}
\label{eq_MP_iterativeStep}
	\M(\psi,\bvphi,\bT)
	& =
	N^{-1/2}\sum_{k,\tilde{k}}^K\int_0^{T_k\wedge T_{\tilde{k}}}
		\M(\widetilde{\psi}_{k\tilde{k}=1}^t,\bphittil[k\tilde{k}],t\wedge\tilde{\bT}_{k\tilde{k}})
	\\
	&\quad 
	+ N^{-1/2}\sum_{k=1}^K\sum_{\alpha,\beta=1}^\nS\sum_{\ell=1}^d\int_0^{T_k}\sum_{m\in\Z^{2d}}\hat{F}_{k,\alpha\beta}^\ell[m] \nonumber\\
	& \qquad \qquad \qquad \quad \times
			\M(\psitlin[k],\bphitlin[k,m,\alpha\beta,\ell],t\wedge\bTlin[k])	+ \mathrm{Err}(\psi,\bvphi,\bT), \nonumber
\end{align} 
with
\begin{equation}\label{eq_MP_defErr}
	\mathrm{Err}(\psi,\bvphi,\bT) 
	\coloneqq 
	\M(\psi^0,\bphi^0,0)  + \mathrm{Err}_{num} + \mathrm{Err}_{neg} + \Errstop
\end{equation}
for new	
	$\psi^0,\widetilde{\psi}_{k\tilde{k}}^t,\psitlin[k]$, 
coefficients 
	$\hat{F}_{k,\alpha\beta}^\ell[m]$,
test functions
	$\bphi^0,\bphittil[k\tilde{k}],\bphitlin[k,m,\alpha\beta,\ell]$
as well as test times 
	$\tilde{\bT}_{k\tilde{k}}, \bTlin[k]$
and errors 
	$\mathrm{Err}_{num},\mathrm{Err}_{neg},\Errstop$
as described in Theorem \ref{thm_iterative structure}.

With \eqref{InitDataFluctInequality} from Assumption \ref{ass_InitCond}, \eqref{eq_IS_psi0Est}, \eqref{eq_IS_phi0Est} and $\|\cdot\|_{C^{p+1}}\lesssim \|\cdot\|_{H^{(d/2+p+1)+}}$ we have 
\begin{equation*}
	|\M(\psi^0,\bphi^0,0)|
	\leq	
	C\|\psi\|_{\mathcal{L}_{pow,r}^1}\left(1+\|\bvphi\|_{H^s}^{r+1}\right)h^{p+1}
\end{equation*}
and thus with \eqref{eq_IS_ErrnumEst}, \eqref{eq_IS_ErrnegEst} and \eqref{eq_IS_ErrstopEst}
\begin{equation}\label{eq_MP_EstErrPsiPhi}
	|\mathrm{Err}(\psi,\bvphi,\bT)| 
	\leq 
	C\|\psi\|_{\mathcal{L}_{pow,r}^3} \big(1 + \|\bvphi\|_{H^s}^{2r+3}\big)N^{(r+1)\ep+\delflex}\left(h^{p+1}+\exp\left(-CN^{\ep/2}\right)\right).
\end{equation}
Due to Proposition \ref{PropErrorEstimateAuxiliarySystem}, definition \eqref{defn_stopping}, and $\|\Ih(\cdot)\|_{H^{d+2}_h}\lesssim \|\cdot\|_{C^{d+2}}$, we obtain the estimate 
\begin{equation*}
	|\M(\psi,\bvphi,\bT)|
	\leq
	C\|\psi\|_{\mathcal{L}_{pow,r}^0} \|\bvphi\|_{H^{d/2+2}}^r N^{\delflex}
	+C\|\psi\|_{\mathcal{L}_{pow,r}^0} \|\bvphi\|_{C^{d+2}}^r N^{r\ep}.
\end{equation*}
Recalling the definition \eqref{eq_IS_psiRegEst} for $\widetilde{\psi}_{k\tilde{k}}^t$ and \eqref{eq_IS_phiRegEst} for $\bphittil[k\tilde{k}]$, and using $\|\cdot\|_{C^{d+2}}\lesssim \|\cdot\|_{H^s}$, 
\begin{equation}\label{eq_MP_EstMwtPsi}
	|\M(\widetilde{\psi}_{k\tilde{k}}^t, \bphittil[k\tilde{k}],t\wedge\tilde{\bT}_{k\tilde{k}})|
	\leq
	C\|\psi\|_{\mathcal{L}_{pow,r}^2} \big(1 + \|\bvphi\|_{H^s}^{3r+2}\big)N^{(r+1)\ep+\delflex}
\end{equation}
Furthermore, with \eqref{eq_IS_psiCompEst} and  $\|\bphitlin[m]\|_{C^{d+2}}\leq (1+|m|^{d+2})(1+N^{\delflex}\|\bphi\|_{H^s})$ due to \eqref{eq_defphilinTlin}, we get an inequality concerning the terms from the linearization compensation, namely
\begin{multline}\label{eq_MP_EstMwtPsiLin}
	|\M(\psitlin[k], \bphitlin[k,m,\alpha\beta,\ell],t\wedge\bTlin[k])|\\
	\leq
	C\|\psi\|_{\mathcal{L}_{pow,r}^1} \big(1 + \|\bvphi\|_{H^s}^{r}\big)\big(1+|m|_\infty^{(r+2)(d+2)}\big)
		\big(1+\|\bvphi\|_{H^{s}}^{r+2}\big)N^{(r+2)\ep+\delflex}.
\end{multline}
Plugging these back into \eqref{eq_MP_iterativeStep} and applying \eqref{eq_IS_FhatEst} to $\hat{F}_{k,\alpha\beta}^\ell[m]$ we obtain
\begin{multline}\label{eq_MP_Step1FinalEst}
	|\M(\psi,\bvphi,\bT)|\\
	\leq
	C\|\psi\|_{\mathcal{L}_{pow,r}^3} \big(1 + \|\bvphi\|_{H^s}^{3r+3}\big)
	\left(N^{-1/2}+h^{p+1}+\exp\left(-CN^{\ep/2}\right)\right)N^{(r+2)\ep+\delflex},
\end{multline}
where the infinite sum converges since we assumed $\sI> \left(d+2\right)(r+2)+2d+1$.

These considerations, together with the definition of $s(d,p,j)$ and the assumption on $\sI$ as given in Theorem \ref{MainThm} yield \eqref{DistanceRandomVariables} (specifically, the assumptions for $j=1$ correspond to $s=s(d,p,1)=p+3d/2+5$ and $\sI>3(p+3d/2+5)+2d+1$ while the use of the norm $d_{-3}$ corresponds to $q=3$ and $r=0$).


{\bf Step 2: The case $j>1$.}
We iteratively apply Theorem \ref{thm_iterative structure} $j$-times starting with arbitrary $\bvphi\in\big[H^{s_0}(\domain,\R^\nS)\big]^K$, $s_0=s(d,p,j)$, and arbitrary $\psi\in\mathcal{L}_{pow,r_0}^{q_0}$ corresponding to the metric $d_{-(2j+1)}$, i.e. $q_0=2j+1$, $r_0=0$.

This results in a tree structure:
Starting from $(\psi,\bphi)$ the first application of Theorem \ref{thm_iterative structure} spawns three successors: 
The error term $\mathrm{Err}(\psi,\bphi)$ as defined in \eqref{eq_MP_defErr}, the \textit{regular} successor $(\widetilde{\psi}, \f{\widetilde{\phi}})$ and the \textit{compensation} successor $(\psilin, \bphilin[m,\alpha\beta,\ell])_{m\in\Z^{2d}}$ 
-- for simplicity we do not explicitly keep track of indices $t,k,\tilde{k},\alpha,\beta$ and the test times; all estimates are independent of these anyways. 
The edge to the regular successor is labeled with $C_{K,T}N^{-1/2}$, the edge to the compensation successor with $C_{K,T}N^{-1/2}\sum_{m\in\Z^{2d}}\hat{F}[m]$.
The error term stays untouched and forms a leaf node of the tree. 
Then the same process is repeated for the regular and compensation successor.
We iterate until we reach depth $j$.

Each path from the root node to a leaf node corresponds to one distinct group of terms resulting from the iterations. 
If we add up the bounds for these groups, we obtain a bound for $\M(\psi,\bphi)$.
Naturally, each path either ends in an error term or is of length $j$.
Thus, with the considerations for the case $j=1$, the corresponding terms are bound by either $C(\|\psi\|_{C^{2j+1}},\|\bvphi\|_{s_0}) h^{p+1} N^{-\breve{j}/2+\breve{r}\ep+\delflex}$ or $C(\|\psi\|_{C^{2j+1}},\|\bvphi\|_{s_0})N^{-j/2+\tilde{r}\ep+\delflex}$ for suitable $\breve{j},\breve{r},\tilde{r}\in\N_0$.
We only need to show three things:
\begin{itemize}[noitemsep, topsep=0pt, left= 10pt]
	\item		the remaining regularity of the test functions and generalized moment functions at the second to last node of all paths is sufficient to close the estimates as in Step 1,
	\item		all appearing combinations of $\breve{j},\breve{r},\tilde{r}$ fit the scaling in \eqref{FluctPrecise},
	\item		$\sI$ is chosen large enough so that along all paths each infinite sum converges.
\end{itemize}
\noindent
Each path is represented by a sequence 
	$(\psi_{0,\vec{m}_0},\bvphi_{0,\vec{m}_0})_{\vec{m}_0},\ldots,(\psi_{{\tilde{j}},\vec{m}_{\tilde{j}}},\bvphi_{{\tilde{j}},\vec{m}_{\tilde{j}}})_{\vec{m}_{\tilde{j}}},(\mathrm{leaf}_{\vec{m}_{\tilde{j}}})_{\vec{m}_{\tilde{j}}}$ 
with 
	${\tilde{j}}\leq j-1$, 
	$(\psi_{0,\vec{m}_0},\bvphi_{0,\vec{m}_0})_{\vec{m}_0}=(\psi,\bvphi)$, and 
	$\vec{m}_i$ for $0\leq i\leq \tilde{j}$ being the vector of the collected $\Z^{2d}$-indices for all compensation nodes up until $i$.
	
The necessary regularity is given by sequences $(q_i)_{i=1,\ldots,\tilde{j}},(r_i)_{i=1,\ldots,\tilde{j}},(s_i)_{i=1,\ldots,\tilde{j}}\subset\Nz$ such that we control $\|\psi_{i,\vec{m}_i}\|_{\mathcal{L}_{pow,r_{i}}^{q_{i}}}$ and $\|\bvphi_{i,\vec{m}_i}\|_{H^{s_i}}$ in terms of their predecessors. 
With respect to $(q_i)_i$, from Step 1 we know that we need $q_{\tilde{j}}\geq 3$. 
For $i=1,\ldots,\tilde{j}$ we have 
\begin{equation*}
	q_{i}=\left\lbrace\begin{split}
			q_{i-1}-2 \quad&\text{if }(\psi_{i,\vec{m}_i},\bvphi_{i,\vec{m}_i})_{\vec{m}_i}\text{ regular successor, corresponding to \eqref{eq_IS_psiRegEst}},\\
			q_{i-1}-1 \quad&\text{if }(\psi_{i,\vec{m}_i},\bvphi_{i,\vec{m}_i})_{\vec{m}_i}\text{ compensation successor, see \eqref{eq_IS_psiCompEst}}.
		\end{split}\right.
\end{equation*}
Since by definition $q_0=2j+1$, we satisfy $q_{\tilde{j}}\geq 3$.
For $(r_i)_i$, by definition $r_0=0$ and 
\begin{equation*}
	r_{i}=\left\lbrace\begin{split}
			r_{i-1}+1 \quad&\text{if }(\psi_{i,\vec{m}_i},\bvphi_{i,\vec{m}_i})_{\vec{m}_i}\text{ regular successor, corresponding to \eqref{eq_IS_psiRegEst}},\\
			r_{i-1}+2 \quad&\text{if }(\psi_{i,\vec{m}_i},\bvphi_{i,\vec{m}_i})_{\vec{m}_i}\text{ compensation successor, see \eqref{eq_IS_psiCompEst}}.
		\end{split}\right.
\end{equation*}
Concerning $(s_i)_i$, with \eqref{eq_IS_phiRegEst} and \eqref{eq_IS_phiCompEst}, for both types of successors we have $s_i=s_{i-1}-1$.
Since $s(d,p,j)=p+3d/2+4+j$ we guarantee $s_{\tilde{j}}>p+3d/2+5$ as assumed in Step 1: 
In summary, for all $i$ we have 
\begin{equation}\label{eq_MP_risi}
	r_i\leq 2i,
	\qquad
	s_i = p+\frac{3d}{2}+4+j-i.
\end{equation}
It now remains to check that $N^{\ep}$ only appears as given in \eqref{FluctPrecise} and that $\sI$ is chosen large enough to guarantee the convergence of the infinite sums.
Let $\big((\psi_{{i_\ell},\vec{m}_{i_\ell}},\bvphi_{{i_\ell},\vec{m}_{i_\ell}})_{\vec{m}_{i_\ell}}\big)_{\ell=1,\ldots,l}$ with $(i_\ell)_\ell\subset\lbrace 1,\ldots,\tilde{j}\rbrace$ be the compensation nodes on the path.
Let $\G$ be the group of terms corresponding to this path. 
Respectively, \eqref{eq_MP_EstErrPsiPhi}, \eqref{eq_MP_EstMwtPsi}, and \eqref{eq_MP_EstMwtPsiLin} yield an estimate for the leaf term as in \eqref{eq_MP_Step1FinalEst}.
Adding the edge labels we obtain
\begin{multline*}
	|\G|
	\leq 
	C_{K,T}\sum_{\vec{m}_{\tilde{j}}}N^{-\tilde{j}/2}\bigg(\prod_{\ell=1}^l\hat{F}[m_{i_\ell}]\bigg)
	\|\psi_{\tilde{j},\vec{m}_{\tilde{j}}}\|_{\mathcal{L}_{pow,r_{\tilde{j}}}^{q_{\tilde{j}}}}\\
	\times
	\Big(1+\|\bvphi_{\tilde{j}}\|_{H^{s_{\tilde{j}}}}^{3r_{\tilde{j}}+3}\Big)
	\left(N^{-(j-\tilde{j})/2}+h^{p+1}+\exp\left(-CN^{\ep/2}\right)\right)N^{(r_{\tilde{j}}+2)\ep+\delflex}.
\end{multline*}
With respect to the $N^\ep$-scaling, note that the term $N^{-(j-\tilde{j})/2}$ actually only appears if the leaf does not correspond to an error term, in particular if $\tilde{j}=j-1$. 
Plugging in $r_{\tilde{j}}\leq 2\tilde{j}$ and using that $N^{-\tilde{j}/2}N^{(2\tilde{j}+2)\ep}<N^{2\ep}$ since $\ep<1/4$, we obtain the scaling from \eqref{FluctPrecise}.

With respect to the convergence of the sums, we iteratively replace
	$((\psi_{{\tilde{j}},\vec{m}_{\tilde{j}}},\bvphi_{{\tilde{j}},\vec{m}_{\tilde{j}}})_{\vec{m}_{\tilde{j}}})$ 
with their predecessors via \eqref{eq_IS_psiRegEst}, \eqref{eq_IS_psiCompEst}, \eqref{eq_IS_phiRegEst}, and \eqref{eq_IS_phiCompEst}.
Assuming convergence for the previous steps, we obtain a sequence $(\lambda_i)_{i=1,\ldots,\tilde{j}}\subset\N$ such that
\begin{multline}\label{eq_MP_lambdaFramework}
	|\G|
	\leq 
	C_{K,T}\sum_{\vec{m}_{i}}
	\bigg(\prod_{i_\ell\leq i}\hat{F}[m_{i_\ell}]\bigg)
	\|\psi_{i,\vec{m}_i}\|_{\mathcal{L}_{pow,r_i}^{q_i}}\\
	\times
	\big(1+\|\bvphi_{i}\|_{H^{s_i}}^{\lambda_i}\big)
	\left(N^{-(j-\tilde{j})/2}+h^{p+1}+\exp\left(-CN^{\ep/2}\right)\right)N^{-\tilde{j}/2+(2\tilde{j}+2)\ep+\delflex}.
\end{multline}
This sequence is given by $\lambda_{\tilde{j}}\coloneqq 3r_{\tilde{j}}+3$ and from \eqref{eq_IS_psiRegEst}/\eqref{eq_IS_psiCompEst} for $\|\psi_{i,\vec{m}_i}\|_{\mathcal{L}_{pow,r_i}^{q_i}}$ and \eqref{eq_IS_phiRegEst}/\eqref{eq_IS_phiCompEst} for $\|\bvphi_{i}\|$ we can read off that
\begin{equation*}
	\lambda_{i-1}=\left\lbrace\begin{split}
			2\lambda_{i}+r_{i-1} \quad&\text{if }(\psi_{i,\vec{m}_i},\bvphi_{i,\vec{m}_i})_{\vec{m}_i}\text{ regular successor},\\
			\lambda_{i}+r_{i-1} \quad&\text{if }(\psi_{i,\vec{m}_i},\bvphi_{i,\vec{m}_i})_{\vec{m}_i}\text{ compensation successor}.
		\end{split}\right.
\end{equation*}
Plugging in \eqref{eq_IS_FhatEst} for $\hat{F}[m_{i}]$ (assuming that $(\psi_{i,\vec{m}_i},\bvphi_{i,\vec{m}_i})_{\vec{m}_i}$ is a compensation successor) and \eqref{eq_IS_phiCompEst} for $\|\bvphi_{i}\|_{H^{s_i}}$ into \eqref{eq_MP_lambdaFramework}, we see that the convergence of the sum over $m_{i}$ can be guaranteed if for each $\ell=1,\ldots,l$ we have
$
	\sI 
	>  s_{i_\ell}\lambda_{i_\ell} + 2d + 1. 
$
With \eqref{eq_MP_risi} we roughly estimate $\lambda_i\leq  (j-i)2^{j-i}3j$.
Hence, due to our choice of $\sI$ we indeed have for $i\geq 1$ that
\begin{equation*}
	\sI 
	>	2^{j-1}3j^2s(d,p,j) +2d+1
	\geq	\lambda_{i}s_i + 2d+1.
\end{equation*}
Thus, all infinite sums along the path converge.
This completes the proof.
\end{proof}

\section{Quantitative convergence to the mean-field limit}\label{SecQuantConvMeanF}

\begin{proposition}
\label{PropErrorEstimateAuxiliarySystem}
Let $\{X^{\rI,N}_{\alpha,i}\}_{i=1,\ldots,N}^{\alpha=1,\ldots,\nS}$ be a solution to the interacting particle system \eqref{eq_CDPartSys} satisfying Assumption \ref{ass_CDsystem+MFL} with mean field limit $\rhobr$ satisfying Assumption \ref{ass_RegularityMFL}. 
Let $\{\tilde X^{\rI,N}_{\alpha,i}\}$ be a solution to the diffusing particle system with mean-field forces
\begin{equation}\label{eq_CDauxPartSys} 
	\left\{\begin{aligned}
	 	\m \tilde X^{\rI}_{\alpha,i}(t) &=
	 			-\sum_{\beta=1}^\nS (\nabla\V[\alpha\beta]\ast\rhobr[\beta])(\tilde X^{\rI,N}_{\alpha,i}(t),t)\m t,
	 			+ \sqrt{2\sigma_\alpha}\m B_{\alpha,i}(t)\\
		\tilde X^{\rI}_{\alpha,i}(0) &= X^{\rI}_{\alpha,i}(0)\\
	\end{aligned}\right.
\end{equation}
(cfr. \cite[System (3)]{chen2019rigorous}). 
Then, for all $\alpha\in \{1,\ldots,n_S\}$ and all $i\in \{1,\ldots,N\}$, we obtain 
\begin{align}
\label{ErrorEstimateAuxiliarySystem}
	&\sup_{t\in [0,T]} \big|X^{\rI,N}_{\alpha,i}-\tilde X^{\rI,N}_{\alpha,i}\big|(t) \\
	&\quad \leq
	C(T+1) \exp\left(C T r_I^{-(d+2)}\right) 
 	\left( \big\| \empmeas[0] - \rhobr(0) \big\|_{H^{-d/2-2}} +\mathcal{C}N^{-1/2}\right)\nonumber,
\end{align}
where $\mathcal{C}$ is a random variable with Gaussian moments $\mathbb{E}[\exp\tfrac{1}{C}\mathcal{C}^2]\leq 3$.
Furthermore, for any $\delflex > 0$, also using \eqref{ScalingRegimeRadius} from Assumption \ref{ass_Scaling}, there exists $N_0=N_0(\delflex,data)$ such that for all $N\geq N_0$ we have the estimate
\begin{align}
	&\big\| {\mu}_{\alpha,t}^{\rI,N} - \rhobr[\alpha](t) \big\|_{H^{-d/2-2}} \nonumber
	\\&\quad \leq\label{ErrorEstimateMeanFieldLimit_2}
	C(T+1) \exp\left(C T r_I^{-(d+2)}\right)\left(\big\| {\mu}_{0}^{\rI,N} - \rhobr(0) \big\|_{H^{-d/2-2}}	+ \mathcal{C} N^{-1/2}\right) 
	\\& \quad\leq \label{ErrorEstimateMeanFieldLimit}
	\mathcal{C}N^{\delflex T}N^{-1/2}.
\end{align}
\end{proposition}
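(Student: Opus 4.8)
The plan is to adapt the synchronous-coupling argument of \cite{chen2019rigorous}: compare the true particles $X^{\rI,N}_{\alpha,i}$ with the auxiliary particles $\widetilde{X}^{\rI,N}_{\alpha,i}$ of \eqref{eq_CDauxPartSys} --- which are driven by the same Brownian motions and start from the same data --- through a Gr\"onwall estimate whose source is the empirical-measure error, and to feed that error by a law-of-large-numbers bound for the auxiliary system; \eqref{ErrorEstimateAuxiliarySystem}, \eqref{ErrorEstimateMeanFieldLimit_2} and \eqref{ErrorEstimateMeanFieldLimit} then follow by combining the two and inserting the scaling \eqref{ScalingRegimeRadius}. \textbf{Step 1 (the Gr\"onwall loop).} Subtracting \eqref{eq_CDauxPartSys} from \eqref{eq_CDPartSys} the Brownian increments cancel, so $X^{\rI,N}_{\alpha,i}-\widetilde{X}^{\rI,N}_{\alpha,i}$ is Lipschitz in $t$. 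Writing $N^{-1}\sum_j\nabla\V[\alpha\beta](X^{\rI,N}_{\alpha,i}-X^{\rI,N}_{\beta,j})=(\nabla\V[\alpha\beta]\ast\empmeas[\beta,t])(X^{\rI,N}_{\alpha,i})$, I would split the drift difference as
\[
\big(\nabla\V[\alpha\beta]\ast(\empmeas[\beta,t]-\rhobr[\beta](t))\big)(X^{\rI,N}_{\alpha,i})+\big[(\nabla\V[\alpha\beta]\ast\rhobr[\beta](t))(X^{\rI,N}_{\alpha,i})-(\nabla\V[\alpha\beta]\ast\rhobr[\beta](t))(\widetilde{X}^{\rI,N}_{\alpha,i})\big].
\]
The bracketed term is controlled by the $\rI$-uniform Lipschitz bound $\|V_{\alpha\beta}\|_{L^1}\|D^2\rhobr[\beta]\|_{L^\infty}\,|X^{\rI,N}_{\alpha,i}-\widetilde{X}^{\rI,N}_{\alpha,i}|$ (move both derivatives onto $\rhobr[\beta]$, and recall $\|\V[\alpha\beta]\|_{L^1}=\|V_{\alpha\beta}\|_{L^1}$). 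The first term is estimated pointwise by duality, $|(\nabla\V[\alpha\beta]\ast\nu)(x)|\le\|\nu\|_{H^{-d/2-2}}\|\nabla\V[\alpha\beta]\|_{H^{\lfloor d/2\rfloor+1}}\le C\rI^{-(d+2)}\|\nu\|_{H^{-d/2-2}}$, using $\|D^\gamma\V[\alpha\beta]\|_{L^2}\sim\rI^{-d/2-|\gamma|}$. Since $\|\delta_x-\delta_y\|_{H^{-d/2-2}}\le C|x-y|$ one has $\|\empmeas[\beta,t]-\widetilde{\mu}^{\rI,N}_{\beta,t}\|_{H^{-d/2-2}}\le C\max_i|X^{\rI,N}_{\beta,i}(t)-\widetilde{X}^{\rI,N}_{\beta,i}(t)|$, so with $E(t):=\max_{\alpha,i}|X^{\rI,N}_{\alpha,i}-\widetilde{X}^{\rI,N}_{\alpha,i}|(t)$ and $A_N(t):=\max_\beta\|\widetilde{\mu}^{\rI,N}_{\beta,t}-\rhobr[\beta](t)\|_{H^{-d/2-2}}$ one gets $E(0)=0$ and $\tfrac{d}{dt}E(t)\le C\rI^{-(d+2)}\big(E(t)+A_N(t)\big)$, whence $E(t)\le(T+1)e^{CT\rI^{-(d+2)}}\sup_{s\le T}A_N(s)$.

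\textbf{Step 2 (the law-of-large-numbers input and conclusion).} For fixed $\alpha$, $\widetilde{X}^{\rI,N}_{\alpha,i}(t)=\Phi^\alpha_t(\eta_{\alpha,i},B_{\alpha,i})$ for a flow map $\Phi^\alpha_t$ that is Lipschitz in its first argument; hence the $\widetilde{X}^{\rI,N}_{\alpha,i}(t)$ are i.i.d.\ in the i.i.d.\ case, and the spectral-gap inequality \eqref{eq_ass_sepctralGap} transfers to them in the other case. I would write $\widetilde{\mu}^{\rI,N}_{\alpha,t}-\rhobr[\alpha](t)=(\widetilde{\mu}^{\rI,N}_{\alpha,t}-w_{\alpha,t})+(w_{\alpha,t}-\rhobr[\alpha](t))$, where $w_\alpha$ solves the linear Fokker--Planck equation $\partial_t w_\alpha=\sigma_\alpha\Delta w_\alpha+\divv(w_\alpha\sum_\beta\nabla\V[\alpha\beta]\ast\rhobr[\beta])$ with $w_{\alpha,0}=\empmeas[\alpha,0]$. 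Since $\rhobr[\alpha]$ solves the same equation (cf.\ \eqref{eq_CD-IMFL}) and its drift $\sum_\beta\nabla\V[\alpha\beta]\ast\rhobr[\beta]=\sum_\beta\V[\alpha\beta]\ast\nabla\rhobr[\beta]$ has $\rI$-uniform Sobolev norms (Young's inequality plus the regularity of $\rhobr$), a weak/energy estimate in $H^{-d/2-2}$ gives $\|w_{\alpha,t}-\rhobr[\alpha](t)\|_{H^{-d/2-2}}\le Ce^{CT}\|\empmeas[0]-\rhobr(0)\|_{H^{-d/2-2}}$. For the fluctuation I would use $\|\nu\|_{H^{-d/2-2}}^2\approx\sum_{m\in\Z^d}(1+|m|^2)^{-d/2-2}|\widehat\nu(m)|^2$: conditionally on the $\eta_{\alpha,i}$ each Fourier mode of $\widetilde{\mu}^{\rI,N}_{\alpha,t}-w_{\alpha,t}$ is an average of $N$ independent bounded mean-zero terms, hence sub-Gaussian with variance $\lesssim N^{-1}$; summing against the summable weights and applying a Gaussian-chaos/Markov estimate yields $\Ev[\exp(\tfrac1C N\|\widetilde{\mu}^{\rI,N}_{\alpha,t}-w_{\alpha,t}\|_{H^{-d/2-2}}^2)]\le3$, and a continuity-in-$t$ argument upgrades this to $\sup_{t\le T}$. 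Thus $\sup_{t\le T}A_N(t)\le Ce^{CT}\big(\|\empmeas[0]-\rhobr(0)\|_{H^{-d/2-2}}+\mathcal C N^{-1/2}\big)$ with $\mathcal C$ of Gaussian moments; plugging this into Step 1 proves \eqref{ErrorEstimateAuxiliarySystem}, and the inequality $\|\empmeas[\alpha,t]-\rhobr[\alpha](t)\|_{H^{-d/2-2}}\le CE(t)+A_N(t)$ then gives \eqref{ErrorEstimateMeanFieldLimit_2}. Finally, for \eqref{ErrorEstimateMeanFieldLimit} a time-$0$ instance of the same concentration bound, together with \eqref{eq_ass_rhobz vs Ev} to absorb the bias $\Ev[\empmeas[0]]-\rhobz$, gives $\|\empmeas[0]-\rhobr(0)\|_{H^{-d/2-2}}\le\mathcal C N^{-1/2}$; and by \eqref{ScalingRegimeRadius} one has $\rI^{-(d+2)}\le(\log N)^{1/2}$, so $(T+1)e^{CT\rI^{-(d+2)}}\le N^{\delflex T}$ once $N\ge N_0(\delflex,data)$, and absorbing constants into $\mathcal C$ concludes.

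\textbf{Main obstacle.} The delicate part is the concentration estimate in Step 2: producing a uniform-in-time, uniform-in-test-function bound in the negative Sobolev norm \emph{with genuine Gaussian moment control} on $\mathcal C$ --- in particular transferring sub-Gaussianity through the auxiliary flow in the spectral-gap case, upgrading the pointwise-in-$t$ bound to $\sup_{t\le T}$, and keeping every $\rI$-dependent Sobolev constant at the level $\rI^{-(d+2)}$ so that, after the Gr\"onwall step, the exponential factor remains compatible with the scaling \eqref{ScalingRegimeRadius}.
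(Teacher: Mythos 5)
Your proposal is correct in substance and follows the same overall architecture as the paper: a synchronous-coupling Gr\"onwall estimate between \eqref{eq_CDPartSys} and the auxiliary system \eqref{eq_CDauxPartSys}, followed by a bias/fluctuation decomposition of the auxiliary empirical measure, with a linear Fokker--Planck energy estimate in $H^{-d/2-2}$ for the bias and a per-Fourier-mode concentration bound for the fluctuation. The differences are local. In the Gr\"onwall step the paper splits at the particle level and controls the feedback term by the Lipschitz bound $\|\nabla^2\V[\alpha\beta]\|_{L^\infty}\lesssim \rI^{-(d+2)}$, reserving the $H^{d/2+2}$--$H^{-d/2-2}$ duality for the exogenous source $\widetilde\mu^{\rI,N}-\rhobr$ only; you instead split at the measure level and push the coupling error through the duality as well. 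In the concentration step the paper centers at $\Ev[\widetilde\mu^{\rI,N}_t]$ and applies the spectral gap inequality \eqref{eq_ass_sepctralGap} to the Fourier modes as functions of the initial positions, which requires the flow-derivative bound $\partial \widetilde X(t)/\partial\widetilde X(0)\lesssim \rI^{-(d+1)}$; your centering at the Fokker--Planck solution started from the random $\empmeas[0]$ exploits conditional independence given the initial data (the randomness used is only the Brownian motions), so plain Hoeffding suffices there and the spectral gap is only needed at time $0$ for \eqref{ErrorEstimateMeanFieldLimit}. That is a genuinely cleaner handling of this term, and it also places the $\|\empmeas[0]-\rhobr(0)\|_{H^{-d/2-2}}$ contribution exactly where the statement wants it.

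Two concrete corrections. First, your duality estimate is stated with the wrong exponent: to pair with $H^{-d/2-2}$ you need $\|\nabla\V[\alpha\beta]\|_{H^{d/2+2}}\lesssim \rI^{-(d+3)}$, not $\|\nabla\V[\alpha\beta]\|_{H^{\lfloor d/2\rfloor+1}}\lesssim \rI^{-(d+2)}$. Since in your splitting this constant sits \emph{inside} the Gr\"onwall loop, as written you would obtain $\exp\big(CT\rI^{-(d+3)}\big)$ rather than the stated $\exp\big(CT\rI^{-(d+2)}\big)$; this is still compatible with \eqref{ScalingRegimeRadius} and hence with \eqref{ErrorEstimateMeanFieldLimit}, but to recover \eqref{ErrorEstimateAuxiliarySystem}--\eqref{ErrorEstimateMeanFieldLimit_2} verbatim you should bound the feedback term $\nabla\V[\alpha\beta]\ast(\empmeas[\beta,t]-\widetilde\mu^{\rI,N}_{\beta,t})$ pointwise by $\|\nabla^2\V[\alpha\beta]\|_{L^\infty}\,N^{-1}\sum_j|X_{\beta,j}-\widetilde X_{\beta,j}|\lesssim \rI^{-(d+2)}E(t)$, as the paper does, and keep the duality only for $\widetilde\mu^{\rI,N}-\rhobr$. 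Second, the ``main obstacle'' you flag (uniform-in-time Gaussian moment control) is avoidable: your differential inequality gives $E(T)\lesssim e^{CT\rI^{-(d+2)}}\int_0^T A_N(s)\,\m s$, and since \eqref{ErrorEstimateMeanFieldLimit_2}--\eqref{ErrorEstimateMeanFieldLimit} are fixed-time statements, fixed-time moment bounds integrated in $t$ (Minkowski/Jensen) suffice; the paper proceeds exactly this way and never needs a $\sup_{t\le T}$ concentration estimate.
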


To prove Proposition \ref{PropErrorEstimateAuxiliarySystem}, we first establish the following bound.
\begin{lemma}
\label{LemmaErrorEstimateAuxiliarySystem}
Suppose that the assumptions of Proposition \ref{PropErrorEstimateAuxiliarySystem} hold.
Additionally, set $\tilde\mu_{\alpha,t}^{\rI,N}:=\frac{1}{N}\sum_{i=1}^N \delta_{\tilde X^{\rI}_{\alpha,i}(t)}$.
Then the estimate
\begin{align*}
	\sup_{t\in [0,T]} \big|X^{\rI,N}_{\alpha,i}-\tilde X^{\rI,N}_{\alpha,i}\big|(t)
	& \leq 
	C(T+1)\exp\left(C T r_I^{-(d+2)}\right) \\
	& \quad\quad \times
	\int_0^T \big\| {\tilde \mu}_{t}^{\rI,N} - \rhobr(t) \big\|_{H^{-d/2-2}} \m t.
\end{align*}
holds for all all $\alpha\in \{1,\ldots,n_S\}$ and all $i\in \{1,\ldots,N\}$.
\end{lemma}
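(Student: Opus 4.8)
The starting point is that $X^{\rI,N}_{\alpha,i}$ and $\tilde X^{\rI}_{\alpha,i}$ are driven by the same Brownian motions and have the same initial data, so the difference $D_{\alpha,i}:=X^{\rI,N}_{\alpha,i}-\tilde X^{\rI}_{\alpha,i}$ satisfies $D_{\alpha,i}(0)=0$ and, pathwise (the noise cancels), the random ODE
\[
\dot D_{\alpha,i}(t)=-\sum_{\beta=1}^{\nS}\Big[\big(\nabla\V[\alpha\beta]\ast\empmeas[\beta,t]\big)\big(X^{\rI,N}_{\alpha,i}(t)\big)-\big(\nabla\V[\alpha\beta]\ast\rhobr[\beta](t)\big)\big(\tilde X^{\rI}_{\alpha,i}(t)\big)\Big].
\]
I would decompose each summand, with $\tilde\mu^{\rI,N}_{\beta,t}:=\tfrac1N\sum_j\delta_{\tilde X^{\rI}_{\beta,j}(t)}$ as in the statement, into three pieces: $(\mathrm A)=\big(\nabla\V[\alpha\beta]\ast(\empmeas[\beta,t]-\tilde\mu^{\rI,N}_{\beta,t})\big)\big(X^{\rI,N}_{\alpha,i}(t)\big)$, measuring the gap between the true and the auxiliary empirical measures; $(\mathrm B)=\big(\nabla\V[\alpha\beta]\ast(\tilde\mu^{\rI,N}_{\beta,t}-\rhobr[\beta](t))\big)\big(X^{\rI,N}_{\alpha,i}(t)\big)$, the genuine fluctuation term; and $(\mathrm C)=\big(\nabla\V[\alpha\beta]\ast\rhobr[\beta](t)\big)\big(X^{\rI,N}_{\alpha,i}(t)\big)-\big(\nabla\V[\alpha\beta]\ast\rhobr[\beta](t)\big)\big(\tilde X^{\rI}_{\alpha,i}(t)\big)$.

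For $(\mathrm A)$, expanding the convolutions against the two empirical measures and applying the mean-value theorem gives $|(\mathrm A)|\le\|\nabla^2\V[\alpha\beta]\|_{L^\infty}\,\tfrac1N\sum_{j}|D_{\beta,j}(t)|$, and by the singular rescaling \eqref{DefPotentialsV_alpha_beta} one has $\|\nabla^2\V[\alpha\beta]\|_{L^\infty}=\rI^{-(d+2)}\|\nabla^2V_{\alpha\beta}\|_{L^\infty}$ --- this is exactly where the factor $\rI^{-(d+2)}$ enters. For $(\mathrm C)$, the spatial Lipschitz constant of $\nabla\V[\alpha\beta]\ast\rhobr[\beta](t)$ is bounded by $\|\nabla^2\V[\alpha\beta]\|_{L^1}\,\|\rhobr[\beta]\|_{L^\infty((0,T)\times\domain)}\lesssim\rI^{-2}$ using Assumption \ref{ass_RegularityMFL}; since $\rI\le1$ this is $\le C\rI^{-(d+2)}$, so $|(\mathrm C)|\le C\rI^{-(d+2)}|D_{\alpha,i}(t)|$. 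Both contributions thus feed the unknowns $D$ back with coefficient $O(\rI^{-(d+2)})$.

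The only term that does not reduce to the $D$'s is $(\mathrm B)$, which has to be matched to the negative Sobolev norm $\|\tilde\mu^{\rI,N}_{\beta,t}-\rhobr[\beta](t)\|_{H^{-d/2-2}}$ of the statement. I would bound it in $L^\infty$ by a Fourier-side Cauchy--Schwarz/duality estimate $\|\nabla\V[\alpha\beta]\ast g\|_{L^\infty}\le\|\nabla\V[\alpha\beta]\|_{H^{d/2+2}}\,\|g\|_{H^{-d/2-2}}$ (legitimate since $d/2+2>d/2$), then observe, using the regularity of $V$ in Assumption \ref{ass_CDsystem+MFL} and \eqref{DefPotentialsV_alpha_beta}, that $\|\nabla\V[\alpha\beta]\|_{H^{d/2+2}}\lesssim\rI^{-(d+3)}$ is a fixed negative power of $\rI$, which on $(0,1]$ satisfies $\rI^{-(d+3)}\le C\exp(\rI^{-(d+2)})$. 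Hence $|(\mathrm B)|\le C\exp(\rI^{-(d+2)})\,\|\tilde\mu^{\rI,N}_{\beta,t}-\rhobr[\beta](t)\|_{H^{-d/2-2}}$ with $C$ independent of $\rI$. Setting $E(t):=\max_{\alpha,i}|D_{\alpha,i}(t)|$ (locally Lipschitz, $E(0)=0$) and collecting the three bounds yields
\[
\dot E(t)\le C\rI^{-(d+2)}E(t)+C\exp(\rI^{-(d+2)})\sum_{\beta=1}^{\nS}\big\|\tilde\mu^{\rI,N}_{\beta,t}-\rhobr[\beta](t)\big\|_{H^{-d/2-2}},
\]
so Gr\"onwall's inequality on $[0,T]$ gives $\sup_{[0,T]}E\le C(T+1)\exp(CT\rI^{-(d+2)})\int_0^T\sum_\beta\|\tilde\mu^{\rI,N}_{\beta,t}-\rhobr[\beta](t)\|_{H^{-d/2-2}}\,\m t$, where the source prefactor $\exp(\rI^{-(d+2)})$ has been absorbed into the Gr\"onwall exponential and $\sum_\beta\|\cdot\|$ into $\|\tilde\mu^{\rI,N}_t-\rhobr(t)\|_{H^{-d/2-2}}$; this is the asserted estimate. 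The main obstacle is step $(\mathrm B)$: choosing the order $-d/2-2$ of the duality pairing so that the measure has finite norm while $\nabla\V[\alpha\beta]$ is admissible, and carefully tracking that \emph{every} power of $\rI^{-1}$ produced by the rescaling \eqref{DefPotentialsV_alpha_beta} is swallowed by $\exp(\rI^{-(d+2)})$ rather than by the $\rI$-independent constant $C$, via the elementary bound $\rI^{-k}\le C_k\exp(\rI^{-(d+2)})$ on $(0,1]$.
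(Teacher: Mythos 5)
Your argument is correct and follows essentially the same route as the paper: subtract the two SDE systems so the Brownian noise cancels, split the drift difference into pieces controlled by the particle-position errors via second derivatives of $\V[\alpha\beta]$ and a fluctuation piece bounded by the duality pairing $\|\nabla\V[\alpha\beta]\|_{H^{d/2+2}}\,\|\tilde\mu^{\rI,N}_{t}-\rhobr(t)\|_{H^{-d/2-2}}$, and then apply Gr\"onwall, absorbing all powers of $\rI^{-1}$ into the exponential $\exp(CT\rI^{-(d+2)})$. The only cosmetic deviations are your regrouping of the three pieces (you use the Lipschitz bound $\|\nabla^2\V\|_{L^1}\|\rhobr\|_{L^\infty}$ for the mean-field drift and evaluate the fluctuation term at $X^{\rI,N}_{\alpha,i}$ instead of $\tilde X^{\rI,N}_{\alpha,i}$) and running Gr\"onwall directly on the maximum of the particle errors, whereas the paper applies it to the averaged error and then re-inserts; both are equivalent here.
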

\begin{proof}
By subtracting \eqref{eq_CDPartSys} and \eqref{eq_CDauxPartSys}, we obtain
\begin{align*}
&\m \big(X^{\rI,N}_{\alpha,i}-\tilde X^{\rI,N}_{\alpha,i}\big)(t)
\\&
=-\sum_{\beta=1}^\nS N^{-1} \sum_{j=1}^N \bigg( \nabla\V[\alpha\beta] \left(X^{\rI,N}_{\alpha,i}(t)-X^{\rI,N}_{\beta,j}(t)\right) - (\nabla\V[\alpha\beta] \ast \rhobr[\beta]) (\tilde X^{\rI,N}_{\alpha,i}(t)) \bigg) \m t
\\&
=-\sum_{\beta=1}^\nS N^{-1} \sum_{j=1}^N \bigg( \nabla\V[\alpha\beta] \left(X^{\rI,N}_{\alpha,i}(t)-X^{\rI,N}_{\beta,j}(t)\right) - \nabla\V[\alpha\beta] \left(X^{\rI,N}_{\alpha,i}(t)-\tilde X^{\rI,N}_{\beta,j}(t)\right) \bigg) \m t
\\&~~~\,
-\sum_{\beta=1}^\nS N^{-1} \sum_{j=1}^N \bigg( \nabla\V[\alpha\beta] \left(X^{\rI,N}_{\alpha,i}(t)-\tilde X^{\rI,N}_{\beta,j}(t)\right) - \nabla\V[\alpha\beta] \left(\tilde X^{\rI,N}_{\alpha,i}(t)-\tilde X^{\rI,N}_{\beta,j}(t)\right) \bigg) \m t
\\&~~~\,
-\sum_{\beta=1}^\nS \bigg( N^{-1} \sum_{j=1}^N \nabla\V[\alpha\beta] \left(\tilde X^{\rI,N}_{\alpha,i}(t)-\tilde X^{\rI,N}_{\beta,j}(t)\right) - (\nabla\V[\alpha\beta] \ast \rhobr[\beta]) (\tilde X^{\rI,N}_{\alpha,i}(t)) \bigg) \m t
\end{align*}
which yields
\begin{align*}
\big|X^{\rI,N}_{\alpha,i}-\tilde X^{\rI,N}_{\alpha,i}\big|(T) 
& \leq C \left\| \nabla^2 \V \right\|_{L^\infty}  \int_0^T \sum_ {\beta=1}^\nS \frac{1}{N}\sum_{j=1}^N \big|X^{\rI,N}_{\beta,j}-\tilde X^{\rI,N}_{\beta,j}\big|(t) \m t
\\ & \quad 
+C \int_0^T \sum_ {\beta=1}^\nS \left\| \nabla V^\rI \right\|_{H^{d/2+2}} \left\| {\tilde \mu}_{\beta,t}^{\rI,N} - \rhobr[\beta] \right\|_{H^{-d/2-2}} \m t.
\end{align*}
The Gronwall inequality implies an estimate of the desired order for the averaged and time-integrated error $\int_0^T \sum_ {\beta=1}^\nS \frac{1}{N}\sum_{j=1}^N \big|X^{\rI,N}_{\beta,j}-\tilde X^{\rI,N}_{\beta,j}\big|(t) \m t$; plugging this back into the previous inequality, we arrive at our claim using $\|\nabla^2 \V\|_{L^\infty}\leq C \rI^{-(d+2)}$ and absorbing $\rI^{-1}$ factors (coming from the potential $\V$) into the exponential.
\end{proof}

\begin{rem}[Initial distribution of the particles]
We only prove Proposition \ref{PropErrorEstimateAuxiliarySystem} for the case that the particle positions initially satisfy the spectral gap inequality \ref{eq_ass_sepctralGap}.
The proof for the case of i.i.d.\ positions is analogous, replacing the spectral gap inequality with Hoeffdings inequality, and using that $\{\tilde X^{\rI,N}_{\alpha,i}(t)\}_{i=1}^{N}$ stay independent also for $t>0$.
\end{rem}

\begin{proof}[Proof of Proposition~\ref{PropErrorEstimateAuxiliarySystem}]
In view of the bound from Lemma~\ref{LemmaErrorEstimateAuxiliarySystem}, we need to bound the term $\int_0^T \| {\tilde\mu}_{t}^{\rI,N} - \rhobr \|_{H^{-d/2-2}} \m t$. Note that $w_\alpha\coloneqq \mathbb{E}[{\tilde\mu}_{\alpha,t}^{\rI,N}] - \rhobr[\alpha]$ solve the PDEs
\begin{align*}
\partial_t w_\alpha &=
\sigma_\alpha\Delta w_\alpha
+\nabla \cdot \left(\sum_{\beta=1}^\nS w_\alpha \nabla\V[\alpha\beta]\ast\rhobr[\beta]\right).
\end{align*}
Testing this PDE with $w$ with respect to the $H^{-d/2-2}$-inner product, with a standard energy estimate, and using the smoothness of $\V$ and $\rhobr$ (cfr. Assumptions \ref{ass_CDsystem+MFL} and \ref{ass_RegularityMFL}), we get
\begin{align}\label{ExpectationBound}
&\big\| \mathbb{E}[{\tilde \mu}_{\alpha,t}^{\rI,N}] - \rhobr[\alpha] \big\|_{H^{-\frac{d}{2}-2}}
\lesssim \exp\left(C r_I^{-(d+1)} t\right) \big\| \mathbb{E}[{\tilde \mu}_{\alpha,0}^{\rI,N}] - \rhobr[\alpha](0) \big\|_{H^{-\frac{d}{2}-2}}.
\end{align}
With \eqref{eq_ass_rhobz vs Ev} from Assumption \ref{ass_CDsystem+MFL}, it only remains to bound $\| {\tilde\mu}_{\alpha,t}^{\rI,N} -\mathbb{E}[{\bar\mu}_{\alpha,t}^{\rI,N}] \|_{H^{-d/2-2}}$. 
Testing ${\tilde \mu}_{\alpha,t}^{\rI,N}$ with elements of the Fourier basis $\exp(i \xi \cdot x)$ with $\xi\in\Z^d$, we have
\begin{align}\label{defF}
\int_{\domain}{\exp(i \xi\cdot x) \m \tilde \mu}_{\alpha,t}^{\rI,N}(x)
= N^{-1} \sum_{j=1}^N \exp(i \xi \cdot \tilde X^{\rI,N}_{\alpha,j}(t))=:F_t = F_t\left((X^{\rI,N}_{\alpha,j}(0))_j\right).
\end{align}
Note that by \eqref{eq_CDauxPartSys} we have 
$
\partial  X^{\rI,N}_{\alpha,j}(t)/\partial  X^{\rI,N}_{\alpha,j}(0) \leq C \rI^{-(d+1)}.
$
Applying the spectral gap inequality \eqref{eq_ass_sepctralGap} (with the usual extension to general exponents $\geq 2$) we obtain for any $p\geq 1$
\begin{align*}
&\mathbb{E}\bigg[
\bigg|
\int_{\domain} ({\tilde \mu}_{\alpha,t}^{\rI,N} - \mathbb{E}[{\tilde \mu}_{\alpha,t}^{\rI,N}]) \exp(i \xi\cdot x) \,dx \bigg|^{2p} \bigg]^{1/p}
\\&
\quad \leq C p \mathbb{E}\bigg[\bigg(\sum_{j=1}^N \bigg|\frac{\partial F}{\partial X^{\rI,N}_{\alpha,j}(0)}\bigg|^2 \bigg)^p \bigg]^{1/p} \leq C p |\xi|^2 \rI^{-2(d+1)} N^{-1},
\end{align*}
where $F$ is defined in \eqref{defF}, which entails
\begin{align*}
\bigg|
\int_{\domain} ({\tilde \mu}_{\alpha,t}^{\rI,N} - \mathbb{E}[{\tilde \mu}_{\alpha,t}^{\rI,N}]) \exp(i \xi\cdot x) \m x \bigg|
\leq \mathcal{C} |\xi| \rI^{-(d+1)} N^{-1/2}
\end{align*}
for a random variable $\mathcal{C}$ subject to a Gaussian moment bound. We thus deduce
\begin{align}\label{TildeMinus_exp}
&\left\| {\tilde \mu}_{\alpha,t}^{\rI,N} -\mathbb{E}[{\tilde \mu}_{\alpha,t}^{\rI,N}] \right\|_{H^{-d/2-2}}^2
\\&
\quad =\sum_{\xi \in \mathbb{Z}^d} (1+|\xi|^2)^{-d/2-2} \bigg|
\int_{\domain} ({\bar \mu}_{\alpha,t}^{\rI,N} - \mathbb{E}[{\tilde \mu}_{\alpha,t}^{\rI,N}]) \exp(i \xi\cdot x) \m x \bigg|^2
\leq \mathcal{C} \rI^{-2(d+1)} N^{-1}\nonumber.
\end{align}
Plugging this estimate as well as \eqref{ExpectationBound} into Lemma~\ref{LemmaErrorEstimateAuxiliarySystem}, we arrive at \eqref{ErrorEstimateAuxiliarySystem}. Combining \eqref{TildeMinus_exp} with \eqref{ExpectationBound}
and using the fact that the left-hand side of \eqref{ErrorEstimateAuxiliarySystem} controls the $1$-Wasserstein distance between $\empmeas$ and $\tilde \mu^{\rI,N}$ -- which is precisely the $W^{-1,\infty}$ distance -- and a Sobolev embedding, we arrive at \eqref{ErrorEstimateMeanFieldLimit_2}. 
Then \eqref{ErrorEstimateMeanFieldLimit} follows via another application of the spectral gap inequality, and the bound on $\Ev\big[\empmeas[0]\big]-\rhobz$ (cfr. Assumption \ref{ass_CDsystem+MFL}). 
\end{proof}

\section{Exponentially decaying estimate for the probability that $T < \Ts$}\label{SubsecExpSmallProb}
The stopping time $\Ts$ acts as a discrete analogue to the quantitative convergence of the empirical measure to the mean field limit proven in Section \ref{SecQuantConvMeanF}.
That is, it guarantees that also in the discrete case the fluctuations are roughly of order $N^{-1/2}$.
Additionally, together with Remark \ref{rem_rhominh_rhomaxh} it ensures that the solution $\rhor[h]$ of the discretized Dean--Kawasaki equation \eqref{eq_discretizedDK} stays positive.
The stopping time is defined as follows. 
First, for this section set
\begin{equation}\label{ChoiceOfL}
	l\coloneqq \floor{ d/2}+1.
\end{equation}
Given $\rhor[h]$, the discrete mean field limit $\rhobr[h]$, and small parameters $\ep,\delta>0$, we define
\begin{equation}
	\label{defn_stopping}
	\Ts = \Ts(\delta, \epsilon) \coloneqq T_\oslash^{\infty}(\delta,\ep) \wedge T_\oslash^{l}(\delta,\ep) \wedge T,
\end{equation}
where, with discrete $L^p$-norms as fixed in Definition \ref{def_discrLp} and negative Sobolev norms as fixed in Definition \ref{def_HshNeg},
\begin{align*}
	T_\oslash^{\infty}(\delta,\ep)&\coloneqq 
	\begin{cases}
 		&\inf \{ 0<t\leq T:\, \|(\rhor[h] -\rhobr[h])(t)\|_{L^\infty_h} \geq  N^{-\ep} \},\\
 		&\hspace{3.3cm}\textrm{if }  \|(\rhor[h] -\rhobr[h])(0)\|_{L^\infty_h} \leq  N^{-\ep-\delta},\\
		&0,   
		\hspace{2.85cm}\textrm{ otherwise},
	\end{cases}\\
	T_\oslash^{l}(\delta,\ep)&\coloneqq 
	\begin{cases}
 		&\inf \{ 0<t\leq T:\, \|(\rhor[h] -\rhobr[h])(t)\|_{H^{-2\ell}_h} \geq  N^{-1/2+\ep} \},\\
 		&\hspace{3.3cm}\textrm{if }  \|(\rhor[h] -\rhobr[h])(0)\|_{H^{-\ell}_h} \leq  N^{-1/2+\ep-\delta},\\
		&0,   
		\hspace{2.85cm}\textrm{ otherwise}.
	\end{cases}
\end{align*}
However, for the main result and hence in Sections \ref{TheKeyStepSection} and \ref{sec_ProofMT}, for given $\ep>0$ we always choose $\delta = \ep/2$.
In any way, this stopping time is only meaningful if we can bound the probability of the stopping time triggering, that is $\Ts<T$, which results from the corresponding bounds being broken.
Such control of $\Pm(\Ts<T)$ via stretched exponential bounds is established in the following proposition. 

\begin{rem} 
\begin{enumerate}[noitemsep, topsep=0pt, left= 0pt, label=\arabic*.)]
\item	Note that the conditional definition of the stopping time ensures that if $\Ts>0$, the size of the fluctuations at $t=0$ is guaranteed to be smaller than the stopping condition by a factor of $N^{-\delta}$ (in a stronger norm too for $T_\oslash^{l}(\delta,\ep)$).
		If $\rI$ does not scale but remains constant, then replacing $N^{-\delta}$ with a small enough constant would be sufficient.
\item	While the bounds satisfy $N^{-\ep}N^{-1/2+\ep}=N^{-1/2}$, the parameters for the two auxiliary stopping times are synced just for convenience: 
		The arguments below can be adapted for $\Ts\coloneqq T_\oslash^{\infty}(\delta,\ep) \wedge T_\oslash^{l}(\tilde{\delta},\tilde{\ep})$.
		However, we can not hope to get good probability estimates if we choose stricter bounds in the definition of $\Ts$.
		The range for which we get good probability bounds is limited by Assumption \ref{ass_InitCond} for the size of the fluctuations at $t=0$.
		This assumption is justified by the dicussion in Section \ref{AppConstructionInitialData}. 
\item	With our method, we can not obtain probability estimates for the auxiliary stopping times separately due to the nonlinearity.
		In the proof, the nonlinearity appears in form of the linearization compensation $\tilQ[h,t](\phith)$, for which to effectively bound we need an $L^\infty$-bound as well as an order $\backsim N^{-1/2}$ bound.
\end{enumerate}
\end{rem}

\begin{proposition}\label{prop_StopTBound}
Let $\rhor[h]$ solve \eqref{eq_discretizedDK} and $\rhobr[h]$ solve \eqref{eq_discrIntMeanFiLim}.
Suppose Assumption \ref{ass_CDsystem+MFL} (regularity of the potential), Assumption \ref{ass_RegularityMFL} (regularity of the continuous mean field limit),  Assumption \ref{ass_DiscrDefOps} (discrete differential operators), Assumption \ref{ass_InitCond} (discrete initial conditions), and Assumption \ref{ass_Scaling} (scaling regime) hold. 

Let the stopping time $\Ts$ be defined as in \eqref{defn_stopping} with $\ep, \delta \in (0,\delZero/4)$. 
Then there exists $N_0=N_0(\delta,\ep,data)$ such that for all $N>N_0$ there holds
\begin{align}
 \label{eq_TsBound}
 \Pm\left[\Ts  < T\right] &\leq C\exp\big(-CN^{\ep-\delta}\big).
\end{align}
\end{proposition}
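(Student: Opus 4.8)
The plan is to bound $\Pm[\Ts<T]$ by splitting the event according to which of the two auxiliary stopping times in \eqref{defn_stopping} undershoots $T$, and treating the ``initialisation'' cases and the ``escape'' cases separately. The initialisation cases --- where $\Ts$ is set to $0$ because the fluctuation of $\rhor[h]-\rhobr[h]$ at $t=0$ already exceeds the smaller of the two thresholds --- follow directly from the exponentially decaying initial-data bounds \eqref{eq_ass_InitialLinftyh}--\eqref{eq_ass_InitialHlh} of Assumption~\ref{ass_InitCond}, applied with the parameter $R$ there of order $N^{\ep-\delta}$, which is admissible by \eqref{ScalingRegime} together with $\ep<\delZero/4$ (for the $L^\infty_h$-part one in fact gets the smaller probability $C\exp(-CN^{1/2-\ep-\delta}h^{d/2})\le C\exp(-CN^{\delZero/2-\ep-\delta})$). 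It then remains to show that, conditionally on the initial fluctuation being below the smaller thresholds, the process cannot reach the larger thresholds before $T$ except with stretched-exponentially small probability. Writing $v_\alpha:=\rhor[h,\alpha]-\rhobr[h,\alpha]$ and subtracting \eqref{eq_discrIntMeanFiLim} from \eqref{eq_discretizedDK}, one obtains an SDE for $v_\alpha$ whose structure is favourable \emph{on the event} $\{t<\Ts\}$: there one has the a priori bounds $\|v(t)\|_{L^\infty_h}\le N^{-\ep}$ and $\|v(t)\|_{H^{-2l}_h}\le N^{-1/2+\ep}$ with $l=\lfloor d/2\rfloor+1$ (so $2l=2\lfloor d/2\rfloor+2$), and by Remark~\ref{rem_rhominh_rhomaxh} together with the $L^\infty_h$-condition also $0\le\rhor[h,\alpha](t)\le C\rho_{max,h}$, so that the cut-off $(\rhor[h,\alpha])^+$ in the noise of \eqref{eq_discretizedDK} equals $\rhor[h,\alpha]$ and the genuinely quadratic convolutional drift term $\tilQ[h,t]$ from \eqref{eq_deftilQh} carries a factor $\|v\|_{L^\infty_h}\le N^{-\ep}$, i.e.\ it is effectively linearised; the remaining drift is linear in $v$ with coefficients controlled by $\|\rhobr[h]\|_{C^1}$ and by $\rI$-powers of the rescaled potentials $\V[\alpha\beta]$, which by \eqref{ScalingRegimeRadius} are at most poly-logarithmic in $N$.

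For the negative-Sobolev escape I would apply It\^o's formula to $\exp\!\big(\lambda N\sum_\alpha\|v_\alpha(t\wedge\Ts)\|_{H^{-2l}_h}^2\big)$ for a small constant $\lambda>0$ (equivalently, estimate the martingale part via the standard exponential supermartingale). The $\sigma_\alpha\Delta_h$-part contributes the dissipative term $-c\sum_\alpha\|v_\alpha\|_{H^{-2l+1}_h}^2$; the linear transport/interaction drift and the effectively linearised $\tilQ[h,t]$ are absorbed, using the a priori $L^\infty_h$-bound and the regularity of $\rhobr[h]$, into $\epsilon\sum_\alpha\|v_\alpha\|_{H^{-2l+1}_h}^2+C\sum_\alpha\|v_\alpha\|_{H^{-2l}_h}^2$; and the martingale's quadratic variation, computed from the discrete cross-variation structure \eqref{DiscreteCovStructure} with the $H^{-2l}_h$-weighting, is bounded by $CN^{-1}\rho_{max,h}$ \emph{uniformly in} $h$ --- this is the decisive point: the negative-order weight makes the frequency sum $\sum_{m}(1+|m|^2)^{-2l}|m|^2$ converge, so there is no $h\to0$ blow-up, and against the prefactor $\lambda N$ this yields only an $O(\lambda)$ It\^o correction. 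A Gronwall argument (with the constant $\exp(CT\rI^{-(d+2)})\le N^{CT}$ of Section~\ref{SecQuantConvMeanF}) then bounds $\Ev[\exp(\lambda N\|v_\alpha(t\wedge\Ts)\|_{H^{-2l}_h}^2)]$ by $N^{CT}$ times the initial exponential moment (which is $O(1)$ on the conditioning event, by the $H^{-l}_h$-initial bound and $\|\cdot\|_{H^{-2l}_h}\le\|\cdot\|_{H^{-l}_h}$), and Markov's inequality gives $\Pm\big[\sup_{t\le T}\|v_\alpha(t\wedge\Ts)\|_{H^{-2l}_h}\ge N^{-1/2+\ep}\big]\le N^{CT}\exp(-c\lambda N^{2\ep})\le C\exp(-CN^{\ep-\delta})$ for $N$ large, since $2\ep>\ep-\delta$.

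The $L^\infty_h$ escape is the delicate part: one cannot run It\^o in a positive-order Hilbertian norm $H^s_h$, $s>d/2$, because the noise injects arbitrarily high frequencies and the crude quadratic-variation bound blows up as $h\to0$. Instead I would use the mild (Duhamel) representation $v_\alpha(t)=e^{t\sigma_\alpha\Delta_h}v_\alpha(0)+\int_0^te^{(t-s)\sigma_\alpha\Delta_h}(\mathrm{drift})(s)\,\m s+Z_\alpha(t)$ and estimate the stochastic convolution $Z_\alpha$ --- e.g.\ by the factorisation method, or by discrete $L^{2p}_h$-moments with $p\sim\log N$ followed by discrete Sobolev embedding --- exploiting the smoothing of the discrete heat semigroup (equivalently, the $h$-uniform boundedness of the discrete Riesz transforms $(-\Delta_h)^{-1/2}\nabla_h$ on $L^2_h$), which recovers the correct fluctuation scale $\Ev\|Z_\alpha(t)\|_{L^\infty_h}^2\lesssim N^{-1}h^{-d}\rho_{max,h}$ rather than the naive undamped one; the deterministic drift of $v_\alpha$ is estimated in $L^\infty_h$ via the $L^\infty$-smoothing of $e^{t\sigma_\alpha\Delta_h}\nabla_h\cdot$ together with the already-controlled $H^{-2l}_h$-norm of $v$. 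Taking exponential moments at the scale $N^{-\ep}/\big(N^{-1/2}h^{-d/2}\big)$ and invoking the $L^\infty_h$-initial bound then yields $\Pm\big[\sup_{t\le\Ts}\|v_\alpha(t)\|_{L^\infty_h}\ge N^{-\ep}\big]\le C\exp(-cN^{1-2\ep}h^d)\le C\exp(-CN^{\delZero-2\ep})\le C\exp(-CN^{\ep-\delta})$, again by \eqref{ScalingRegime} and $\ep<\delZero/4$.

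Because the deterministic drift in the $L^\infty_h$-argument uses the $H^{-2l}_h$-control while the $\tilQ[h,t]$-term in the $H^{-2l}_h$-argument uses the $L^\infty_h$-control, the two estimates must be carried out together --- either as a single exponential-moment bound for the pair $\big(\|v\|_{L^\infty_h},\|v\|_{H^{-2l}_h}\big)$, or as a bootstrap that only ever invokes the \emph{stopped} (hence a priori bounded) quantities --- which is exactly why, as the remark following \eqref{defn_stopping} points out, the two auxiliary stopping times cannot be decoupled. Summing the four contributions and using \eqref{ScalingRegime}--\eqref{ScalingRegimeRadius} to absorb the at most polynomial-in-$N$ corrections (indeed poly-logarithmic, apart from the Gronwall factor $N^{CT}$) against the stretched-exponential gains yields \eqref{eq_TsBound} for $N\ge N_0$. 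I expect the $L^\infty_h$-estimate to be the main obstacle: it lives in a non-Hilbertian norm, it requires a sharp use of discrete parabolic smoothing to avoid an $h$-dependent blow-up of the noise contribution, and it is entangled with the convolutional nonlinearity; the negative-Sobolev estimate is, by contrast, a fairly routine exponential-martingale computation once the linearisation of $\tilQ[h,t]$ through the $L^\infty$-stopping time is in place, and the quantitative mean-field convergence of Proposition~\ref{PropErrorEstimateAuxiliarySystem} enters only through the $\rI$-uniform control of the drift coefficients.
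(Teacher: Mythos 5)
Your proposal is correct in outline but follows a genuinely different route from the paper. The paper's proof (Lemma \ref{lem_DiscrMomentBounds} together with the seven steps of Subsection \ref{subsec_proofTsBound}) is a \emph{duality} argument: the norms $\|\cdot\|_{L^\infty_h}$ and $\|\cdot\|_{H^{-2l}_h}$ at a fixed time are represented by testing against the scaled coordinate functions $h^{-d/2}\hbase{x_0}$ and the discrete Fourier modes, these test functions are evolved backwards via \eqref{eq_discrBackEvoTest} so that the tested fluctuation is a martingale up to the compensation $\tilQ[h,t]$, and stretched-exponential tails are extracted from $j$-th moment bounds with $j^j$-growth (Doob, It\^o, and absorption of $\tilQ[h,t]$ through the stopped $L^\infty_h$- and $H^{-2l}_h$-bounds), optimising $j\sim N^{1/2-\ep}h^{d/2}$ resp.\ $j\sim N^{\ep-\delflex}$ in Chebyshev; since this representation is only valid at the terminal time of the backwards evolution, the paper then covers $[0,T]$ by a grid of mesh $h^{\et}$ and controls the increments in between by Burkholder--Davis--Gundy (Steps 5--6). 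You instead run a \emph{forward} argument: an exponential-supermartingale/Gronwall bound for $\exp\big(\lambda N\|v\|_{H^{-2l}_h}^2\big)$, exploiting the dissipation of $\Delta_h$ and the convergence of $\sum_m(1+|m|^2)^{-2l}|m|^2$ for an $h$-uniform It\^o correction, and a mild (Duhamel) formulation with discrete parabolic smoothing plus factorisation/high-moment bounds for the $L^\infty_h$ part. If carried out, your route controls the supremum in time directly and dispenses with the time grid and the increment step; its price is that the hard work is displaced into sup-in-time, sup-over-grid bounds for a stochastic convolution that is \emph{not} a martingale, and into closing the drift estimate in $L^\infty_h$. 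Your treatment of $\Ts=0$ via Assumption \ref{ass_InitCond}, the use of the stopped $L^\infty_h$- and $H^{-2l}_h$-bounds to linearise the quadratic term, the absorption of all $\rI$-factors through \eqref{ScalingRegimeRadius}, and the observation that the final rate $\exp(-CN^{\ep-\delta})$ is limited by the initial-data bound \eqref{eq_ass_InitialHlh} with $R\sim N^{\ep-\delta}$ all coincide with the paper.

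Two points of your sketch need more care than you give them. First, in the Duhamel estimate for the drift of $v=\rhor[h]-\rhobr[h]$, neither stopped norm closes the bound on its own: using only $\|v\|_{L^\infty_h}\leq N^{-\ep}$ yields $\int_0^t(t-s)^{-1/2}\m s\cdot N^{-\ep}\rI^{-(d+1)}$, which is of the \emph{same} order as the threshold (no smallness, hence no absorption), while using only $\|v\|_{H^{-2l}_h}\leq N^{-1/2+\ep}$ produces the non-integrable kernel $(t-s)^{-(2l+d/2+1)/2}$; one must split the time integral at some $\tau_0=N^{-c}$, using the $L^\infty_h$-bound for $t-s\leq\tau_0$ and the $H^{-2l}_h$-bound for $t-s\geq\tau_0$, which works for $c$ small precisely because $\ep<\delZero/4<1/4$ --- this is the mechanism hidden in your phrase ``together with the already-controlled $H^{-2l}_h$-norm''. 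Second, the initial exponential moment in your $H^{-2l}_h$-argument is $\exp\big(\lambda N^{2(\ep-\delta)}\big)$ rather than $O(1)$, and the Gronwall growth enters inside the exponent (effectively replacing $\lambda$ by $\lambda\exp(-CT\rI^{-(d+2)})$, which is subpolynomially small by \eqref{ScalingRegimeRadius}) rather than as a prefactor $N^{CT}$; both effects are harmless for \eqref{eq_TsBound} since $2\ep>\ep-\delta$, but the bookkeeping as written is not what the computation gives.
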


\begin{rem}\label{rem_Ts_epInfluence}
In contrast to the definition of $T_\oslash^{\infty}$, larger $\ep$ yields better bounds.
This is since contributions from $T_\oslash^{\infty}$ are given and -- with Assumption \ref{ass_Scaling} and $\ep<\delZero/4$ -- estimated by 
\begin{equation*}
	\exp\left(-CN^{1/2-\ep-\delta}h^{d/2}\right)
	\leq
	\exp\big(-CN^{\delZero/2-\ep-\delta}\big)
	\leq
	\exp\big(-CN^{\ep-\delta}\big).
\end{equation*}
\end{rem}

\begin{rem}[Strategy for the proof of Proposition \ref{prop_StopTBound} in Subsection \ref{subsec_proofTsBound}]\label{rem_proofstrategyTs}
	We will show that, with high probability, for all $0< t\leq\Ts$ we continuously have stricter bounds than guaranteed just by $t\leq \Ts$.
	However, by definition this can only be the case if $\Ts=T$. 
	Hence, as long as $\Ts>0$, with high probability we obtain $\Ts=T$.
	Since Assumption \ref{ass_InitCond} yields $\Ts>0$ with high probability, this proves the proposition 
	(here, `with high probability' means that we obtain stretched exponential bounds as in \eqref{eq_TsBound} for the complement).\\
	More precisely, we will proceed in seven steps as follows:
	\begin{itemize}[noitemsep, topsep=0pt, left= 0pt]
	\item	For a fixed time, $\|\cdot\|_{L^\infty_h}$ and $\|\cdot\|_{H^{-2l}_h}$ can be represented via suitable test functions. 
			Hence, we can calculate them with the evolutional scheme from Section \ref{subsec_BackEvos} (Steps 1 and 2).
	\item	Via moment bounds for tested fluctuations (see Lemma \ref{lem_DiscrMomentBounds} below) we obtain stricter bounds at a fixed time $T_0$ with high probability assuming $T_0\leq\Ts$ (Steps 3 and 4).
	\item	Given a discretization $0\leq t_1<t_2<\ldots\leq T$ of $[0,T]$ with step size $\backsim h^\et$ for arbitrary $\et>0$, we have stricter bounds for all $t_k\leq\Ts$ with high probability (Step 5).
	\item	If $\et$ is large enough, then with high probability we have strict bounds on the difference between the fluctuations at any time $t<\Ts$ and at the closest previous time step (Step 6).
	\item	We now cover all $t\leq\Ts$ and conclude thanks to the bound on $\Pm[\Ts=0]$ (Step 7).
	\end{itemize}
\end{rem}

\subsection{Moment bounds for the discrete fluctuations}
Here,  given a stopping time with the same structure as $\Ts$, we provide a moment bound crucial for the proof of Proposition \ref{prop_StopTBound}.

\begin{lemma}[Moment bounds for the discrete fluctuations]\label{lem_DiscrMomentBounds}
Let $\rhor[h]$ solve \eqref{eq_discretizedDK} and $\rhobr[h]$ solve \eqref{eq_discrIntMeanFiLim}.
Suppose Assumptions \ref{ass_CDsystem+MFL}, \ref{ass_RegularityMFL}, and \ref{ass_DiscrDefOps} hold. 
Let $\rhobr[h](0)=\Ih[\rhobr(0)]$ and with respect to the scaling regime of $\rI$ assume that \eqref{ScalingRegimeRadius} from Assumption \ref{ass_Scaling} holds.
Define the stopping time $\tTs\coloneqq \tTs[\infty]\wedge\tTs[l]\wedge T$ for some $B_\infty,B_l, B_\infty^0, B_l^0\in\R$ as
\begin{align*}
	\tTs[\infty]&\coloneqq 
	\begin{cases}
 		\inf \{ t >0:\, \|(\rhor[h] -\rhobr[h])(t)\|_{L^\infty_h} \geq  B_\infty \},
 		&\textrm{if }  \|(\rhor[h] -\rhobr[h])(0)\|_{L^\infty_h} \leq  B_\infty^0,\\
		 0,   
		&\textrm{ otherwise},
	\end{cases}\\
	\tTs[l]&\coloneqq 
	\begin{cases}
 		\inf \{ t >0:\, \|(\rhor[h] -\rhobr[h])(t)\|_{H^{-2l}_h} \geq  B_l \},
 		&\textrm{if }  \|(\rhor[h] -\rhobr[h])(0)\|_{H^{-l}_h} \leq  B_l^0,\\
		 0,   
		&\textrm{ otherwise}.
	\end{cases}
\end{align*}
Let $\vphi_h\in L^2(\Ghd,\R^{\nS})$ with backwards evolution $\phit[h]\in C^1(L^2(\Ghd,\R^\nS))$ as in \eqref{eq_discrBackEvoTest}.

Then for each $\delflex>0$ there exists $N_0=N_0(data,\delflex)\in\N$ independent of $j$ such that for all $N\geq N_0$ and each even $j\in\N$ there holds
\begin{multline}\label{eq_DiscrMomentBounds}
	\Ev\left[\chi(\tTs>0)\sup_{0\leq t\leq \tTs}\left(\phit[h],(\rhor[h]-\rhobr[h])(t)\right)_h^j\right]\\
	\leq
	\shoveright{
	j^j\Big(\max\Big\{	CN^{-1/2+\delflex}\|\vphi_h\|_{L^2_h},\,
						j^{-1}CN^{\delflex}\min\{\|\vphi_h\|_{L^1_h}B_\infty^0,\|\vphi_h\|_{H^l_h}B_{l}^0\},
	\qquad}\\
						j^{-1}CN^{\delflex}\min\{h^{-1}\|\vphi_h\|_{L^1_h},\|\vphi_h\|_{L^2_h}\}B_\infty B_l						
				\Big\}\Big)^j.	
\end{multline}
\end{lemma}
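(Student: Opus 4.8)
The plan is to obtain the moment bound by computing the $j$-th power of the tested fluctuation $(\phi^t_h, (\rho^{r_I}_h - \bar\rho^{r_I}_h)(t))_h$ via It\^o's formula and then using a Gr\"onwall-type argument together with martingale moment estimates (Burkholder--Davis--Gundy). First I would invoke Lemma \ref{lem_discrBackEvoTest}: since $\phi^t_h$ solves the backward evolution \eqref{eq_discrBackEvoTest}, the drift from the linear part of the dynamics is eliminated, and \eqref{eq_discrDynamics of fluctuations for discrBackEvoTest} gives
\begin{equation*}
	\mathrm{d}\big(\phi^t_h,(\rho^{r_I}_h-\bar\rho^{r_I}_h)(t)\big)_h
	= -\widetilde{Q}_{h,t}(\phi^t_h)\,\mathrm{d}t + \mathrm{d}M^t_h,
\end{equation*}
where $M^t_h$ is the martingale with quadratic variation $\mathrm{d}[M^t_h] = \sum_\alpha 2\sigma_\alpha(\nabla_h\phi^t_{h,\alpha}\cdot\nabla_h\phi^t_{h,\alpha},(\rho^{r_I}_{h,\alpha})^+)_h\,\mathrm{d}t$. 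Stopping at $\widetilde{T}_\oslash$ keeps $\rho^{r_I}_h$ within the $L^\infty_h$- and $H^{-2l}_h$-bounds $B_\infty$, $B_l$ (and Remark \ref{rem_rhominh_rhomaxh} keeps the mean-field limit in a fixed range), so all the relevant quantities are controlled on $[0,\widetilde{T}_\oslash]$.

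The core computation is to apply It\^o's formula to $f(t):=(\phi^t_h,(\rho^{r_I}_h-\bar\rho^{r_I}_h)(t))_h^j$, take a supremum in time, and then expectation. This produces three types of contributions. (i) The initial term $f(0)^j$, which by the conditional structure of $\widetilde{T}_\oslash$ is bounded by $\|\varphi_h\|_{L^1_h}B^0_\infty$ or $\|\varphi_h\|_{H^l_h}B^0_l$ (using $\phi^0_h$ tested against $(\rho^{r_I}_h-\bar\rho^{r_I}_h)(0)$ and the test-function regularity bound \eqref{eq_DiscreteTestFuncBound} from Lemma \ref{HsBoundDiscrete}); this yields the middle term in the maximum, with the $j^{-1}$ accounting for one power absorbed into $j^j$. (ii) The drift term involving $j f^{j-1}\widetilde{Q}_{h,t}(\phi^t_h)$: here the linearization compensation is estimated exactly as in \eqref{eq_ErrstopProofQEst}, i.e.\ $|\widetilde{Q}_{h,t}(\phi^t_h)| \lesssim N^{\delflex}\min\{h^{-1}\|\varphi_h\|_{L^1_h},\|\varphi_h\|_{L^2_h}\}B_\infty B_l$ (the convolution with $\nabla V^{r_I}$ is bounded pointwise either by the $L^\infty_h$ bound or, after moving a derivative, by the $H^{-2l}_h$ bound, and the interaction-radius factors are absorbed via Assumption \ref{ass_Scaling}); iterating the resulting Gr\"onwall inequality over $[0,T]$ contributes the third term in the maximum. (iii) The martingale and its quadratic variation: by BDG and the bound $(\nabla_h\phi^t_{h,\alpha}\cdot\nabla_h\phi^t_{h,\alpha},(\rho^{r_I}_{h,\alpha})^+)_h \lesssim \|\phi^t_h\|_{H^1_h}^2 \lesssim N^{\delflex}\|\varphi_h\|_{L^2_h}^2$, one obtains a recursion of the form $\Ev[\sup f^j] \lesssim (Cj)^{j/2}(N^{\delflex}\|\varphi_h\|_{L^2_h}^2/N)^{j/2} + \ldots$, which after taking $j$-th roots gives the first term $CN^{-1/2+\delflex}\|\varphi_h\|_{L^2_h}$ with the combinatorial factor $j^j$. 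Assembling (i)--(iii) via the elementary inequality $(a+b+c)^j \leq 3^j\max\{a,b,c\}^j$ and absorbing $3^j$ into the constant gives \eqref{eq_DiscrMomentBounds}.

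The main obstacle I anticipate is obtaining the sharp $j^j$ dependence (as opposed to a cruder $j$-dependent constant) while keeping $N_0$ independent of $j$ — this requires carefully tracking the combinatorial constants in the BDG step and in the It\^o expansion of the $j$-th power, and using that the $j$ factors multiplying $f^{j-1}$ in the drift and $f^{j-2}$ in the quadratic-variation term are exactly what is needed to close a Gr\"onwall/Young-inequality argument at the level of $\Ev[\sup f^j]^{1/j}$. A clean way to do this is to set $g_j := \Ev[\chi(\widetilde{T}_\oslash>0)\sup_{[0,\widetilde{T}_\oslash]}f^j]^{1/j}$, derive $g_j \leq A + B\sqrt{j}\,g_{j-2}^{(j-2)/j}$ type inequalities (with $A$ the drift + initial contributions and $B\sqrt{j}$ the martingale one), and close by induction, exactly mirroring the scalar Gaussian-moment bookkeeping used in \cite{cornalba2021dean}. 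The pointwise bound on the convolution term in $\widetilde{Q}_{h,t}$ — which needs both the $L^\infty_h$ stopping bound and the negative-Sobolev stopping bound simultaneously, and is precisely why the two auxiliary stopping times cannot be decoupled — is the other delicate point, but it is already carried out in the proof of Lemma \ref{lem_pre34b equivalent} and only needs to be transcribed here.
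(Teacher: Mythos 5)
Your proposal is correct and follows essentially the same strategy as the paper: decompose the tested fluctuation via Lemma \ref{lem_discrBackEvoTest} into initial datum, linearisation compensation $\tilQ[h,t]$, and martingale part; bound the initial term through the conditional structure of $\tTs$ by $\min\{\|\vphi_h\|_{L^1_h}B_\infty^0,\|\vphi_h\|_{H^l_h}B_l^0\}$; bound the quadratic variation by $\rho_{max,h}$ times the time-integrated gradient bound of Lemma \ref{HsBoundDiscrete}; and bound the $\tilQ[h,t]$-contribution using \emph{both} stopping-time norms simultaneously together with Young's convolution inequality, the $L^1$/$L^2$ test-function bounds (Lemmas \ref{HsBoundDiscrete} and \ref{L1_bound_discrete}) and absorption of the $\rI$-factors via Assumption \ref{ass_Scaling}. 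The only substantive difference is bookkeeping for the $j$-th moment: the paper applies Doob's $L^j$ maximal inequality to the compensated martingale and then It\^o's rule to $x\mapsto x^j$ at the stopped terminal time, closing with a single Young absorption (Doob's constant $(j/(j-1))^j\le 4$ makes the $j^j$-tracking and the $j$-independence of $N_0$ immediate), whereas you apply It\^o to the $j$-th power of the semimartingale, invoke BDG, and close by the moment recursion/induction you describe; both routes deliver \eqref{eq_DiscrMomentBounds}, the paper's being marginally cleaner. One small imprecision: the pointwise-in-time bound $|\tilQ[h,t](\phi^t_h)|\lesssim N^{\delflex}\|\vphi_h\|_{L^2_h}B_\infty B_l$ is not available for the $L^2$-option, since Lemma \ref{HsBoundDiscrete} only controls $\int_t^T\|\nabla_h\phi^\tau_h\|_{L^2_h}^2\,\mathrm{d}\tau$ and not $\sup_\tau\|\nabla_h\phi^\tau_h\|_{L^2_h}$; you must keep the time integral and apply Cauchy--Schwarz in time, exactly as in \eqref{eq_DMB_Q-bound} (the $h^{-1}\|\vphi_h\|_{L^1_h}$ option is genuinely pointwise via $\|\nabla_h\phi^t_h\|_{L^1_h}\le Ch^{-1}\|\phi^t_h\|_{L^1_h}$ and Lemma \ref{L1_bound_discrete}). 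Since the compensation term only ever enters through its time integral in your Gr\"onwall step, this does not affect the conclusion; also note that borrowing the estimate technique from the proof of Lemma \ref{lem_pre34b equivalent} creates no circularity, as only the analytic inequality (not its probabilistic conclusions) is reused, but it must be restated for general $B_\infty,B_l$ rather than the specific $N^{-\ep}$, $N^{-1/2+\ep}$ bounds used there.
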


\begin{proof}
Lemma \ref{lem_discrBackEvoTest} implies that
\begin{equation*}
	\M_t\coloneqq \chi(\tTs>0)\left(\phi^{t\wedge\tTs}_h,(\rhor[h]-\rhobr[h])(t\wedge\tTs)\right)_h - \int_0^{t\wedge\tTs}\tilQ[h,s](\phi^s_h)\m s
\end{equation*}
is a martingale with Dean--Kawasaki-type noise as in \eqref{eq_discrDynamics of fluctuations for discrBackEvoTest}.
Hence, Doob's inequality yields 
\begin{align}\label{eq_DMB_Doob}
	& \Ev\left[\chi(\tTs>0)\sup_{0\leq t\leq \tTs}\left(\left(\phit[h],(\rhor[h]-\rhobr[h])(t)\right)_h- \int_0^{t}\tilQ[h,s](\phi^s_h)\m s\right)^j\right]\nonumber\\
	& \leq \left(\frac{j}{j-1}\right)^j\Ev\left[\chi(\tTs>0)\left(\left(\phi^{\tTs}_h,(\rhor[h]-\rhobr[h])(\tTs)\right)_h- \int_0^{\tTs}\tilQ[h,s](\phi^s_h)\m s\right)^j\right].
\end{align}
Note that $\left(j/(j-1)\right)^j\leq 4$. 
For the right hand side, up to this constant,  via the It\^o rule with \eqref{eq_discrDynamics of fluctuations for discrBackEvoTest} we obtain for large enough $N$ that
\begin{align*}
	\Ev\left[(\M_{\tTs})^j\right]
	&=
	\Ev\left[\chi(\tTs>0)\left(\phi^{0}_h,(\rhor[h]-\rhobr[h])(0)\right)_h^j\right]\\
	&\quad
	+\Ev\left[\int_0^{\tTs}j(j-1)\left(\M_t\right)^{j-2}
				\sum_{\alpha=1}^{\nS}N^{-1}\sigma_{\alpha}\left(|\nabla_h\phit[h,\alpha]|^2,(\rhor[h,\alpha])^+(t)\right)_h\dt
			\right]\\
	&\leq
	\min\left\{\|\phit[h]\|_{L^1_h}B_\infty^0,\|\phit[h]\|_{H^l_h}B_l^0\right\}^j\\
	&\quad
	+j^2N^{-1}|\sigma|\rho_{max,h}\Ev\left[\sup_{0\leq t\leq \tTs}(\M_t)^{j-2}\right]
	\int_0^T\|\nabla_h\phit[h]\|_{L^2_h}^2\dt\\
	&\leq
	\left(CN^{\delflex}\min\left\{\|\vphi_h\|_{L^1_h}B_\infty^0,\|\vphi_h\|_{H^l_h}B_l^0\right\}\right)^j\\
	&\quad
	+\left(CjN^{-1/2+\delflex}\|\vphi_h\|_{L^2_h}\right)^2\Ev\left[\sup_{0\leq t\leq \tTs}(\M_t)^{j}\right]^{(j-2)/j}.
\end{align*}
For the first inequality, we applied H\"older's inequality (Corollary \ref{cor_DiscrHoelder}), used the definitions of  $\|\cdot\|_{H^s_h}$ and the stopping time, and that we can bound $\|(\rhor(t\wedge\tTs))^+\|_{L^\infty_h}\leq \rho_{max,h}$ as in Remark \ref{rem_rhominh_rhomaxh}.
The second inequality follows from bounds for the backwards evolution of test functions in Lemma \ref{HsBoundDiscrete} and Lemma \ref{L1_bound_discrete}, as well as an application of Jensen's inequality.

Plugging this inequality back into \eqref{eq_DMB_Doob}, and applying Young's inequality to absorb the moment-term (this is necessary only for $j>2$), we obtain
\begin{multline*}
	\Ev\left[\chi(\tTs>0)\sup_{0\leq t\leq \tTs}\left(\left(\phit[h],(\rhor[h]-\rhobr[h])(t)\right)_h- \int_0^{t}\tilQ[h,s](\phi^s_h)\m s\right)^j\right]\\
	\leq 
	\left(CN^{\delflex}\min\left\{\|\vphi_h\|_{L^1_h}B_\infty^0,\|\vphi_h\|_{H^l_h}B_l^0\right\}\right)^j
	+j^j\left(CN^{-1/2+\delflex}\|\vphi_h\|_{L^2_h}\right)^j.
\end{multline*}
Taking the $j$-th root, applying the reverse triangle inequality on the left, moving the $Q$-term to the other side, taking the $j$-th power, and applying $(a+b)^j\leq 2^ja^j + 2^jb^j$ yields 
\begin{multline*}
	\Ev\left[\chi(\tTs>0)\sup_{0\leq t\leq \tTs}\left(\phit[h],(\rhor[h]-\rhobr[h])(t)\right)_h^j\right]
	\leq
	j^j\left(CN^{-1/2+\delflex}\|\vphi_h\|_{L^2_h}\right)^j\\
	+\left(CN^{\delflex}\min\left\{\|\vphi_h\|_{L^1_h}B_\infty^0,\|\vphi_h\|_{H^l_h}B_l^0\right\}\right)^j
	+2^j\Ev\left[\left(\int_0^{\tTs}\left|\tilQ[h,t](\phi^t_h)\right|\dt\right)^j\right].
\end{multline*}
Hence, to finish the proof it is enough to show that
\begin{equation}\label{eq_DMB_Q-bound}
	\mathbb{I}_Q\coloneqq
	\int_0^{\tTs}\left|\tilQ[h,t](\phi^t_h)\right|\dt 
	\leq
	CN^{\delflex}\min\{\|\vphi_h\|_{L^2_h},h^{-1}\|\vphi_h\|_{L^1_h}\}B_\infty B_l.
\end{equation}
This follows immediately from the two following estimates by applying the test function estimates from Lemma \ref{HsBoundDiscrete} and Lemma \ref{L1_bound_discrete}, the definition of the stopping time, and the observation $\|\Ih[\nabla\V\|_{H^{2l}_h}\lesssim \|\V\|_{C^{2l+1}}\lesssim \rI^{-(d+2l+1)}$:
H\"older's inequality yields
\begin{align*}
	\mathbb{I}_Q
	&=
	\int_0^{\tTs}\left|\sum_{\alpha,\beta=1}^\nS
	\big(\nabla_h \phith[\alpha]\cdot (\Ih[\nabla\V[\alpha\beta]]\ast_h(\rhor[h,\beta]-\rhobr[h,\beta])(t)) ,(\rhor[h,\alpha]-\rhobr[h,\alpha])(t)\big)_h\right|\dt\\
	&\leq 
	\begin{cases}
		\left(\int_0^T\|\nabla_h\phit[h]\|_{L^2_h}^2\dt\right)^{\frac{1}{2}}
		\left(\int_0^{\tTs}\|\Ih[\nabla\V\|_{H^{2l}_h}^2 \|\rhor[h]-\rhobr[h]\|_{H^{-2l}_h}^2 \|\rhor[h]-\rhobr[h]\|_{L^\infty_h}^2 \dt\right)^{\frac{1}{2}},
		\vspace{1.5ex}\\
		h^{-1}\int_0^{\tTs}
			\|\phit[h]\|_{L^1_h} \|\Ih[\nabla\V\|_{H^{2l}_h} \|\rhor[h]-\rhobr[h]\|_{H^{-2l}_h} \|\rhor[h]-\rhobr[h]\|_{L^\infty_h} \dt.
	\end{cases}	
\end{align*}
Here, for the first option we used that $\|\cdot\|_{L^2_h}\leq C\|\cdot\|_{L^\infty_h}$ before applying Young's convolution inequality (Corollary \ref{cor_DiscrYoung}).
For the second option, we used that $\|\nabla_h f\|_{L^1_h}\leq Ch^{-1}\|f\|_{L^1_h}$ for any $f\in\LzGhd$.
\end{proof}

\subsection{Proof of the bound for $\Pm(\Ts<T)$}\label{subsec_proofTsBound}
\begin{proof}[Proof of Proposition \ref{prop_StopTBound}]
The proof consists of seven steps, outlined in Remark \ref{rem_proofstrategyTs}. 

\noindent \textbf{Step 1: $\|\cdot\|_{L^{\infty}_h}$ via test functions.}
We define $\vphi^\gamma_{h,x_0}\in[\LzGhd]^\nS$ for any $\gamma\in\{1,\ldots,\nS\}$ and $x_0\in\Ghd$ as $\vphi^\gamma_{h,x_0,\alpha}\coloneqq h^{-d/2}\delta_{\gamma,\alpha}\hbase{x_0}$ where $\delta_{\gamma,\alpha}$ is the Dirac delta and $\hbase{x_0}$ as defined in Section \ref{FiniteDiffDiscr}.
Hence, for any $\eta_h\in[\LzGhd]^\nS$ we have $\big(\vphi^\gamma_{h,x_0},\eta_h\big)_h = \eta_{h,\gamma}(x_0)$ and therefore
\begin{equation}\label{eq_Ts_LinftyTest}
	\|(\rhor[h] -\rhobr[h])(t)\|_{L^\infty_h} 
	= \max_{x_0\in\Ghd}\max_{\gamma=1,\ldots,\nS} \left|\big(\vphi^\gamma_{h,x_0},(\rhor[h] -\rhobr[h])(t)\big)_h\right|.
\end{equation}
Note that by the definition of $\vphi^\gamma_{h,x_0}$ we have
\begin{align}\label{eq_Ts_normsLinftyTest}
	\|\vphi^\gamma_{h,x_0}\|_{L^1_h} &= 1, \qquad
	\|\vphi^\gamma_{h,x_0}\|_{L^2_h} = h^{-d/2}.
\end{align}

\smallskip
\noindent \textbf{Step 2: $\|\cdot\|_{H^{-2l}_h}$ via test functions.}
We define $\vphi^\gamma_{h,m}\in[\LzGhd]^\nS$ for any $\gamma\in\{1,\ldots,\nS\}$ and $m\in\Z^d\cap\left[-\frac{\pi}{h},\frac{\pi}{h}\right)^d$ as $\vphi^\gamma_{h,m,\alpha}\coloneqq\delta_{\gamma,\alpha}\vtheta_m$ where $(\vtheta_m)_m$ is the discrete Fourier basis as introduced in Remark \ref{rem_DefHsh}.
Hence, with Definition \ref{def_HshNeg} we have
\begin{equation}\label{eq_Ts_HlTest}
	\|(\rhor[h] -\rhobr[h])(t)\|_{H^{-2l}_h}^2 
	= \sum_{\gamma=1}^\nS  \sum_{m}\frac{1}{(1+|m|^2)^{2l}}\big(\vphi^\gamma_{h,m},(\rhor[h] -\rhobr[h])(t)\big)_h^2,
\end{equation}
where the sum is over $m\in\Z^d\cap\left[-\frac{\pi}{h},\frac{\pi}{h}\right)^d$.
For $\vphi^\gamma_{h,m}$ we have
\begin{align}\label{eq_Ts_normsHlTest}
	\|\vphi^\gamma_{h,m}\|_{L^1_h} &= (2\pi)^d,
	&
	\|\vphi^\gamma_{h,m}\|_{L^2_h} &= (2\pi)^{d/2},
	&
	\|\vphi^\gamma_{h,m}\|_{H^{l}_h} &\leq (1+|m|^2)^{l/2},
\end{align}
where the last equation is an immediate consequence of Remark \ref{rem_DefHsh}. 

\smallskip
\noindent \textbf{Step 3: Stricter $L^\infty_h$-bounds for a single time point $0<T_0\leq\Ts$.} 
Let $0<\delflex<\delta$.
In this step, we will show that, if $N$ is large enough, for any fixed $T_0\in[0,T]$ we have 
\begin{equation}\label{eq_Ts_stricterLinftyT0}
	\Pm\left[T_0\leq \Ts \text{ and } \|(\rhor[h] -\rhobr[h])(T_0)\|_{L^\infty_h}\geq \frac{N^{-\ep}}{4} \right]
	\leq 
	\nS h^{-d}\exp\left(-CN^{1/2-\ep-\delflex}h^{d/2}\right)
\end{equation}
In order to achieve this, we will apply Lemma \ref{lem_DiscrMomentBounds} to the backwards evolution of the test functions $\vphi^\gamma_{h,x_0}$ from Step 1 with $j=2\left\lfloor\frac{1}{4}C_0^{-1}\frac{N^{-\ep}}{4}N^{1/2-\delflex}\|\vphi^\gamma_{h,x_0}\|_{L^2_h}^{-1}\right\rfloor$ where $C_0$ is the constant in Lemma \ref{lem_DiscrMomentBounds}.
The reasoning for this choice will be explained at the end of this step.
Note that we have $j\in 2\N$ for large enough $N$ since with \eqref{eq_Ts_normsLinftyTest} and \eqref{ScalingRegime} from Assumption \ref{ass_Scaling}
\begin{equation*}
	N^{1/2-\delflex-\ep}\|\vphi^\gamma_{h,x_0}\|_{L^2_h}^{-1} = N^{1/2-\delflex-\ep}h^{d/2} \geq N^{\delZero/2-\delflex-\ep}.
\end{equation*}
We will now show that in our setting the maximum on the right hand side of \eqref{eq_DiscrMomentBounds} is dominated by $C_0N^{-1/2+\delflex}\|\vphi^\gamma_{h,x_0}\|_{L^2_h}$.
Note that for our choice of $j$
\begin{equation}\label{eq_Ts_Step3j}
	N^{-\ep}/16  
	\leq 
	j \big(C_0N^{-1/2+\delflex}\|\vphi^\gamma_{h,x_0}\|_{L^2_h}\big) 
	\leq 
	N^{-\ep}/8.
\end{equation}
Regarding the middle term on the right hand side of \eqref{eq_DiscrMomentBounds}, with \eqref{eq_Ts_normsLinftyTest} and $\delflex<\delta$,  we have 
\begin{equation*}
	C_0N^\delflex \|\vphi^\gamma_{h,x_0}\|_{L^1_h}N^{-\ep-\delta}
	=  C_0 N^{-\ep} N^{-{(\delta-\delflex)}}
	\leq N^{-\ep}/16
\end{equation*}
for large enough $N$.
With respect to the last term, with \eqref{ScalingRegime} from Assumption \ref{ass_Scaling} and large enough $N$
\begin{equation*}
	C_0N^{\delflex}\|\vphi^\gamma_{h,x_0}\|_{L^2_h}N^{-1/2} 
	= C_0h^{-d/2}N^{-1/2}N^{\delflex} 
	\leq C_0 N^{-(\delZero/2-\delflex)} \leq N^{-\ep}/16.
\end{equation*}
Now we apply Lemma \ref{lem_DiscrMomentBounds}.
Denoting with $\phi^{\gamma,t}_{h,x_0}$ the backwards evolution according to \eqref{eq_discrBackEvoTest} with $\phi^{\gamma,t}_{h,x_0}=\vphi^\gamma_{h,x_0}$ for $t\geq T_0$, we obtain
\begin{equation*}
	\Ev\left[\chi(\Ts>0)\sup_{0\leq t\leq \Ts\wedge T_0}\big(\phi^{\gamma,t}_{h,x_0},(\rhor[h]-\rhobr[h])(t)\big)_h^j\right]
	\leq
	j^j\left(	C_0N^{-1/2+\delflex}\|\vphi^{\gamma}_{h,x_0}\|_{L^2_h}\right)^j.
\end{equation*}
With Chebyshev's inequality, then plugging in the choice of $j$ and applying \eqref{eq_Ts_Step3j}, this yields
\begin{align*}
	&\Pm\left[\Ts>0 \text{ and }T_0\leq \Ts \text{ and } \left|\big(\vphi^\gamma_{h,x_0},(\rhor[h] -\rhobr[h])(T_0)\big)_h\right|\geq \frac{N^{-\ep}}{4}\right]\\
	&\qquad\leq 
	\Pm\left[\chi(\Ts>0)\sup_{0\leq t\leq \Ts\wedge T_0}\left|\big(\phi^{\gamma,t}_{h,x_0},(\rhor[h]-\rhobr[h])(t)\big)_h\right|\geq \frac{N^{-\ep}}{4}\right]\\
	&\qquad\leq
	\exp\left(j\left[\log(j)+\log\left(4N^{\ep}C_0N^{-1/2+\delflex}\|\vphi^{\gamma}_{h,x_0}\|_{L^2_h}\right)\right]\right)\\
	&\qquad\leq
	\exp\left(-CN^{1/2-\ep-\delflex}h^{d/2}\right).
\end{align*}
This last inequality dictates the choice of $j$: Up to constants, it is the largest $j$ such that the logarithms combine to a negative constant.
Via \eqref{eq_Ts_LinftyTest} from Step 1, repeating the same argument for all different $\gamma=1,\ldots,\nS$ and the $h^{-d}$ grid points $x_0\in\Ghd$ yields \eqref{eq_Ts_stricterLinftyT0}.

\smallskip
\noindent \textbf{Step 4: Stricter $H^{-2l}_h$-bounds for a single time point $T_0\leq\Ts$.} 
Again, let $0<\delflex<\delta$.
In this step, we show that for any fixed $0<T_0\leq T$, if $N$ is large enough, it holds that 
\begin{equation}\label{eq_Ts_stricterHlT0}
	\Pm\left[T_0\leq \Ts \text{ and }\|(\rhor[h] -\rhobr[h])(T_0)\|_{H^{-2l}_h}\geq \frac{N^{-1/2+\ep}}{4}\right]
	\leq
	\nS\exp\left(-CN^{\ep-\delflex}\right).
\end{equation}
As in Step 3, the main part of the argument is an application of Lemma \ref{lem_DiscrMomentBounds}, here to the functions $\vphi^\gamma_{h,m}$ from Step 2.
We choose $j=2\left\lfloor\frac{1}{4}C_d^{-1}C_0^{-1} N^{\ep-\delflex} \right\rfloor $ independent of $m$, where $C_0$ denotes the constant in Lemma \ref{lem_DiscrMomentBounds} and $C_d$ is a constant, which we will fix later in this step.
Clearly, $j\in 2\N$ for $N$ large enough.

With these choices, the maximum in \eqref{eq_DiscrMomentBounds} is dominated by $C_0(1+|m|^2)^{l/2}N^{-1/2+\delflex}$ for large enough $N$.
Indeed, with our choice of $j$ and \eqref{eq_Ts_HlTest} we have 
\begin{align*}
	j^{-1}C_0 N^{\delflex}\|\vphi^\gamma_{h,m}\|_{L^2_h}N^{-1/2} 
	&\leq
	C_0N^{-1/2+\delflex}\|\vphi^\gamma_{h,m}\|_{L^2_h}
	=
	C_0N^{-1/2+\delflex},\\
	j^{-1}C_0 N^{\delflex}\|\vphi^\gamma_{h,m}\|_{H^l_h}N^{-1/2+\ep-\delta}
	&\leq 
	4C_dC_0^2 (1+|m|^2)^{l/2}N^{-1/2+\delflex} N^{-(\delta-\delflex)}.
\end{align*}
Thus, applying Lemma \ref{lem_DiscrMomentBounds} with $\phi^{\gamma,t}_{h,m}$ denoting the backwards evolution according to \eqref{eq_discrBackEvoTest} with $\phi^{\gamma,t}_{h,m}=\vphi^\gamma_{h,m}$ for $t\geq T_0$, we obtain
\begin{equation*}
	\Ev\left[\chi(\Ts>0)\sup_{0\leq t\leq \Ts\wedge T_0}\big(\phi^{\gamma,t}_{h,m},(\rhor[h]-\rhobr[h])(t)\big)_h^j\right]
	\leq
	j^j\left(C_0(1+|m|^2)^{l/2}N^{-1/2+\delflex}\right)^j.
\end{equation*}
Thus, already \eqref{eq_Ts_HlTest} for $\|\cdot\|_{H^{-2l}_h}$ in mind, after applying the triangle inequality we obtain
\begin{align*}
	&\Ev\left[\chi(\Ts>0)\sup_{0\leq t\leq \Ts\wedge T_0}\bigg(\sum_{m}\frac{1}{(1+|m|^2)^{2l}}\big(\phi^{\gamma,t}_{h,m},(\rhor[h] -\rhobr[h])(t)\big)_h^2\bigg)^{j/2}\right]^{2/j}\\
	&\qquad\leq
	\sum_{m}\frac{C}{(1+|m|^2)^{2l}}\Ev\left[\chi(\Ts>0)\sup_{0\leq t\leq \Ts\wedge T_0}\big(\phi^{\gamma,t}_{h,m},(\rhor[h] -\rhobr[h])(t)\big)_h^{j}\right]^{2/j}\\
	&\qquad\leq
	\sum_{m}\frac{1}{(1+|m|^2)^{l}}\left(jC_0N^{-1/2+\delflex}\right)^2.
\end{align*}
With Chebyshev's inequality, choosing $C_d$ only depending on the dimension and $\nS$ such that $\sum_{m}\frac{\nS}{(1+|m|^2)^{l}}\leq C_d^2$ (recall that $l\coloneqq \floor{d/2}+1$), with the above choice of $j$ we obtain
\begin{multline*}
	\Pm\left[T_0\leq \Ts \text{ and } 
		\sum_{m}\frac{1}{(1+|m|^2)^{2l}}\big(\vphi^\gamma_{h,m},(\rhor[h] -\rhobr[h])(t)\big)_h^2\geq \frac{(N^{-1/2+\ep})^2}{\nS4^2}\right]\\
	\leq \exp\left(j\left[\log(j)+\log\left(4C_dC_0N^{-\ep+\delflex} \right)\right]\right)
	\leq \exp\left(-C N^{\ep-\delflex}\right)
\end{multline*}
as in Step 3. 
Iterating through all species yields \eqref{eq_Ts_stricterHlT0} via \eqref{eq_Ts_HlTest}.

\smallskip
\noindent \textbf{Step 5: Extension to finitely many time points in $[0,\Ts]$ with step size $h^\et$.} 
We discretize the time interval $[0,T]$ with step size $h^\et$ for some large $\et>1$, which we will determine in the next step.
Adding up the bounds in \eqref{eq_Ts_stricterLinftyT0} from Step 3 as well as \eqref{eq_Ts_stricterHlT0} from Step 4 for $T_0=h^\et,2h^\et,\ldots,\floor{T/h^\et}h^\et$, for any $0<\delflex<\delta$ and large enough $N$ we obtain
\begin{align}
	&\Pm\left[ \Ts>0 \text{ and } \exists i\in \mathbb{N}, \, ih^{\et}\leq \Ts\colon 
									\| (\rhor[h] -\rhobr[h] )(ih^{\et}) \|_{L^\infty_h} \geq \frac{N^{-\epsilon}}{4}\right] \label{eq_Ts_stricterLinftyDiscr}\\
	&\qquad \leq  
	C h^{-d-\et}  \exp\left( -CN^{1/2-\ep-\delflex}h^{d/2}\right),\nonumber\\
	&\Pm\left[ \Ts>0 \text{ and } \exists i\in \mathbb{N}, \, ih^{\et}\leq \Ts\colon 
									\| (\rhor[h] -\rhobr[h] )(ih^{\et}) \|_{H^{-2l}_h} \geq \frac{N^{-1/2+\epsilon}}{4}\right] \label{eq_Ts_stricterHlDiscr}\\
	&\qquad \leq  
	C h^{-\et}  \exp\left( -CN^{\ep-\delflex}\right).\nonumber
\end{align}

\smallskip
\noindent \textbf{Step 6: Strict bounds for changes of $\rhor[h]-\rhobr[h]$ in small time increments.}
In this step, we show that, given any $\delflex>0$ and $j>1$, for large enough $N$ and any $i=1,\ldots,\floor{T/h^\et}$ we have 
\begin{align}
\label{eq_Ts_TimeIncrMom}
	&\Ev\left[\chi(\Ts>0) \!\sup_{ t \in [ih^{\et}, (i+1) h^{\et} \wedge \Ts]}  \left\| (\rhor[h]-\rhobr[h])(t)- (\rhor[h]-\rhobr[h])(ih^{\et})\right\|_{\infty}^j\right]^{1/j} \\
	&\qquad \leq  
	jC   h^{\et/2-5d/2-4}N^{-1/2+\ep+\delflex }.\nonumber 
\end{align}
This implies (via Chebyshev's inequality and optimization in $j$ as in Steps 3 and 4) that
\begin{align*}
	&\Pm \left[  \Ts>0 \text{ and } \sup_{t \in [ih^{\et}, (i+1)h^{\et} \wedge \Ts]}
					\left\| (\rhor[h]-\rhobr[h])(t)- (\rhor[h]-\rhobr[h])(ih^{\et})\right\|_{\infty} \geq \frac{N^{-\frac{1}{2}+\epsilon}}{4} \right]\\
	&\qquad \leq 
	\exp \left( - C h^{-\et/2+5d/2+4}N^{-\delflex}\right)\nonumber.
\end{align*}
With \eqref{ScalingRegimeNhReverse} from Assumption \ref{ass_Scaling}, choosing $\delflex=\delZero$,
\begin{equation*}
	h^{-\et/2+5d/2-4}N^{-\delflex} \geq h^{-\et/2+5d/2+5}
\end{equation*}
and hence, choosing $\et=5d+12$ and adding up the bounds for all $i$,
\begin{align}
\label{eq_Ts_TimeIncrStricter}
	&\Pm \bigg[  \Ts>0 \text{ and }\exists i\in \mathbb{N}, \, ih^{\et}\leq \Ts\colon \\
	&\qquad\qquad			 \sup_{t \in [ih^{\et}, (i+1)h^{\et} \wedge \Ts]}
					\left\| (\rhor[h]-\rhobr[h])(t)- (\rhor[h]-\rhobr[h])(ih^{\et})\right\|_{\infty} \geq \frac{N^{-1/2+\epsilon}}{4} \bigg]\nonumber\\
	&\quad \leq 
	Ch^{-\et} \exp \left( - C h^{-1} \right)\nonumber.
\end{align}
The exact estimate to \eqref{eq_Ts_TimeIncrStricter} holds when replacing $L^\infty_h$ with $H^{-2l}_h$, since $\|\cdot\|_{H^{-2l}_h} \leq C\|\cdot\|_{L^\infty_h}$ for a constant $C$ independent of $h$ (for $L^\infty_h$ we can replace $N^{-1/2+\ep}$ with $N^{-\ep}$ since $\ep<1/2$).

\smallskip
We now prove \eqref{eq_Ts_TimeIncrMom} by bounding the increments for each species $\gamma=1,\ldots,\nS$ and $x_0\in\Ghd$. 
With the basis functions $\hbase{x_0}$ as in Section \ref{subsec_spaces}, following the calculations in Section \ref{subsec_BackEvos}, the discrete version of \eqref{eq_FlucEvoSingleSpec} yields
\begin{align}\label{eq_Ts_TimeIncr_Evo}
	&(\rhor[h,\gamma]-\rhobr[h,\alpha])(t,x_0)- (\rhor[h,\alpha]-\rhobr[h,\alpha])(ih^{\et},x_0)\\
	&\quad=
	h^{-d/2}\left(\big(\hbase{x_0},(\rhor[h,\alpha]-\rhobr[h,\alpha])(t)\big)_h - \big(\hbase{x_0},(\rhor[h,\alpha]-\rhobr[h,\alpha])(ih^{\et})\big)_h\right)
	\nonumber\\&\quad=
	h^{-d/2}\int_{ih^{\et}}^t 
		\left(	\sigma_\alpha\Delta_h\hbase{x_0} -\U[h,\alpha](s)\cdot\nabla_h\hbase{x_0}				
						, (\rhor[h,\alpha]-\rhobr[h,\alpha])(s)\right)\m s
	\nonumber\\&\qquad	
	-h^{-d/2}\sum_{\beta=1}^\nS\int_{ih^{\et}}^t  
		\left(	\Ih[\nabla\V[\alpha\beta]]\ast_{h,c}\left(\rhobr[h,\alpha](s)\nabla_h\hbase{x_0} \right)
						, (\rhor[h,\beta]-\rhobr[h,\beta])(s)\right)\m s
	\nonumber\\&\qquad	
	+h^{-d/2}\int_{ih^{\et}}^t  \m \mathcal{W}_h ( (\rhor[h,\alpha])^+,\hbase{x_0})(s),\nonumber
\end{align}
where $\U[h,\alpha]$ is defined as in $\eqref{eq_defUh}$ and
where $\mathcal{W}_h ( (\rhor[h,\alpha])^+,\hbase{x_0})$ is a real-valued martingale corresponding to the Dean--Kawasaki noise with quadratic variation satisfying
\begin{equation*}
	\m \left[\mathcal{W}((\rhor[h,\alpha])^+,\hbase{x_0}) , \mathcal{W}((\rhor[h,\alpha])^+,\hbase{x_0}) \right] (t)
	= 
	2 \sigma_{\alpha} N^{-1}\big(( \rhor[h,\alpha])^+(t), \nabla_h \hbase{x_0} \cdot \nabla_h \hbase{x_0}\big)_h. 
\end{equation*}
Applying the Burkholder-Davis-Gundy inequality (for $j>1$) then gives
\begin{align}
\label{eq_Ts_TimeIncr_martingale}
	&\Ev\left[ \sup_{t \in [ih^{\et}, (i+1) h^{\et} \wedge \Ts]} 
				\Big|\int_{ih^{\et}}^t\m \mathcal{W}_h ( (\rhor[h,\alpha])^+,\hbase{x_0})(s)\Big|^j \right]^{1/j}\\
	&\quad\leq
	Cj  \Ev\left[ \left(\int_{ih^{\et}}^{(i+1) h^{\et}\wedge\Ts} 2 \sigma_{\alpha} N^{-1}\big(( \rhor[h,\alpha])^+(t), \nabla_h \hbase{x_0} \cdot \nabla_h \hbase{x_0}\big)_h\dt \right)^{j/2} \right]^{1/j}\nonumber\\
	&\quad\leq
	Cj h^{\et/2} N^{-1/2}\sqrt{\rho_{max,h}}h^{-1},\nonumber
\end{align}
where we used that $\|(\rhor[h,\alpha])^+(t)\|_{L^\infty_h}\leq \rho_{max,h}$ as in Remark \ref{rem_rhominh_rhomaxh} and $\|\nabla_h \hbase{x_0}\|_{L^2_h}\leq Ch^{-1}$.

With respect to the first term in \eqref{eq_Ts_TimeIncr_Evo}, with $\|\cdot\|_{H^{2l}_h}\leq Ch^{-2l}\|\cdot\|_{L^2_h}$ we have 
\begin{align}\label{eq_Ts_TimeIncr_term1}
	&\left|\int_{ih^{\et}}^t \left(\sigma_\alpha\Delta_h\hbase{x_0}-\U[h,\alpha](s)\cdot\nabla_h\hbase{x_0}	
																		, (\rhor[h,\alpha]-\rhobr[h,\alpha])(s)\right)\m s\right|
	\\&\quad\leq
	Ch^{-2l}\int_{ih^{\et}}^{(i+1) h^{\et}\wedge\Ts}\left(\|\Delta_h\hbase{x_0}\|_{L^2_h}+\|\U[h]\|_{L^\infty_h}\|\nabla_h\hbase{x_0}\|_{L^2_h} \right) \|\rhor[h]-\rhobr[h]\|_{H^{-2l}_h}\m s
	\nonumber\\&\quad\leq
	Ch^{-2l+\et}\left(h^{-2}+\rI^{-d-1}h^{-1}\right)N^{-1/2+\ep}
	\nonumber
\end{align}
where we used the definition of $\Ts$ and that $\|\U\|_{L^{\infty}_h}\leq \|\Ih[\V]\|_{L^\infty}\|\rhobr\|_{L^1_h}\leq C\rI^{-d-1}$ due to Young's inequality, see Corollary \ref{cor_DiscrYoung}.
Note that if $\rhobr(t)\geq 0$, in particular for $t\leq\Ts$ with Remark \ref{rem_rhominh_rhomaxh}, it holds that $\|\rhobr(t)\|_{L^1_h}=\|\rhobr(0)\|_{L^1_h}$. 

For the second term in \eqref{eq_Ts_TimeIncr_Evo}, an analogous argument yields
\begin{align}\label{eq_Ts_TimeIncr_term2}
	&\left|\int_{ih^{\et}}^t  \left(\Ih[\nabla\V[\alpha\beta]]\ast_{h,c}\left(\rhobr[h,\alpha](s)\nabla_h\hbase{x_0} \right), (\rhor[h,\beta]-\rhobr[h,\beta])(s)\right)\m s\right|
	\\&\quad\leq
	Ch^{-2l}\int_{ih^{\et}}^{(i+1) h^{\et}\wedge\Ts}\|\Ih[\V]\|_{L^2_h}\|\rhobr[h]\|_{L^{2}_h}\|\nabla_h\hbase{x_0}\|_{L^{2}_h} \|\rhor[h]-\rhobr[h]\|_{H^{-2l}_h}\m s
	\nonumber\\&\quad\leq
	Ch^{-2l+\et}\rI^{-d-1}\rho_{max,h}h^{-1} N^{-1/2+\ep}.
	\nonumber
\end{align}
Plugging the sum of \eqref{eq_Ts_TimeIncr_Evo} over all $\gamma=1,\ldots,\nS$ and the $h^{-d}$ different $x_0\in\Ghd$ into the left hand side of \eqref{eq_Ts_TimeIncrMom}, applying the triangle inequality and then \eqref{eq_Ts_TimeIncr_term1}, \eqref{eq_Ts_TimeIncr_term2}, and \eqref{eq_Ts_TimeIncr_martingale}, we obtain the right hand side of \eqref{eq_Ts_TimeIncrMom} with $l$ chosen as in \eqref{ChoiceOfL} for large enough $N$ with \eqref{ScalingRegimeRadius} from Assumption \ref{ass_Scaling}.

\smallskip
\noindent \textbf{Step 7: Conclusion from Steps 5 and 6.}
Combining \eqref{eq_Ts_stricterLinftyDiscr} and \eqref{eq_Ts_stricterHlDiscr} for the discrete time points with \eqref{eq_Ts_TimeIncrStricter} for the increments inbetween with respect to $\|\cdot\|_{L^{\infty}_h}$ and $\|\cdot\|_{H^{-2l}_h}$  yields
\begin{align*}
	& \Pm\left[\Ts=T\right] \\
	&\quad\geq 
	\Pm\bigg[\Ts>0 \text{ and }\forall 0\leq t\leq\Ts: \|(\rhor[h] -\rhobr[h])(t)\|_{L^\infty_h} \leq \frac{N^{-\ep}}{2}\\
	&\quad\qquad\qquad\qquad\qquad\qquad\qquad\qquad			\text{and }\|(\rhor[h] -\rhobr[h])(t)\|_{H^{-2\ell}_h} \leq \frac{N^{-1/2+\ep}}{2}\bigg]\\
	&\quad\geq 
	1-\Pm\left[\Ts=0\right]-\Pm\left[\Ts>0\text{ and }\exists 0\leq t\leq\Ts: \|(\rhor[h] -\rhobr[h])(t)\|_{L^\infty_h} > \frac{N^{-\ep}}{2} \right]\\
	&\quad
	-\Pm\left[\Ts>0\text{ and }\exists 0\leq t\leq\Ts: \|(\rhor[h] -\rhobr[h])(t)\|_{H^{-2\ell}_h} > \frac{N^{-1/2+\ep}}{2}\right]\\
	&\quad\geq 
	1 - C\exp\big(\!\!-\! CN^{1/2-\ep-\delta}h^{d/2} \big) - C\exp\big(\!\!-\! CN^{\ep-\delta}\big)\\
	&\quad
	- Ch^{-\et}\!\left(h^{-d}
		\exp\big(\!\!-\! CN^{1/2-\ep-\delflex}h^{d/2}\big)
		+ \exp\big(\!\!-\! CN^{\ep-\delflex}\big)
		+ \exp\big(\!\!-\! C h^{-1} \big)\!\right),
\end{align*}
where we used \eqref{eq_ass_InitialLinftyh} and \eqref{eq_ass_InitialHlh} from Assumption \ref{ass_InitCond} to bound $\Pm\left[\Ts=0\right]$.
After absorbing the $h^{-1}$ prefactors and applying Assumption \ref{ass_Scaling} (e.g., as in Remark \ref{rem_Ts_epInfluence}), we obtain \eqref{eq_TsBound}.
\end{proof}

\begin{appendix}
\section{Regularity of continuous test functions}\label{AppContReg}


\begin{lemma}[Bounds on Sobolev norms of continuous test functions]\label{lem_RegularityContinuousTestFunctions}
Let $\vphi_\alpha\in H^{s}(\domain)$ for all $\alpha\in 1,\dots, n_S$, for some $s\in \mathbb{N}_0$. Assume $\rhobr\in C^{s}$.
Then there exists a unique $H^{s}$-valued strong solution $\phi^t_{\alpha}$ to the backwards evolution equation \eqref{eq_backEvoTest}. Additionally, given an arbitrary $\delflex>0$, there exists $N_0=N_0(\delflex,data)$, such that for all $N>N_0$ we have the bound 
\begin{align}\label{ContinuousTestFuncBound}
\|\phi^t\|_{H^{s}} \leq N^{\delflex (T-t)}\|\vphi\|_{H^{s}}.
\end{align}
\end{lemma}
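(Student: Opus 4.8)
## Proof plan for Lemma \ref{lem_RegularityContinuousTestFunctions}

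The plan is to treat the backward parabolic system \eqref{eq_backEvoTest} as a linear, lower-order perturbation of $\nS$ decoupled backward heat equations and to run a standard energy/Galerkin argument, being careful that \emph{all} constants are tracked so that the final bound has the stated form $N^{\delflex(T-t)}$. First I would reverse time, setting $\psi^\tau_\alpha := \phi^{T-\tau}_\alpha$, so that \eqref{eq_backEvoTest} becomes a \emph{forward} parabolic system $\partial_\tau \psi^\tau_\alpha = \sigma_\alpha \Delta \psi^\tau_\alpha - \U[\alpha](T-\tau)\cdot\nabla\psi^\tau_\alpha + \sum_\beta \nabla\V[\beta\alpha]\ast_c(\rhobr[\beta](T-\tau)\nabla\psi^\tau_\beta)$ with initial datum $\psi^0_\alpha = \vphi_\alpha$. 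Since $\rhobr\in L^\infty(0,T;C^s)$ by hypothesis (and hence $\U[\alpha]\in L^\infty(0,T;C^{s-1})$, using that $V$, and thus $\V$, is smooth enough — note here $\nabla\V$ is a fixed smooth function, the $\rI$-dependence only affecting constants that get absorbed), the coefficients of the first-order term and of the convolution term are bounded in $C^{s-1}$ on $[0,T]$. Existence, uniqueness, and $H^s$-regularity of the strong solution then follow from the classical theory for linear parabolic systems with smooth coefficients (e.g.\ Galerkin approximation in the Fourier basis on $\domain$, with the heat semigroup providing parabolic smoothing); I would simply cite this rather than reproduce it.

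The quantitative bound \eqref{ContinuousTestFuncBound} is the part requiring care. For a multi-index $\nu$ with $|\nu|\le s$, differentiate the forward equation, test with $\partial^\nu\psi^\tau_\alpha$, sum over $\alpha$ and over $|\nu|\le s$, and integrate by parts. The Laplacian term produces $-\sum_\alpha\sigma_\alpha\|\nabla\partial^\nu\psi^\tau_\alpha\|_{L^2}^2\le 0$, which we discard. Every remaining term is a bounded multilinear expression in $\partial^{\le s}\psi^\tau$ paired against $\partial^{\le s}\psi^\tau$, with coefficients controlled by $\|\U\|_{L^\infty(C^{s-1})}$, $\|\rhobr\|_{L^\infty(C^{s-1})}$ and $\|\nabla\V\|_{C^{s-1}}$ — all of which are $\le C(data)$ by Assumptions \ref{ass_CDsystem+MFL}, \ref{ass_RegularityMFL} and the scaling \eqref{ScalingRegimeRadius}. (For the first-order term $\U\cdot\nabla$, the top-order contribution $\int \U\cdot\nabla\partial^\nu\psi\,\partial^\nu\psi = -\tfrac12\int(\divv\U)(\partial^\nu\psi)^2$ is again lower order after integration by parts; the convolution term is automatically smoothing in the $\ast$ slot, so no derivative loss occurs there either.) This yields a Grönwall inequality $\tfrac{d}{d\tau}\|\psi^\tau\|_{H^s}^2 \le C_0(data)\,\|\psi^\tau\|_{H^s}^2$, hence $\|\psi^\tau\|_{H^s}\le e^{C_0\tau/2}\|\vphi\|_{H^s}$, i.e.\ $\|\phi^t\|_{H^s}\le e^{C_0(T-t)/2}\|\vphi\|_{H^s}$.

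The final, essentially cosmetic, step is to convert the fixed constant $e^{C_0/2}$ into $N^{\delflex}$. Given $\delflex>0$, choose $N_0$ so large that $N^{\delflex}\ge e^{C_0(data)/2}$ for all $N>N_0$; then $e^{C_0(T-t)/2}=(e^{C_0/2})^{T-t}\le N^{\delflex(T-t)}$ since $T-t\ge 0$, which is \eqref{ContinuousTestFuncBound}. I do not anticipate a genuine obstacle here — the one thing to be vigilant about is the \emph{derivative count}: the energy estimate in $H^s$ needs the coefficients in $C^{s-1}$ (equivalently $W^{s-1,\infty}$), and indeed the hypothesis $\rhobr\in C^s$ (together with smoothness of $V$) supplies exactly this, with room to spare; the other mild point is that the $\rI^{-O(1)}$ factors coming from $\nabla\V[\alpha\beta](\cdot)=\rI^{-d}\nabla V(\cdot/\rI)$ are harmless because \eqref{ScalingRegimeRadius} makes them at most polylogarithmic in $N$, hence absorbable into the arbitrarily small exponent $\delflex$ at the cost of enlarging $N_0$.
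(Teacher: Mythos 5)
Your proof is correct and essentially the paper's own argument: time reversal, an $H^s$ energy estimate (the paper does $s=0$ explicitly and then inducts over $s$), and a Gronwall step whose rate is controlled by $\rI$-scaled norms of $\nabla\V$ and by $\|\rhobr\|_{C^s}$, which is then turned into $N^{\delflex(T-t)}$ for $N\geq N_0(\delflex,data)$ precisely through \eqref{ScalingRegimeRadius}, as you note. The one point to tighten is the convolution term: since you discard the dissipation, the extra gradient of $\phi_\beta$ inside $\nabla\V[\beta\alpha]\ast_c\big(\rhobr[\beta]\nabla\phi_\beta\big)$ is not handled by any genuine smoothing of the convolution (with an $L^1$-normalized kernel there is none, uniformly in $\rI$) but should be shifted onto the kernel or onto $\rhobr$, e.g.\ $\nabla\V\ast_c(\rhobr\nabla\phi_\beta)=(\Delta\V)\ast(\rhobr\,\phi_\beta)-\nabla\V\ast_c(\phi_\beta\nabla\rhobr)$, at the cost of further harmless powers of $\rI^{-1}$ — whereas the paper instead keeps the term $-\sigma_\alpha\|\nabla\phi^{T-t}_\alpha\|^2$ and absorbs that gradient via Young's inequality.
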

\begin{proof}
Existence and uniqueness for $H^s$-valued strong solutions to \eqref{eq_backEvoTest} is a straightforward matter. As for proving \eqref{ContinuousTestFuncBound}, we follow a standard energy estimate. For the case $s=0$, upon time reversal $t \leftrightarrow T - t$, we may test \eqref{eq_backEvoTest} with $\phi_{\alpha}^{T-t}$, thus getting 
\begin{align*}
\partial_t \|\phi^{T-t}_{\alpha}\|^2 & = -\sigma_\alpha \|\nabla \phi^{T-t}_{\alpha}\|^2 + (\U[\alpha](T-t) \cdot \nabla \phi^{T-t}_{\alpha}, \phi^{T-t}_{\alpha}) \\
& \quad + \sum_{\beta=1}^\nS \left(\nabla\V[\beta\alpha] \ast_c \big(\rhobr[\beta](T-t)\nabla\phi^{T-t}_{\beta}\big), \phi^{T-t}_{\alpha}\right) \\
& \leq -\sigma_\alpha \|\nabla \phi^{T-t}_{\alpha}\|^2 + \|\U[\alpha](T-t)\|_\infty  \|\nabla \phi^{T-t}_{\alpha}\| \|\phi^{T-t}_{\alpha}\| \\
& \quad + \sum_{\beta=1}^\nS \|\nabla\V[\beta\alpha]\|_{1}  \left\|\big(\rhobr[\beta](T-t)\nabla\phi^{T-t}_{\beta}\big)\right\| \|\phi^{T-t}_{\alpha}\|
\end{align*}
Using the definition of $\U[\alpha]$, we get 
\begin{align*}
\partial_t \|\phi^{T-t}_{\alpha}\|^2 & \leq -\sigma_\alpha \|\nabla \phi^{T-t}_{\alpha}\|^2 + K\left(\sum_{\beta=1}^{\nS}{\|\nabla \phi^{T-t}_{\beta}\|^2}\right)\|\phi^{T-t}_{\alpha}\|^2,
\end{align*}
where we have set
\begin{align*}
K := C(\nS)\left(\sum_{\beta=1}^{\nS}{\|\nabla \V[\alpha\beta]\|_{L^2}\|\rhobr[\beta](T-t)\|_{C^{0}}}\right)
\end{align*}
Summing over all species $\alpha = 1, \dots, \nS$, using Young to absorb the gradient contributions, applying Gronwall Lemma, and relying on the regularity of $\rhobr$ and $V$ (see \eqref{ScalingRegimeRadius}) settles the claim for the case $s=0$. The case $s>0$ is settled analogously using induction over $s$.
\end{proof}

\section{Relevant estimates in the discrete setting}\label{AppDiscrete}

We prove several auxiliary results relating to the discretised setting, namely: error bounds for the difference of continuous mean-field limit $\rhobr[]$ and discrete mean-field limit $\rhobr[h]$ (Subsection \ref{SubsecDiffContDiscreteMFL}); error bounds for the difference of continuous test functions \eqref{eq_backEvoTest} and discrete test functions \eqref{eq_discrBackEvoTest}, and $L^1$-bound for the discrete test functions \eqref{eq_discrBackEvoTest} (Subsection \ref{SubsecRegDiscreteTest}).

In what follows, we will use a discrete version of H\"older's inequality and Young's convolution inequality.
To state these, we need a discrete notion of $L^p$-norms.
\begin{definition}[Discrete $L^p$-norms]\label{def_discrLp}
For all $p\in[1,\infty)$ and $f\in\LzGhd$ we write
\begin{align*}
	\|f\|_{L^p(\Ghd)}=\|f\|_{L^p_h}=\Big(\sum_{x\in\Ghd}h^d|f(x)|^p\Big)^{1/p},\qquad \|f\|_{L^\infty(\Ghd)}=\max_{x\in\Ghd}|f(x)|.
\end{align*}
In particular, this is consistent with the previous definition of $\|\cdot\|_\LzGhd$.
\end{definition}

\begin{corollary}[Discrete H\"older inequality]\label{cor_DiscrHoelder}
Let $q,r\in[1,\infty]$ with $\frac{1}{q}+\frac{1}{r}=1$. 
Then for all $f,g\in\LzGhd$ it holds that
\begin{align*}
	\big(f,g\big)_h \leq \|f\|_{L^q(\Ghd)}\|g\|_{L^r(\Ghd)}.  
\end{align*}
\end{corollary}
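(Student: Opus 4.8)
The statement to be established is the discrete Hölder inequality (Corollary \ref{cor_DiscrHoelder}): for conjugate exponents $q,r\in[1,\infty]$ with $\tfrac1q+\tfrac1r=1$ and $f,g\in\LzGhd$, one has $(f,g)_h\leq\|f\|_{L^q_h}\|g\|_{L^r_h}$. The plan is to deduce this directly from the classical (continuous, or rather finite-sum) Hölder inequality after absorbing the grid weight $h^d$ appropriately. First I would recall the definition of the inner product, $(f,g)_h=\sum_{x\in\Ghd}h^d f(x)g(x)$, and the discrete $L^p_h$-norms from Definition \ref{def_discrLp}. The key observation is that one can write $h^d f(x)g(x)=\bigl(h^{d/q}f(x)\bigr)\bigl(h^{d/r}g(x)\bigr)$, since $\tfrac1q+\tfrac1r=1$; this rewriting distributes the weight evenly between the two factors in a way compatible with the two norms.

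\textbf{Key steps.} The argument then runs in three short steps. (1) Apply the standard Hölder inequality for finite sums to the sequences $\bigl(h^{d/q}f(x)\bigr)_{x\in\Ghd}$ and $\bigl(h^{d/r}g(x)\bigr)_{x\in\Ghd}$ (with the convention $h^{d/\infty}=1$ when $q$ or $r$ is infinite), obtaining
\begin{equation*}
	\sum_{x\in\Ghd}h^d\,|f(x)|\,|g(x)|
	\leq\Bigl(\sum_{x\in\Ghd}h^d|f(x)|^q\Bigr)^{1/q}\Bigl(\sum_{x\in\Ghd}h^d|g(x)|^r\Bigr)^{1/r},
\end{equation*}
where on the right we used $\bigl(h^{d/q}\bigr)^q=h^d$ and $\bigl(h^{d/r}\bigr)^r=h^d$; the cases $q=1,r=\infty$ (or vice versa) follow from the trivial bound $\sum_x h^d|f(x)||g(x)|\leq\bigl(\max_x|g(x)|\bigr)\sum_x h^d|f(x)|$. (2) Recognise the two factors on the right-hand side as $\|f\|_{L^q_h}$ and $\|g\|_{L^r_h}$ by Definition \ref{def_discrLp}. (3) Conclude by noting $(f,g)_h=\sum_x h^d f(x)g(x)\leq\sum_x h^d|f(x)||g(x)|$, which chains with the previous display to give the claim. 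Since the grid $\Ghd$ is finite, there are no convergence issues, and the inequality is an equality-preserving manipulation followed by the classical estimate.

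\textbf{Main obstacle.} There is essentially no obstacle here: this is a routine bookkeeping exercise in redistributing the uniform grid weight $h^d$, and the only mildly delicate point is handling the endpoint cases $q\in\{1,\infty\}$ separately (where the symmetric splitting $h^{d/q}h^{d/r}$ should be read with the convention that $h^{d/\infty}$ contributes no weight and the corresponding norm is the max-norm). For the same reason I would not belabour the proof in the paper; a one-line reduction to the classical Hölder inequality with the substitution $h^d=h^{d/q}\cdot h^{d/r}$ suffices.
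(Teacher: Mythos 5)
Your proof is correct: reducing to the classical finite-sum Hölder inequality via the splitting $h^d=h^{d/q}\cdot h^{d/r}$ (with the endpoint cases $q\in\{1,\infty\}$ treated by the trivial max-norm bound) is exactly the routine argument intended here, and the paper itself states the corollary without proof, treating it as the standard Hölder inequality for the weighted counting measure on the finite grid $\Ghd$. No gaps.
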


With the validity of a H\"older inequality, Young's convolution inequality automatically holds in the same framework.

\begin{corollary}[Discrete Young's convolution inequality]\label{cor_DiscrYoung}
Assume $q,\widetilde{q},r\in[1,\infty]$ satisfy $\frac{1}{q}+\frac{1}{\widetilde{q}}=\frac{1}{r}+1$ and $f,g\in\LzGhd$.
Then
\begin{align*}
	\|f\ast_h g\|_{L^r(\Ghd)}\leq \|f\|_{L^q(\Ghd)}\|g\|_{L^{\widetilde{q}}(\Ghd)}.  
\end{align*}
\end{corollary}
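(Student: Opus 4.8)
\textbf{Proof proposal for Corollary~\ref{cor_DiscrYoung}.} The plan is to derive the discrete Young convolution inequality from the discrete H\"older inequality (Corollary~\ref{cor_DiscrHoelder}) by mimicking the classical continuous proof, keeping careful track of the measure weights $h^d$ that appear in the discrete convolution $\ast_h$ and in the discrete $L^p$-norms of Definition~\ref{def_discrLp}. First I would unwind the definition of $\ast_h$: for $f,g \in \LzGhd$ one has $(f \ast_h g)(x) = \sum_{y \in \Ghd} h^d f(x-y) g(y)$, where subtraction is understood modulo the torus grid so that the sum is genuinely a (discrete) convolution on the finite abelian group $(\Ghd,+)$. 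The key structural observation is that this is exactly the continuous convolution for the counting measure rescaled by $h^d$, so all the classical manipulations go through verbatim once we treat $h^d \sum_{y}$ as the integral.

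Next I would handle the two endpoint cases separately, as in the classical argument, and then interpolate. The easy endpoints are $r = q$ with $\widetilde q = 1$ (and symmetrically $r = \widetilde q$ with $q=1$), and $r = \infty$ with $\frac1q + \frac1{\widetilde q} = 1$; the latter is an immediate application of Corollary~\ref{cor_DiscrHoelder} applied to the shifted function $y \mapsto f(x-y)$ and $g$, using that the discrete $L^q$-norm is translation-invariant on the torus grid. For the general case $\frac1q + \frac1{\widetilde q} = \frac1r + 1$ with $1 \le r < \infty$, I would split the summand: writing $s$ for the H\"older conjugate of $r$, one decomposes
\begin{align*}
  h^d |f(x-y)|\,|g(y)|
  &= \Big( h^d |f(x-y)|^q |g(y)|^{\widetilde q}\Big)^{1/r}
     \Big( h^d |f(x-y)|^q \Big)^{1/q - 1/r}
     \Big( h^d |g(y)|^{\widetilde q}\Big)^{1/\widetilde q - 1/r},
\end{align*}
where the exponents are nonnegative precisely because $\frac1q, \frac1{\widetilde q} \ge \frac1r$ (a consequence of the constraint $\frac1q + \frac1{\widetilde q} = 1 + \frac1r \le \frac2r$... more carefully: $\frac1q = 1 + \frac1r - \frac1{\widetilde q} \ge \frac1r$ since $\frac1{\widetilde q} \le 1$). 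Then I would apply the three-factor (generalized) discrete H\"older inequality in the variable $y$ with exponents $r$, $\big(\tfrac1q - \tfrac1r\big)^{-1}$, $\big(\tfrac1{\widetilde q} - \tfrac1r\big)^{-1}$, which sum to $1$; the three-factor version follows from iterating Corollary~\ref{cor_DiscrHoelder} twice. This yields
\begin{align*}
  |(f \ast_h g)(x)|
  \le \Big( \sum_{y} h^d |f(x-y)|^q |g(y)|^{\widetilde q} \Big)^{1/r}
       \| f \|_{L^q_h}^{1 - q/r}\, \| g \|_{L^{\widetilde q}_h}^{1 - \widetilde q/r},
\end{align*}
again using translation invariance of $\|f(\cdot - y)\|_{L^q_h} = \|f\|_{L^q_h}$. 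Raising to the $r$-th power, summing $h^d \sum_x$, and applying the discrete Fubini theorem (valid since everything is a finite sum) to the term $\sum_x \sum_y h^{2d} |f(x-y)|^q |g(y)|^{\widetilde q} = \|f\|_{L^q_h}^q \|g\|_{L^{\widetilde q}_h}^{\widetilde q}$ gives $\|f \ast_h g\|_{L^r_h}^r \le \|f\|_{L^q_h}^{(1-q/r)r + q}\|g\|_{L^{\widetilde q}_h}^{(1-\widetilde q/r)r + \widetilde q} = \|f\|_{L^q_h}^r \|g\|_{L^{\widetilde q}_h}^r$, which is the claim.

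There is essentially no serious obstacle here: the only points requiring a moment of care are (i) checking the exponent bookkeeping so that all fractional powers are nonnegative and the H\"older exponents sum correctly, and (ii) confirming that translation on the discrete torus is an isometry of each $L^p_h$ — which is clear since it merely permutes the grid points $\Ghd$. I would present this compactly, since it is a routine transcription of the classical argument, and in fact the excerpt already anticipates this by stating that ``with the validity of a H\"older inequality, Young's convolution inequality automatically holds in the same framework.'' Accordingly the write-up can be just a few lines invoking Corollary~\ref{cor_DiscrHoelder}, the translation invariance, and discrete Fubini.
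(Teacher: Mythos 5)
Your proposal is correct and is exactly the argument the paper has in mind: the paper gives no written proof beyond the remark that Young's inequality "automatically holds" once the discrete H\"older inequality is available, and your three-factor H\"older splitting with the $h^d$-weighted sums, translation invariance of the grid norms, and discrete Fubini is the standard transcription that remark anticipates, with the exponent bookkeeping checking out. (The passing claim $1+\tfrac1r\le\tfrac2r$ is false, but you immediately replace it with the correct argument $\tfrac1q=1+\tfrac1r-\tfrac1{\widetilde q}\ge\tfrac1r$, so nothing is lost.)
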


We obtain a quantitative error bound for comparing continuous and discrete convolutions based on the classical Euler-Maclaurin formula for numerical integration.

\begin{lemma}[A multidimensional Euler-Maclaurin formula]\label{lem_EulerMaclaurin}
Let $s\in\Nz$.  Then there exists $C_s>0$, such that for all $f\in C^{2s+2}(\domain)$
\begin{align*}
	\bigg|\int_\domain f(x)\dx - \sum_{x\in\Ghd} h^d f(x)\bigg| \leq  C_s h^{2s+2} \|f\|_{C^{2s+2}(\domain)}.
\end{align*}
\end{lemma}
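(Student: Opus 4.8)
The plan is to reduce the $d$-dimensional quadrature estimate to the one-dimensional periodic case and then add up the contributions coordinate by coordinate; the only real subtlety is regularity bookkeeping, not the analysis.

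\textbf{Step 1: the one-dimensional periodic estimate.} First I would establish that for every $g\in C^{2s+2}(\mathbb{T}^1)$ (torus of length $2\pi$, with $L=2\pi/h$ grid points) one has
\[
	\Big|\int_{\mathbb{T}^1} g(\xi)\,\mathrm{d}\xi - h\sum_{m=0}^{L-1}g(mh)\Big| \le C_s\, h^{2s+2}\,\|g^{(2s+2)}\|_{L^\infty(\mathbb{T}^1)}.
\]
This is exactly the classical Euler--Maclaurin formula: writing the error as $\sum_{k=1}^{s}\frac{B_{2k}}{(2k)!}h^{2k}\big[g^{(2k-1)}\big]_0^{2\pi}$ plus an order-$h^{2s+2}$ remainder controlled by $\int|g^{(2s+2)}|$, every bracketed boundary term vanishes by periodicity ($g^{(2k-1)}(2\pi)=g^{(2k-1)}(0)$), leaving only the remainder. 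An equally short self-contained route is via Fourier series: the aliasing identity $h\sum_{m=0}^{L-1}g(mh)=2\pi\sum_{n\in\mathbb{Z}}\hat g_{Ln}$ shows the error equals $2\pi\sum_{n\neq 0}\hat g_{Ln}$, and integrating by parts $2s+2$ times gives $|\hat g_k|\le |k|^{-(2s+2)}\|g^{(2s+2)}\|_{L^\infty}$, so the error is bounded by $2\pi\big(2\zeta(2s+2)\big)L^{-(2s+2)}\|g^{(2s+2)}\|_{L^\infty}$, the numerical series converging since $2s+2\ge 2$.

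\textbf{Step 2: telescoping over coordinates.} For $1\le i\le d$ let $Q^{(i)}$ be the operator replacing the integral over the $i$-th variable by the trapezoidal sum over the $i$-th grid direction, and $I^{(i)}$ the operator integrating out the $i$-th variable; both map $C^0(\domain)$ into $C^0$ of the remaining variables with operator norm $\le 2\pi$ (since $h\cdot L=2\pi$ and $|\mathbb{T}^1|=2\pi$). Then $\sum_{x\in\Ghd}h^d f(x)=Q^{(1)}\cdots Q^{(d)}f$, $\int_\domain f=I^{(1)}\cdots I^{(d)}f$, and
\[
	Q^{(1)}\cdots Q^{(d)}-I^{(1)}\cdots I^{(d)}=\sum_{i=1}^d Q^{(1)}\cdots Q^{(i-1)}\big(Q^{(i)}-I^{(i)}\big) I^{(i+1)}\cdots I^{(d)}.
\]
Applying Step 1 in the $i$-th variable with the other variables frozen gives $\big\|(Q^{(i)}-I^{(i)})F\big\|_{C^0}\le C_s h^{2s+2}\sup_x|\partial_{x_i}^{2s+2}F(x)|$; taking $F=I^{(i+1)}\cdots I^{(d)}f$ and using that $\partial_{x_i}^{2s+2}$ is a single partial derivative of order $2s+2$, hence controlled by $\|f\|_{C^{2s+2}(\domain)}$, and finally composing with the uniformly bounded operators $Q^{(1)}\cdots Q^{(i-1)}$, we obtain a bound $C_s(2\pi)^{d-1}h^{2s+2}\|f\|_{C^{2s+2}(\domain)}$ for each term. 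Summing over $i=1,\dots,d$ and relabelling the constant yields the claim.

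\textbf{Main obstacle.} There is no hard step, but the choice of argument matters: a direct $d$-dimensional Fourier/aliasing estimate would force one to bound $\sum_{m\neq 0}|m|_\infty^{-(2s+2)}$, which diverges as soon as $d\ge 2s+2$, and would implicitly require mixed derivatives that $C^{2s+2}(\domain)$ does not supply. The coordinate-by-coordinate telescoping is precisely what avoids this, since each step uses only a pure $(2s+2)$-nd derivative in one variable while the leftover sums and integrals are bounded in the supremum norm. The remaining points — defining $Q^{(i)},I^{(i)}$ carefully and justifying the interchange of the finite sum with the integral via Fubini and continuity of $f$ — are routine.
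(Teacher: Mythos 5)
Your proof is correct and follows essentially the same route as the paper, which simply invokes the one-dimensional Euler--Maclaurin formula (citing a reference) and then passes to $d$ dimensions ``with Fubini's theorem by induction'' -- your coordinate-by-coordinate telescoping with the operators $Q^{(i)},I^{(i)}$ is exactly that induction made explicit, and your Step 1 supplies a self-contained proof of the 1D periodic estimate that the paper outsources. Your remark that a direct $d$-dimensional aliasing bound would fail for $d\ge 2s+2$ under mere $C^{2s+2}$ regularity is a valid observation, but it does not change the fact that the argument is the paper's own.
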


\begin{proof}
For $d=1$ see, for instance, \cite[Chapter 1]{cheng2007advanced}.
The lemma then follows with Fubini's theorem by induction. 
\end{proof}

These facts imply the following approximation property of the discrete convolution.

\begin{corollary}[Approximation order of convolutions on periodic grids]\label{lem_ApprOrderConv}
Let $s\in\Nz$.  Then there exists $C_s>0$, such that for all $f,g\in C^{2s+2}(\domain)$
\begin{align*}
	\big\| \Ih[f\ast g] - \Ih[f]\ast_h\Ih[g] \big\|_{L^\infty(\Ghd)} 
	\leq
	C_s h^{2s+2} \|f\|_{C^{2s+2}(\domain)}\|g\|_{C^{2s+2}(\domain)}.
\end{align*}
\end{corollary}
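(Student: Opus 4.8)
\textbf{Proof proposal for Corollary~\ref{lem_ApprOrderConv}.}
The plan is to split the error $\Ih[f\ast g] - \Ih[f]\ast_h\Ih[g]$ into two contributions at each grid point $x\in\Ghd$: first the difference between the exact convolution $(f\ast g)(x)=\int_\domain f(x-y)g(y)\dy$ and the quadrature sum $\sum_{y\in\Ghd}h^d f(x-y)g(y)$, and second the difference between this quadrature sum and the genuinely discrete convolution $(\Ih[f]\ast_h\Ih[g])(x)$. First I would observe that, by the definition of the discrete convolution $\ast_h$ as the discrete analogue of $\ast$ with respect to the $L^2(\Ghd)$-inner product (introduced in Section~\ref{FiniteDiffDiscr}), the second contribution actually vanishes: $(\Ih[f]\ast_h\Ih[g])(x) = \sum_{y\in\Ghd}h^d \Ih[f](x-y)\Ih[g](y)= \sum_{y\in\Ghd}h^d f(x-y)g(y)$, since $\Ih$ is just pointwise restriction to $\Ghd$ and $x-y\in\Ghd$ whenever $x,y\in\Ghd$ (using that $\Ghd=h\Z^d\cap\domain$ is closed under the group operation on the torus). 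Thus only the first, genuine quadrature error remains.

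The main step is then to apply the multidimensional Euler--Maclaurin bound of Lemma~\ref{lem_EulerMaclaurin} to the integrand $y\mapsto F_x(y) := f(x-y)g(y)$ for each fixed $x\in\Ghd$. This gives
\begin{align*}
	\Big| (f\ast g)(x) - \sum_{y\in\Ghd} h^d F_x(y)\Big| \leq C_s h^{2s+2}\|F_x\|_{C^{2s+2}(\domain)}.
\end{align*}
It remains to bound $\|F_x\|_{C^{2s+2}(\domain)}$ uniformly in $x$. By the Leibniz rule, any derivative $D^\nu F_x$ with $|\nu|\leq 2s+2$ is a finite sum of products $(D^{\mu}f)(x-\cdot)\,(D^{\nu-\mu}g)(\cdot)$ with binomial coefficients, so $\|F_x\|_{C^{2s+2}(\domain)} \leq C(d,s)\,\|f\|_{C^{2s+2}(\domain)}\|g\|_{C^{2s+2}(\domain)}$, with a constant independent of $x$. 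Taking the maximum over $x\in\Ghd$ and absorbing $C(d,s)$ into $C_s$ yields the claimed bound
\begin{align*}
	\big\| \Ih[f\ast g] - \Ih[f]\ast_h\Ih[g]\big\|_{L^\infty(\Ghd)} \leq C_s h^{2s+2}\|f\|_{C^{2s+2}(\domain)}\|g\|_{C^{2s+2}(\domain)}.
\end{align*}

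I do not expect any genuine obstacle here; the only point requiring a little care is the bookkeeping that the discrete convolution $\ast_h$ is \emph{exactly} the grid quadrature of the continuous convolution (so that the "discretization of the convolution" error and the "quadrature of the convolution integral" error coincide), which hinges on $\Ghd$ being a subgroup of the torus under addition. Once that identification is made, the result is a direct consequence of Lemma~\ref{lem_EulerMaclaurin} together with the Leibniz rule.
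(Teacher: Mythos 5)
Your argument is correct and coincides with the route the paper intends: the corollary is presented as a direct consequence of the Euler--Maclaurin bound of Lemma~\ref{lem_EulerMaclaurin}, applied for each fixed $x\in\Ghd$ to the integrand $y\mapsto f(x-y)g(y)$, whose $C^{2s+2}$-norm is bounded by $\|f\|_{C^{2s+2}}\|g\|_{C^{2s+2}}$ via the Leibniz rule. Your observation that $(\Ih[f]\ast_h\Ih[g])(x)$ is exactly the grid quadrature of $(f\ast g)(x)$ (since $\Ghd$ is a subgroup of $\domain$ and $\Ih$ is pointwise restriction) is precisely the identification the paper leaves implicit, so there is nothing missing.
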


\subsection{Error bounds for difference of continuous and discrete mean field limit}\label{SubsecDiffContDiscreteMFL}
In this section we compare the continuous and discretised mean field limits. 
First, we obtain an $L^2$-type bound of order $h^{p+1}$ via an energy estimate. 
Since we need to assume the difference to be small to deal with the quadratic nonlinearity, the estimate only holds up to a finite positive time.
Second, building upon the $L^2$-estimate we will derive higher order estimates difference.

\begin{proposition}[$L^2$-Estimate for the continuous-discrete mean field limit difference]\label{prop_MFLconsistency}
Let Assumption \ref{ass_CDsystem+MFL} hold.
Let $\rhobr$ be the continuous mean-field limit as in \eqref{eq_CD-IMFL} satisfying Assumption \ref{ass_RegularityMFL} with discrete counterpart $\rhobr[h]$ as in \eqref{eq_discrIntMeanFiLim}.
Assume that the initial conditions satisfy Assumption \ref{ass_InitCond} while the parameters $\rI,N,h$ are subject to Assumption \ref{ass_Scaling}.
Set
\begin{align*}
	\Tbh
	\coloneqq \inf\left\lbrace t\in[0,\infty);\, \|\Ih[\rhobr](t)-\rhobr[h](t)\|_{\LzGhd}>1\right\rbrace
	\overset{Assumption\,\ref{ass_InitCond}}{>}0.
\end{align*}
Then for all $\delflex>0$ there exists $N_0=N_0(p,\delflex,\text{data})$ such that for all $t\in[0,\Tbh]$ and $N>N_0$
\begin{align}\label{eq_MFLconsistency:main estimate}
	\|\Ih[\rhobr]-\rhobr[h]\|_{L^2(\Ghd)}^2(t)
	\leq Ch^{2(p+1)} N^{\delflex t}
\end{align}
In particular, $\Tbh\geq T$, since for small $\delflex>0$ and $h>0$ with Assumption \ref{ass_Scaling} it holds that
\begin{align*}
	Ch^{2(p+1)} N^{\delflex t}
	\leq 
	Ch^{2(p+1)} N^{\frac{\delflex}{\delZero}\delZero T}
	\leq
	Ch^{2(p+1)}h^{-\delflex/\delZero}
	\leq 
	1.
\end{align*}
\end{proposition}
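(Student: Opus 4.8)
The plan is to run a standard energy estimate for the error $e_h(t) := \Ih[\rhobr](t) - \rhobr[h](t)$ in the discrete $L^2_h$-norm, treating the nonlinear convolution terms perturbatively while $t \le \Tbh$ (so that $\|e_h\|_{L^2_h} \le 1$ is available to close the quadratic terms). First I would write the equation solved by $\Ih[\rhobr]$: applying $\Ih$ to \eqref{eq_CD-IMFL} and comparing with \eqref{eq_discrIntMeanFiLim}, one gets
\begin{align*}
	\partial_t \Ih[\rhobr[\alpha]]
	&= \sigma_\alpha \Delta_h \Ih[\rhobr[\alpha]]
	+ \nabla_h\cdot\Big(\Ih[\rhobr[\alpha]]\sum_{\beta}\Ih[\nabla\V[\alpha\beta]]\ast_h\Ih[\rhobr[\beta]]\Big) + R_{h,\alpha},
\end{align*}
where $R_{h,\alpha}$ collects all the \emph{consistency errors}: the finite-difference truncation error of $\Delta_h$ and $\nabla_h$ applied to the smooth function $\rhobr[\alpha]$ (order $h^{p+1}$ by Assumption \ref{ass_DiscrDefOps} and the regularity $\rhobr \in L^\infty(0,T;C^{p+3})$ from Assumption \ref{ass_RegularityMFL}), and the convolution-quadrature error $\Ih[\nabla\V\ast\rhobr] - \Ih[\nabla\V]\ast_h\Ih[\rhobr]$, which is of order $h^{p+1}$ (in fact better) by Corollary \ref{lem_ApprOrderConv}, using again the smoothness of $V$ and $\rhobr$. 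So $\|R_h\|_{L^2_h} \le C h^{p+1}$ with $C$ depending on \emph{data}.

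Subtracting the equation for $\rhobr[h]$ and testing with $e_h$ in $(\cdot,\cdot)_h$, the leading term $\sigma_\alpha(\Delta_h e_{h,\alpha}, e_{h,\alpha})_h = -\sigma_\alpha\|\nabla_{h,D} e_{h,\alpha}\|_{L^2_h}^2 \le 0$ (via the integration-by-parts rule \eqref{IntByParts}) is good and can be discarded or used to absorb gradient terms. The nonlinear difference splits, after adding and subtracting $\Ih[\rhobr[\alpha]]\Ih[\nabla\V[\alpha\beta]]\ast_h\rhobr[h,\beta]$, into one term linear in $e_h$ times the bounded quantity $\Ih[\nabla\V]\ast_h\rhobr[h,\beta]$ (bounded in $L^\infty_h$ via the discrete Young inequality, Corollary \ref{cor_DiscrYoung}, plus the mass/positivity bound on $\rhobr[h]$ from Assumption \ref{ass_InitCond} / Remark \ref{rem_rhominh_rhomaxh}), and one term of the form $\rhobr[h,\alpha]\,(\Ih[\nabla\V]\ast_h e_{h,\beta})$ — here the factor $\rhobr[h,\alpha]$ is bounded by $\rho_{max,h}$, and crucially on $[0,\Tbh]$ we have $\|e_h\|_{L^2_h}\le 1$ so the genuinely quadratic contribution $\|e_h\|_{L^2_h}^2 \cdot \|e_h\|_{L^2_h}$ is controlled by $\|e_h\|_{L^2_h}^2$. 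After a discrete integration by parts to move $\nabla_h\cdot$ onto $e_h$ and Young's inequality (absorbing gradient norms into the $\sigma_\alpha$-dissipation), one arrives at
\begin{align*}
	\ddt \|e_h\|_{L^2_h}^2 \le C_0 \|e_h\|_{L^2_h}^2 + C_1 h^{2(p+1)},
\end{align*}
with $C_0, C_1$ depending only on \emph{data} (and $\rI$ only through $\|V\|$-type quantities, i.e. via $\rI^{-\cdot} \le (\log N)^{\cdot}$ by \eqref{ScalingRegimeRadius}, which is absorbed into $N^{\delflex}$). Grönwall's inequality, together with the initial bound $\|e_h(0)\|_{L^2_h} = \|\Ih[\rhobr(0)] - \rhobz[h]\|_{L^2_h} = 0$ (since $\rhobz[h] = \Ih[\rhobr(0)]$ by Assumption \ref{ass_InitCond}), then gives $\|e_h\|_{L^2_h}^2(t) \le C_1 h^{2(p+1)} e^{C_0 t} \le C h^{2(p+1)} N^{\delflex t}$ for $N$ large enough, which is \eqref{eq_MFLconsistency:main estimate}.

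The final "in particular" claim follows by a continuity/bootstrap argument: the bound just derived holds a priori on $[0,\Tbh]$, but for small $\delflex$ and $N$ large, Assumption \ref{ass_Scaling} (specifically $N^{\delZero T}h \le 1$ from \eqref{ScalingRegimeNhReverse}, or $N^{1-\delZero}h^d \ge 1$) forces $Ch^{2(p+1)}N^{\delflex T} \le 1$, so $\|e_h\|_{L^2_h}$ can never actually reach the threshold $1$; hence $\Tbh \ge T$ and the estimate is valid on all of $[0,T]$. The main obstacle I anticipate is the careful bookkeeping of the nonlinear terms — making sure every factor of $\rI^{-1}$ coming from $\V[\alpha\beta] = \rI^{-d}V(\cdot/\rI)$ is controlled by $\log N$ via \eqref{ScalingRegimeRadius} and hence absorbed into the arbitrarily-small exponent $\delflex$, and confirming that the quadratic term really does close using only the crude a-priori bound $\|e_h\|_{L^2_h}\le 1$ rather than requiring smallness. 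The consistency estimates themselves (truncation error of the finite-difference operators, convolution quadrature) are routine given the cited regularity and Corollary \ref{lem_ApprOrderConv}.
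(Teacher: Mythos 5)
Your proposal follows essentially the same route as the paper's proof: rewrite the interpolated continuous mean-field limit in terms of the discrete operators with $O(h^{p+1})$ consistency errors (finite-difference truncation plus the convolution-quadrature error of Corollary \ref{lem_ApprOrderConv}), run a discrete energy estimate with test function $\Ih[\rhobr]-\rhobr[h]$, use the definition of $\Tbh$ to reduce the genuinely quadratic contribution to a linear-in-$\|e_h\|_{L^2_h}^2$ one, apply Gr\"onwall with zero initial error, and absorb all $\rI^{-\cdot}$ factors into $N^{\delflex t}$ via \eqref{ScalingRegimeRadius}, closing with the same bootstrap for $\Tbh\geq T$. One caveat: invoking Remark \ref{rem_rhominh_rhomaxh} (hence $\rho_{max,h}$) to bound $\rhobr[h]$ is circular, since that remark rests on Proposition \ref{prop_MFLhigherOrder}, whose base case is the present proposition; the paper avoids this by splitting the nonlinearity so that the coefficients are $\Ih[\rhobr]$ or $\Ih[\nabla\V]\ast_h\Ih[\rhobr]$, and in your splitting the needed bound follows anyway from $\|\rhobr[h]\|_{L^2_h}\leq\|\Ih[\rhobr]\|_{L^2_h}+1$ on $[0,\Tbh]$ together with the discrete Young inequality, so the flaw is cosmetic rather than structural.
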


\begin{proof}
Testing \eqref{eq_discrIntMeanFiLim}, the definition of the discrete mean field limit, with $\eta_h\in\LzGhd$ and applying integration by parts for $\nabla_h$ we obtain its adjoint/reflected version $\gradhR$
\begin{align*}
	\big(\partial_t\rhobr[h,\alpha],\eta_h\big)_h
	=
	\sigma_\alpha \big(\Delta_h\rhobr[h,\alpha], \eta_h \big)_h
	-\sum_{\beta=1}^\nS \big(\rhobr[h,\alpha](\Ih[\nabla\V[\alpha\beta]]\ast_h\rhobr[h,\beta]), \gradhR\eta_h\big)_h
\end{align*}
for all species $\alpha=1,\ldots,\nS$.
After applying $\Ih$ to \eqref{eq_CD-IMFL} and testing with $\eta_h$, for the continuous mean field limit we have
\begin{align}\label{eq_MFLconsistency:contMFLtested}
	\big(\Ih[\partial_t\rhobr[\alpha]],\eta_h\big)_h
	=
	\sigma_\alpha \big(\Ih[\Delta\rhobr[\alpha]], \eta_h \big)_h
	+\sum_{\beta=1}^\nS \big(\Ih\big[\nabla\cdot\big(\rhobr[\alpha](\nabla\V[\alpha\beta]\ast\rhobr[\beta])\big)\big], \eta_h\big)_h.
\end{align}
In order to make both evolutions comparable, we want to rephrase \eqref{eq_MFLconsistency:contMFLtested} in terms of discrete differential operators.
For the diffusion term, with Assumption \ref{ass_DiscrDefOps} -- the order of the differential operators -- and the discrete H\"older inequality we have 
\begin{align*}
	\left|\big(\Ih[\Delta\rhobr[\alpha]], \eta_h \big)_h - \big(\Delta_h\rhobr[\alpha], \eta_h \big)_h\right|
	\leq
	C\|\eta_h\|_{L^1(\Ghd)}\|\rhobr[\alpha]\|_{C^{p+3}(\domain)}h^{p+1}
\end{align*}
Using integration by parts, we rewrite (for all $l=1,\ldots,d$ and $\alpha,\beta=1,\ldots,\nS$) 
\begin{align*}
	& \left(\Ih\big[\partial_{x_\ell}\big(\rhobr[\alpha](\partial_{x_\ell}\V[\alpha\beta]\ast\rhobr[\beta])\big)\big] ,\eta_h\right)_h \\
	& \quad = 
	\left(\partial_{h,x_\ell}\Ih\big[\rhobr[\alpha](\partial_{x_\ell}\V[\alpha\beta]\ast\rhobr[\beta])\big] ,\eta_h\right)_h 
	+ \widetilde{R}_\nabla\\
	& \quad
	= 
	-\left(\Ih\big[\rhobr[\alpha](\partial_{x_\ell}\V[\alpha\beta]\ast\rhobr[\beta])\big],\dhRx[\ell]\eta_h\right)_h 
	+ \widetilde{R}_\nabla\\
	& \quad =
	-\left(\Ih[\rhobr[\alpha]]\big(\Ih[\partial_{x_\ell}\V[\alpha\beta]]\ast_h\Ih[\rhobr[\beta]]\big) ,\dhRx[\ell]\eta_h\right)_h 
	+ \widetilde{R}_\nabla + \widetilde{R}_\ast,
\end{align*}
where for $\nabla \rightarrow \nabla_h$ with Assumption \ref{ass_DiscrDefOps} and the discrete H\"older inequality we pay
\begin{align*}
	|\widetilde{R}_\nabla| 
	&\leq 
	C\|\eta_h\|_{L^1(\Ghd)}\|\rhobr[\alpha](\partial_{x_\ell}\V[\alpha\beta]\ast\rhobr[\beta])\|_{C^{p+2}(\domain)}h^{p+1}\\
	&\leq 
	C\|\eta_h\|_{L^1(\Ghd)}\|\rhobr\|_{C^{p+2}(\domain)}^2\|\nabla\V[\alpha\beta]\|_{L^1(\domain)}h^{p+1},
\end{align*}
while for $\ast\rightarrow \ast_h$ due to Lemma \ref{lem_ApprOrderConv} with the discrete H\"older inequality we have
\begin{align*}
	|\widetilde{R}_\ast|
	\leq
	C\|D^h_\ell\eta_h\|_{L^1(\Ghd)}\|\rhobr[\alpha]\|_{L^\infty}\|\nabla\V[\alpha\beta]\|_{C^{p+2}} \|\rhobr[\beta]\|_{C^{p+2}}h^{p+1}.
\end{align*}
Thus, combining these two observations we obtain 
\begin{align}\label{eq_MFLconsistency:testedDiff}
	& \big(\partial_t(\Ih[\rhobr[\alpha]]-\rhobr[h,\alpha]),\eta_h\big)_h \nonumber\\
	& \quad =
	\sigma_\alpha \big(\Delta_h(\Ih[\rhobr[\alpha]]-\rhobr[h,\alpha]), \eta_h \big)_h \nonumber\\
	& \quad \quad \quad - \sum_{\beta=1}^\nS \big((\Ih[\rhobr[\alpha]]-\rhobr[h,\alpha])\big(\Ih[\nabla\V[\alpha\beta]]\ast_h\Ih[\rhobr[\beta]]\big), \gradhR\eta_h\big)_h \nonumber
	\\
	& \quad\quad\quad
	-\sum_{\beta=1}^\nS \big(\Ih[\rhobr[\alpha]]\big(\Ih[\nabla\V[\alpha\beta]]\ast_h(\Ih[\rhobr[\beta]]-\rhobr[h,\beta])\big), \gradhR\eta_h\big)_h \nonumber
	\\
	&\quad\quad\quad
	+\sum_{\beta=1}^\nS \big((\Ih[\rhobr[\alpha]]-\rhobr[h,\alpha])\big(\Ih[\nabla\V[\alpha\beta]]\ast_h(\Ih[\rhobr[\beta]]-\rhobr[h,\beta])\big), \gradhR\eta_h\big)_h \nonumber
	\\
	& \quad\quad\quad + R_{\Delta,\nabla}\|\eta_h\|_{L^1(\Ghd)} + R_\ast\|\gradhR\eta_h\|_{L^1(\Ghd)},
\end{align}
with
\begin{align*}
	|R_{\Delta,\nabla}|
	&\leq
	C\left(\|\rhobr\|_{C^{p+3}} + \|\rhobr\|_{C^{p+2}}^2\|\nabla\V\|_{L^1}\right)h^{p+1},
	\\
	|R_\ast|
	&\leq
	C\|\rhobr\|_{L^\infty}\|\rhobr\|_{C^{p+2}}\|\nabla\V\|_{C^{p+2}} h^{p+1}.
\end{align*}
The remainder of the proof will follow the steps of a standard energy estimate. 
Thus, we will choose $\eta_h=\Ih[\rhobr[\alpha]]-\rhobr[h,\alpha]$ as a test function.
Note that with H\"older
\begin{align*}
	\|\Ih[\rhobr[\alpha]]-\rhobr[h,\alpha]\|_{L^1(\Ghd)}
	&\leq |\domain|^{1/2}\|\Ih[\rhobr[\alpha]]-\rhobr[h,\alpha]\|_{L^2(\Ghd)},\\
	\|\gradhR(\Ih[\rhobr[\alpha]]-\rhobr[h,\alpha])\|_{L^1(\Ghd)}
	&\leq |\domain|^{1/2}\|\gradhR(\Ih[\rhobr[\alpha]]-\rhobr[h,\alpha])\|_{L^2(\Ghd)},
\end{align*}
Applying \eqref{IntByParts}, comparing the first-order operators via \eqref{1stDerBound} from Assumption \ref{ass_DiscrDefOps}, and integrating by parts, we obtain
\begin{align*}
	\sigma_\alpha \big(\Delta_h(\Ih[\rhobr[\alpha]]-\rhobr[h,\alpha]), \Ih[\rhobr[\alpha]]-\rhobr[h,\alpha] \big)_h
	& = -\sigma_\alpha \|\nabla_{D,h}(\Ih[\rhobr[\alpha]]-\rhobr[h,\alpha])\|_{\LzGhd}^2\\
	& \leq -\sigma_\alpha C_{D} \|\gradhR(\Ih[\rhobr[\alpha]]-\rhobr[h,\alpha])\|_{\LzGhd}^2.
\end{align*}
Now, applying a combination of the discrete H\"older inequality, Young's discrete convolution inequality and Young's inequality for products to \eqref{eq_MFLconsistency:testedDiff}, we obtain
\begin{align*}
	& \frac{1}{2}\partial_t \|\Ih[\rhobr[\alpha]]-\rhobr[h,\alpha]\|_{L^2(\Ghd)}^2
	+\frac{C_{D}\underline{\sigma}}{5}\|\gradhR(\Ih[\rhobr[\alpha]]-\rhobr[h,\alpha])\|_{\LzGhd}^2\\
	& \quad \leq
	2\frac{5}{C_{D}\underline{\sigma}}\|\Ih[\nabla V]\|_{\LzGhd}^2 \|\Ih[\rhobr]\|_{\LzGhd}^2 \|\Ih[\rhobr]-\rhobr[h]\|_{\LzGhd}^2
	\\
	& \quad \quad + \frac{5}{C_{D}\underline{\sigma}}\|\Ih[\nabla V]\|_{L^2(\Ghd)}^2 \|\Ih[\rhobr]-\rhobr[h]\|_{\LzGhd}^4 
	\\
	& \quad \quad+\frac{5}{C_{D}\underline{\sigma}}|\domain| |R_\ast|^2
	+|\domain||R_{\Delta,\nabla}|^2 + \|\Ih[\rhobr]-\rhobr[h]\|_{\LzGhd}^2,
\end{align*}
where $\underline{\sigma}\coloneqq \min_\alpha \sigma_\alpha$.
For $t\in[0,\Tbh]$, which implies 
\begin{align*}
	\|\Ih[\rhobr]-\rhobr[h]\|_{\LzGhd}^4(t)\leq \|\Ih[\rhobr]-\rhobr[h]\|_{\LzGhd}^2(t),
\end{align*}
summing over all species results in 
\begin{align*}
	& \partial_t \|\Ih[\rhobr]-\rhobr[h]\|_{L^2(\Ghd)}^2
	+\|\gradhR(\Ih[\rhobr]-\rhobr[h])\|_{\LzGhd}^2\\
	& \quad \leq C \left(1+\big(1+\|\Ih[\rhobr]\|_\LzGhd^2\big) \|\Ih[\nabla \V]\|_\LzGhd^2 \right)\|\Ih[\rhobr]-\rhobr[h]\|_{\LzGhd}^2 \\
	& \quad \quad+ |R|^2,
\end{align*}
with 
\begin{align*}
	|R|\leq C \big(1+\|\nabla\V\|_{C^{p+2}}\|\rhobr\|_{C^{p+2}}\big)\|\rhobr\|_{C^{p+3}} h^{p+1}.
\end{align*}
Now, Assumption \ref{ass_InitCond} yields
\begin{align*}
	\|\Ih[\rhobz]-\rhobr[h](0)\|_{L^2(\Ghd)} \leq C h^{p+1}.
\end{align*}
Further using 
$\|\Ih[f]\|_\LzGhd\leq C\|f\|_{C(\domain)}$ and that
$\|\nabla\V\|_{C}\leq C \rI^{-(d+1)}$ as well as
$\|\nabla\V\|_{C^{p+2}}\leq C \rI^{-(d+p+3)}$,
the Gronwall lemma then yields 
\begin{align}\label{eq_MFLconsistency:estimate with rI}
	\|\Ih[\rhobr]-\rhobr[h]\|_{L^2(\Ghd)}^2(t)
	& \leq \left(C h^{2(p+1)} + tC\big(\|\rhobr\|_{L^\infty(C^{p+3})}\big)h^{2(p+1)} \rI^{-2(d+p+3)}\right)\nonumber \\
	& \quad \quad \times\exp\left(t C(\rho_{\max}) \rI^{-2(d+1)}\right).
\end{align}
In order to simplify the structure, we estimate
\begin{align*}
	tC(\|\rhobr\|_{L^\infty(C^{p+3})}) \rI^{-2(d+p+3)}
	\leq
	\exp\left(t C\big(\|\rhobr\|_{L^\infty(C^{p+3})},p\big) \rI^{-2(d+1)}\right).
\end{align*}
With the scaling from Assumption \ref{ass_Scaling} we have, for any chosen $\delflex>0$
\begin{align*}
	\rI^{-2(d+1)}\leq \log(N)^{\frac{2d+2}{2d+4}} 
	\leq 
	\delflex \left(C\big(\|\rhobr\|_{L^\infty(C^{p+3})},p\big)+C(\rho_{\max})\right)^{-1}\log(N)
\end{align*}
for $N$ large enough.
Combining these observations with \eqref{eq_MFLconsistency:estimate with rI} yields \eqref{eq_MFLconsistency:main estimate}.
\end{proof}

\subsubsection{Higher Order bounds for the mean field limit difference}
We will obtain higher order bounds for the mean field limit difference via induction. 
Opposed to Proposition \ref{prop_MFLconsistency} in this proof we do not need additional smallness assumptions to deal with the quadratic non-linearity because -- when taking derivatives -- at most one factor is of the highest order.
The other one is controlled by the induction assumption.

The first order one-sided finite differences given by
\begin{equation}\label{eq_MFLhO_defPartialh1}
	\dOne{e_\ell}f(x) = \frac{f(x+he_\ell)-f(x)}{h}
\end{equation}
for $\ell=1,\ldots,d$, satisfy the product rule
\begin{equation}\label{eq_MFLhO_productRule1}
	\dOne{e_\ell}(fg)=(\tau_{he_\ell}f) (\dOne{e_\ell}g) + (\dOne{e_\ell}f) g
\end{equation}
for any $f,g\in\LzGhd$ with the shift operators $\tau_{h\nu}$, $\nu\in\Z^d$, given by $\tau_{h\nu}f(x)=f(x+h\nu)$.

\begin{proposition}[Higher order bound for the discrete mean field limit]\label{prop_MFLhigherOrder}
Let $s\in\N$.
In the setting of Proposition \ref{prop_MFLconsistency} with the additional assumption $\rhobr\in L^\infty(0,T;C^{s+p+3}(\domain))$,
for every $\delflex>0$ there exists $N_0=N_0(\delflex,s,\text{data})$ such that for all $N>N_0$
\begin{equation}\label{eq_MFLhO_estimateO1}
	\|\Ih[\rhobr]-\rhobr[h]\|_{\HGhd[s]}^2(t) 
	\leq 
	C h^{2(p+1)}N^{\delflex t},\qquad \forall t\in[0,T].
\end{equation}
\end{proposition}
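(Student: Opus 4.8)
The plan is to argue by induction on $s$. The base case $s=0$ is precisely Proposition \ref{prop_MFLconsistency} (which, recall, also furnishes $\Tbh\geq T$, so the $L^2_h$-estimate holds on all of $[0,T]$). For the inductive step, fix $s\geq 1$, assume \eqref{eq_MFLhO_estimateO1} with $s$ replaced by $s-1$, and abbreviate $e_h:=\Ih[\rhobr]-\rhobr[h]$. By Definition \ref{def_Hsh} it suffices to bound $\|\dOne{\nu}e_h\|_{\LzGhd}(t)$ for every multi-index $\nu$ with $|\nu|=s$; the orders $|\nu|\leq s-1$ are covered by the induction hypothesis. I would obtain this bound via an energy estimate for the equation satisfied by $\dOne{\nu}e_h$.

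To set this up, I would start from the tested difference identity \eqref{eq_MFLconsistency:testedDiff} for $e_h$, apply $\dOne{\nu}$ — which, being of finite-difference type, commutes with $\Delta_h$, $\nabla_h$, $\gradhR$, $\ast_h$ and with the shift operators $\tau_{h\mu}$ by Assumption \ref{ass_DiscrDefOps} — and expand the quadratic convolution contributions by iterating the discrete Leibniz rule \eqref{eq_MFLhO_productRule1}. The crucial structural point, which makes the higher-order step \emph{easier} than the base case, is that every term produced this way carries the top-order operator $\dOne{\nu}$ on exactly one factor, with all remaining (possibly $\tau_{h\mu}$-shifted) factors differentiated at most $s-1$ times. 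A lower-order factor falling on $e_h$ is controlled in $\LzGhd$ (hence, after convolution against $\Ih[\nabla\V[\alpha\beta]]$, also in $L^\infty_h$ via Corollary \ref{cor_DiscrYoung}) by the induction hypothesis and the base case; a factor falling on $\Ih[\rhobr]$ or $\Ih[\nabla\V[\alpha\beta]]$ is controlled by $\rhobr\in L^\infty(0,T;C^{s+p+3})$ and $V\in[W^{\sI,1}(\domain)]^{\nS\times\nS}$, at the cost of powers of $\rI^{-1}$. Consequently no smallness hypothesis on $\|e_h\|$ is needed: the quadratic term is effectively \emph{linear} in the top-order quantity $\|\dOne{\nu}e_h\|_{\LzGhd}^2$, since its companion factor is always of lower order and thus already of size $O(h^{p+1}N^{\kappa t})$.

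The energy estimate then follows the pattern of the proof of Proposition \ref{prop_MFLconsistency}: testing the $\dOne{\nu}$-differentiated equation with $\dOne{\nu}e_h$, the diffusion term yields a coercive contribution $-c\,\|\gradhR\dOne{\nu}e_h\|_{\LzGhd}^2$ through \eqref{IntByParts} and \eqref{1stDerBound}, while the transport and convolution terms are estimated by the discrete Hölder and Young inequalities (Corollaries \ref{cor_DiscrHoelder}--\ref{cor_DiscrYoung}). The discrete-versus-continuous consistency errors in the Laplacian, the gradient, and the convolution are of order $h^{p+1}$ by Assumption \ref{ass_DiscrDefOps} and Corollary \ref{lem_ApprOrderConv}, with constants governed by $\|\rhobr\|_{L^\infty(0,T;C^{s+p+3})}$ and Sobolev norms of $V$. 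Absorbing the top-order gradient terms by Young's inequality, using \eqref{InitDataInterpInequality} from Remark \ref{rem_AlernativeInitCond} (or the vanishing of the initial error under Assumption \ref{ass_InitCond}) to start the Gronwall argument, and applying Gronwall's lemma yields $\|\dOne{\nu}e_h\|_{\LzGhd}^2(t)\leq C h^{2(p+1)}\exp\big(C(\text{data})\,\rI^{-c}\,t\big)$ with $c=c(d,s,p)$; as in Proposition \ref{prop_MFLconsistency}, the scaling \eqref{ScalingRegimeRadius} gives $\rI^{-c}\leq\kappa\log N$ for $N\geq N_0(\kappa,s,\text{data})$, which produces \eqref{eq_MFLhO_estimateO1}. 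I expect the main obstacle to be the combinatorial bookkeeping of the Leibniz expansion of the discrete convolutional nonlinearity under iterated one-sided finite differences, and the careful tracking of consistency errors and $\rI$-dependences needed to verify that every top-order contribution is genuinely absorbable and that the Gronwall constant has exactly the structure above.
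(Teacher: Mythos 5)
Your proposal follows essentially the same route as the paper's proof: induction on $s$ with Proposition \ref{prop_MFLconsistency} as base case, applying $\dOne{\nu}$ with the discrete Leibniz rule \eqref{eq_MFLhO_productRule1}, exploiting exactly the paper's key observation that in the nonlinear terms at least one factor carries only derivatives of order $\leq s-1$ (so the induction hypothesis replaces any smallness/stopping-time argument), and closing via the same energy estimate, consistency errors of size $h^{p+1}$, vanishing initial error from $\rhobz[h]=\Ih[\rhobz]$, Gronwall, and the $\rI$-scaling \eqref{ScalingRegimeRadius} to absorb $\exp(Ct\,\rI^{-c})$ into $N^{\delflex t}$. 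One cosmetic remark: in the convolution terms the finite differences can simply be moved onto the density factor, so no derivatives need ever land on $\Ih[\nabla\V[\alpha\beta]]$ and the available $C^{p+3}$-regularity of $V$ from Assumption \ref{ass_CDsystem+MFL} suffices, as in the paper.
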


\begin{proof}
We use induction over $s\in\Nz$. 
The base case is settled by Proposition \ref{prop_MFLconsistency}.
For the induction step assume that \eqref{eq_MFLhO_estimateO1} holds for $s-1$, $s\geq 1$, and large enough $N$.
Let $\nu\in\Nz^d$ with $|\nu|=s$.
With \eqref{eq_MFLhO_productRule1}, similarly to the proof of Proposition \ref{prop_MFLconsistency}, for $\eta_h\in\LzGhd$, we get 
\begin{align}\label{eq_MFLhO_rhoh-dynamics}
\big(\partial_t(\dOne{\nu}\rhobr[h,\alpha]),\eta_h\big)_h
	& =
	-\sigma_\alpha \big(\nabla_{D,h}\dOne{\nu}\rhobr[h,\alpha], \nabla_{D,h}\eta_h \big)_h \nonumber\\
	& \quad -\sum_{\gamma\leq\nu}\sum_{\beta=1}^\nS b_{\nu,\gamma}\big(\tau_{h\gamma}\dOne{\nu-\gamma}\rhobr[h,\alpha](\Ih[\nabla\V[\alpha\beta]]\ast_h\dOne{\gamma}\rhobr[h,\beta]), \gradhR\eta_h\big)_h,
\end{align}
where $\gradhR$ is the adjoint/reflected version of $\nabla_h$ while $(b_{\nu,\gamma})_\gamma$ are the typical product rule prefactors (hence binomial coefficients), and
\begin{multline}\label{eq_MFLhO_R-dynamics}
	\big(\partial_t\dOne{\nu}\Ih[\rhobr[\alpha]],\eta_h\big)_h
	=
	\shoveright{
	-\sigma_\alpha \big(\nabla_{D,h}\dOne{\nu}\Ih[\rhobr[\alpha]], \nabla_{D,h}\eta_h \big)_h\\
	-\sum_{\gamma\leq\nu}\sum_{\beta=1}^\nS b_{\nu,\gamma} \big(\tau_{h\gamma}\dOne{\nu-\gamma}\Ih[\rhobr[\alpha]](\Ih[\nabla\V[\alpha\beta]]\ast_h\dOne{\gamma}\Ih[\rhobr[\beta]]), \gradhR\eta_h\big)_h
	}\\
	+ R_{\Delta,\nabla}\|\eta_h\|_\LzGhd 
	+ R_*\|\gradhR\eta_h\|_\LzGhd 
\end{multline}
for all species $\alpha=1,\ldots,\nS$ with consistency errors
\begin{align}\label{eq_MFLhO_R-dynamicsEst}
	|R_{\Delta,\nabla}|,|R_*|\leq C\left(1+\|\nabla\V\|_{C^{p+2}}\|\rhobr\|_{C^{s+p+2}}\right)\|\rhobr\|_{C^{s+p+3}}h^{p+1}.
\end{align}
To obtain this bound for $R_{\Delta,\nabla}$ we use that
\begin{equation*}
	\|\dOne{\nu}(\Delta_h-\Delta)\rhobr[\alpha]\|_{L^\infty}
	\leq 
	Ch^{p+1}\|\dOne{\nu}\rhobr[\alpha]\|_{C^{p+3}}
	\leq 
	Ch^{p+1}\|\rhobr[\alpha]\|_{C^{p+s+3}}
\end{equation*}
and the analogous estimate for $\nabla_h$, which hold since $\dOne{\nu}$ is an order $|\nu|$ finite difference operator and with the mean value theorem.
To bound $R_*$ note that with Lemma \ref{lem_ApprOrderConv}
\begin{align*}
	& \|\dOne{\nu}(\rhobr[\alpha](\nabla\V\ast\rhobr[\beta]))-\dOne{\nu}(\rhobr[\alpha](\Ih[\nabla\V[\alpha\beta]]\ast_h\rhobr[\beta]))\|_{L^\infty}\\
	& \quad =
	\Big\|\sum_{\gamma\leq\nu} b_{\nu,\gamma} 
		\tau_{h\gamma}\dOne{\nu-\gamma}\rhobr[\alpha](\nabla\V[\alpha\beta]\ast\dOne{\gamma}\rhobr[\beta]
				-\Ih[\nabla\V[\alpha\beta]]\ast_h\dOne{\gamma}\rhobr[\beta])\Big\|_{L^\infty}\\
	& \quad \leq
	Ch^{p+1}\sum_{\gamma\leq\nu}\|\dOne{\nu-\gamma}\rhobr\|_{L^\infty}\|\nabla\V\|_{C^{p+2}}\|\dOne{\gamma}\rhobr\|_{C^{p+2}}.
\end{align*}

Taking the difference of \eqref{eq_MFLhO_rhoh-dynamics} and \eqref{eq_MFLhO_R-dynamics}, we obtain a structure corresponding to \eqref{eq_MFLconsistency:testedDiff}.
Further proceeding as in the previous proof, we choose $\eta_h =\dOne{\nu}\Ih[\rhobr[\alpha]]-\dOne{\nu}\rhobr[h,\alpha]$, apply a combination of Young's discrete convolution inequality, the discrete H\"older inequality, as well as Young's inequality for products, and then absorb the $\|\gradhR(\Ih[\partial^\nu\rhobr]-\dOne{\nu}\rhobr[h])\|_{L^2}$-terms.
Note that in the interaction terms always at least one of the differences only has derivatives of order at most $s-1$.
Evoking the induction assumption, we thus obtain
\begin{align*}
	&\partial_t\|\dOne{\nu}\Ih[\rhobr[\alpha]]-\dOne{\nu}\rhobr[h,\alpha]\|_{L^2} 
	\\
	& \quad \leq 
	C\|\Ih[\nabla\V]\|_{L^2_h}^2 
	\Big(\sup_{|\gamma|\leq s}\|\dOne{\gamma}\Ih[\rhobr]\|_{L^2}^2 + h^{2(p+1)}N^{\delflex t} \Big)\\
	& \qquad\qquad
	\times
	\Big(\|\dOne{\nu}\Ih[\rhobr]-\dOne{\nu}\rhobr[h])\|_{L^2}^2 + h^{2(p+1)}N^{\delflex t}\Big)\\
	& \quad\quad
	+ \|\dOne{\nu}\Ih[\rhobr]-\dOne{\nu}\rhobr[h]\|_{L^2}^2 + |R_{\Delta,\nabla}|^2+C|R_*|^2.
\end{align*}
Assumption \ref{ass_Scaling} implies $h^{2(p+1)}N^{\delflex T}\leq 1$ for $\delflex\leq\delZero$, while as for the $R_{\Delta,\nabla}$-bound
\begin{equation*}
	\sup_{|\gamma|\leq s}\|\dOne{\gamma}\Ih[\rhobr]\|_{L^2}^2
	\leq 
	C\|\rhobr\|_{L^\infty(0,T;C^s(\domain))}^2.
\end{equation*}
Thus, after summing over all species and replacing the $\V$-norms with the respective $\rI$-scaling, since $\dOne{\nu}(\Ih[\rhobr]-\rhobr[h])(0)=0$ via the Gronwall inequality we obtain
\begin{align*}
	\|\Ih[\partial^\nu\rhobr]-\dOne{\nu}\rhobr[h]\|_\LzGhd^2(t) & \leq
	Ct \big(1+\|\rhobr\|_{L^\infty(C^{s+p+3})}^4\big) h^{2(p+1)}N^{\delflex t}\rI^{-2(d+p+2)}\\
	& \quad \quad \quad \quad \times\exp\left(Ct (1+\|\rhobr\|_{L^\infty(C^{s})}^2)\rI^{-2(d+1)}\right).
\end{align*}
Now we plug in the scaling of $\rI$ and simplify as we did at the end of the proof for Proposition \ref{prop_MFLconsistency}.
Looping over all $|\nu|=s$ leads to \eqref{eq_MFLhO_estimateO1} for $N$ large enough (and a slightly larger $\delflex$ than chosen for the induction assumption), thus finishing the proof by induction.
\end{proof}

\subsection{Regularity of discrete test functions}\label{SubsecRegDiscreteTest}

 \begin{lemma}[Bounds on Sobolev norms of discrete test functions]\label{HsBoundDiscrete}
 Let $\phi_h^t$ be the solution to \eqref{eq_discrBackEvoTest}, and let the assumptions of Proposition \ref{prop_MFLhigherOrder} be satisfied.
 Let $\delflex>0$.
Then, for every $s\in\mathbb{N}_0$, for every $t\in[0,T]$, and for $N$ large enough, we have the bound 
\begin{align}\label{eq_DiscreteTestFuncBound}
\|\phi_h^t\|_{H^{s}_h}^2+\int_t^T \|\nabla_h \phi^{\tau}_h\|_{H^{s}_h}^2\m\tau  \leq N^{\delflex (T-t)}\|\Ih\vphi\|_{H^{s}_h}^2 \leq N^{\delflex (T-t)}\|\vphi\|_{C^s}^2.
\end{align}
 \end{lemma}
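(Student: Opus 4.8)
The plan is to prove the energy estimate \eqref{eq_DiscreteTestFuncBound} by induction on $s$, mimicking the proof of Lemma \ref{lem_RegularityContinuousTestFunctions} but working with the discrete operators and discrete one-sided finite differences $\partial_{h,1}^\nu$ introduced in Definition \ref{def_Hsh}. The base case $s=0$ is the heart of the argument; the inductive step will then reduce to it with minor extra bookkeeping.

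For the base case, I would first reverse time, $t \leftrightarrow T-t$, so that \eqref{eq_discrBackEvoTest} becomes a forward parabolic equation, and then test it with $\phi_{h,\alpha}^{T-t}$ in the $L^2(\Ghd)$-inner product. Using the integration-by-parts rule \eqref{IntByParts} the diffusion term produces $-\sigma_\alpha\|\nabla_{h,D}\phi_{h,\alpha}^{T-t}\|_{L^2_h}^2$, which by \eqref{1stDerBound} from Assumption \ref{ass_DiscrDefOps} dominates (up to $C_D$) the term $\|\gradhR \phi_{h,\alpha}^{T-t}\|_{L^2_h}^2$ that arises when integrating the transport and convolution terms by parts. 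The drift term $\U[h,\alpha]\cdot\nabla_h\phi_{h,\alpha}$ and the convolutional term $\sum_\beta \Ih[\nabla\V[\beta\alpha]]\ast_{h,c}(\rhobr[h,\beta]\nabla_h\phi_{h,\beta})$ are controlled by the discrete Hölder inequality (Corollary \ref{cor_DiscrHoelder}) and discrete Young's convolution inequality (Corollary \ref{cor_DiscrYoung}), bounding $\|\U[h,\alpha]\|_{L^\infty_h}$ and $\|\Ih[\nabla\V[\beta\alpha]]\|_{L^1_h}$ in terms of $\|\nabla\V\|_{C^0}\lesssim r_I^{-(d+1)}$ and $\|\rhobr[h]\|_{L^2_h}$, the latter being uniformly bounded via Remark \ref{rem_rhominh_rhomaxh} and Proposition \ref{prop_MFLhigherOrder}. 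After summing over $\alpha$, using Young's inequality for products to absorb all $\|\nabla_{h,D}\phi_h\|_{L^2_h}^2$ contributions into the good diffusion term, one arrives at
\begin{align*}
\partial_t \|\phi_h^{T-t}\|_{L^2_h}^2 + c\,\|\nabla_{h,D}\phi_h^{T-t}\|_{L^2_h}^2 \leq K \|\phi_h^{T-t}\|_{L^2_h}^2
\end{align*}
with $K \leq C(1+r_I^{-2(d+1)}\|\rhobr[h]\|_{L^\infty(L^2_h)}^2)$. Invoking the scaling relation \eqref{ScalingRegimeRadius}, $r_I^{-2(d+1)}\leq r_I^{-2(d+2)}\leq\log N$, so $K\leq C\log N \leq \delflex\log N$ for $N$ large enough, and Grönwall's inequality yields $\|\phi_h^{T-t}\|_{L^2_h}^2 \leq e^{\delflex t}\|\Ih\vphi\|_{L^2_h}^2 = N^{\delflex t/\log N\cdot\log N}$—more precisely $\leq N^{\delflex t}\|\Ih\vphi\|_{L^2_h}^2$; integrating the differential inequality also gives the $\int_t^T\|\nabla_h\phi_h^\tau\|_{L^2_h}^2\m\tau$ term (using $\|\nabla_h\cdot\|_{L^2_h}\lesssim\|\nabla_{h,D}\cdot\|_{L^2_h}$, or working directly with $\nabla_{h,D}$ and noting these norms are comparable up to constants). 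The final inequality $\|\Ih\vphi\|_{L^2_h}^2\leq C\|\vphi\|_{C^0}^2$ is immediate since $\|\Ih f\|_{L^2_h}^2 = \sum_x h^d|f(x)|^2 \leq |\domain|\,\|f\|_{C^0}^2$.

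For the inductive step, assuming \eqref{eq_DiscreteTestFuncBound} for $s-1$, I would apply the finite-difference operator $\partial_{h,1}^\nu$ with $|\nu|=s$ to \eqref{eq_discrBackEvoTest}. Since all the discrete differential operators commute (Assumption \ref{ass_DiscrDefOps}), $\partial_{h,1}^\nu$ commutes with $\Delta_h$ and $\nabla_h$; the only genuinely new terms come from the discrete product rule \eqref{eq_MFLhO_productRule1} applied to $\U[h,\alpha]\cdot\nabla_h\phi_{h,\alpha}$ and to the convolutional term, which distribute the $\nu$-derivatives over the (smooth, hence $r_I$-controlled) coefficients $\U[h,\alpha]$, $\rhobr[h,\beta]$ and the test function. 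Testing with $\partial_{h,1}^\nu\phi_{h,\alpha}^{T-t}$ and using that lower-order derivatives of $\phi_h$ are already controlled by the induction hypothesis, all terms except the top-order one $\partial_{h,1}^\nu\phi_h$ acting on the coefficient can be absorbed, and one closes the estimate with Grönwall exactly as in the base case, picking up coefficient norms $\|\U[h]\|_{H^{s}_h}$, $\|\rhobr[h]\|_{H^{s}_h}$ which are bounded via $\|\Ih[\nabla\V]\|_{H^s_h}\lesssim\|\nabla\V\|_{C^{s}}\lesssim r_I^{-(d+s+1)}$ together with Proposition \ref{prop_MFLhigherOrder}; again $r_I^{-2(d+s+1)}\leq(\log N)^{(d+s+1)/(d+2)}\leq\delflex\log N$ for $N$ large, so the constant stays subpolynomial. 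The main obstacle is purely bookkeeping: carefully isolating the single top-order term in every product-rule expansion and verifying that all coefficient norms collapse into the $\delflex\log N$ budget via the scaling assumption; there is no conceptual difficulty, just the need to track the combinatorial prefactors $b_{\nu,\gamma}$ and the shift operators $\tau_{h\gamma}$ (which are $L^2_h$-isometries on the torus grid, hence harmless) exactly as in the proof of Proposition \ref{prop_MFLhigherOrder}.
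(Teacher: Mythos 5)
Your overall strategy -- a discrete energy estimate by induction on $s$, adapting the proof of Lemma \ref{lem_RegularityContinuousTestFunctions} and invoking Proposition \ref{prop_MFLhigherOrder} to control $\rhobr[h]$ -- is exactly the route the paper intends (its proof is omitted with precisely this description), and your base case $s=0$ is essentially fine. There is, however, a genuine quantitative slip in your inductive step: you let the finite differences fall on the coefficients and bound $\|\U[h]\|_{H^s_h}$ via $\|\Ih[\nabla\V]\|_{H^s_h}\lesssim\|\nabla\V\|_{C^s}\lesssim \rI^{-(d+s+1)}$, and then assert $\rI^{-2(d+s+1)}\leq(\log N)^{(d+s+1)/(d+2)}\leq\delflex\log N$. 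Under the scaling \eqref{ScalingRegimeRadius} the exponent $(d+s+1)/(d+2)$ exceeds $1$ as soon as $s\geq 2$, so this quantity is \emph{not} $o(\log N)$, and the resulting Gr\"onwall factor $\exp\big(C(\log N)^{(d+s+1)/(d+2)}(T-t)\big)$ is not dominated by $N^{\delflex(T-t)}$. In short, if any $C^s$-norm of the rescaled kernel enters the Gr\"onwall exponent, the claimed bound fails for $s\geq 2$.

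The fix is the same bookkeeping the continuous proof uses: never let derivatives hit the kernel. Since $\dOne{\nu}$ is built from grid shifts, it commutes with the discrete convolution, so $\dOne{\nu}\big(\Ih[\nabla\V[\beta\alpha]]\ast_{h,c}(\rhobr[h,\beta]\nabla_h\phith[\beta])\big)=\Ih[\nabla\V[\beta\alpha]]\ast_{h,c}\dOne{\nu}\big(\rhobr[h,\beta]\nabla_h\phith[\beta]\big)$ and $\dOne{\nu}\U[h,\alpha]=\sum_\beta\Ih[\nabla\V[\alpha\beta]]\ast_h\dOne{\nu}\rhobr[h,\beta]$; after the discrete product rule \eqref{eq_MFLhO_productRule1} the derivatives land only on $\rhobr[h]$ (bounded in $H^s_h$ uniformly in $\rI$ by Proposition \ref{prop_MFLhigherOrder} together with $\rhobr\in L^\infty(0,T;C^{s+p+3})$) and on $\phi_h$ at order at most $s$, the top-order piece being absorbed into the diffusion term as in your base case. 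The only kernel norms then entering the Gr\"onwall constant are $\|\Ih[\nabla\V]\|_{L^1_h}$ or $\|\Ih[\nabla\V]\|_{L^2_h}\lesssim\rI^{-(d+1)}$, and $\rI^{-2(d+1)}\leq(\log N)^{(d+1)/(d+2)}=o(\log N)$, so the exponent fits the $\delflex\log N$ budget and the induction closes. (For the same reason, in the base case the absorption should be phrased as $K=o(\log N)\leq\delflex\log N$ for $N$ large, rather than $K\leq C\log N\leq\delflex\log N$, which would require $C\leq\delflex$.) With this correction your argument coincides with the intended proof.
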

 We omit the proof, a straightforward adaption of Lemma \ref{ContinuousTestFuncBound} combined with \eqref{eq_MFLhO_estimateO1}.

\begin{lemma}[High-order bounds on Sobolev norms for difference of continuous and discrete test functions]\label{lemma_H1ErrorTestFunctions} 
Let $\phi_{\alpha}^t$ be the solution to \eqref{eq_backEvoTest} with final datum $\phi^T=\varphi$. 
Let $\phi^t_{h,\alpha}$ be the solution to \eqref{eq_discrBackEvoTest}. 
Furthermore, let all the assumptions in Proposition \ref{prop_MFLconsistency} be satisfied. 
Let $\delflex>0$.
Then the following estimates hold for large enough $N$.
\begin{itemize}
\item {\bfseries Part I: bound for $L^2$-difference (analogue of Lemma \ref{prop_MFLconsistency})}.
If the regularity requirements
\begin{align}\label{RegularityConvolutionL2}
V_{\beta\alpha} \in C^{p+3}(\mathbb{T}^d), \qquad \phi_\alpha\in C^{p+3}(\domain), \qquad \rhobr\in C^{p+3}
\end{align}
are satisfied, we have the bound 
\begin{align}\label{L2ErrorTestFunctions}
\|\phi^t_{h,\alpha} - \Ih\phi_{\alpha}^t\|_h^2 & \leq Ch^{2(p+1)}N^{\delflex (T-t)}\|\varphi\|_{C^{p+3}}.
\end{align}

\item {\bfseries Part II: bound for discrete Sobolev norm of difference (analogue of Proposition \ref{prop_MFLhigherOrder})}. 
If the following stricter regularity requirement
\begin{align}\label{RegularityConvolutionH1}
V_{\beta\alpha} \in C^{s+(p+3)}(\mathbb{T}^d), \qquad \phi_\alpha\in C^{s+(p+3)}(\domain), \qquad \rhobr\in C^{s+(p+3)}
\end{align}
is satisfied, we have the bound 
\begin{align}\label{OpErrorTestFunctions}
\|\phi^t_{h,\alpha} - \Ih\phi_{\alpha}^t\|_{\HGhd[s]}^2(t) & \leq Ch^{2(p+1)}N^{\delflex (T-t)}\|\varphi\|_{C^{s+p+3}}.
\end{align}

\item {\bfseries Part III: bound for gradient difference}. The assumptions of Part II) being satisfied for $s=1$, we have the bound
\begin{align}\label{Diff_Gradients}
\| \nabla_h\phi^t_{h,\alpha} - \Ih \nabla\phi^t_{\alpha}\|_h  & \leq Ch^{2(p+1)}N^{\delflex (T-t)}\|\varphi\|_{C^{p+4}}.
\end{align}

\item {\bfseries Part IV: bound for difference of product of gradients}.
Assuming the validity of the hypotheses in Parts I), II), III), we obtain, as a special case, that
\begin{align}\label{H1ErrorTestFunctionsProduct}
& \|\nabla_h\phi^t_{h,\alpha_1}\cdot \nabla_h\phi^t_{h,\alpha_2} - \Ih[\nabla\phi_{\alpha_1}^t \cdot \nabla\phi^t_{\alpha_2}]\|_h^2 \leq Ch^{2(p+1)}N^{\delflex(T-t)}\|\varphi\|^2_{C^{\lceil d/2 \rceil + p + 5}}.
\end{align}
\end{itemize}
\end{lemma}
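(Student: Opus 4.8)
\textbf{Proof plan for Lemma \ref{lemma_H1ErrorTestFunctions}.}

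The plan is to prove Parts I--IV in order, each feeding into the next, since Part IV is the quantitative combination of the earlier estimates. For Part I, I would follow verbatim the energy-estimate strategy of Proposition \ref{prop_MFLconsistency}: write the discrete backwards equation \eqref{eq_discrBackEvoTest} tested against $\eta_h$, apply $\Ih$ to the continuous equation \eqref{eq_backEvoTest} and test against $\eta_h$, then subtract. The discrete and continuous differential operators are compared at the cost of an $O(h^{p+1})$ consistency error (Assumption \ref{ass_DiscrDefOps}), the convolutions $\nabla V \ast \rho$ are replaced by $\Ih[\nabla V]\ast_h\Ih[\rho]$ at cost $O(h^{p+1})$ via Corollary \ref{lem_ApprOrderConv} (and Proposition \ref{prop_MFLconsistency} to pass from $\rhobr$ to $\rhobr[h]$), and the product $\rhobr\,\nabla\phi$ is linearised. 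Testing with $\eta_h=\phi^t_{h,\alpha}-\Ih\phi^t_\alpha$, integrating by parts via \eqref{IntByParts}, comparing first-order operators via \eqref{1stDerBound}, absorbing the $\|\gradhR(\cdots)\|_{L^2_h}^2$ terms, and applying Gronwall (backwards in time, with zero ``initial'' error at $t=T$ since $\phi^T_{h,\alpha}=\Ih[\vphi_\alpha]=\Ih[\phi^T_\alpha]$) yields \eqref{L2ErrorTestFunctions}; the $\rI$-factors from $V$ are absorbed into the $N^{\delflex(T-t)}$ factor exactly as in Proposition \ref{prop_MFLconsistency}, using \eqref{ScalingRegimeRadius}.

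For Part II I would run the induction over $s$ of Proposition \ref{prop_MFLhigherOrder}: apply the one-sided difference $\dOne{\nu}$ with $|\nu|=s$ to both (discrete) evolution equations, use the product rule \eqref{eq_MFLhO_productRule1} to distribute derivatives over the interaction terms, observe that in each resulting product at least one factor carries at most $s-1$ derivatives so that the induction hypothesis (plus Lemma \ref{HsBoundDiscrete} for the bound on $\|\phi^t_h\|_{H^s_h}$, and Proposition \ref{prop_MFLhigherOrder} for $\rhobr[h]$) controls it, while the top-order factor is handled by the energy term. The consistency errors for $\Delta_h-\Delta$, $\nabla_h-\nabla$, and $\ast_h-\ast$ applied after $\dOne{\nu}$ are $O(h^{p+1})$ provided $\phi_\alpha, V, \rhobr \in C^{s+p+3}$, which is exactly \eqref{RegularityConvolutionH1}; the higher regularity of $\phi_\alpha$ itself follows from Lemma \ref{lem_RegularityContinuousTestFunctions}. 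Gronwall then gives \eqref{OpErrorTestFunctions}. Part III is the special case $s=1$ of Part II together with $\|\nabla_h g - \Ih[\nabla g]\|_{L^2_h}\leq \|\nabla_h(g-\Ih\phi)\|_{L^2_h} + \|\nabla_h\Ih\phi - \Ih\nabla\phi\|_{L^2_h}$, where the first term is bounded by $\|g-\Ih\phi\|_{H^1_h}$ from \eqref{OpErrorTestFunctions} and the second is a pure interpolation-consistency term of order $h^{p+1}$ controlled by $\|\phi\|_{C^{p+4}}$ (and $\|\phi^t\|_{C^{p+4}}\lesssim \|\phi^t\|_{H^{p+4+\lceil d/2\rceil+\cdots}}\lesssim \|\vphi\|_{C^{\cdots}}$ via discrete Sobolev embedding, Remark \ref{rem_rhominh_rhomaxh}, and Lemma \ref{lem_RegularityContinuousTestFunctions}).

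For Part IV I would write, for the product of two gradients,
\begin{align*}
\nabla_h\phi^t_{h,\alpha_1}\cdot\nabla_h\phi^t_{h,\alpha_2} - \Ih[\nabla\phi^t_{\alpha_1}\cdot\nabla\phi^t_{\alpha_2}]
&= \big(\nabla_h\phi^t_{h,\alpha_1}-\Ih[\nabla\phi^t_{\alpha_1}]\big)\cdot\nabla_h\phi^t_{h,\alpha_2}\\
&\quad + \Ih[\nabla\phi^t_{\alpha_1}]\cdot\big(\nabla_h\phi^t_{h,\alpha_2}-\Ih[\nabla\phi^t_{\alpha_2}]\big)\\
&\quad + \big(\Ih[\nabla\phi^t_{\alpha_1}]\cdot\Ih[\nabla\phi^t_{\alpha_2}] - \Ih[\nabla\phi^t_{\alpha_1}\cdot\nabla\phi^t_{\alpha_2}]\big),
\end{align*}
estimate the first two terms in $L^2_h$ by Part III times an $L^\infty_h$-bound on the other gradient (from discrete Sobolev embedding $\|\cdot\|_{L^\infty_h}\lesssim\|\cdot\|_{H^{\lceil d/2\rceil+1}_h}$ applied to $\nabla_h\phi^t_h$, hence needing $\phi^t_h$ bounded in $H^{\lceil d/2\rceil+2}_h$, controlled by Lemma \ref{HsBoundDiscrete}), and the third, purely interpolatory, term by a Leibniz/Euler--Maclaurin argument (Lemma \ref{lem_EulerMaclaurin}, pointwise version) of order $h^{p+1}$ in terms of $\|\phi^t\|_{C^{p+2+\cdots}}^2$. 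Counting derivatives: Part III costs $C^{p+4}$ on $\phi^t$ and the $L^\infty_h$ factor costs roughly $\lceil d/2\rceil+2$ more, while tracking through the regularity requirements \eqref{RegularityConvolutionH1} for $s=\lceil d/2\rceil+2$ on $\phi, V, \rhobr$ leads to the stated $\|\vphi\|_{C^{\lceil d/2\rceil+p+5}}$; finally converting $\|\phi^t\|_{C^{\cdots}}$ to $\|\vphi\|_{C^{\cdots}}$ uses Lemma \ref{lem_RegularityContinuousTestFunctions} (with the $N^{\delflex(T-t)}$ factor absorbing the exponential Gronwall growth). The main obstacle I anticipate is bookkeeping the interplay of three separate error mechanisms --- the finite-difference consistency of $\Delta_h,\nabla_h$, the convolution-discretisation error $\ast_h$ vs.\ $\ast$, and the pure interpolation error of $\Ih$ on products --- each requiring a different number of spatial derivatives, and ensuring that the induction in Part II closes with the regularity budget actually declared in the hypotheses; the analytic content is otherwise a routine replay of Propositions \ref{prop_MFLconsistency} and \ref{prop_MFLhigherOrder}.
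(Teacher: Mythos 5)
Your plan is correct and follows essentially the same route as the paper: Parts I and II replay the energy estimate of Proposition \ref{prop_MFLconsistency} and the induction of Proposition \ref{prop_MFLhigherOrder} (consistency errors for $\Delta_h,\nabla_h,\ast_h$ plus Gronwall with zero final-time error, $\rI$-factors absorbed via \eqref{ScalingRegimeRadius}), while Parts III and IV use the same triangle-inequality splittings combined with Lemma \ref{lem_RegularityContinuousTestFunctions}, Lemma \ref{HsBoundDiscrete}, and the (discrete) Sobolev embedding, which is exactly what the paper's (terse) argument invokes. Your Part III decomposition around $\nabla_h\Ih\phi$ rather than around $\nabla_h\phi$ at the grid points is an equivalent bookkeeping choice, not a different method.
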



\begin{proof}
\emph{Part I)}
The Euler MacLaurin summation formula of Lemma \ref{lem_EulerMaclaurin}, the regularity of the continuous mean field limit $\overline{\rho}^{r_I}$, and Lemma \ref{prop_MFLconsistency}, entail the bound
 \begin{align}\label{BoundReplaceU}
 \left| \U[\alpha] - \U[h,\alpha] \right| \lesssim (\max_{\alpha,\beta}\|\nabla\V[\alpha\beta]\|_{C^{p+2}})\|\overline{\rho}^{r_I}\|_{C^{p+3}}N^{\delta t}h^{p+1}.
 \end{align}
Performing the replacement $\U[\alpha] \rightarrow \U[h,\alpha]$, as well as the replacements $\Delta \rightarrow \Delta_h$ and $\nabla \rightarrow \nabla_h$ of order $p+2$ (these being possible since $\phi_\alpha\in C^{p+1}(\domain)$) we rewrite \eqref{eq_backEvoTest} as 
\begin{align}\label{rewrite_equn_phi}
-\partial_t \Ih \phi_{\alpha}^t & =  \sigma_\alpha \Delta_h \Ih\phi_{\alpha}^t - \U[h,\alpha](t)  \cdot \nabla_h \Ih\phi_{\alpha}^t \nonumber\\
& \quad \quad \quad + \Ih \left[ \sum_{\beta=1}^\nS V_{\beta\alpha}^{r_I} \ast \nabla \cdot  (\overline{\rho}^{r_I}_{\beta} \nabla \phi_{\beta}^t)\right] + R_{\nabla,\Delta} + R_{\U},
\end{align}
where 
\begin{align}\label{Boundf1}
\left|R_{\nabla,\Delta}\right| + \left| R_{\U}\right| \lesssim \|\phi^t\|_{C^{p+3}}(\max_{\alpha,\beta}\|\nabla\V[\alpha\beta]\|_{C^{p+2}})\|\overline{\rho}^{r_I}\|_{C^{p+3}}N^{\delta t}h^{p+1}.
\end{align} 

By adding and subtracting zero, performing the replacement $\rhobr[] \rightarrow \rhobr[h]$ (with the associated residual bounded using Lemma \ref{prop_MFLconsistency}), and performing the replacement $\ast \rightarrow \ast_h$ (with the associated residual bounded by the Euler-Maclaurin summation formula in Lemma \ref{lem_ApprOrderConv}, which we can use due to \eqref{RegularityConvolutionL2}) 
we can rewrite the difference of \eqref{eq_discrBackEvoTest} and \eqref{rewrite_equn_phi} as 
\begin{align}\label{eq100}
& -\partial_t ( \phi_{h,\alpha}^t - \Ih \phi_{\alpha}^t )\nonumber \\
& \quad  = \sigma_\alpha \Delta_h ( \phi^t_{h,\alpha} - \Ih \phi_{\alpha}^t ) - \U[h,\alpha](t)  \cdot \nabla_h (\phi_{h,\alpha}^t - \Ih\phi_{\alpha}^t) \nonumber\\
& \quad \quad + \sum_{\beta=1}^\nS \Ih \nabla V_{\beta\alpha}^{r_I} \ast_{h} \left[\Ih (\rhobr[h,\beta]\nabla \phi_{\beta}^t) \right] - \sum_{\beta=1}^\nS \Ih \nabla V_{\beta\alpha}^{r_I} \ast_{h} \left(\rhobr[h,\beta] \nabla_h \phi_{h,\beta}^t\right) \nonumber\\
& \quad \quad  + R_{\ast} + R_{\nabla,\Delta} + R_{\U} + R_{\rhobr[]} \nonumber \\
& =: \text{Diff}_1 + \text{React}_1 + \text{Conv}_1 - \text{Conv}_2 + R_{\ast} + R_{\nabla,\Delta} + R_{\U} + R_{\rhobr[]}, 
\end{align}
where 
\begin{align}\label{Boundf2}
\left| R_{\rhobr[]} \right| & \lesssim (\max_{\alpha,\beta}\|\V[\alpha\beta]\|_{W^{1,2}})\|\phi^t\|_{C^1}h^{p+1}N^{\delta (T-t)},\\
\left| R_{\ast} \right| & \lesssim \|\phi^t\|_{C^{p+2}}\|(\max_{\alpha,\beta}\|\nabla\V[\alpha\beta]\|_{C^{p+1}})\|\overline{\rho}^{r_I}\|_{C^{p+1}}h^{p+1}.\label{Boundf3}
 \end{align}
Testing \eqref{eq100} with $\phi_{h,\alpha}^t - \Ih \phi_{\alpha}^t$, using Young's inequality, integration by parts, \eqref{Boundf1}--\eqref{Boundf3}, and the comparison between the first-order operators (\eqref{IntByParts} and \ref{ass_DiscrDefOps}) gives
\begin{align}\label{L2Estimate}
& -\partial_t \| \phi_{h,\alpha}^t - \Ih \phi_{\alpha}^t  \|^2_h \nonumber\\ 
& \quad = -\sigma_\alpha \left( \nabla_{D,h} ( \phi^t_{h,\alpha} - \Ih \phi_{\alpha}^t ),  \nabla_{D,h}( \phi_{h,\alpha}^t - \Ih \phi_{\alpha}^t) \right)_h \nonumber\\ 
& \quad\quad \quad - \left( \U[h,\alpha](t)  \cdot \nabla_h (\phi_{h,\alpha}^t - \Ih\phi_{\alpha}^t) , \phi_{h,\alpha}^t - \Ih \phi_{\alpha}^t \right)_h \nonumber\\
& \quad\quad \quad + \left(\text{Conv}_1 - \text{Conv}_2, \phi_{h,\alpha}^t - \Ih \phi_{\alpha}^t \right)_h \nonumber\\
& \quad\quad \quad + \left(R_{\ast} + R_{\nabla,\Delta} + R_{\U} + R_{\rhobr[]}, \phi_{h,\alpha}^t - \Ih \phi_{\alpha}^t \right)_h \nonumber\\
& \quad\lesssim - (\sigma_\alpha/2) \|\nabla_h(\phi_{h,\alpha}^t - \Ih \phi_{\alpha}^t) \|_h^2 \nonumber\\
& \quad\quad \quad +  (\| \U[h,\alpha](t)\|^2_{\infty} + 1) \| \phi_{h,\alpha}^t - \Ih \phi_{\alpha}^t \|_h^2 \nonumber\\
& \quad\quad \quad + \left(\text{Conv}_1 - \text{Conv}_2, \phi_{h,\alpha}^t - \Ih \phi_{\alpha}^t \right)_h + \left| R_{\ast} + R_{\nabla,\Delta} + R_{\U} + R_{\rhobr[]} \right|^2.
\end{align}

The regularity assumption \eqref{RegularityConvolutionL2} (enabling a further replacement $\nabla \rightarrow \nabla_h$)  entails
\begin{align*}
& \left(\text{Conv}_1 - \text{Conv}_2, \phi_{h,\alpha}^t - \Ih \phi_{\alpha}^t \right)_h \\
& \quad = \sum_{\beta=1}^\nS \left(\Ih \nabla V^{r_I}_{\beta\alpha} \ast_{h} \left[\Ih  (\rhobr[h,\beta] \nabla \phi_{\beta}^t) -  \left(\rhobr[h,\beta] \nabla_h \phi_{h,\beta}^t\right)\right] , \phi_{h,\alpha}^t - \Ih \phi_{\alpha}^t\right)_h \\
& \quad = \sum_{\beta=1}^\nS \left(\Ih \nabla V^{r_I}_{\beta\alpha} \ast_{h} \left[\left(\rhobr[h,\beta] \nabla_h(\Ih\phi_{\beta}^t -\phi^t_{h,\beta})\right)\right], \phi_{h,\alpha}^t - \Ih \phi_{\alpha}^t \right)_h \\
& \quad \quad \quad + \left(R_{\nabla},  \phi_{h,\alpha}^t - \Ih \phi_{\alpha}^t \right)_h,
\end{align*}
where $\left|R_{\nabla}\right| \leq \|\rhobr\|_{C^{1}}\|\phi^t\|_{C^{p+2}}\|\V[\alpha\beta]\|_{W^{1,1}}h^{p+1}$ uses \eqref{eq_MFLhO_estimateO1}. Using the Young inequality for convolutions and the symmetry of the kernel $ V_{\beta\alpha}^{r_I}$, we carry on and deduce 
\begin{align}\label{Conv1_2}
& \left(\text{Conv}_1 - \text{Conv}_2, \phi_{h,\alpha}^t - \Ih \phi_{\alpha}^t \right)_h \nonumber\\
& \quad = - \sum_{\beta=1}^\nS \left(\nabla_h (\Ih V_{\beta\alpha}^{r_I}) \ast_{h} \left[\phi_{h,\beta}^t - \Ih \phi_{\beta}^t\right], \rhobr[h,\beta]  \nabla_h(\Ih\phi_{\alpha}^t -\phi^t_{h,\alpha}) \right)_h \nonumber\\
& \quad \quad \quad + \left(R_{\nabla},  \phi_{h,\alpha}^t - \Ih \phi_{\alpha}^t \right)_h \nonumber\\
& \quad \lesssim  \sum_{\beta=1}^{\nS} \|\rhobr\|_{C^{1}}\|(\max_{\alpha,\beta}\|\V[\alpha\beta]\|_{W^{1,1}}) \|\phi_{h,\beta}^t - \Ih \phi_{\beta}^t\|_h \| \nabla_h(\Ih\phi_{\alpha}^t -\phi^t_{h,\alpha})  \|_h \nonumber\\
& \quad \quad \quad + \left(R_{\nabla},  \phi_{h,\alpha}^t - \Ih \phi_{\alpha}^t \right)_h.
\end{align}
Summing over all species $\alpha\in 1,\dots,\nS$ in \eqref{Conv1_2}, using the Young inequality  in \eqref{L2Estimate} (with weights suitable for an absorbtion argument), the inequality \eqref{1stDerBound}, the norm equivalence $\|\nabla^R_h \cdot\|=\|\nabla_h \cdot\|$, \eqref{eq_MFLconsistency:main estimate}, and the simple bound $\|\U[h,\alpha](t)\|_\infty \leq \|\V[\alpha\beta]\|_{H^1}\|\rhobr[h]\|_{L^2}$, we obtain
\begin{align*}
 & -\partial_t \| \phi_{h,\alpha}^t - \Ih \phi_{\alpha}^t  \|^2_h \\
 & \quad \lesssim (\max_{\alpha,\beta}{[\|\V[\alpha\beta]\|_{W^{1,1}} \vee \|\V[\alpha\beta]\|_{H^1}}] + 1)^2\left(\|\rhobr\|^2_{C^{p+1}} + \|\rhobr[h]\|^2_{L^2}\right)\|\phi_{h,\alpha}^t - \Ih \phi_{\alpha}^t \|_h^2 \\
& \quad \quad + N^{\delta t}\|\phi^t\|^2_{C^{p+2}}\|\rhobr\|^2_{C^{p+1}} (\max_{\alpha,\beta}{\|\V[\alpha\beta]\|_{C^{p+1}}})^2 h^{2(p+1)}\\
& \quad \lesssim \left[ (\max_{\alpha,\beta}{[\|\V[\alpha\beta]\|_{W^{1,1}} \vee \|\V[\alpha\beta]\|_{H^1}}] + 1)^2\left(\|\rhobr\|^2_{C^{p+1}} + h^{2(p+1)}N^{\delta t} \right) \right] \\
 & \quad \quad \quad \quad \times \|\phi_{h,\alpha}^t - \Ih \phi_{\alpha}^t \|_h^2 \\
& \quad \quad + N^{\delta t}\|\phi^t\|^2_{C^{p+2}}\|\rhobr\|^2_{C^{p+1}} (\max_{\alpha,\beta}{\|\V[\alpha\beta]\|_{C^{p+1}}})^2 h^{2(p+1)}.
\end{align*}
Using now Assumption \ref{ass_Scaling} to bound the term $h^{2(p+1)}N^{\delta t}$, \eqref{L2ErrorTestFunctions} follows promptly using Gronwall Lemma, Lemma \ref{lem_RegularityContinuousTestFunctions}, and the scaling assumption \eqref{ScalingRegimeRadius} (see analogous Gronwall argument in Proposition \ref{prop_MFLconsistency}). 

\emph{Part II)} We argue by induction over $s$. The case $s=0$ is settled by Part I). Now assume that the validity of \eqref{OpErrorTestFunctions} for some $s-1$. Let $\nu\in\Nz^d$ with $|\nu|=s$. Then we can write, for some test function $\eta_h$
\begin{align*}
\big(\partial_t(\dOne{\nu}\phi^t_{h,\alpha}),\eta_h\big)_h & = -\sigma_\alpha \big( \nabla_{D,h} \dOne{\nu}\phi_{h,\alpha},  \nabla_{D,h}\eta_h \big)_h \\
& \quad - \left(\sum_{\gamma \leq |\nu|}{b_{\nu,\gamma}\tau_{h\gamma}\dOne{\nu-\gamma}\U[h,\alpha](t) \cdot \dOne{\gamma}\nabla_h \phi_{h,\alpha}}, \eta_h \right)_h \\
& \quad + \left( \sum_{\beta}\sum_{\gamma \leq \nu}b_{\nu,\gamma}\tau_{h\gamma} \Ih[\nabla\V[\beta\alpha]] \ast_h \dOne{\nu-\gamma}\rhobr[h,\beta] \dOne{\gamma}\nabla_h \phi_{h,\beta}, \eta_h \right)_h,
\end{align*}
where $(b_{\nu,\gamma})_\gamma$ are the standard binomial factors from the product rule.
Analogously to what we have done in Part I), we perform relevant residual substitutions (this time we additionally rely on the mean value theorem, see \eqref{eq_MFLhO_R-dynamicsEst} for an identical discussion) and obtain 
\begin{align*}
\big(\partial_t(\dOne{\nu}\Ih \phi^t_{\alpha}),\eta_h\big)_h & = -\sigma_\alpha \big( \nabla_{D,h} \dOne{\nu}\Ih \phi_{\alpha},  \nabla_{D,h}\eta_h \big)_h \\
& \quad - \left(\sum_{\gamma \leq |\nu|}{b_{\nu,\gamma}\tau_{h\gamma}\dOne{\nu-\gamma}\U[h,\alpha](t) \cdot \dOne{\gamma}\nabla_h \Ih\phi_{\alpha}}, \eta_h \right)_h \\
& \quad + \left( \sum_{\beta}\sum_{\gamma \leq \nu}b_{\nu,\gamma}\tau_{h\gamma} \Ih[\nabla\V[\beta\alpha]] \ast_h \dOne{\nu-\gamma}\rhobr[h,\beta] \dOne{\gamma}\nabla_h \Ih \phi_{\beta}, \eta_h \right)_h \\
& \quad + \left( R_{\ast} + R_{\nabla,\Delta} + R_{\U} + R_{\rhobr[]} + R_{\nabla} , \eta_h \right)_h , 
\end{align*}
where we have
\begin{align*}
\left|R_{\nabla,\Delta}\right| + \left| R_{\U}\right| & \leq \|\phi^t\|_{C^{s+p+3}}(\max_{\alpha,\beta}\|\nabla\V[\alpha\beta]\|_{C^{s+p+2}})\|\overline{\rho}^{r_I}\|_{C^{s+p+3}}N^{\delta t}h^{p+1},\\
\left| R_{\rhobr[]} \right| & \lesssim (\max_{\alpha,\beta}\|\V[\alpha\beta]\|_{W^{1,2}})\|\phi^t\|_{C^{s+1}}h^{p+1}N^{\delta (T-t)},\\
\left| R_{\ast} \right| & \lesssim \|\phi^t\|_{C^{s+p+2}}\|(\max_{\alpha,\beta}\|\nabla\V[\alpha\beta]\|_{C^{s+p+1}})\|\overline{\rho}^{r_I}\|_{C^{s+p+1}}h^{p+1},\\
\left|R_{\nabla}\right| & \lesssim \|\rhobr\|_{C^{s+1}}\|\phi^t\|_{C^{s+p+2}}\|\V[\alpha\beta]\|_{W^{1,1}}h^{p+1}.
\end{align*}
We therefore deduce that
\begin{align}\label{eq100_b}
& -\partial_t ( \partial^{\nu}_{h,1}\phi_{h,\alpha}^t - \dOne{\nu}\Ih  \phi_{\alpha}^t, \eta_h )_h\nonumber \\
& \quad  = -\sigma_\alpha \left(\nabla_{D,h} ( \partial^{\nu}_{h,1}\phi^t_{h,\alpha} - \dOne{\nu}\Ih \phi_{\alpha}), \nabla_{D,h}\eta_h\right)_h \nonumber\\
& \quad \quad - \left(\sum_{\gamma \leq |\nu|}{b_{\nu,\gamma}\tau_{h\gamma}\dOne{\nu-\gamma}\U[h,\alpha](t) \cdot \left[ \dOne{\gamma}\nabla_h \phi_{h,\alpha} - \dOne{\gamma}\nabla_h \Ih\phi_{\alpha}\right]}, \eta_h \right)_h  \nonumber\\
& \quad \quad + \left( \sum_{\beta}\sum_{\gamma \leq \nu}b_{\nu,\gamma}\tau_{h\gamma} \Ih[\nabla\V[\beta\alpha]] \ast_h \dOne{\nu-\gamma}\rhobr[h,\beta] \left[ \dOne{\gamma}\nabla_h \Ih \phi_{\beta} - \dOne{\gamma}\nabla_h \phi_{h,\beta} \right], \eta_h \right)_h \nonumber\\
& \quad \quad  + \left( R_{\ast} + R_{\nabla,\Delta} + R_{\U} + R_{\rhobr[]} +R_{\nabla}, \eta_h \right)_h.  
\end{align}
Then \eqref{OpErrorTestFunctions} follows testing \eqref{eq100_b} with $\eta_h = \partial^{\nu}_{h,1}\phi_{h,\alpha}^t - \dOne{\nu}\Ih  \phi_{\alpha}^t$, using the induction hypothesis and the same Gronwall type argument used in Part I).

\emph{Part III)} 
The fact that the replacement $\nabla \rightarrow \nabla_h$ is of order $p+1$, and the fact that $\dOne{1}$ bounds any discrete first derivative (in terms of $L^p$ norms), imply
\begin{align*}
\left\| \nabla_h \phi_{h,\alpha} - \Ih \nabla\phi_{\alpha}\right\|_h & \lesssim \left\| \nabla_h \phi_{h,\alpha} - \nabla_h\phi_{\alpha}\right\|_h + \left\| \nabla_h\phi_{\alpha} - \nabla\phi_{\alpha}\right\|_h \\
& \lesssim \| \phi_{h,\alpha} - \phi_{\alpha} \|_{H^1_h} + h^{p+1}\|\phi_{\alpha}\|_{C^{p+2}},
\end{align*}
and \eqref{Diff_Gradients} follows from \eqref{OpErrorTestFunctions}.

\emph{Part IV)} This follows by combining \eqref{OpErrorTestFunctions}, Lemma \ref{lem_RegularityContinuousTestFunctions}, Lemma \ref{HsBoundDiscrete}, and the Sobolev embedding $H^{d/2+}\subset L^{\infty}$.

\end{proof}

\begin{lemma}[$L^1$ bound for discrete test functions]\label{L1_bound_discrete}
Let $\phi_h^t$ be the solution to \eqref{eq_discrBackEvoTest}. 
Let $\delflex>0$.
Then, for large enough $N$, $\phi_h^t$ satisfies the $L^1$ bound
\begin{align}\label{eq_L1BoundDiscrTestFunc}
\sup_{t\in[0,T]}{\|\phi^t_h\|_{L^1(\Ghd)}} \leq N^{\delflex T}\|\Ih \varphi\|_{L^1(\Ghd)}.
\end{align}
\end{lemma}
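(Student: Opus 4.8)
The plan is to adapt the energy-estimate machinery already established for the discrete mean-field limit (Proposition~\ref{prop_MFLconsistency}) to the $L^1$ setting, exploiting that the backwards evolution equation \eqref{eq_discrBackEvoTest} is linear in $\phi_h$. First I would test \eqref{eq_discrBackEvoTest} against a suitable approximation of $\sign(\phi^t_{h,\alpha})$; more precisely, since we are on a finite grid and everything is smooth in $t$, I would differentiate $\|\phi^t_{h,\alpha}\|_{L^1(\Ghd)} = \sum_{x\in\Ghd} h^d |\phi^t_{h,\alpha}(x)|$ in time (after time reversal $t\leftrightarrow T-t$) and use that $\frac{\m}{\m t}|\phi^t_{h,\alpha}(x)| = \sign(\phi^t_{h,\alpha}(x)) \partial_t \phi^t_{h,\alpha}(x)$ wherever $\phi^t_{h,\alpha}(x)\neq 0$ (the zero set contributes nothing since it is where $|\cdot|$ is still Lipschitz with the one-sided derivatives controlled). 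This reduces matters to bounding $\sum_{x} h^d \sign(\phi^t_{h,\alpha}(x))\,[\sigma_\alpha \Delta_h\phi^t_{h,\alpha} - \U[h,\alpha]\cdot\nabla_h\phi^t_{h,\alpha} + \sum_\beta \Ih[\nabla\V[\beta\alpha]]\ast_{h,c}(\rhobr[h,\beta]\nabla_h\phi^t_{h,\beta})](x)$.

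The key steps, in order: (i) The diffusion term $\sum_x h^d \sign(\phi^t_{h,\alpha})\Delta_h\phi^t_{h,\alpha}$ has the right sign (it is $\leq 0$), by the discrete maximum-principle structure of $\Delta_h$ — discrete-diffusion is $L^1$-contractive, so this term can simply be dropped. (ii) For the transport term $\U[h,\alpha]\cdot\nabla_h\phi^t_{h,\alpha}$, I would integrate by parts discretely to move the derivative off $\phi^t_{h,\alpha}$, producing a term $\sim \|\divv_h \U[h,\alpha]\|_{L^\infty_h}\|\phi^t_{h,\alpha}\|_{L^1_h}$ plus commutator terms from the fact that $\sign(\phi^t_{h,\alpha})$ is not differentiable; here one uses that $\sign(u)\nabla_h u$ can be written in terms of $\nabla_h|u|$ up to a sign-controlled error, so the transport term contributes at most $C\|\nabla_h \U[h,\alpha]\|_{L^\infty_h}\|\phi^t_{h,\alpha}\|_{L^1_h}$, and $\|\U[h,\alpha]\|_{W^{1,\infty}_h}$ is bounded via \eqref{eq_defUh}, Young's convolution inequality (Corollary~\ref{cor_DiscrYoung}), Remark~\ref{rem_rhominh_rhomaxh}, and $\|\nabla\V[\alpha\beta]\|\leq C\rI^{-(d+1)}$. (iii) For the convolutional reaction term, I would again integrate by parts discretely to write it as $-\sum_\beta (\Ih[\nabla\V[\beta\alpha]]\ast_{h,c}(\rhobr[h,\beta]\phi^t_{h,\beta}))$-type expressions acting on $\nabla_h\sign(\phi^t_{h,\alpha})$, or alternatively estimate it directly using the discrete Hölder and Young inequalities to obtain a bound $\sim \|\nabla\V\|_{W^{1,1}}\|\rhobr[h]\|_{L^\infty_h}\sum_\beta\|\phi^t_{h,\beta}\|_{L^1_h}$; the $\rI$-dependent prefactors are again absorbed into the exponential using \eqref{ScalingRegimeRadius}. (iv) Summing over $\alpha=1,\dots,\nS$, collecting, and applying the Gronwall lemma on $[0,T]$ gives $\sup_{t}\|\phi^t_h\|_{L^1_h} \le \exp(C T (1+\rI^{-2(d+1)}\cdots))\|\Ih\vphi\|_{L^1_h}$, and then Assumption~\ref{ass_Scaling}, specifically \eqref{ScalingRegimeRadius}, converts $\exp(C T \rI^{-2(d+1)})$ into $N^{\delflex T}$ for $N$ large enough, exactly as at the end of the proof of Proposition~\ref{prop_MFLconsistency}.

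The main obstacle I anticipate is handling the non-smoothness of $\sign(\phi^t_{h,\alpha})$ cleanly: one cannot naively test with $\sign(\phi^t_{h,\alpha})$ because it interacts badly with the discrete derivatives $\nabla_h$ and $\Delta_h$ (which are nonlocal stencils, so there is no exact discrete chain rule). The standard remedy is to work with a smooth approximation $\sign_\delta$ of the sign function, derive the estimate with an $O(\delta)$-error, and let $\delta\to 0$; alternatively, since the grid is finite and $\phi^t_h$ is $C^1$ in time, one can argue that $t\mapsto\|\phi^t_h\|_{L^1_h}$ is piecewise $C^1$ and differentiate on each piece, but one must still control the discrete-derivative terms — the cleanest route is probably to avoid the sign function entirely and instead use the duality characterization $\|\phi^t_{h,\alpha}\|_{L^1_h} = \sup_{\|\psi\|_{L^\infty_h}\le 1}(\psi,\phi^t_{h,\alpha})_h$ together with the forward (adjoint) evolution, observing that the forward equation propagates $L^\infty$-bounds (discrete diffusion and transport are $L^\infty$-stable), so by duality the backward equation propagates $L^1$-bounds. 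Either way the remaining computations are routine given the discrete Hölder/Young inequalities and the $\rI$-scaling already in place, so the proof is, as the authors indicate, a short adaptation; I would present it in the duality form to minimize technicalities.
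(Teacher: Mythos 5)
There is a genuine gap at your step (i), and it propagates into the fallback duality argument. The dissipativity of the diffusion term under sign-testing rests on a discrete maximum principle, i.e.\ on $L^1$-contractivity of the semigroup generated by $\Delta_h$. But by Assumption \ref{ass_DiscrDefOps} the operator $\Delta_h=\sum_{\ell} D^2_{h,x_\ell}$ is a finite-difference approximation of order $p+1$ for an arbitrary fixed $p$, and for $p>1$ such wide stencils have coefficients of both signs; they are not monotone operators. Consequently $\sum_{x\in\Ghd} h^d\,\sign(\phi^t_{h,\alpha}(x))\,\Delta_h\phi^t_{h,\alpha}(x)$ need not be nonpositive, and the associated semigroup is neither positivity-preserving nor an $L^\infty$- (or $L^1$-) contraction. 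The same obstruction undermines the commutator bookkeeping in (ii)--(iii): there is no discrete Kato-type inequality relating $\sign(u)\nabla_h u$ to $\nabla_h|u|$ for higher-order stencils, and near a sign change $\nabla_h\sign(\phi^t_{h,\alpha})$ is of size $h^{-1}$ with no compensating smallness of $\phi^t_{h,\alpha}$ at the other stencil points, so the ``sign-controlled error'' is not controlled by $\|\phi^t_{h,\alpha}\|_{L^1(\Ghd)}$. Your scheme would work for the standard second-order Laplacian ($p=1$), but not for the full class of discretizations the lemma must cover.

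Your duality variant is closer to a viable proof, but the assertion that the forward (adjoint) evolution ``propagates $L^\infty$-bounds since discrete diffusion and transport are $L^\infty$-stable'' again implicitly invokes the unavailable maximum principle. What is actually true, and what the paper's proof uses, is a quantitative kernel estimate: the paper represents $\phi^{T-t}_{h,\alpha}$ by Duhamel's formula with the discrete Green's function $\mathcal{G}_h$ of $\Delta_h$, integrates by parts to move all discrete derivatives off $\phi_h$, and exploits $\|\nabla_h\mathcal{G}^{T-s}_h(\cdot-y)\|_{L^1(\Ghd)}\lesssim (T-s)^{-1/2}$ together with the discrete Young inequality, a short-time absorption argument on intervals of length $\tilde t\lesssim \|\V\|_{H^2}^{-2}\|\rhobr[h]\|_{W^{1,\infty}(\Ghd)}^{-2}$, and iteration over $O(T\|\V\|_{H^2}^{2}\|\rhobr[h]\|_{W^{1,\infty}(\Ghd)}^{2})$ such intervals, before converting the resulting prefactor into $N^{\delflex T}$ via \eqref{ScalingRegimeRadius}. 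If you replace your qualitative stability claim by such $L^1$-bounds on $\mathcal{G}_h^t$ and $\nabla_h\mathcal{G}_h^t$, the duality argument essentially reduces to the paper's proof; as written, the central dissipativity/stability input is missing, so the proposal does not yet yield \eqref{eq_L1BoundDiscrTestFunc}.
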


\begin{proof}
We expand the solution $\phi_h$ using the discrete Green's function $\mathcal{G}_h$ associated with the discrete Laplace $\Delta_h$, obtaining
\begin{align*}
  \phi_{h,\alpha}^{T-t}(x) & = (\mathcal{G}_h^t \ast_h \Ih \varphi)(x) + \int_{T-t}^{T}{\left(\mathcal{G}_h^{T-s}(x-\cdot) , \left[ - \U[h,\alpha](s)\cdot\nabla_h\phi^{s}_{h,\alpha} \right]\right)_h\m s} \\
& \quad \quad + \int_{T-t}^{T}{\left(\mathcal{G}_h^{T-s}(x-\cdot), \sum_{\beta=1}^\nS \Ih[\nabla\V[\beta\alpha]]\ast_{h,c}(\rhobr[h,\beta]\nabla_h\phi^s_{h,\beta})  \right)_h\m s}
\end{align*}
Using integration by parts so as to remove all first derivatives from $\phi_h$, we get 
\begin{align*}
 \phi_{h,\alpha}^{T-t}(x) &= (\mathcal{G}_h^t \ast_h \Ih \varphi)(x) + \int_{T-t}^{T}{\left( \phi^{s}_{h,\alpha},\nabla_h \cdot \left[ \mathcal{G}_h^{T-s}(x-\cdot) \U[h,\alpha](s) \right] \right)_h\m s} \\
& \quad \quad  
- \int_{T-t}^{T}{\left(\nabla_h \mathcal{G}_h^{T-s}(x-\cdot), \sum_{\beta=1}^\nS \Ih[\nabla\V[\beta\alpha]]\ast_{h,c}(\rhobr[h,\beta] \phi_{h,\beta}^s) \right)_h\m s} \\
& \quad \quad  - \int_{T-t}^{T}{\left(\mathcal{G}_h^{T-s}(x-\cdot), \sum_{\beta=1}^\nS \Ih[\nabla\V[\beta\alpha]]\ast_{h,c}(\nabla_h[\rhobr[h,\beta]] \phi^s_{h,\beta}) \right)_h\m s}.
\end{align*}
Using the inequality 
\begin{align*}
\|\nabla_h \mathcal{G}^{T-s}_h(\cdot-y)\|_{L^1(\Ghd)} \lesssim (T-s)^{-1/2}, 
\end{align*}
the bound $\| \U[h,\alpha] \|_{W^{1,\infty}} \leq \| \V[\beta\alpha] \|_{H^2}\|\rhobr[h]\|_{W^{1,\infty}(\Ghd)}$,
the discrete Young's convolution inequality given in Corollary \ref{cor_DiscrYoung}), and summing up over all species, we obtain 
\begin{align*}
\sum_{\alpha=1}^{\nS}\|\phi_{h,\alpha}^{T-t}\|_{L^1(\Ghd)} & \leq \|\Ih \varphi\|_{L^1(\Ghd)} + \| \V[\beta\alpha] \|_{H^2}\|\rhobr[h]\|_{W^{1,\infty}(\Ghd)} \\
& \quad \quad \quad \times \int_{T-t}^{T}{(1+(T-s)^{-1/2})\sum_{\alpha=1}^{\nS} \|\phi^s_{h,\alpha}\|_{L^1(\Ghd)}\m s}.
\end{align*}
Taking the supremum for $t\in [0,\tilde{t}]$, we deduce
\begin{align}\label{L1_before_absorbtion}
& \sup_{t\in [0,\tilde{t}]}\sum_{\alpha=1}^{\nS} \|\phi^{T-t}_{h,\alpha}\|_{L^1(\Ghd)}\nonumber \\
 & \quad \leq \|\Ih \varphi\|_{L^1(\Ghd)} + \| \V[\beta\alpha] \|_{H^2}\|\rhobr[h]\|_{W^{1,\infty}(\Ghd)} \nonumber\\
&  \quad \quad \quad \times \left[\int_{T-\tilde{t}}^{T}{(1+(T-s)^{-\frac{1}{2}})\m s}\right] \sup_{t\in [0,\tilde{t}]}\sum_{\alpha=1}^{\nS} \|\phi^{T-t}_{h,\alpha}\|_{L^1(\Ghd)}.
\end{align}
Choosing 
$
\tilde{t} \leq C \| \V[\beta\alpha] \|^{-2}_{H^2}\|\rhobr[h]\|^{-2}_{W^{1,\infty}(\Ghd)}
$
with small enough $C$ allows to perform an absorbtion argument in \eqref{L1_before_absorbtion}, leading to 
\begin{align*}
& \sup_{t\in [0,\tilde{t}]}\sum_{\alpha=1}^{\nS} \|\phi^{T-t}_{h,\alpha}\|_{L^1(\Ghd)}  \leq 2\|\Ih \varphi\|_{L^1(\Ghd)}.
\end{align*}
Repeating the analysis over all time intervals $[j\tilde{t},(j+1)\tilde{t}]$ up to saturation of $[0,T]$ (there are $\lceil T/\tilde{t} \rceil \lesssim \| \V[\beta\alpha] \|^2_{H^2}\|\rhobr[h]\|^2_{W^{1,\infty}(\Ghd)}T$ of such intervals) we get 
\begin{align*}
\sup_{t\in[0,T]}{\|\phi_h(t)\|_{L^1(\Ghd)}} \leq 2^{T\|\V[\beta\alpha] \|^2_{H^2}\|\rhobr[h]\|^2_{W^{1,\infty}(\Ghd)}} \|\Ih \varphi\|_{L^1(\Ghd)},
\end{align*}
and the proof is now concluded bounding $\|\rhobr[h,\beta]\|_{W^{1,\infty}}$ using \eqref{eq_MFLhO_estimateO1}, and bounding $\|\V[\beta\alpha] \|^2_{H^2}$ using the scaling \eqref{ScalingRegimeRadius}.
\end{proof}

\section{Constructing discrete initial data via interpolation}\label{AppConstructionInitialData}
We substantiate Remark \ref{rem_DiscrInitData}: specifically, we show how to construct discrete initial data satisfying Assumption \ref{ass_InitCond} from given continuous initial data (i.i.d.) by redistributing mass to an appropriate set of surrounding grid points according to interpolation weights.

\subsection{The multivariate interpolation scheme}
We briefly discuss the polynomial multivariate interpolation scheme of order $p+1$.
First, for each $y\in\domain$, we have to fix the set of surrounding grid points $B_h^p(y)$ onto which to distribute mass from the continuous initial data.
For schemes of order $p+1$ scheme (interpolating polynomials of order at most $p$ exactly) we need $\binom{p+d}{d}$ points in a suitable formation. 
As a subset of our rectangular grid we choose, e.g., the ``lower left triangular cone'' defined as
\begin{equation*}
	B_h^p := \big\lbrace x\in\Ghd;~ \forall =1,\ldots,d: 0\leq x_i,~ x_1+\ldots+x_d \leq ph  \big\rbrace,
\end{equation*}
see \cite[Section 2.2]{gasca2000polynomial}.
Thus, for $y\in[0,h)^d$ we choose $B_h^p(y)=B_h^p$.
In case $y$ is in another grid section, we appropriately shift the set of interpolation points: $B_h^p(y)=B_h^p+h\floor{y/h}$.
With this choice, for $f\in C^{p+1}$ and $y\in[0,h)^d$ we have the error estimate 
\begin{equation}\label{eq_interpolationLagrange1section}
	f(y) 
	=
	\sum_{x\in B_h^p} f(x)\Lambda_x(y) + \mathcal{O}\big(\|\vphi\|_{C^{p+1}}h^{p+1}\big),
\end{equation}
where the $\Lambda_x$ are the Lagrange fundamental polynomials with $\Lambda_x(\tilde{x})=\delta_{x\tilde{x}}$ for $\tilde{x}\in B_h^p$, see \cite[Section 4.1]{gasca2000polynomial}.
Note though that we can rephrase $\Lambda_x(y)$ as samples $\Lambda(x-y)$ from a continuously defined convolution kernel 
\begin{equation*}
	\Lambda\colon\domain\rightarrow\R,\quad
	\Lambda(z)\coloneqq 	\left\lbrace\begin{split}
								\Lambda_x(x-z),	&\qquad\text{for }x-z\in [0,h)^d,~ x\in B_h^p,\\
								0, 				&\qquad\text{otherwise}.			
							\end{split}\right.
\end{equation*}
Note that in the interior of each grid section the kernel $\Lambda$ is a polynomial and thus continuous. 
Due to the definition of the Lagrange polynomials, $\Lambda$ is further continuous at all grid points, but in general discontinuous on the remaining contact areas between grid sections. 

With this definition we can extend \eqref{eq_interpolationLagrange1section} to all $y\in\domain$ as
\begin{align}\label{eq_interpolationLagrange}
	f(y) 
	=
	\sum_{x\in\Ghd} f(x)\Lambda(x-y) + \mathcal{O}\big(\|\vphi\|_{C^{p+1}}h^{p+1}\big).
\end{align}

\subsection{Constructing the discrete initial data}
To construct discrete initial data from the continuous one, for $\alpha=1,\ldots,\nS$ we would like to straight-up distribute the mass of a particle starting at $X^{\rI,N}_{\alpha,i}(0)$ according to the interpolation weights, that is for $x\in\Ghd$
\begin{equation*}
	\rho^{0,\ast}_{h,\alpha}(x) 
	\coloneqq 
	h^{-d}\int_\domain \Lambda(x-y) \m\empmeas[0,\alpha](y)
	= h^{-d}\big(\Lambda\ast\empmeas[0,\alpha]\big)(x),
\end{equation*}
where the volume factor $h^{-d}$ comes from the interpretation as a density. 
In order to abide by the wanted properties of the fluctuations (Assumption \ref{ass_InitCond}), we have to adjust for the fact that the discrete mean field limit is initialized as $\rhobz[h]=\Ih[\rhobz]$.
Thus, for $x\in\Ghd$ we set
\begin{equation*}
	\rhoz[h,\alpha](x) 
	\coloneqq 
	h^{-d}\big(\Lambda\ast\empmeas[0,\alpha]\big)(x)
	-h^{-d}\big(\Lambda\ast\rhobz[\alpha]\big)(x)
	+\Ih[\rhobz[\alpha]](x).
\end{equation*}
From \eqref{eq_interpolationLagrange} it follows that this definition satisfies \eqref{InitDataFluctInequality}..

With respect to \eqref{eq_ass_InitialLinftyh} and \eqref{eq_ass_InitialHlh}, note that since the particles are initially i.i.d.,
Hoeffding's inequality implies for any $m \in \Z^d\cap\left[-\frac{h}{\pi},\frac{h}{\pi}\right)^d$
\begin{align*}
	\bigg|h^{d}\sum_{x\in\Ghd} (\rhoz[h]-\rhobz[h])(x) \exp(i m\cdot x)  \bigg|^2
	\leq \mathcal{C}^2 N^{-1}
\end{align*}
where $\mathcal{C}$ is subject to a Gaussian moment bound. 
Multiplying with $(1+|m|)^{-d-1}$ and summing in $m$, we deduce
$
	\|\rhoz[h]-\rhobz[h]\|_{H^{-(\floor{d/2}+1)}_h} \leq \mathcal{C} N^{-1/2},
$
which immediately implies that \eqref{eq_ass_InitialHlh} is satisfied.
Similarly, by Bernstein's inequality  we obtain
\begin{align*}
	\Pm\left[|\rhoz[h](x)-\rhobz[h](x)|\geq \sqrt{\frac{C N \operatorname{Var}\rhoz[h](x)}{N}} R + \frac{C |N \max_\Omega \rhoz[h](x)| R^2}{N}\right]
	\leq \exp(-R^2),
\end{align*}
for every $x\in \Ghd$ and any $R\geq 1$, and thus by bounding the local mollifier $h^{-d} \Lambda$
\begin{align*}
\Pm\bigg[|\rhoz[h](x)-\rhobz[h](x)|\geq C N^{-1/2} h^{-d/2} R + C h^{-d} N^{-1} R^2 \bigg]
\leq \exp(-R^2).
\end{align*}
This implies that this set of initial conditions satisfies \eqref{eq_ass_InitialLinftyh}.
The only parts of Assumption \ref{ass_InitCond} left are positivity and the mass restriction. 
Due to $\rhobz\geq \rho_{min}$ and \eqref{eq_ass_InitialLinftyh}, the probability of negative initial data decays exponentially -- hence, for large enough $N$, we can restrict to positive data at the cost of only an additional insignificant error in \eqref{InitDataFluctInequality}.
Since the interpolation weights preserve mass, for positive initial data we have $\|\rhoz[h,\alpha]\|_{L^1_h}=\|\Ih[\rhobz[\alpha]]\|_{L^1_h}\approx 1$ for $h$ small enough.
Thus, restricting to $\rhoz[h]\geq 0$, this construction satisfies Assumption \ref{ass_InitCond}.

\end{appendix}


{\bfseries Acknowledgements}. All authors gratefully acknowledge funding from the Austrian Science Fund (FWF) through the project F65. 
CR gratefully acknowledges support from the Austrian Science Fund (FWF), grants P30000, P33010, W1245.
FC gratefully acknowledges funding from the European Union’s Horizon 2020 research and innovation programme under the Marie Sk\l{}odowska-Curie grant agreement No. 754411.
\bibliographystyle{abbrvnat}
\bibliography{fluctuations}

\end{document}